\numberwithin{equation}{section}
\theoremstyle{plain}
\newtheorem{theorem}{Theorem}[section]
\newtheorem*{theorem*}{Theorem}
\newtheorem{proposition}[theorem]{Proposition}
\newtheorem{lemma}[theorem]{Lemma}
\newtheorem{example}[theorem]{Example}
\newtheorem{corollary}[theorem]{Corollary}
\theoremstyle{definition}
\newtheorem{definition}[theorem]{Definition}
\newtheorem{remark}[theorem]{Remark}
\newtheorem{assumption}{Assumption}
\newcommand{\call}[1]{\mathscr{#1}}
\newcommand{\A}{\call{A}}
\newcommand{\B}{\call{B}}
\newcommand{\E}{\call{E}}
\newcommand{\F}{\call{F}}
\newcommand{\G}{\call{G}}
\newcommand{\bA}{\mathbb{A}}
\newcommand{\bB}{\mathbb{B}}
\newcommand{\bG}{\mathbb{G}}
\newcommand{\eps}{\varepsilon}
\renewcommand{\epsilon}{\varepsilon}
\newcommand\ep{\varepsilon}
\renewcommand{\theta}{\vartheta}
\newcommand{\sfPi}{\mathsf{\Pi}}
\newcommand{\N}{\mathbb{N}}
\newcommand{\Z}{\mathbb{Z}}
\newcommand{\Zd}{\mathbb{Z}^d}
\newcommand{\R}{\mathbb{R}}
\newcommand{\C}{\mathbb{C}}
\newcommand{\Rd}{\R^d}
\newcommand{\Rdmz}{\R^d\setminus\set{0}}
\newcommand{\RN}{\R^N}
\newcommand{\RM}{\R^M}
\newcommand{\Td}{\mathbb{T}^d}
\newcommand{\wto}{{\rightharpoonup}}
\DeclarePairedDelimiter{\va}{\lvert}{\rvert}
\DeclarePairedDelimiter{\norm}{\lVert}{\Vert}
\newcommand{\dist}{\mathrm{dist}}
\newcommand{\de}{\mathrm{d}}
\newcommand{\pder}[2]{\frac{\partial #1}{\partial #2}}
\DeclareMathOperator{\dvg}{div}
\renewcommand{\div}{\dvg}
\newcommand{\Ld}{\call{L}^d}
\DeclareMathOperator{\supp}{supp}
\newcommand{\weak}{\rightharpoonup}
\newcommand\wk{\rightharpoonup}
\newcommand{\weakts}{\stackrel{2}{\rightharpoonup}}
\newcommand{\strongts}{\stackrel{2}{\to}}
\newcommand{\Cc}{C_\mathrm{c}}
\newcommand{\loc}{\mathrm{loc}}
\newcommand{\per}{\mathrm{per}}
\renewcommand{\hom}{\mathrm{hom}}
\DeclareMathOperator{\im}{im}
    \newcommand\BBB{\color{blue}}
    \newcommand\EEE{\color{black}}
    \newcommand\RRR{\color{black}}
\title{Homogenization~of~high-contrast~composites under~differential~constraints}
\author[E. Davoli]{Elisa Davoli}
\address[E. Davoli]{
		Institute of Analysis and Scientific Computing,
		TU Wien,
		Wiedner Hauptstrasse 8-10, 1040 Vienna, Austria.
		Email: \href{mailto:elisa.davoli@tuwien.ac.at}{\tt elisa.davoli@tuwien.ac.at}.
}
\author[M. Kru\v{z}\'ik]{Martin Kru\v{z}\'ik}
\address[M. Kru\v{z}\'ik]{Czech Academy of Sciences, 
Institute of Information Theory and Automation, 
Pod Vod\'arenskou V\v{e}\v{z}\'i 4, 180 00 Prague, Czech Republic.
		Email: \href{mailto: kruzik@utia.cas.cz}{\tt kruzik@utia.cas.cz}.
}
\author[V. Pagliari]{Valerio Pagliari}
\address[V. Pagliari]{
		Institute of Analysis and Scientific Computing,
		TU Wien,
		Wiedner Hauptstrasse 8-10, 1040 Vienna, Austria.
		Email: \href{mailto:valerio.pagliari@tuwien.ac.at}{\tt valerio.pagliari@tuwien.ac.at}.
}
\date{\today}
\keywords{homogenization, high-contrast, $\mathscr{A}$-quasiconvexity, $\Gamma$-convergence, two-scale convergence, periodic unfolding}
\subjclass[2000]{49J45; 35D99; 49K20; 74Q99}
\begin{document}

\maketitle

\begin{abstract}
We derive, by means of variational techniques, a limiting description 
for a class of integral functionals
under linear differential constraints.
The functionals are designed  to encode the energy of a high-contrast composite, that is,
a heterogeneous material which, at a microscopic level, consists of a periodically perforated matrix
whose cavities are occupied by a filling with very different physical properties.
Our main result provides a $\Gamma$-convergence analysis as the periodicity tends to zero, and
 shows that
the variational limit of the functionals at stake is the sum of two contributions,
one resulting from the energy stored in the matrix and
the other from the energy stored in the inclusions.
As a consequence of the underlying high-contrast structure,
the study is faced with a lack of coercivity with respect to the standard topologies in $L^p$,
which we tackle by means of two-scale convergence techniques.
In order to handle the differential constraints, instead,
we establish new results about the existence of potentials and of constraint-preserving extension operators
for linear, $k$-th order, homogeneous differential operators
with constant coefficients and constant rank.
\end{abstract}

{\parskip=0pt \tableofcontents}
\section{Introduction}

The identification of materials capable
to switch, enhance, or tune their properties according to external stimuli
is one of the key catalysts
for the creation of new artificially engineered composites: the metamaterials.
In the present paper, 
we contribute to the mathematical study of metamaterials
by providing a unified limiting description for a class of energies
that are inspired by the physics of high-contrast composites and 
are evaluated on fields subject to suitable partial differential constraints.

High-contrast materials are characterized by the property that
their microscopic physical features may change abruptly from point to point.
In the case of a binary periodic medium of this kind,
we have two relevant microscales:
the periodicity of the microstructure and the high-contrast parameter,
which encodes the strong (high-contrast) difference in the properties of the two components.
Typically, the scale at which periodicity is observed is much smaller than the size of the specimen.
It is hence natural, from a mathematical viewpoint,
to study the asymptotics of the quantities that describe the system
in the limit of vanishing period.
This procedure is called homogenization.
It allows to make predictions on the effective character of heterogeneous media
when no experiments are available,
and eases numerical simulations
by reducing the degrees of freedom of the problem.
Therefore, mathematical homogenization has been a lively field
of investigation for many decades.
We refer e.g.~to the monographs \cite{bakhvalov.panasenko,cioranescu.donato,zhikov.kozlov.olejnik}
for a thorough introduction to the subject. 

\RRR
In this paper, we tackle the case in which
both the periodicity and the high-contrast are quantified by a small number $\ep>0$
(see \eqref{eq:ep-funct} and Remark \ref{stm:degeneracy}).
\EEE
%
We assume that the vector fields that encode the significant quantities of the system
satisfy a linear first-order differential constraint.
The rationale behind this choice becomes transparent
if one focuses on specific theories:
in large strain elasticity
the deformation gradient is a significant strain measure, while
electromagnetism deals with the differential operators encoded by the Maxwell system. 
In other settings,
the elastic behavior of some materials can be influenced
by electric currents or magnetic fields and vice versa
(see e.g.~\cite{silhavy} for a recent contribution). 
In all these cases,
the key quantities lie in the kernel of a suitable differential operator.
We therefore see that
the need of combining homogenization and differential constraints naturally arises and leads to generalized notions of convexity (see \cite{benesova.kruzik} for an overview).
It is well-known that all the differential constraints above (and many more) can be treated in a unified way
in the framework of 
$\A$-quasiconvexity
(see \cite{fonseca.muller} and references therein).
Such a notion allows to extend various results
which were originally available only for Sobolev maps and their gradients
to the setting where admissible fields belong to the kernel of a linear first-order differential operator $\A$ of constant rank. 

The novelty of our contribution consists
in tackling homogenization of high-contrast materials in the setting of $\A$-quasiconvexity.
In order to describe our main results, we need to introduce some basic notation. 

For $d\in \N$, $d\geq 2$,
let $\Omega \subset \Rd$ be a bounded, connected, open set with Lipschitz boundary.
We regard $\Omega$ as a reference configuration
for a composite with a high-contrast microstructure
obtained by periodically inserting inclusions of a specific kind in a surrounding matrix.
For instance, in the context of elasticity,
{\sc M. Cherdantsev \& K. D. Cherednichenko} \cite{CC} considered 
a stiff matrix with embedded soft fillings.
In general, applications require that
the properties of the inclusions and of the matrix are tuned
so that the resulting effective behavior of the composite is the desired one.

To depict the fine texture of the material in mathematical terms,
we consider the periodicity cell $Q\coloneqq  (0,1)^d \subset \Rd$
and an open connected subset with Lipschitz boundary $D_0 \Subset Q$ (see Figure \ref{fig:Q}),
where the symbol $\Subset$ indicates that $D_0$ is compactly contained in $Q$.
The high-contrast behavior of the composite sitting in $\Omega$ is identified
by means of the sets $D_0$ and $D_1 \coloneqq  Q \setminus\bar D_0$
in the following way:
we introduce a small parameter $\eps>0$
representing the periodicity scale
and we define the sets
\begin{gather}\label{eq:om0}
\Omega_{0,\eps} \coloneqq  \bigcup_{z \in Z_\eps} \eps (D_0 + z ),
\quad\text{with }
Z_\eps \coloneqq  \Set{ z \in \Z^d : \eps (\bar D_0 + z ) \subset \Omega}, 
\end{gather}
and
\begin{equation}\label{eq:om1}
\Omega_{1,\eps} \coloneqq  \Omega \setminus \overline\Omega_{0,\eps},
\end{equation}
which correspond respectively
to the inclusions and to the surrounding matrix (see Figure \ref{fig:Omega}).

\begin{figure}
	\caption{The subdivision of the unit cube $Q$
		into the `soft' region  $D_0$ (white) 
		and the `stiff' region $D_1$ (grey).
	}
	\label{fig:Q}		
	\centering
	\begin{tikzpicture}[scale=0.75]
	\filldraw[fill=lightgray] (-2,-2) rectangle (2,2);
	
	\filldraw[fill=white,rotate=120]
	(-0.5,-1) .. controls (-1.5,-1) and (-1.5,0) .. 
	(-1,1) .. controls (0,2) and (0,1) ..
	(1,1) arc(90:-90:0.5)
	(1,0) .. controls (0,0) and (0,0) ..
	(-0.5,-1);
	\path (-0.3,-0.4) node {$D_0$};
	\path (0.8,1) node {$D_1$};
	\end{tikzpicture}
\end{figure}
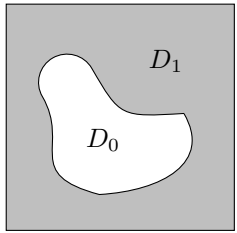
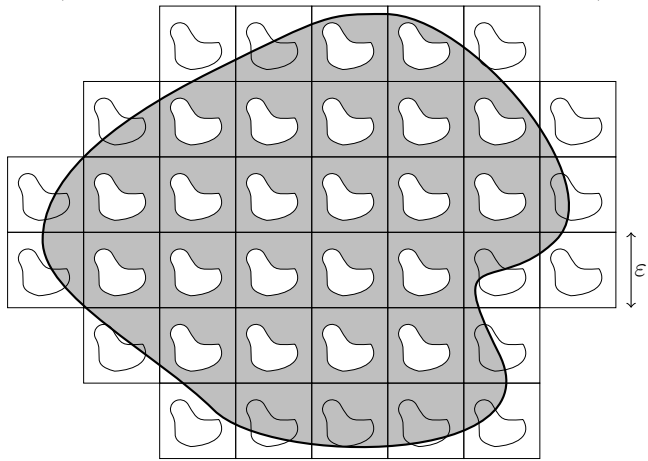
\begin{figure}
	\caption{The reference set $\Omega$ and its microscopic structure.
		The collection of the `soft' inclusions $\Omega_{0,\varepsilon}$ is depicted in white,
		whereas the matrix $\Omega_{1,\varepsilon}$ is in grey.}
	\label{fig:Omega}
	
	\begin{tikzpicture}
	\filldraw[thick,fill=lightgray,scale=1.5]
	[xshift=4,yshift=-2] (-1.5,1)  .. controls (-1.5,1.5) and (-1,2) ..
	(0,2.5) .. controls (1,3) and (1,3) ..
	(1.5,3) .. controls (2.25,3) and (3.5,1.5) .. 
	(3,1) .. controls (2.5,0.5) and (2,1) .. 
	(2.5,0)  .. controls (3,-1) and (0.5,-1) ..
	(0,-0.5) .. controls (-0.5,0) and (-1.5,0.5) ..
	(-1.5,1);
	
	
	\foreach \x in {-4}
	\foreach \y in {4,8}
	{
		\draw[very thin,scale=0.25] (\x-2,\y-2) rectangle (\x+2,\y+2);
		
		\filldraw[very thin,fill=white,scale=0.25,rotate around={120:(\x,\y)},]
		(\x-0.5,\y-1) .. controls (\x-1.5,\y-1) and (\x-1.5,\y) .. 
		(\x-1,\y+1) .. controls (\x,\y+2) and (\x-0,\y+1) ..
		(\x+1,\y+1) arc(90:-90:0.5)
		(\x+1,\y) .. controls (\x,\y) and (\x,\y) ..
		(\x-0.5,\y-1);
	}
	
	\foreach \x in {0,8,12}
	\foreach \y in {0,4,...,16}
	{
		\draw[very thin,scale=0.25] (\x-2,\y-2) rectangle (\x+2,\y+2);
		
		\filldraw[very thin,fill=white,scale=0.25,rotate around={120:(\x,\y)},]
		(\x-0.5,\y-1) .. controls (\x-1.5,\y-1) and (\x-1.5,\y) .. 
		(\x-1,\y+1) .. controls (\x,\y+2) and (\x-0,\y+1) ..
		(\x+1,\y+1) arc(90:-90:0.5)
		(\x+1,\y) .. controls (\x,\y) and (\x,\y) ..
		(\x-0.5,\y-1);
	}
	
	\foreach \x in {4}
	\foreach \y in {0,4,8,12}
	{
		\draw[very thin,scale=0.25] (\x-2,\y-2) rectangle (\x+2,\y+2);
		
		\filldraw[very thin,fill=white,scale=0.25,rotate around={120:(\x,\y)},]
		(\x-0.5,\y-1) .. controls (\x-1.5,\y-1) and (\x-1.5,\y) .. 
		(\x-1,\y+1) .. controls (\x,\y+2) and (\x-0,\y+1) ..
		(\x+1,\y+1) arc(90:-90:0.5)
		(\x+1,\y) .. controls (\x,\y) and (\x,\y) ..
		(\x-0.5,\y-1);
	}
	
%
	
	\foreach \x in {16}
	\foreach \y in {8,12}
	{
		\draw[very thin,scale=0.25] (\x-2,\y-2) rectangle (\x+2,\y+2);
		
		\filldraw[very thin,fill=white,scale=0.25,rotate around={120:(\x,\y)},]
		(\x-0.5,\y-1) .. controls (\x-1.5,\y-1) and (\x-1.5,\y) .. 
		(\x-1,\y+1) .. controls (\x,\y+2) and (\x-0,\y+1) ..
		(\x+1,\y+1) arc(90:-90:0.5)
		(\x+1,\y) .. controls (\x,\y) and (\x,\y) ..
		(\x-0.5,\y-1);
	}
	\draw[<->,scale=0.25] (22.8,2) -- (22.8,6);
	\path[scale=0.25] (23.3,4) node {$\varepsilon$};
	
	\foreach \x in {-8}
	\foreach \y in {4,8}
	{
		\draw[very thin,scale=0.25] (\x-2,\y-2) rectangle (\x+2,\y+2);
		
		\draw[very thin,scale=0.25,rotate around={120:(\x,\y)},]
		(\x-0.5,\y-1) .. controls (\x-1.5,\y-1) and (\x-1.5,\y) .. 
		(\x-1,\y+1) .. controls (\x,\y+2) and (\x-0,\y+1) ..
		(\x+1,\y+1) arc(90:-90:0.5)
		(\x+1,\y) .. controls (\x,\y) and (\x,\y) ..
		(\x-0.5,\y-1);
	}
	
	\foreach \x in {-4}
	\foreach \y in {0,12} 
	{
		\draw[very thin,scale=0.25] (\x-2,\y-2) rectangle (\x+2,\y+2);
		
		\draw[very thin,scale=0.25,rotate around={120:(\x,\y)},]
		(\x-0.5,\y-1) .. controls (\x-1.5,\y-1) and (\x-1.5,\y) .. 
		(\x-1,\y+1) .. controls (\x,\y+2) and (\x-0,\y+1) ..
		(\x+1,\y+1) arc(90:-90:0.5)
		(\x+1,\y) .. controls (\x,\y) and (\x,\y) ..
		(\x-0.5,\y-1);
	}
	
	\foreach \x in {0,4,8,12,16}
	\foreach \y in {-4} 
	{
		\draw[very thin,scale=0.25] (\x-2,\y-2) rectangle (\x+2,\y+2);
		
		\draw[very thin,scale=0.25,rotate around={120:(\x,\y)},]
		(\x-0.5,\y-1) .. controls (\x-1.5,\y-1) and (\x-1.5,\y) .. 
		(\x-1,\y+1) .. controls (\x,\y+2) and (\x-0,\y+1) ..
		(\x+1,\y+1) arc(90:-90:0.5)
		(\x+1,\y) .. controls (\x,\y) and (\x,\y) ..
		(\x-0.5,\y-1);
	}
	
	\foreach \x in {16}
	\foreach \y in {0,4,16} 
	{
		\draw[very thin,scale=0.25] (\x-2,\y-2) rectangle (\x+2,\y+2);
		
		\draw[very thin,scale=0.25,rotate around={120:(\x,\y)},]
		(\x-0.5,\y-1) .. controls (\x-1.5,\y-1) and (\x-1.5,\y) .. 
		(\x-1,\y+1) .. controls (\x,\y+2) and (\x-0,\y+1) ..
		(\x+1,\y+1) arc(90:-90:0.5)
		(\x+1,\y) .. controls (\x,\y) and (\x,\y) ..
		(\x-0.5,\y-1);
	}
	
	\foreach \x in {4}
	\foreach \y in {16} 
	{
		\draw[very thin,scale=0.25] (\x-2,\y-2) rectangle (\x+2,\y+2);
		
		\draw[very thin,scale=0.25,rotate around={120:(\x,\y)},]
		(\x-0.5,\y-1) .. controls (\x-1.5,\y-1) and (\x-1.5,\y) .. 
		(\x-1,\y+1) .. controls (\x,\y+2) and (\x-0,\y+1) ..
		(\x+1,\y+1) arc(90:-90:0.5)
		(\x+1,\y) .. controls (\x,\y) and (\x,\y) ..
		(\x-0.5,\y-1);
	}
	
	\foreach \x in {20}
	\foreach \y in {4,8,12} 
	{
		\draw[very thin,scale=0.25] (\x-2,\y-2) rectangle (\x+2,\y+2);
		
		\draw[very thin,scale=0.25,rotate around={120:(\x,\y)},]
		(\x-0.5,\y-1) .. controls (\x-1.5,\y-1) and (\x-1.5,\y) .. 
		(\x-1,\y+1) .. controls (\x,\y+2) and (\x-0,\y+1) ..
		(\x+1,\y+1) arc(90:-90:0.5)
		(\x+1,\y) .. controls (\x,\y) and (\x,\y) ..
		(\x-0.5,\y-1);
	}
	\end{tikzpicture}
\end{figure}

Hereafter, we will systematically adopt the adjectives `soft' and `stiff'
to refer respectively to $\Omega_{0,\eps}$ and to $\Omega_{1,\eps}$,
as well as to their related quantities.
This use is informal and just meant to convey ideas,
though being justified by the possible interpretation of our model in the framework of elasticity.
The reader should however bear in mind that
our treatment is suited for a wider range of applications.

The main goal of this contribution is to characterize the asymptotic behavior as $\ep\to 0$ in the sense of $\Gamma$-convergence \cite{braides, DalMaso93} of the energy functionals 
\begin{equation}\label{eq:ep-funct}
\int_{\Omega_{0,\eps}} f_{0,\eps}\left(\eps u\right) \de x
+ \int_{\Omega_{1,\eps}} f_1\left(\frac{x}{\ep},u\right) \de x, 
\end{equation}
evaluated on fields $u\in L^p(\Omega;\R^N)$ satisfying
$\A u = 0$,
where $\A$ is a suitable linear first-order partial differential operator.
The precise class of constraints is presented in Subsection \ref{subs:diff} below
and further investigated in Section \ref{sec:adm}. In the expression above,
the family $\set{f_{0,\ep}}$ represents the energy densities stored in the `soft' inclusions,
while $f_1$ encodes the behavior of the `stiff' matrix.  The exact assumptions on $\set{f_{0,\ep}}_{\ep>0}$ and $f_1$ are collected in Subsection \ref{subs:notation}. 
We point out that
considering a sequence of $\ep$-dependent energy densities for the `soft' component is not merely a mathematical mannerism:
on the contrary, it is a modeling necessity
that arises when the admissible maps $u$ are assumed to have the form $u=\nabla v$ for suitable fields $v$, i.e., when $\A=$ curl.
We refer to \cite[Remark 2]{CC} for a discussion on this point.

\RRR
When $f_1$ does not oscillate and the parameter $\ep$ is fixed,
determining an optimal shape $D=\set{x\in \Omega:\chi(x)=1}$ for the functionals
$$(u,\chi)\mapsto \int_{\Omega} \big[ \chi f_0(u)+(1-\chi)f_1(u) \big] \de{x}$$
under a volume constraint amounts to solve a shape-optimization problem.
In this case, the aim is, for instance, finding
the optimal distribution in terms of compliance
for two materials with different stiffness in a given container $\Omega$.
By enforcing in the energy a dependence on $\ep$
like the one we consider,
we attempt to encode the fact that
the material properties in the set $D$ and in its complement may be very different;
specifically, the one sitting in $D$ becomes increasingly soft as $\ep$ vanishes.
In addition, finer and finer microstructures may be taken into account
by assuming as well a dependence on $\ep$ of the shapes,
so that $D$ is replaced by $\Omega_{0,\ep}$.
Problems of this kind are central, e.g.,
in the modeling of lattice materials for additive manufacturing.
We refer to the monograph \cite{allaire-book1} and the paper \cite{allaire-book2}
for extensive discussions about the connections
between homogenization and topology optimization.
\EEE

Even though $\set{f_{0,\ep}}$ and $f_1$ in \eqref{eq:ep-funct} are assumed to satisfy $p$-growth conditions from above and below, the fact that the `soft' energies are evaluated on $\ep u$ rather than $u$ results in a loss of coercivity. As a result, classical weak and strong $L^p$-topologies are not capable to capture the asymptotic behavior of the problem and one needs to resort to an {\RRR {\it ad hoc} notion of  `convergence in the high-contrast sense'}, which instead keeps track of the finest features of the microstructure, cf.~Definition \ref{stm:HC-conv}. 
Denoting by $U_1$ the set of maps in $L^p(\Omega;\R^N)$ such that $\A u=0$ in $\Omega$, our main result consists in showing that the limiting behavior of the energies in \eqref{eq:ep-funct} is encoded by the functional
$\F\colon U_1 \to \R$
defined as

\begin{equation*}
\F(u) \coloneqq
\alpha_0 + \int_{\Omega} f_\hom \big( u (x) \big) \de x.
\end{equation*}
In the formula above,
$\alpha_0$ is a constant determined by $f_0$ (see \eqref{eq:alpha0}),
$f_\hom$ is a suitable energy density related to $f_1$ (see \eqref{eq:def-fhom}),
and the condition $\A u = 0$ in $\Omega$ has to be interpreted in a distributional sense (see Subsection \ref{subs:diff}).

Our first main result is presented in Theorem \ref{stm:main}, and,
loosely speaking, states the following: 
\begin{theorem*}\nonumber
	The functional $\F$ is the $\Gamma$-limit as $\eps\to 0$ of the energies in \eqref{eq:ep-funct}
	restricted to $\A$-free fields with respect to the high-contrast convergence. 
\end{theorem*}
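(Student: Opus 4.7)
The plan is to establish the two standard $\Gamma$-convergence inequalities separately, exploiting the decoupling of the limit functional into the additive constant $\alpha_0$ (contributed by the soft inclusions) and the bulk integral $\int_\Omega f_\hom(u)\,\de x$ (contributed by the stiff matrix). For the $\Gamma$-liminf inequality, I would fix an arbitrary sequence $\{u_\eps\}\subset U_1$ converging in the high-contrast sense to $u\in U_1$ and analyze the two contributions independently. The stiff contribution $\int_{\Omega_{1,\eps}} f_1(x/\eps,u_\eps)\,\de x$ is a standard $\A$-quasiconvex homogenization problem on a perforated domain: combining two-scale/unfolding convergence with the $\A$-quasiconvexity of $f_\hom$ and Jensen's inequality applied to the two-scale limit should deliver $\int_\Omega f_\hom(u)\,\de x$ as a lower bound. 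The soft contribution $\int_{\Omega_{0,\eps}} f_{0,\eps}(\eps u_\eps)\,\de x$ is where the high-contrast scaling plays its role: the rescaled field $\eps u_\eps$ extracts precisely the micro-oscillatory component of $u_\eps$, and the very definition of $\alpha_0$ as a cell-type infimum involving $f_0$ should yield $\alpha_0$ as the two-scale lower bound, once again via Jensen's inequality.

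For the recovery sequence, given a target $u\in U_1$ I would proceed in two stages. First, exploit the cell-problem definition of $f_\hom$ to select, for a.e.\ $x\in\Omega$, a $Q$-periodic $\A$-free corrector $w_{u(x)}$ on $Q$ that almost attains $f_\hom(u(x))$; after a measurable selection and a diagonal argument, the field $\tilde u_\eps(y)\approx u(y)+w_{u(y)}(y/\eps)$ is $\A$-free on the whole of $\Omega$, and its stiff-phase energy converges to $\int_\Omega f_\hom(u)\,\de x$. Second, inside each inclusion $\eps(D_0+z)\subset\Omega_{0,\eps}$ replace $\tilde u_\eps$ by a rescaled optimizer of the cell problem defining $\alpha_0$, so that the soft-phase energy converges to $\alpha_0|\Omega|\cdot|D_0|/|Q|$ (up to the appropriate normalization). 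The construction must be performed so that the global field $u_\eps$ remains in the kernel of $\A$: this is the point where the existence-of-potentials and constraint-preserving extension machinery announced in the abstract enters decisively, allowing one to glue the matrix and inclusion pieces along $\partial\Omega_{0,\eps}$ while preserving $\A u_\eps=0$ on $\Omega$.

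The chief obstacle, I anticipate, will be precisely this last gluing step. For gradients one simply blends two Sobolev fields by a smooth cutoff, but for a general constant-rank operator $\A$ multiplication by cutoffs breaks the differential constraint, and neither does a naive truncation of a potential suffice. The proof will therefore hinge on the constraint-preserving extension operators developed earlier in the paper: one first builds a $k$-th order potential for the stiff competitor on a neighborhood of each cell, then extends it across the inclusion boundary using the operator that preserves $\A$-freeness, and finally takes $\A$ of the extended potential to produce the recovery field. A careful estimate of the error introduced by this extension, based on the $p$-growth bounds on $f_{0,\eps}$ and $f_1$ and on the $\eps$-scaling of the cell geometry, should confirm that both the high-contrast convergence $u_\eps\to u$ and the energy convergence $\F_\eps(u_\eps)\to\F(u)$ hold. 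Once this is in place, the standard density and diagonalization arguments complete the proof.
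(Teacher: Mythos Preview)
Your overall architecture is right---decouple into a soft contribution yielding $\alpha_0$ and a stiff contribution yielding $\int_\Omega f_\hom(u)\,\de x$---but you have the role of the extension machinery backwards, and this leads you to propose a much harder limsup construction than is needed.

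For the recovery sequence, the paper does \emph{not} glue a stiff competitor to a soft competitor across $\partial\Omega_{0,\eps}$. Instead it builds two sequences separately and simply \emph{adds} them. The stiff recovery $\{\tilde u_\eps\}\subset U_1$ comes from the Fonseca--Kr\"omer homogenization theorem applied on all of $\Omega$ (not just $\Omega_{1,\eps}$), so it is $\A$-free on $\Omega$ and converges weakly in $L^p$ to $u$. The soft recovery $\{v_\eps\}\subset U_1$ is constructed directly from an almost-minimizer $w\in U_0(\Omega')$ by an averaging/unfolding trick (Proposition~\ref{prop:limsup-soft}): one sets $\tilde v_\eps(x)=w_\eps(x,x/\eps)$ where $w_\eps$ is a piecewise-constant-in-$x$ version of $w$, and checks by hand that this is $\A$-free on $\Omega$ and vanishes on $\Omega_{1,\eps}$. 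Then $u_\eps\coloneqq v_\eps+\tilde u_\eps$ is automatically $\A$-free (sum of $\A$-free fields), agrees with $\tilde u_\eps$ on $\Omega_{1,\eps}$, and the cross-error in the soft energy is $O(\eps\|\tilde u_\eps\|_{L^p})\to 0$ by the $p$-Lipschitz hypothesis {\bf H4}. No cutoff, no potential-based gluing, and no extension operator enters the limsup at all.

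Where the extension operator $\mathsf{E}_\A^\eps$ \emph{is} essential is in the liminf, and you do not mention this. Given a high-contrast convergent $\{u_\eps\}$ one cannot directly feed $u_\eps$ into the two sub-problems: the soft liminf (Proposition~\ref{prop:liminf-gen}) requires a $p$-equiintegrable sequence that vanishes on $\Omega_{1,\eps}$. The paper therefore first passes to a $p$-equiintegrable $\A$-free modification via Lemma~\ref{stm:Adecomp}, then sets $\tilde u_\eps\coloneqq \mathsf{E}_\A^\eps u_\eps$ and $v_\eps\coloneqq u_\eps-\tilde u_\eps$; the splitting lemma shows the error $\F_\eps(u_\eps)-\F_{0,\eps}(v_\eps)-\F_{1,\eps}(\tilde u_\eps)$ is asymptotically nonnegative. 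Without Assumption~\ref{stm:exist-ext} (uniform $L^p$ bound, $\A$-freeness, and preservation of $p$-equiintegrability), this decomposition fails. Finally, neither liminf piece is a Jensen argument: the stiff one is the full Fonseca--Kr\"omer $\Gamma$-liminf on $\Omega$ (modulo a boundary layer of vanishing measure), and the soft one is a direct unfolding computation that compares $\int_{\Omega}\int_{D_0} f_{0,\eps}(\mathsf{S}_\eps(\eps u_\eps))\,\de y\,\de x$ with the infimum defining $\alpha_0$.
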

Key techniques to establish the theorem are $p$-equiintegrability arguments, as well as the notions of two-scale convergence \cite{allaire} and of periodic unfolding \cite{cioranescu1, cioranescu2, visintin1, visintin2}.
The proof relies essentially on the possibility of decoupling the behavior of the material in the `soft' and `stiff' contributions.
The $\Gamma$-convergence of the `stiff' portion is a corollary of homogenization results in the setting of $\A$-quasiconvex \cite{fonseca.kromer}. 
The study of the `soft' part is instead more challenging,
for the presence of the high-contrast microstructure and its interplay with the differential constraint result in the emergence of a second, hidden scale.
This makes
both the identification of a lower bound and the proof of its optimality
need a delicate combination
of two-scale results and weak-$L^p$ compactness. 

The decoupling of the system into regions having different material features hinges on careful compactness and splitting arguments. The latter are in turn based on two essential properties of the differential constraint $\A$: the fact that $\A$-free maps in the periodicity cell have null average and the existence of a suitable extension operator on perforated domains (see Assumptions \ref{stm:null-av} and \ref{stm:exist-ext}). In our general setting, these conditions are quite hard to ensure, in contrast to the well-understood case in which the set $\Omega$ is contractible, $\A$ is the curl operator, and the $\A$-free fields are gradients. In particular, to the authors' knowledge, even for standard operators, it is still an open question whether extension operators preserving the $\A$-free constraint from a perforated domain to the `filled' set exist. Similar results are available for gradients both in the Sobolev setting \cite{ACDP} and in the weaker one of antiplane fracture \cite{cagnetti.scardia}. We also mention the recent result in \cite{chambolle.perugini} for the case of symmetric gradients in linearized fracture (in the space $GSBD$ \cite{dalmaso2}).

The analysis of the differential constraints satisfying the aforementioned Assumptions \ref{stm:null-av} and \ref{stm:exist-ext} is the focus of Section \ref{sec:adm}. A special case in which the first requirement holds is that of operators $\A$ admitting `potentials', namely such that $\A u=0$ if and only if $u=\B w$ for a suitable differential operator $\B$ and a field $w$. The second main contribution of this paper consists in showing that for a broad class of operators $\A$ existence of potentials and existence of extension operators are closely related. Roughly speaking, we prove the following.

\begin{theorem*}
Let $\Omega$ be a `nice' set and let $\A$ be a linear homogeneous differential operator with constant coefficients and constant rank. If an extension operator from $\A$-free maps on $\Omega$ to $\A$-free maps on the whole space exists, then $\A$ admits a potential on $\Omega$.
Conversely, if $\A$ admits a potential on $\Omega$ for which a suitable Korn-type inequality holds true, then there exists an extension operator from $\Omega$ to the whole space which preserves the property of being $\A$-free.
\end{theorem*}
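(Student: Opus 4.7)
The plan is to treat the two implications by essentially separate arguments: the forward direction reduces to the whole-space potential theorem for constant-rank operators (the result that for every constant-rank $\A$ there exists a potential $\B$ on $\Rd$, as in the works of Raita cited in the introduction), while the converse is constructive, combining the given potential, the Korn-type inequality, and a standard Sobolev extension.

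For the first implication, let $E$ denote the postulated bounded linear extension from $\A$-free $L^p$-fields on $\Omega$ to $\A$-free $L^p$-fields on $\Rd$. Since $\A$ has constant coefficients and constant rank, the whole-space potential theorem yields a linear homogeneous differential operator $\B$ with constant coefficients and constant rank such that any $\A$-free distribution on $\Rd$ is of the form $\B v$. Applying this representation to $Eu$ and restricting to $\Omega$, one obtains $u = (Eu)|_\Omega = \B v|_\Omega$, which is exactly the statement that $\B$ is a potential of $\A$ on $\Omega$. The converse inclusion $\A \B v = 0$ is automatic from the symbolic identity $\A\B = 0$ that accompanies the construction of $\B$.

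For the converse, given $u \in L^p(\Omega; \RN)$ with $\A u = 0$, the potential hypothesis provides $v \in W^{\ell,p}(\Omega;\R^M)$ (with $\ell$ the order of $\B$ and $M$ a suitable integer) such that $u = \B v$. The Korn-type inequality permits a linear selection of $v$ modulo $\ker \B$ with the quantitative bound
\begin{equation*}
\|v\|_{W^{\ell,p}(\Omega)} \leq C\|u\|_{L^p(\Omega)}.
\end{equation*}
Since $\Omega$ is Lipschitz, the classical Stein extension theorem produces $\tilde v \in W^{\ell,p}(\Rd;\R^M)$ with $\tilde v|_\Omega = v$ and controlled norm. Setting $Eu \coloneqq \B \tilde v$ gives an element of $L^p(\Rd;\RN)$; the identity $\A\B = 0$ ensures $\A Eu = 0$ on $\Rd$, and $Eu|_\Omega = \B \tilde v|_\Omega = \B v = u$. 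Linearity and $L^p$-boundedness of $E$ follow by composition of the three steps.

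The principal difficulty lies in the non-uniqueness of the potential: the relation $u = \B v$ determines $v$ only modulo $\ker \B$, and turning this into a \emph{linear, bounded} map $u \mapsto v$ is exactly the content of a Korn-type inequality on the quotient space. This is why the converse direction needs such an assumption, whereas the forward direction can do without, since it asserts only qualitative existence. A secondary, more technical issue in the forward direction is that the potential $v$ produced on $\Rd$ may have low distributional regularity, but this causes no trouble, since one only uses the relation $\B v = Eu$, which is already of $L^p$-class by hypothesis.
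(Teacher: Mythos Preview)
Your converse direction is essentially the paper's argument: take a potential, normalise it via the Korn-type inequality, extend it as a Sobolev function, and apply $\B$. The paper uses a slightly sharper extension lemma that controls $L^p$ and $\nabla^\ell$ separately (needed later for the perforated-domain version with $\eps$-independent constants), but for the statement at hand your use of the Stein extension is fine.

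The forward direction has a genuine gap. You invoke a ``whole-space potential theorem'' asserting that \emph{any $\A$-free distribution on $\Rd$} is of the form $\B v$. The available result (Rai\c{t}\u{a}'s, Proposition~\ref{stm:raita} in the paper) is proved only for Schwartz functions; upgrading it to $\A$-free $L^p$-fields with a \emph{Sobolev} potential $v\in W^{\ell,p}$ is precisely the content of the theorem you are trying to prove, and is not a black box you may cite. Your closing remark that the regularity of $v$ ``causes no trouble'' is incorrect: the precise statement (Theorem~\ref{stm:raita-bis}) requires $w\in W^{\ell,p}(\Omega;\RM)$, not merely a distributional solution of $\B w=u$. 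Concretely, a Fourier-side inverse $\bB^\dagger$ is $(-\ell)$-homogeneous and singular at the origin, so even on $\Rd$ one only obtains $\nabla^\ell v\in L^p$, not $v\in L^p$.

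The paper fills this gap by an approximation argument: project an $L^p$-approximating sequence onto $\ker\A$ via the multiplier $\sfPi_\A$, apply Rai\c{t}\u{a}'s result at the Schwartz level to get potentials $w_k$, use the Guerra--Rai\c{t}\u{a} Korn inequality for $\B$ on $\Rd$ to bound $\nabla^\ell(w_k-\sfPi_\B w_k)$ in $L^p$, and then invoke the higher-order Poincar\'e inequality on the \emph{bounded} domain $\Omega$ to extract a limit $w\in W^{\ell,p}(\Omega)$. Boundedness of $\Omega$ is used essentially here; your outline never exploits it. Your high-level strategy (extend, find potential, restrict) is correct, but the middle step requires exactly this machinery.
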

We refer to Theorems \ref{stm:raita-bis} and \ref{thm:exist-ext} for the precise formulation of the result,
as well as to Section \ref{sec:adm} for a broader discussion on the topic.
We point out that assuming the existence of potentials imposes some topological constraints on the set $\Omega$.
An example is easily provided by the case $\A=\mathrm{curl}$ for $d=3$:
one needs to require at least $\Omega$ to be simply connected, cf. Remark \ref{stm:simplconn} and Examples \ref{ex:curl}--\ref{ex:div}. Essential tools for the proof of Theorems \ref{stm:raita-bis} and \ref{thm:exist-ext} are the theory of Fourier multipliers and the notion of generalized Moore-Penrose inverse.

Before proceeding with the mathematical details of our analysis,
we conclude the section with some bibliographical notes.

The very first mathematical analysis of high-contrast materials was developed
in the seminal work \cite{auriault} by {\sc G.~L.~Auriault},
who employed formal asymptotic expansions.
In the early 2000, {\sc V.~V.~Zhikov} \cite{zhikov} introduced a novel approach
by extending classical two-scale techniques \cite{allaire}
to the setting of PDEs with rapidly oscillating coefficients.
The study of high-contrast problems has been ever since at the center of an extensive scientific effort,
with applications ranging from elastodynamics \cite{smyshlyaev}
to Maxwell’s equations \cite{bouchitte.schweizer, cherednichenko.cooper}.
A full characterization of the settings of linear elasticity and of conducting materials was provided in \cite{camar-eddine.seppecher},
while the nonlinear elastic setting was considered in \cite{CC,CCN} (see also \cite{braides.garroni}).
The effects of microstructure on free-discontinuity functionals have been studied
when high-contrast behavior appears in bulk terms only \cite{barchiesi, barchiesi.lazzaroni.zeppieri},
when it is just in surface terms \cite{scardia, scardia2},
and when both contributions are affected \cite{braides.solci}.
Mumford-Shah energies
with high-contrast surface contributions
have been recently characterized in \cite{xavier.scardia.zeppieri}.
The analysis of variational models for layered high-contrast materials was undertaken
in \cite{kreisbeck.christowiak, kreisbeck.christowiak2, davoli.ferreira.kreisbeck}.  

As for the notion of $\A$-quasiconvexity,
its introduction goes back to \cite{dacorogna},
while an extensive study was carried out in \cite{fonseca.muller}
for operators $\A$ with constant coefficients and constant rank
(see \eqref{eq:op-A} and \eqref{eq:symbol} below).
In \cite{braides.fonseca.leoni} and \cite{fonseca.kromer},
under $p$-growth assumptions on the energy density,
relaxation and homogenization results were obtained
for integral functionals evaluated on $\A$-free fields (i.e. fields in the kernel of $\A$);
see also \cite{ferreira.fonseca.raghav} for a related analysis on quasicrystals.
Problems featuring simultaneously homogenization and dimension reduction were addressed
in \cite{kreisbeck.kromer, kreisbeck.rindler}, whereas
oscillations and concentrations generated by $\A$-free fields were characterized in \cite{fonseca.kruzik}.
The case of non-positive energy densities has been studied in \cite{kramer.kromer.kruzik.patho}.
A complete theory under linear growth assumptions on the energy density was established
in \cite{arroyo, arroyo.dephilippis.rindler, baia.chermisi.matias.santos, matias.morandotti.santos}.
The characterization of $\A$-quasiconvexity was extended to operators with variable coefficients in \cite{santos}.
We refer to \cite{davoli.fonseca, davoli.fonseca2} for homogenization results in this purview,
and to \cite{davoli.fonseca3} for a corresponding relaxation formula.
Applications to the theory of compressible Euler systems,
as well as to adaptive image processing and to data-driven finite elasticity were the subject of \cite{chiodaroli.feireisl.kreml.wiedemann}, \cite{davoli.fonseca.liu}, and \cite{conti.mueller.ortiz}, respectively.
To complete our review on $\A$-quasiconvexity,
we mention the works \cite{raita, raita.skorobogatova} on $BV_{\A}$-spaces for elliptic and cancelling operators, \cite{kristensen.raita} for a characterization of associated Young measures, as well as \cite{GR, GRV, raita2, raita3} for the corresponding Sobolev analysis, and \cite{guerra.raita.schrecker} for a compensated-compactness result.


\subsection{Structure of the paper}
The plan of the paper is as follows.
In Section~\ref{sec:setting},
we describe the setting of the problem and the related assumptions, and we formulate the precise statements of our main results, i.e., Theorems~\ref{stm:main}, \ref{stm:raita-bis}, and \ref{thm:exist-ext}.
To lay the ground for the proofs,
we collect auxiliary assertions and properties concerning measurable selection criteria, $\A$-free sequences, two-scale convergence, and Fourier analysis in Section~\ref{sec:preliminaries}.
In Section \ref{sec:comp+split} we study compactness properties of sequences of fields with equibounded energies, we detail the splitting argument, and present the proof of Theorem~\ref{stm:main}. Sections \ref{sec:soft} and \ref{sec:stiff} contain the $\Gamma$-convergence analysis for the `soft' and `stiff' energy-contributions. Section~\ref{sec:adm} is focused on the description of the class of the differential operators
that are admissible for our analysis, and
deals with the proofs of Theorems \ref{stm:raita-bis} and \ref{thm:exist-ext}.


\section{Setting and main results}\label{sec:setting}
This section is devoted to the set-up of our analysis and
to the presentation of the main achievements of this contribution.
We will first fix the notation and the hypotheses used throughout the paper.
The major results will be summarized in Subsection \ref{subs:results}.


\subsection{Energy functionals} \label{subs:notation}
Let $\Omega\subset \R^d$, $d\in \mathbb{N}$, $d\geq 2$, be a bounded, connected, open set with Lipschitz boundary.
We denote by $\chi_{0,\eps}$ and $\chi_{1,\eps}$
the characteristic functions respectively of $\Omega_{0,\eps}$ and $\Omega_{1,\eps}$  in $\Omega$ (see \eqref{eq:om0} and \eqref{eq:om1}), i.e. for $i=0,1$,
\begin{equation}\label{eq:chi-i-eps}
   \chi_{i,\eps}(x) \coloneqq
    \begin{cases}
        1 & \text{if } x\in \Omega_{i,\eps}, \\
        0 & \text{otherwise.}
    \end{cases} 
\end{equation}
Similarly, we let
\[
    \chi_i(y) \coloneqq
     \begin{cases}
        1 & \text{if } y\in D_i, \\
        0 & \text{otherwise},
    \end{cases}
\]
denote the characteristic function $\chi_i$ of $D_i$ in $Q$.

We assume that $\set{f_{0,\ep}}_{\ep>0}$ and $f_1$ in \eqref{eq:ep-funct} fulfill the following set of hypotheses:
	\begin{description}
		\item[H1] each $f_{0,\eps}\colon \RN \to \R$ is continuous
		and $f_1\colon \Rd \times \RN \to \R$ is a Carath\'eodory function;
		\item[H2] $f_1(\,\cdot\,,\xi)$ is $Q$-periodic for every $\xi\in \RN$;
		\item[H3] there exist $a,\lambda,\Lambda > 0$ such that
		for a.e. $x\in \R^d$, for all $\xi\in \RN$, and all  $\eps>0$, 
				\begin{gather*}
				\lambda(-a + \va{\xi}^p)\leq f_{0,\eps}(\xi) \leq \Lambda (1+\va{\xi}^p), \\
				\lambda(-a + \va{\xi}^p)\leq f_1(x,\xi) \leq \Lambda (1+\va{\xi}^p),
				\end{gather*}
			for a fixed $p\in (1,+\infty)$;
		\item[H4] there exists $\mu> 0$ such that for a.e. $x\in\Omega$,
				for all $\xi,\eta\in \RN$,  and $\eps>0$, 
				\begin{gather*}
				\va{f_{0,\eps}(\xi) - f_{0,\eps}(\eta)}
					\leq \mu \left(1+\va{\xi}^{p-1}+\va{\eta}^{p-1}\right)\va{\xi - \eta}, \\
				\va{f_1(x,\xi) - f_1(x,\eta)}
				\leq \mu \left(1+\va{\xi}^{p-1}+\va{\eta}^{p-1}\right)\va{\xi-\eta},
				\end{gather*}
		where $p$ is the same as in {\bf H3};
		\item[H5] there exists $f_0\colon \RN \to \R$ such that
		    for all $\xi\in \RN$
		        \[
		          \lim_{\eps\to 0} f_{0,\eps}(\xi) = f_0(\xi).
		        \]
	\end{description}


\begin{remark}[Degeneracy of the `soft' component]
\label{stm:degeneracy}
\RRR
    We draw again attention to the fact that,
    in spite of the standard coercivity and growth of $f_{0,\eps}$,
    the problem we address features some degeneracy,
    which is expressed by the factor $\eps$ multiplying the argument of the `soft' integrand (see \eqref{eq:ep-funct}).
    From a modeling perspective,
    this $\eps$ accounts for the high-contrast between the two components,
    in that it makes the `soft' component less and less sensitive to external stimuli.
    This can be easily seen in the simple instance $f_{0,\eps}(\xi)=f_1(\xi)=\va{\xi}^p$:
    formula \eqref{eq:ep-funct} becomes
    \[
        \eps^p\int_{\Omega_{0,\eps}} f_{0,\eps}(u) \de x
        + \int_{\Omega_{1,\eps}} f_1\left(\frac{x}{\ep},u\right) \de x, 
    \]
    and the ratio $1/\eps^p$ between the `stiffness coefficients' of the two components blows up as $\eps$ vanishes.
\end{remark}

Owing to the growth conditions prescribed by hypothesis {\bf H3},
the natural environment for our problem is the space $L^p$.
We will often work with $Q$-periodic functions, i.e.,
those $u\colon \Rd \to \RN$ such that
$u(x+z) = u(x)$ for all $x\in\Rd$ and all $z\in\Z^d$.
We will use the subscript `$\per$' to denote subspaces of $Q$-periodic maps;
for instance,
\begin{gather*}
L^p_\per(\Rd;\RN) \coloneqq  \Set{ u\in L^p_\loc(\Rd;\RN)
	: u \text{ is $Q$-periodic}}.
\end{gather*}
We endow the previous space with the norm of $L^p(Q;\RN)$.
With a slight abuse of notation,
we will tacitly identify $L^p(Q;\RN)$ with $L^p_\per(\Rd;\RN)$
by mapping the unit cube into the unit torus in $\R^d$ and extending the corresponding fields by periodicity. Analogously, we will denote the subspace of $L^p_{\rm loc}(\Omega \times \Rd;\RN)$
containing all the functions
that are $Q$-periodic w.r.t. their second argument by 
 $L^p(\Omega; L^p_{\rm per}(\Rd;\RN))$ and implicitly identify this space with $L^p(\Omega\times Q;\RN)$.


\subsection{The differential constraint}
\label{subs:diff}
Let $p\in (1,+\infty)$ be as in {\bf H3} and {\bf H4} above.
We suppose that
the constraint that we couple with the energy in \eqref{eq:ep-funct}
falls within the framework of $\A$-quasiconvexity
as described by {\sc I. Fonseca \& S. M\"uller} \cite{fonseca.muller}.
Namely, for $N,M\in \N\setminus\set{0}$,
we let $\A$ be a partial differential operator on $\Rd$ from $\RN$ to $\RM$
whose action on a function $u\colon\R^d\to\RN$ is given by
\begin{equation} \label{eq:op-A}
	\A u \coloneqq \sum_{i=1}^{d} A^{(i)} \pder{u}{x_i},
\end{equation}
where $A^{(i)}\colon \RN \to \RM$ are linear maps for all $i=1,\dots,d$.
In other words,
$\A$ is a linear first-order differential operator with constant coefficients.

Our fundamental requirement on $\A$ is the \emph{constant rank} assumption
introduced by {\sc F. Murat} \cite{murat}, that is,
we suppose that there exists $r\in\N$ such that
the \emph{symbol} $\bA$ of $\A$ satisfies the following:
for all $\omega=(\omega_1,\dots,\omega_d)\in \Rd\setminus\set{0}$, the operator
	\begin{equation}
	\label{eq:symbol}
		\bA[\omega]\coloneqq  \sum_{i=1}^{d} \omega_i A^{(i)} 
	\end{equation}
has rank equal to $r$.

We observe that for every open set $O\subset \R^d$,
$\A$ can be regarded as an operator
from $L^{p}(O;\RN)$ to $W^{-1,p}(O;\RM)$, where
$W^{-1,p}(O;\RM)$ denotes the dual
of the Sobolev space $W^{1,p/(p-1)}_0(O;\RM)$.
To see this,
we introduce the formal adjoint of $\A$, denoted by $\A^\ast$. For $v\colon O\to \RM$, we set
\[
    \A^\ast v \coloneqq  -  \sum_{i=1}^{d} A^{(i),\ast} \pder{v}{x_i},
\]
where $A^{(i),\ast}$ is the adjoint (i.e. the transpose) of $A^{(i)}$.
In this way,
\[
	\int_{O} \A \phi \cdot \psi\,\de{x} = \int_{O} \phi \cdot \A^\ast \psi\,\de{x}
	\quad\text{for all }
	\phi\in \Cc^1(O;\RN)
	\text{ and }
	\psi\in\Cc^1(O;\RM),
\]
and for $u\in L^{p}(O;\RN)$ we define the pairing
\[
    \langle \A u, v \rangle \coloneqq \int_O u\cdot \A^\ast v\, \de x
    \quad\text{for all } v \in W^{1,p'}_0(O;\RM),
\]
where $p'$ is the conjugate exponent to $p$.
In what follows, if $u\in L^p(O;\R^N)$,
equalities of the form $\A u = 0$ in $O$ are always tacitly understood
in the sense of $W^{-1,p}$, or, in other words,
in the sense that
\begin{equation}\label{eq:Afree-openset}
    \int_{O} u \cdot \A^\ast v\,\de{x} = 0
	\quad\text{for all }
	v\in W^{1,p'}_0(O;\RM).
\end{equation}
When such a relation holds,
we say that $u$ is {\it $\A$-free} on $O$.
Similarly, if $u\in L^p_\per(\Rd;\RN)$,
we say that it is $\A$-free on the unit torus $\Td$
when the equality in \eqref{eq:Afree-openset} is satisfied for $O=Q$
and for all $v\in W^{1,p'}_\per(\Rd;\RM)$;
in particular, if $u\in L^p_\per(\Rd;\RN)$ is $\A$-free on $\Td$,
then it is $\A$-free on $Q$ as well.

In addition to the constant rank hypothesis,
we need to consider further restrictions on the class of operators $\A$
for which our analysis will be performed.
We formulate two {\it ad hoc} assumptions:

\begin{assumption}[Null-average] \label{stm:null-av}
    For all $u\in L^p_{\rm per}(\R^d;\RN)$ such that
    $u=0$ on $D_1$ and 
    $\A u=0$ on $\Td$,
    it holds $\int_Q u(y)\,\de{y} = 0$.
\end{assumption}

\begin{assumption}[$\A$-free extension] \label{stm:exist-ext}
	There exists an $\eps$-independent constant $c> 0$ and a sequence of operators $\set{\mathsf{E}_\A^\ep}$, with  $\mathsf{E}_\A^\ep \colon L^p(\Omega;\RN) \to L^p(\Omega;\RN)$
	such that the following holds:
	for all $\A$-free $u\in L^p(\Omega;\RN)$
	\begin{enumerate}
    \item ${\mathsf E}_{\A}^\ep u = u$ a.e. in $\Omega_{1,\ep}$,
    \item $\norm{ \mathsf E_{\A}^\ep u}_{L^p(\Omega;\RN)} \leq c \norm{u}_{L^p(\Omega_{1,\eps};\RN)}$,
	\item $\A (\mathsf E_{\A}^\ep u) = 0$ on $\Omega$, and
	\item if $\set{u_\ep}\subset L^p(\Omega;\R^N)$ is $p$-equiintegrable, then $\set{\mathsf{E}_{\A}^\ep u_\ep}$ is also $p$-equiintegrable in $L^p(\Omega;\R^N)$.
	\end{enumerate}
\end{assumption}
We recall that 
$\set{u_\ep}\subset L^p(\Omega;\R^N)$ is $p$-equiintegrable
if for every $\eta > 0$ there exists $m>0$ such that
\[
    \int_E \va{u_\eps}^p\,\de{x} < \eta
    \quad\text{for all } \eps>0
\]
whenever $E\subset \Omega$ satisfies $\Ld(E)<m$.
Thanks to the Dunford-Pettis Theorem,
this is equivalent to the fact that
$\set{\va{u_\ep}^p}$ admits a subsequence
that is weakly convergent in $L^1(\Omega)$.
We point out that in Assumption \ref{stm:exist-ext}
it is essential to start with maps $u$ which are $\A$-free in the whole set $\Omega$,
for this is related to the existence of `potentials' for the operator $\A$.
We refer to the considerations after Theorem \ref{thm:exist-ext} in Subsection \ref{subs:results} 
for a discussion on this point,
while we collect some comments on Assumption \ref{stm:exist-ext} in the next remark .

\RRR
\begin{remark}[On Assumption \ref{stm:exist-ext}]\label{stm:rmkA2}
	In problems involving perforated media
	the use of extension techniques is fairly common, and
	Assumption \ref{stm:exist-ext} is akin to others in the literature.
	For comparison, we mention \cite[Definition 1.4]{fonseca.kruzik}.
	The main differences consist in: 1) the fact that the extension operator here is assumed to exist on a periodically perforated domain, with constants independent of the periodicity parameter $\ep$; 2) the fact that the existence of an extended $p$-equiintegrable sequence here is only required when starting from a sequence that also had $p$-equiintegrability properties. We refer to \cite{fonseca.kruzik} for a general discussion on the connections between the existence of extension operators from an open set to a bigger surrounding domain, and the theory of DiPerna-Majda measures. 
	
	A notion of $\A$-free extension domain was also considered in \cite[Definition 4.4]{kramer.kromer.kruzik.patho}.
	There, however, no $p$-equiintegrability conditions are prescribed. As pointed out in \cite[Section 4.2]{kramer.kromer.kruzik.patho}, such extension results are, in general, non-trivial. Additionally, they depend both on the topology of the set and on the operator $\A$ under consideration. In particular, no such $\A$-free extension is possible when $\A$ is the operator associated to the Cauchy-Riemann system, not even in the case of very regular sets $\Omega$.
	
	As far as our analysis is concerned,
	items (1)--(3) in Assumption \ref{stm:exist-ext} are among the main ingredients
	in the proof of Proposition \ref{stm:HC-limits}, and
	without them we would not be able
	to establish high-contrast compactness for sequences with equibounded energy
	(see Definition \ref{stm:HC-conv}).
	On the contrary, the preservation of $p$-equiintegrability is not required at this stage,
	whereas it is used in the proof of our main $\Gamma$-convergence result
	(see the proofs of Propositions \ref{prop:stiff-part} and \ref{stm:split2}).
	Nonetheless, point (4) in Assumption \ref{stm:exist-ext} could be dispensed with
	in some simplified versions of our problem,
	e.g. when $f_{0,\eps}=f_0$ for every $\eps>0$ and $\Omega$ is a rectangle.
\end{remark}
\EEE

For the sake of brevity,
it is convenient to give a name
to the class of operators addressed in our study:

\begin{definition}
\label{def:adm}
Let $\A$ be a linear first-order differential operator as in \eqref{eq:op-A}.
We say that $\A$ is \emph{admissible}
if and only if it is of constant rank
and it satisfies Assumptions \ref{stm:null-av} and \ref{stm:exist-ext}.
\end{definition}

It is well-known that
the family of constant rank operators is quite large \cite{fonseca.muller};
as for the subclass of the admissible ones,
we prove that it is non-empty in Section \ref{sec:adm}
where, in particular, we analyze the cases
of the curl, of the $\mathrm {curl\,curl}$ operator, of the divergence, 
as well as the setting associated with higher-order gradients.


\subsection{Main results}\label{subs:results}
The first main result of this paper, Theorem \ref{stm:main} below, is the characterization
of the asymptotic behavior as $\ep\to 0$ of the functionals in \eqref{eq:ep-funct}
when the family is restricted to $\A$-free fields.
In order to deduce also some information on the related energy minimizers,
we resort to a variational kind of convergence,
namely $\Gamma$-convergence (see e.g. \cite{braides,DalMaso93}).
Here, we consider a quite abstract version of its definition:
indeed, we consider functionals defined on a certain set
for which it is only known that some of its sequences are converging,
the limit point being declared too;
no topological structure is provided, instead.
We summarize this situation by saying that
such a set is endowed with a notion of convergence.

\begin{definition}
	Let $X$ be a set endowed with a notion of convergence.
	We say that the family $\set{F_\eps}_{\eps>0}$
	of functions on $X$ with values in $[-\infty,+\infty]$
	\emph{$\Gamma$-converges} as $\epsilon\to 0$
	to the function \mbox{$F\colon X\to [-\infty,+\infty]$}
	if for any $x\in X$ and
	for any sequence $\set{\epsilon_k}_{k\in\N}$ such that $\eps_k\to 0$
	the following holds:
	\begin{enumerate}
		\item for any sequence $\set{x_{\eps_k}}_{k\in\N}\subset X$
			such that $x_{\eps_k}\to x$, we have
			\begin{equation*}
				F(x)\leq\liminf_{k\to +\infty} F_{\eps_k}(x_{\eps_k});
			\end{equation*}
		\item there exists a sequence $\set{x_{\eps_k}}_{k\in\N}\subset X$
			such that $x_{\eps_k}\to x$ and
			\begin{equation*}
				\limsup_{k \to +\infty} F_{\eps_k}(x_{\eps_k}) \leq F(x).
			\end{equation*}
	\end{enumerate}
\end{definition}

We have already remarked that
the natural domain of the functionals \eqref{eq:energy} is contained in $L^p(\Omega;\RN)$.
However, in our setting, the two kinds of convergence in this space
that are most frequently considered,
the strong and the weak one,
are not well-suited to perform a $\Gamma$-convergence analysis.
Indeed, they do not match the high-contrast nature of the problem and
would therefore not yield useful compactness results
for sequences with equibounded energy.
This leads us to introduce a peculiar notion of convergence. 

\begin{definition}\label{stm:HC-conv}
	We say that	a family $\set{u_\eps} \subset L^p(\Omega;\RN)$
	\emph{converges in the high-contrast sense} to $u\in L^p(\Omega;\RN)$
	{\RRR (relatively to the sequence of sets $\set{\Omega_{1,\eps}}$)}
	if $\eps u_\eps \weak 0$ weakly in $L^p(\Omega;\R^N)$
	and if there exists a second family $\set{\tilde u_\eps}\subset L^p(\Omega;\RN)$
	such that  $\A\tilde{u}_\ep=0$ in $W^{-1,p}(\Omega;\R^M)$,  $u_\eps = \tilde u_\eps$ in $\Omega_{1,\ep}$ and
	$\tilde u_\eps \weak u$ weakly in $L^p(\Omega;\RN)$.
\end{definition}

{\RRR
Some comments are in order.
The reason why
the definition above is made up of two requirements 
is that
there is no standard convergence which is able to capture alone
the behaviors of both components.
On one hand, the convergence of extensions is related to the coercivity of the `stiff' part,
and it is a somehow customary requirement in homogenization 
(see Subsection \ref{sec:comparison});
on the other, the convergence $\eps u \weak 0$ is needed
to keep track of the `soft' component.
Here, in the light of Assumption \ref{stm:null-av},
the $0$ vector turns out to be the average of some two-scale limit.

If we select $\A=\mathrm{curl}$ and we assume that $\Omega$ is simply connected,
the problem may be recast in $W^{1,p}(\Omega)$.
In this setting, as far as the asymptotics of the `stiff' part is concerned,
Rellich-Kondrachov's Theorem allows to switch from weak convergence in $W^{1,p}(\Omega)$ to strong convergence in $L^p(\Omega)$.
Therefore, the use of high-contrast convergence on the matrix leads to the standard homogenization results for perforated media.
At the same time, it allows to quantify the change in the effective energy of the perforated material
that occurs when `soft' inclusions are added
(cf. $\alpha_0$ in \eqref{eq:limitfunct}).
Indeed, it can be readily shown
that $\eps \nabla w_\eps \weak 0$
if $\F_\eps(\nabla w_\eps)\leq c$
by well-known two-scale properties of gradients.

As a last remark on Definition \ref{stm:HC-conv}, note that
it is obvious that
the weak convergence in $L^p(\Omega;\RN)$ of a sequence $\set{u_\eps}$ of $\A$-free maps entails high-contrast convergence
(it suffices to set $\tilde u_\eps \coloneqq u_\eps$).
In particular, bounded sequences in $L^p$ are pre-compact in the high-contrast sense.
As Proposition \ref{stm:HC-limits} proves, however,
a much weaker control is enough (see \eqref{eq:apriori-compact}),
at least for admissible differential operators.
}

It will be convenient to have a special notation
for spaces of functions satisfying the differential constraint encoded by $\A$.
For any open $\Omega' \subset \Omega$, we define
\begin{multline}\label{eq:U0}
	U_0(\Omega') \coloneqq
		\big\{
			u\in L^p(\Omega;L^p_\per(\Rd;\RN)) : u = 0 \text{ if } y\in D_1 \\
			\text{ and }
			\A_y u= 0 \text{ in } W^{-1,p}(\Td;\RM) \text{ for a.e. } x\in  \Omega'
		\big\}
\end{multline}
and
\begin{gather} \label{eq:U1}
    U_1 \coloneqq \set{ u \in L^p(\Omega;\RN) : \A u = 0\text{ in } W^{-1,p}(\Omega;\RM)}.
\end{gather}
Note that limit points of high-contrast convergent sequences are automatically in $U_1$.  Hereafter, we use the subscripts $x$ and $y$ to denote that
the operator $\A$ acts on the variables $x$ and $y$, respectively.
We will prove that
	the limiting behavior of the $\eps$-dependent energies constrained to $\A$-free fields is described
	by the functional $\F\colon U_1 \to \R $
	defined as
	\begin{equation}\label{eq:limitfunct}
	\F(u) \coloneqq
		\alpha_0 + \int_{\Omega} f_\hom \big( u (x) \big) \de x
\end{equation} 
	where
\begin{equation}\label{eq:alpha0}
	\alpha_0 \coloneqq \sup_{\Omega'\Subset\Omega} \inf_{u \in U_0(\Omega')} \int_{\Omega'}\int_{D_0} f_{0} \big( u(x,y) \big) \de y\,\de x
\end{equation}
and
\begin{multline}\label{eq:def-fhom}
	f_\hom (\xi) \coloneqq \liminf_{k\to +\infty}
		\inf\Bigg\{
			\sum_{z \in \Zd} \int_{Q \cap k^{-1}(D_1 + z)} f_1 \big( ky, \xi + v(y) \big) \de y
			: v\in L^p_\per(\Rd;\RN) \\
			\text{ with } \int_Q v(z) \de z = 0 \text{ and } \A v=0 \text{ in } W^{-1,p}(\Td;\RM)
	\Bigg\}.
\end{multline}

We are now ready to state our main convergence result.

\begin{theorem}\label{stm:main}
	
    Let $\F\colon L^p(\Omega; L^p_{\rm per} (\R^d; \RN)) \to \R \cup \set{+\infty}$ be the functional in \eqref{eq:limitfunct} and, for $\ep>0$,
    let $\F_\ep\colon L^p(\Omega;\RN) \to \R\cup \set{+\infty}$ be given by
    \begin{equation}\label{eq:energy}
    \F_\ep(u) \coloneqq \begin{cases}
    \displaystyle{
    	\int_{\Omega_{0,\eps}} f_{0,\eps}\left(\eps u\right) \de x
    	+ \int_{\Omega_{1,\eps}} f_1\left(\frac{x}{\ep},u\right) \de x
    }   & \text{if } u \in U_1,\\
    +\infty     & \text{otherwise in } L^p(\Omega;\RN).
    \end{cases}
    \end{equation}
    Let us assume that $\set{f_{0,\ep}}$ and $f_1$ fulfill {\bf H1}--{\bf H5},
    and that $\A$ is an admissible partial differential operator
    in the sense of Definition \ref{def:adm}.
    Then, the following properties hold:
    \begin{enumerate}
    	\item If a family $\set{u_\ep} \subset L^p(\Omega;\R^N)$ satisfies
    	$\F_\ep(u_\ep)\leq c$ for all $\ep>0$ and  for some \RRR $c\geq 0$, \EEE
    	then it admits a subsequence
    	that is convergent in the high-contrast sense.
    	\item For any $u\in U_1$ and for any sequence $\set{u_\eps}\subset L^p(\Omega;\RN)$
    	that converges to $u$ in the high-contrast sense, it holds that
    	\[
    	\F(u) \leq \liminf_{\eps \to 0} \F_\eps(u_\eps).
    	\]
    	\item For any $u\in U_1$  there exists a sequence $\set{u_\eps}\subset U_1$
    	that converges to $u$ in the high-contrast sense and satisfies
    	\[
    	\limsup_{\eps \to 0} \F_\eps(u_\eps) \leq \F(u).
    	\]
    \end{enumerate}
\end{theorem}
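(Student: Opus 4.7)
The plan is to decompose $\F_\ep=\F_\ep^0+\F_\ep^1$ where
$\F_\ep^0(u)\coloneqq \int_{\Omega_{0,\ep}}f_{0,\ep}(\ep u)\,\de x$ and
$\F_\ep^1(u)\coloneqq \int_{\Omega_{1,\ep}}f_1(x/\ep,u)\,\de x$,
and to treat the two contributions separately; the `stiff' part will yield $\int_\Omega f_\hom(u)\,\de x$ via a perforated-domain version of standard $\A$-quasiconvex periodic homogenization (to be developed in Section~\ref{sec:stiff}), whereas the `soft' part will produce the constant $\alpha_0$ via a two-scale analysis (Section~\ref{sec:soft}). The three glue pieces between the two analyses are the $\A$-free extension $\mathsf{E}_\A^\ep$ of Assumption~\ref{stm:exist-ext}, the null-average property of Assumption~\ref{stm:null-av}, and a careful $p$-equiintegrability argument based on \textbf{H3}--\textbf{H4}.

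For the compactness claim~(1), the lower bound in~\textbf{H3} applied to the hypothesis $\F_\ep(u_\ep)\leq c$ gives $\|u_\ep\|_{L^p(\Omega_{1,\ep};\RN)}+\|\ep u_\ep\|_{L^p(\Omega_{0,\ep};\RN)}\leq C$. Setting $\tilde u_\ep\coloneqq \mathsf{E}_\A^\ep u_\ep$, Assumption~\ref{stm:exist-ext} yields $\tilde u_\ep\in U_1$ with $\|\tilde u_\ep\|_{L^p(\Omega;\RN)}\leq C$ and $\tilde u_\ep=u_\ep$ a.e.\ in $\Omega_{1,\ep}$, so up to a subsequence $\tilde u_\ep\weak u$ weakly in $L^p(\Omega;\RN)$ for some $u\in U_1$. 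Since $\ep\tilde u_\ep\to 0$ strongly, the family $\{\ep u_\ep\}$ is bounded in $L^p(\Omega;\RN)$; to check $\ep u_\ep\weak 0$, let $u_0(x,y)$ be a two-scale accumulation point. On $\Omega_{1,\ep}$ we have $\ep u_\ep=\ep\tilde u_\ep\to 0$ strongly, hence $u_0$ vanishes on $D_1$; moreover $\A(\ep u_\ep)=\ep\A u_\ep=0$ in $W^{-1,p}$ implies $\A_y u_0=0$ on $\Td$ for a.e.\ $x$. Assumption~\ref{stm:null-av} then forces $\int_Q u_0(x,y)\,\de y=0$, which is exactly the weak $L^p$-limit of $\ep u_\ep$.

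For the liminf inequality~(2), write $\F_\ep^1(u_\ep)=\int_{\Omega}\chi_{1,\ep}(x)f_1(x/\ep,\tilde u_\ep)\,\de x$ and apply the periodic $\A$-quasiconvex homogenization result on the stiff perforated domain to $\tilde u_\ep\weak u$; growth~\textbf{H3} and Lipschitz control~\textbf{H4} allow one to pass from $u_\ep$ to $\tilde u_\ep$ at the cost of a term vanishing because $|\Omega_{0,\ep}|\to 0$ and $\{\tilde u_\ep\}$ is $p$-equiintegrable by Assumption~\ref{stm:exist-ext}(4). For the soft part, set $w_\ep\coloneqq \ep(u_\ep-\tilde u_\ep)$: this field is supported on $\Omega_{0,\ep}$, belongs to $U_1$ (as the difference of two $\A$-free fields), and shares the two-scale limit $u_0$ of $\ep u_\ep\chi_{0,\ep}$ because $\ep\tilde u_\ep\to 0$ strongly. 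Thus $u_0\in U_0(\Omega')$ for every $\Omega'\Subset\Omega$, and two-scale lower semicontinuity together with~\textbf{H5} (and with~\textbf{H4}, used to substitute $f_{0,\ep}$ by $f_0$ modulo an equiintegrable error) yields
\[
\liminf_{\ep\to 0}\F_\ep^0(u_\ep)\geq \int_{\Omega'}\int_{D_0}f_0(u_0(x,y))\,\de y\,\de x\geq \inf_{v\in U_0(\Omega')}\int_{\Omega'}\int_{D_0}f_0(v(x,y))\,\de y\,\de x.
\]
Taking the supremum over $\Omega'\Subset\Omega$ returns $\alpha_0$.

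For the limsup~(3), the recovery sequence is of the form $u_\ep\coloneqq v_\ep+\ep^{-1}z_\ep$, where $v_\ep\in U_1$ is the standard $\A$-quasiconvex recovery sequence for $\int_\Omega f_\hom(u)\,\de x$ in the perforated setting, and $z_\ep\in U_1$ is built from a near-optimal $u_0^*\in U_0(\Omega')$ for $\alpha_0$ by $\ep$-periodic arrangement essentially supported on $\Omega_{0,\ep}$; the factor $\ep^{-1}$ ensures that $\ep z_\ep$ two-scale converges to $u_0^*$, so $\F_\ep^0(u_\ep)\to\alpha_0$, while $z_\ep$ being concentrated on the soft set keeps $\F_\ep^1(u_\ep)$ on the right asymptotics. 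A diagonal procedure in $\Omega'\Subset\Omega$ and in the approximation of $u_0^*$ closes the argument. The main obstacle is to ensure that $v_\ep+\ep^{-1}z_\ep$ belongs \emph{exactly} to $U_1$, rather than merely being $\A$-free up to a vanishing error: this requires combining the $\A$-free extensions of Assumption~\ref{stm:exist-ext} with the potential-based constructions of Section~\ref{sec:adm}, so that the soft and stiff recovery fields can be superposed while preserving the global differential constraint. The interplay between the two-scale limit on the soft side and the weak $L^p$-limit on the stiff side, bridged precisely by Assumption~\ref{stm:null-av}, is the conceptual heart of the proof.
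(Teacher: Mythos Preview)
Your overall architecture---split $\F_\ep=\F_{0,\ep}+\F_{1,\ep}$, use the $\A$-free extension $\tilde u_\ep=\mathsf E_\A^\ep u_\ep$ for compactness and splitting, handle the stiff part by $\A$-quasiconvex homogenization and the soft part by a two-scale analysis---is exactly the paper's. Part~(1) is fine. There are, however, two genuine gaps in your liminf argument.

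\textbf{Soft liminf: lower semicontinuity fails.} You claim that ``two-scale lower semicontinuity'' gives
\[
\liminf_{\ep\to 0}\F_{0,\ep}(u_\ep)\;\geq\;\int_{\Omega'}\int_{D_0}f_0\big(u_0(x,y)\big)\,\de y\,\de x,
\]
where $u_0$ is the weak two-scale limit of $w_\ep=\ep(u_\ep-\tilde u_\ep)$. But $f_0$ is neither convex nor $\A$-quasiconvex in any sense that would make this true: weak two-scale convergence $w_\ep\weakts u_0$ does \emph{not} entail $\liminf\int f_{0}(w_\ep)\geq\int f_0(u_0)$ in general. The paper never passes through $u_0$. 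Instead it notes that the \emph{unfolded} field $\hat w_\ep\coloneqq \mathsf S_\ep w_\ep$ itself lies in $U_0(\Omega')$ for small $\ep$ (Lemma~\ref{stm:Afree-unfold}) and that $\int_{\Omega_{0,\ep}}f_{0,\ep}(w_\ep)\,\de x$ rewrites exactly as $\int_{\hat\Omega_\ep}\int_{D_0}f_{0,\ep}(\hat w_\ep)\,\de y\,\de x$. Since each $\hat w_\ep$ is already a competitor in the infimum defining $\alpha_0$, the bound $\geq\alpha_0$ follows directly (Lemma~\ref{stm:liminfineq-Aquasiconv}, Proposition~\ref{prop:liminf-gen}), with no semicontinuity step. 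Your intermediate inequality through the two-scale limit is the wrong route.

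\textbf{$p$-equiintegrability is not free.} You invoke Assumption~\ref{stm:exist-ext}(4) to claim $\{\tilde u_\ep\}$ is $p$-equiintegrable, but that item only says the extension \emph{preserves} $p$-equiintegrability; nothing guarantees $\{u_\ep\}$ has it to begin with. The paper inserts the $\A$-free decomposition (Lemma~\ref{stm:Adecomp}) to replace $\{u_\ep\}$ by a $p$-equiintegrable, $\A$-free sequence at negligible cost (Proposition~\ref{stm:split2}, Lemma~\ref{stm:substitution}); without this step the soft liminf cannot be reduced to the $p$-equiintegrable case treated in Proposition~\ref{stm:Glim-soft}.

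Two smaller points. First, $\Ld(\Omega_{0,\ep})$ does not tend to $0$---it converges to $\Ld(\Omega)\Ld(D_0)$---so the error you propose to kill that way survives; for the stiff part one instead absorbs the perforation into the density by setting $\tilde f_1(y,\xi)=\chi_{D_1}(y)f_1(y,\xi)$ and applying the Fonseca--Kr\"omer result to $\tilde f_1$ (Proposition~\ref{prop:stiff-part}). Second, your worry about needing the potential machinery of Section~\ref{sec:adm} to make $v_\ep+\ep^{-1}z_\ep$ exactly $\A$-free is misplaced: the soft recovery sequence in Proposition~\ref{prop:limsup-soft} is built by cell-wise averaging and is verified $\A$-free by a direct duality computation using only that $w\in U_0(\Omega')$ vanishes on $D_1$ and is $\A_y$-free on $\Td$; the stiff recovery is $\A$-free by construction. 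Section~\ref{sec:adm} is about \emph{verifying} Assumptions~\ref{stm:null-av}--\ref{stm:exist-ext} for concrete operators, not about the limsup.
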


By standard $\Gamma$-convergence techniques,
we find the following corollary of Theorem \ref{stm:main}:
\begin{corollary}
\label{stm:minimizers}
Under the same assumptions and notation of Theorem \ref{stm:main}, 
if $\set{u_\ep} \subset L^p(\Omega;\RN)$ is a sequence of almost-minimizers for $\set{\F_\ep}$, i.e., if
\[
    \lim_{\ep\to 0}\left( \F_\ep(u_\ep) - \inf_{u\in  U_1} \F_\ep(u) \right)=0,
\]
then there exists a subsequence of $\set{u_\ep}$
that is convergent in the high-contrast sense to a minimum point of $\F$.
Moreover, 
$$\inf_{u\in L^p(\Omega;\RN)}\F_\ep(u)\to \min_{u\in  U_1}\F(u).$$
\end{corollary}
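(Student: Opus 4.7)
The plan is to deduce the corollary from Theorem~\ref{stm:main} via the standard fundamental theorem of $\Gamma$-convergence, adapted to the high-contrast convergence in place of a topology.

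First I would show that the sequence of almost-minimizers has equibounded energy. By hypothesis {\bf H3}, $\F_\ep(u) \geq -a\lambda \Ld(\Omega)$ for every $u \in U_1$, hence $\inf_{L^p(\Omega;\RN)}\F_\ep = \inf_{U_1}\F_\ep \geq -a\lambda \Ld(\Omega)$. For the upper bound, I would pick any $v \in U_1$ (e.g.\ $v \equiv 0$, which is trivially $\A$-free) and, using part (3) of Theorem~\ref{stm:main}, produce a recovery sequence $\{v_\ep\} \subset U_1$ with $\limsup_\ep \F_\ep(v_\ep) \leq \F(v) < +\infty$. Then $\inf \F_\ep \leq \F_\ep(v_\ep)$ is bounded above, and since $\F_\ep(u_\ep) - \inf \F_\ep \to 0$, the sequence $\{\F_\ep(u_\ep)\}$ is bounded. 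Invoking the compactness statement (1) of Theorem~\ref{stm:main} yields a subsequence (not relabeled) converging in the high-contrast sense to some $u \in U_1$.

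Next I would identify $u$ as a minimizer of $\F$ and show convergence of the infima. The liminf inequality (part (2) of Theorem~\ref{stm:main}) gives
\begin{equation*}
\F(u) \leq \liminf_{\ep \to 0} \F_\ep(u_\ep) = \liminf_{\ep \to 0} \inf_{U_1} \F_\ep,
\end{equation*}
where the last equality uses $\F_\ep(u_\ep) - \inf_{U_1}\F_\ep \to 0$. On the other hand, for an arbitrary $v \in U_1$, a recovery sequence $\{v_\ep\} \subset U_1$ from part (3) satisfies $\inf_{U_1}\F_\ep \leq \F_\ep(v_\ep)$, and therefore
\begin{equation*}
\limsup_{\ep \to 0} \inf_{U_1} \F_\ep \leq \limsup_{\ep \to 0} \F_\ep(v_\ep) \leq \F(v).
\end{equation*}
Chaining these inequalities gives $\F(u) \leq \liminf_\ep \inf \F_\ep \leq \limsup_\ep \inf \F_\ep \leq \F(v)$ for every $v \in U_1$. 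Taking the infimum over $v \in U_1$ forces $\F(u) = \inf_{U_1}\F$, so the infimum is attained (i.e.\ it is a minimum) at $u$, and all the inequalities in the chain collapse to equalities. In particular,
\begin{equation*}
\lim_{\ep \to 0} \inf_{L^p(\Omega;\RN)} \F_\ep = \lim_{\ep \to 0} \inf_{U_1} \F_\ep = \min_{U_1} \F,
\end{equation*}
where the first equality holds because $\F_\ep \equiv +\infty$ outside $U_1$.

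There is no genuine obstacle here: every ingredient is already encoded in Theorem~\ref{stm:main}. The only point that deserves a small remark is that the subsequence extraction is unavoidable, since high-contrast convergence is not Hausdorff-like on the entire $L^p(\Omega;\RN)$ and different subsequences could a priori select different minimizers; however, each cluster point produced by the procedure above is a minimizer of $\F$, which is precisely the content of the corollary.
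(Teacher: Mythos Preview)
Your proof is correct and is precisely the standard $\Gamma$-convergence argument that the paper alludes to; the paper's own proof simply invokes ``the equicoercivity of the energies with respect to the high-contrast convergence and the convergence result in Theorem~\ref{stm:main}'' and declares the conclusion follows by standard $\Gamma$-convergence arguments. You have written out exactly those standard steps in detail.
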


Our proof of Theorem \ref{stm:main} relies on four main intermediate results:
1) a compactness analysis combined with the characterization of high-contrast limits of $\A$-free fields,
cf. Proposition \ref{stm:HC-limits};
2) a splitting argument that allows
to reduce the study of the $\Gamma$-limit of $\set{\F_\eps}$ to the study of two independent problems,
concerning the asymptotic behavior of the `soft' and of the `stiff' energy contributions, respectively,
cf. Lemma \ref{stm:split} and Proposition \ref{stm:split2};
3) the identification of an optimal lower bound for the `soft' part of the energy, cf. Proposition \ref{stm:Glim-soft};
4) the identification of the limiting description for the `stiff' part, cf. Proposition \ref{prop:stiff-part}.
The proof of Theorem \ref{stm:main} is exposed in Section \ref{sec:comp+split}.

Let us now present shortly the results that
we obtain for the `soft' and `stiff' parts of the energy.
The splitting procedure in Section \ref{sec:comp+split} yields the functionals
\begin{gather}
\label{eq:F0eps}
    \F_{0,\eps}(u) \coloneqq 
    \begin{cases}
    \displaystyle{ \int_{\Omega_{0,\eps}} f_{0,\eps}\left(\eps u \right) \de x} &\text{if } u \in U_1, \\
    +\infty&\text{otherwise in } L^p(\Omega;\R^N),
    \end{cases}
\\
\label{eq:F1eps}
    \F_{1,\ep}(u)\coloneqq 
    \begin{cases}
    \displaystyle{\int_{\Omega_{1,\eps}} f_1\left(\frac{x}{\eps}, u \right) \de x}      & \text{if } u \in U_1,\\
    +\infty     &\text{otherwise in }L^p(\Omega;\R^N),
    \end{cases}
\end{gather}
where the densities $\set{f_{0,\ep}}$ and $f_1$ satisfy {\bf H1}--{\bf H5}.
We will show, respectively in Sections \ref{sec:soft} and \ref{sec:stiff},
that $\set{\F_{0,\eps}}$ and $\set{\F_{1,\ep}}$ $\Gamma$-converge to
\begin{gather}
\label{eq:F0}
    \F_{0}(u) \coloneqq 
    \begin{cases}
    	\alpha_0    &\text{if } u=0, \\
        +\infty     &\text{otherwise in }L^p(\Omega;\RN),
    \end{cases}
\\
\label{eq:F1}
    \F_{1}(u) \coloneqq 
    \begin{cases}
        \displaystyle{\int_{\Omega} f_\hom \big( u(x) \big) \de x} & \text{if } u \in U_1, \\
        +\infty     &\text{otherwise in }L^p(\Omega;\R^N),
    \end{cases}
\end{gather}
Precisely, we establish the following.

\begin{proposition}[$\Gamma$-limit of the `soft' component]
\label{stm:Glim-soft}
    Let $\F_{0,\eps},\F_0\colon L^p(\Omega;\RN) \to \R\cup\set{+\infty}$ be as above.
    If the hypotheses {\bf H1} and  {\bf H3}--{\bf H5} are satisfied, and
    if $\A$ is an admissible differential operator as in Definition \ref{def:adm},
    then for all $u\in L^p(\Omega;\RN)$ the following hold:
	\begin{enumerate} 
	    \item for every  $p$-equiintegrable sequence $\set{u_\ep}\subset L^p(\Omega;\RN)$
	    such that $u_\ep=0$ on $\Omega_{1,\ep}$
	    and that $\ep u_\ep\wk u$ weakly in $L^p(\Omega;\R^N)$, we have
	        \[
		        \F_0(u) \leq \liminf_{\eps \to 0} \F_{0,\eps}(u_\eps);
	        \]
	    \item there exists a sequence $\set{u_\epsilon} \subset U_1$
	    with the properties that $u_\ep=0$ in $\Omega_{1,\ep}$ for every $\ep>0$,
	    $\ep u_\ep\wk u$ weakly in $L^p(\Omega;\R^N)$, and 
	        \[
		        \limsup_{\eps \to 0} \F_{0,\eps}(u_\eps) \leq \F_0(u).
	        \]
	\end{enumerate}
\end{proposition}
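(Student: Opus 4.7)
\emph{Liminf inequality.} Take $\{u_\ep\}$ as in (1); we may assume each $u_\ep \in U_1$, else $\F_{0,\ep}(u_\ep) = +\infty$ and the bound is trivial. Since $\Omega$ has finite measure, $p$-equiintegrability yields uniform boundedness in $L^p(\Omega;\R^N)$, so $\|\ep u_\ep\|_{L^p} \to 0$ and necessarily $u=0$; it suffices to prove $\alpha_0 \leq \liminf \F_{0,\ep}(u_\ep)$. Fix $\Omega' \Subset \Omega$ and choose $\ep$ small enough that every periodicity cell meeting $\Omega'$ lies in $\Omega$; let $T_\ep$ denote the periodic unfolding operator. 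Since $u_\ep$ vanishes on $\Omega_{1,\ep}$, for a.e.~$x \in \Omega'$ the profile $y \mapsto T_\ep(\ep u_\ep)(x,y)$ vanishes on $D_1$, while the chain rule combined with $\A u_\ep = 0$ in $\Omega$ gives $\A_y T_\ep(\ep u_\ep)(x,\cdot) = 0$ on $Q$. Because $D_0 \Subset Q$, this profile vanishes in a fixed neighborhood of $\partial Q$, so its $Q$-periodic extension is $\A_y$-free on $\Td$ and defines an element of $U_0(\Omega')$. The unfolding identity then yields
\[
\int_{\Omega_{0,\ep} \cap \Omega'} f_{0,\ep}(\ep u_\ep) \, \de x = \int_{\Omega'} \int_{D_0} f_{0,\ep}\big( T_\ep(\ep u_\ep)(x,y) \big) \, \de y \, \de x + o(1).
\]
Hypothesis \textbf{H4} produces equi-Lipschitz estimates on $\{f_{0,\ep}\}$ which, combined with \textbf{H5}, give local uniform convergence $f_{0,\ep} \to f_0$; together with the $p$-equiintegrability of $\{T_\ep(\ep u_\ep)\}$, this lets us substitute $f_0$ for $f_{0,\ep}$ in the integrand at no cost. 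Since the periodic extension of $T_\ep(\ep u_\ep)$ lies in $U_0(\Omega')$, the right-hand side is bounded below by $\inf_{v \in U_0(\Omega')} \int_{\Omega'}\int_{D_0} f_0(v) \, \de y \, \de x$; controlling the energy on $\Omega_{0,\ep} \setminus \Omega'$ from below by $-a\lambda\,\Ld(\Omega\setminus\Omega')$ via \textbf{H3} and taking the supremum over $\Omega' \Subset \Omega$ yields $\liminf \F_{0,\ep}(u_\ep) \geq \alpha_0$.

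\emph{Recovery sequence.} By Assumption \ref{stm:null-av}, no sequence with the prescribed properties can converge in the high-contrast sense to a nonzero limit, so the claim is vacuous unless $u = 0$. Since $f_0$ is $x$-independent, a comparison with $x$-constant competitors shows
\[
\alpha_0 = \Ld(\Omega) \, m, \qquad m \coloneqq \inf\Big\{ \int_{D_0} f_0(w(y)) \, \de y : w \in L^p_\per(\R^d;\R^N),\ \A_y w = 0 \text{ on } \Td,\ w = 0 \text{ on } D_1 \Big\}.
\]
Given $\eta > 0$, select $w_\eta$ with $\int_{D_0} f_0(w_\eta) \leq m + \eta$, let $\Omega'_\ep$ be the union of $\ep$-cells fully contained in $\Omega$, and define $u_\ep^\eta(x) \coloneqq \ep^{-1} w_\eta(x/\ep)$ on $\Omega'_\ep$ and $u_\ep^\eta(x) \coloneqq 0$ on $\Omega \setminus \Omega'_\ep$. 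The strict inclusion $\bar D_0 \subset Q$ forces $w_\eta$ to vanish in a fixed neighborhood of $\Z^d$, so $u_\ep^\eta$ vanishes in a neighborhood of $\partial \Omega'_\ep$; this rules out distributional singularities at the interface, guarantees $u_\ep^\eta \in U_1$, and delivers $u_\ep^\eta = 0$ on $\Omega_{1,\ep}$. Assumption \ref{stm:null-av} ensures $\ep u_\ep^\eta = w_\eta(\cdot/\ep) \wk 0$, while a cell-wise change of variable together with \textbf{H5} and $\Ld(\Omega \setminus \Omega'_\ep) \to 0$ gives $\lim_\ep \F_{0,\ep}(u_\ep^\eta) = \Ld(\Omega) \int_{D_0} f_0(w_\eta) \leq \alpha_0 + \Ld(\Omega)\eta$. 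A diagonal extraction in $\eta \to 0$ completes the construction.

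\emph{Principal obstacle.} The delicate point is ensuring that the recovery sequence stays $\A$-free after the sharp cutoff outside $\Omega'_\ep$. Were $w_\eta$ permitted to be nonzero near $\partial Q$, the abrupt transition to zero on the boundary layer would produce spurious distributional contributions to $\A u_\ep^\eta$. The condition $D_0 \Subset Q$ resolves this by forcing $w_\eta$ to vanish in a uniform neighborhood of every cell boundary, and the same geometric fact is what permits the periodic extension of the unfolded profile to qualify as an admissible competitor in the liminf step.
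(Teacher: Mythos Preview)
Your liminf argument follows the paper's strategy closely: unfold $\ep u_\ep$, verify that the resulting profile lies in $U_0(\Omega')$ for $\ep$ small (this is the content of the paper's Lemma~\ref{stm:Afree-unfold}), pass from $f_{0,\ep}$ to $f_0$ via the equi-Lipschitz bound \textbf{H4} combined with pointwise convergence \textbf{H5} and $p$-equiintegrability, and then bound below by the infimum over $U_0(\Omega')$. This is exactly the route taken in the paper's Lemma~\ref{stm:liminfineq-Aquasiconv} and Proposition~\ref{prop:liminf-gen}. One minor difference: the paper absorbs the contribution from $\Omega\setminus\Omega'$ using the $p$-equiintegrability of the unfolded sequence, whereas you invoke the cruder pointwise lower bound $f_{0,\ep}\ge-a\lambda$ from \textbf{H3}; both lead to the same conclusion after letting $\Omega'\nearrow\Omega$.

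Your limsup construction, on the other hand, is genuinely different and considerably more economical. The paper works with an arbitrary (possibly $x$-dependent) competitor $w\in U_0(\Omega')$ and builds a recovery sequence by cell-averaging $w$ in $x$, then evaluating at $(x,x/\ep)$; a mollification step is required to handle the measurability of $x\mapsto w(x,\cdot)$. You instead exploit that $f_0$ is $x$-independent: the slicing observation that for every $u\in U_0(\Omega')$ and a.e.\ $x\in\Omega'$ the profile $u(x,\cdot)$ is itself a one-variable competitor gives $\inf_{U_0(\Omega')}\int_{\Omega'}\int_{D_0}f_0=\Ld(\Omega')\,m$, so $x$-constant competitors suffice. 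This yields the elementary recovery sequence $u_\ep^\eta=\ep^{-1}w_\eta(\cdot/\ep)$ truncated outside full cells, and you correctly identify $D_0\Subset Q$ as the mechanism that keeps the cutoff $\A$-free. What you lose is the recovery of general two-scale limits in $U_0(\Omega')$, but for the $\Gamma$-limsup at $u=0$ this is not needed.

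One caveat: the identity $\alpha_0=\Ld(\Omega)\,m$ that you state is only literally correct when $m\ge0$; if $m<0$ the supremum in~\eqref{eq:alpha0} is approached by shrinking rather than exhausting $\Omega'$. This does not damage your limsup argument, since your recovery sequence in any case delivers $\limsup\le\Ld(\Omega)\,m\le\alpha_0$ (the second inequality following from $\Ld(\Omega')\,m\le\alpha_0$ for every $\Omega'\Subset\Omega$), and only the latter bound is required. The paper's Remark~\ref{stm:rmkG0} and the opening step of Lemma~\ref{stm:liminfineq-Aquasiconv} rest on the same monotonicity, so the subtlety is shared.
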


We point out that the $p$-equiintegrability condition in the first part of the previous statement does not affect the generality of the result,
as Proposition \ref{stm:split2} will prove. 
For more details on this point, the reader is referred to the splitting argument in Section \ref{sec:comp+split} below.

For what concerns the functionals $\F_{1,\eps}$,
the analysis carried out by {\sc I. Fonseca \& S. Kr\"omer} in \cite{fonseca.kromer} yields the following:

\begin{proposition}[$\Gamma$-limit of the `stiff' component]
\label{prop:stiff-part}
    Let $\F_{1,\eps}, \F_1\colon L^p(\Omega;\RN) \to \R\cup\set{+\infty}$ be as above.
    If the hypotheses {\bf H1}--{\bf H4} are satisfied and
    if $\A$ is an admissible differential operator as in Definition \ref{def:adm},
    then the $\Gamma$-limit of $\set{\F_{1,\eps}}$ with respect to the weak $L^p(\Omega;\RN)$-convergence is $\F_1$.
\end{proposition}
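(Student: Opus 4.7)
The plan is to reduce the assertion to the homogenization theory for $\A$-quasiconvex integral functionals on unperforated domains developed in \cite{fonseca.kromer}. The crucial observation is that for every $u\in U_1$ we may rewrite
\[
    \F_{1,\eps}(u) = \int_\Omega g\!\left(\frac{x}{\eps}, u(x)\right) \de x, \qquad \text{where } g(y,\xi)\coloneqq \chi_1(y)\,f_1(y,\xi),
\]
so that the problem recasts into the homogenization of a standard (unperforated) integral functional, with an integrand that simply vanishes on $D_0\times\RN$.

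Next I would verify that $g$ fulfills the structural hypotheses used in \cite{fonseca.kromer}. Indeed, from {\bf H1}--{\bf H4} one reads that $g$ is Carath\'eodory and $Q$-periodic in $y$ (thanks to the $Q$-periodicity of both $f_1$ and $\chi_1$), obeys the two-sided bound
\[
    -\lambda a \leq g(y,\xi)\leq \Lambda\bigl(1+\va{\xi}^p\bigr),
\]
and inherits the Lipschitz estimate $\va{g(y,\xi)-g(y,\eta)}\leq \mu\bigl(1+\va{\xi}^{p-1}+\va{\eta}^{p-1}\bigr)\va{\xi-\eta}$. Adding the finite constant $\lambda a$ to $g$ makes it nonnegative and alters each $\F_{1,\eps}$ only by the additive constant $\lambda a \Ld(\Omega)$, so there is no loss of generality in assuming $g\geq 0$.

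With this preparation in place, the main homogenization theorem of \cite{fonseca.kromer}, applied on the $\A$-free class $U_1$ and with respect to the weak $L^p(\Omega;\RN)$-convergence, produces the $\Gamma$-limit $u\mapsto \int_\Omega g_\hom(u)\,\de x$ for $u\in U_1$, where
\[
    g_\hom(\xi) = \liminf_{k\to\infty}\inf\left\{\frac{1}{k^d}\int_{kQ} g(y,\xi+v(y))\,\de y : v\in L^p_\per(\R^d;\RN),\ \int_Q v = 0,\ \A v = 0\right\}.
\]
Rescaling $y\mapsto y/k$ together with the $Q$-periodicity of $f_1$ and the decomposition of $\chi_1$ across the $k^d$ translates of $Q$ inside $kQ$ then identifies $g_\hom$ with the density $f_\hom$ prescribed in \eqref{eq:def-fhom}, whence the two representations of the $\Gamma$-limit coincide.

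The main non-routine point I anticipate is that the lack of coercivity of $g$ on the inclusion phase $D_0\times\RN$ might obstruct the application of \cite{fonseca.kromer}. This is however harmless in the present framework: recovery sequences are constructed with a prescribed weak $L^p$-limit and are thus automatically bounded in $L^p$, while the liminf inequality only exploits the upper $p$-growth and the $Q$-periodic structure of $g$ through the $\A$-quasiconvex cell problem. The reduction consequently goes through without modifications and yields Proposition~\ref{prop:stiff-part}.
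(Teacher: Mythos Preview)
Your overall strategy coincides with the paper's: recast $\F_{1,\eps}$ as an unperforated homogenization functional with density $g(y,\xi)=\chi_1(y)f_1(y,\xi)$ and invoke Theorem~\ref{stm:stiff-hom} from \cite{fonseca.kromer}. However, your opening identity
\[
\F_{1,\eps}(u)=\int_\Omega g\!\left(\frac{x}{\eps},u(x)\right)\de x
\]
is \emph{false}. By definition $\Omega_{1,\eps}=\Omega\setminus\Omega_{0,\eps}$, and $\Omega_{0,\eps}$ collects only those copies of $\eps D_0$ lying \emph{entirely} inside $\Omega$; hence near $\partial\Omega$ there are points $x$ with $x/\eps\in D_0$ (so $\chi_1(x/\eps)=0$) that nonetheless belong to $\Omega_{1,\eps}$. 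The right-hand side above equals $\int_{\tilde\Omega_{1,\eps}}f_1(x/\eps,u)\,\de x$ with $\tilde\Omega_{1,\eps}\coloneqq\Omega\cap\bigcup_{z\in\Zd}\eps(D_1+z)\subsetneq\Omega_{1,\eps}$, and you are silently discarding the boundary layer $\Omega_{1,\eps}\setminus\tilde\Omega_{1,\eps}$.

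This omission is innocuous for the $\Gamma$-liminf (the discarded piece is bounded below by $-a\lambda\,\Ld(\Omega_{1,\eps}\setminus\tilde\Omega_{1,\eps})\to 0$), but it is \emph{not} innocuous for the $\Gamma$-limsup: along a recovery sequence $\{u_\eps\}$ for $\tilde\F_{1,\eps}$ you must also control $\int_{\Omega_{1,\eps}\setminus\tilde\Omega_{1,\eps}}|u_\eps|^p\,\de x$, and mere weak $L^p$-convergence does not force this to vanish. The paper closes this gap by applying the $\A$-free decomposition (Lemma~\ref{stm:Adecomp}) to replace the Fonseca--Kr\"omer recovery sequence with a $p$-equiintegrable $\A$-free one, and then uses Lemma~\ref{stm:substitution} to pass from the latter back to the former. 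You have identified the wrong ``non-routine point'': the degenerate coercivity of $g$ is already absorbed by the hypotheses of Theorem~\ref{stm:stiff-hom}, whereas the boundary layer and the $p$-equiintegrability upgrade are where the actual work lies.
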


The last section is focused on the study of admissible differential operators.
Differently from the other parts of the paper, the analysis of Section \ref{sec:adm} encompasses all linear, $k$-th order, homogeneous differential operators with constant coefficients and constant rank.
It will be useful to consider those operators $\A$ for which there exists a second operator $\B$ with the property that $u=\B w$ for some Sobolev function $w$ whenever $\A u = 0$;
in this case we say that $\B$ is a potential for $\A$.
Our second main result proves that, for $\A$ to admit a potential on a certain open set $\Omega$, it is sufficient that $\A$-free maps on $\Omega$ can be extended to $\A$-free maps on the whole space in such a way that a control on the $L^p$ norm is ensured.
We dub a set $\Omega$ with this property an $\A$-extension domain, see Definition \ref{stm:ext-dom}. As Remark \ref{stm:simplconn} shows, an $\A$-extension domain has to comply with some topological requirements; in particular, it cannot have holes, in general.

\begin{theorem}[Existence of potentials for $\A$-free maps]
\label{stm:raita-bis}
    Let $\A$ be a linear, $k$-th order, homogeneous differential operator
    with constant coefficients and constant rank.
    If $\Omega\subset \Rd$ is a bounded, connected, open set with Lipschitz boundary
    which is also an $\A$-extension domain,
    then there exists $\ell\in \N$, and a differential operator $\B$ of order $\ell$ satisfying the following:
    for all $\A$-free maps $u\in L^p(\Omega;\RN)$
    there is a function $w\in W^{\ell,p}(\Omega;\RM)$
    such that $u=\B w$ almost everywhere in $\Omega$.
\end{theorem}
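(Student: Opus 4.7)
The plan is to use the $\A$-extension hypothesis to transfer the problem from $\Omega$ to the whole of $\Rd$, where Fourier-analytic techniques apply, and then to restrict the resulting potential back to $\Omega$. More precisely, given $u\in L^p(\Omega;\RN)$ with $\A u=0$, I would first invoke the extension property inherent in Definition \ref{stm:ext-dom} to obtain $\tilde u\in L^p(\Rd;\RN)$ satisfying $\A \tilde u=0$ on $\Rd$ together with $\tilde u = u$ a.e.~on $\Omega$ and a uniform $L^p$-control.

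The algebraic core of the construction is the existence of a \emph{symbolic} potential for $\A$. Since $\A$ has constant rank, the family of kernels $\ker \bA[\xi]$ forms a smooth vector bundle over $\Rd\setminus\set 0$. The theorem of Raita \cite{raita} guarantees the existence of a linear homogeneous differential operator $\B$ of some order $\ell\in\N$, with constant coefficients and constant rank, such that $\operatorname{im} \bB[\xi] = \ker \bA[\xi]$ for every $\xi\in \Rd\setminus\set 0$. I would cite this result to fix $\B$ and then construct $w$ via a Fourier multiplier procedure: formally, I set $\hat w(\xi) = \bB[\xi]^+ \hat{\tilde u}(\xi)$, where $\bB[\xi]^+$ denotes the generalized Moore--Penrose inverse of $\bB[\xi]$. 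This pseudoinverse is smooth and $(-\ell)$-homogeneous on $\Rd\setminus\set 0$ by the constant rank property, and it satisfies $\bB[\xi]\bB[\xi]^+\zeta=\zeta$ for every $\zeta\in \operatorname{im}\bB[\xi]$; since $\hat{\tilde u}(\xi)\in \ker \bA[\xi]=\operatorname{im}\bB[\xi]$, one has $\bB[\xi]\hat w(\xi)=\hat{\tilde u}(\xi)$ pointwise for $\xi\neq 0$.

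To turn this formal identity into the statement $w\in W^{\ell,p}(\Omega;\RM)$ with $\B w=u$ a.e.~in $\Omega$, I would note that for every multi-index $\alpha$ with $\va{\alpha}=\ell$ the symbol $(i\xi)^\alpha \bB[\xi]^+$ is smooth and zero-homogeneous on $\Rd\setminus\set 0$, hence satisfies the Mikhlin multiplier condition. This produces bounded operators $L^p(\Rd;\RN)\to L^p(\Rd;\RM)$ yielding $D^\alpha w\in L^p(\Rd;\RM)$ for every such $\alpha$, so that $w\in W^{\ell,p}_{\loc}(\Rd;\RM)$; restricting to the bounded Lipschitz set $\Omega$ then delivers $w\in W^{\ell,p}(\Omega;\RM)$, while $\B w=u$ on $\Omega$ follows from the pointwise Fourier identity above.

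The main obstacle I anticipate is the low-frequency singularity of $\bB[\xi]^+$ at the origin, which prevents $w$ itself from being globally in $L^p(\Rd)$ even though its $\ell$-th derivatives are. I would handle this in the standard way by fixing the non-uniqueness of $w$ modulo polynomials of degree at most $\ell-1$, which is harmless after restriction to the bounded domain $\Omega$; equivalently, one may split $\hat{\tilde u}$ through a smooth Littlewood--Paley cutoff and treat the high- and low-frequency contributions separately, the latter being finite-dimensional once the $\A$-free constraint is imposed. Beyond this technical point the argument is essentially mechanical, the heart of the proof being Raita's symbolic potential theorem combined with the Mikhlin multiplier theorem.
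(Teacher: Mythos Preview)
Your approach is correct and lands close to the paper's; the difference is mainly organizational. The paper also extends to $\tilde u\in L^p(\Rd;\RN)$ and invokes Rai\c{t}\u{a}'s symbolic potential $\B$, but instead of applying the multiplier $(\bB[\omega])^\dagger$ directly to $\tilde u$ it runs an approximation: one takes $u_k\in\mathscr{S}$ with $u_k\to\tilde u$ in $L^p$, projects to $\tilde u_k\coloneqq\sfPi_\A u_k$ (still Schwartz, now $\A$-free), obtains Schwartz potentials $w_k$ via Rai\c{t}\u{a}'s result, and then uses the Guerra--Rai\c{t}\u{a} Korn-type inequality $\|\nabla^\ell(w_k-\sfPi_\B w_k)\|_{L^p(\Rd)}\le c\|\B w_k\|_{L^p(\Rd)}$ together with a higher-order Poincar\'e inequality on $\Omega$ to obtain uniform $W^{\ell,p}(\Omega)$ bounds and pass to the limit. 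Your direct multiplier route is conceptually cleaner; the paper's approximation route sidesteps the low-frequency issue entirely, since for Schwartz data the pseudoinverse construction is unproblematic and the remaining analysis is packaged into an off-the-shelf Korn inequality.

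One caveat: your remark that the low-frequency part is ``finite-dimensional once the $\A$-free constraint is imposed'' is not right---$\A$-free $L^p$ functions with spectrum near the origin are still infinite-dimensional. What \emph{is} finite-dimensional is the ambiguity in $w$ arising from the singularity of $(\bB[\omega])^\dagger$ at $\omega=0$, namely polynomials of degree $<\ell$; your alternative suggestion (define $w$ modulo such polynomials, harmless on the bounded domain $\Omega$) is the correct fix. Relatedly, the pointwise assertion ``$\hat{\tilde u}(\xi)\in\ker\bA[\xi]$'' is only formal when $p\neq 2$; the rigorous substitute is that the $0$-homogeneous multiplier $\bB[\omega]\,\bB[\omega]^\dagger=\mathbb{P}_\A[\omega]$ acts as the identity on $\A$-free fields, i.e.\ $\sfPi_\A\tilde u=\tilde u$, which the paper isolates as Lemma~\ref{stm:proj}.
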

An immediate consequence of this result is that
Assumption \ref{stm:null-av} holds
whenever the unit cube $Q$ is an $\A$-extension domain
(cf. Corollary \ref{cor:u0}).

Our third and last main result shows that for operators admitting a potential
satisfying a suitable Korn-type inequality it is possible to define an extension operator preserving $\A$-free maps. 

\begin{theorem}
\label{thm:exist-ext}
Let $\A$ be a linear, $k$-th order, homogeneous differential operator
    with constant coefficients and constant rank. 
    Let also $\B$ be a linear, $\ell$-th order,  homogeneous differential operator 
    with constant coefficients such that  
    \begin{equation*}
            \ker \bA[\omega] = \im \bB[\omega]
            \quad\text{for all }\omega\in \Rdmz,
        \end{equation*}
        where $\bA$ and $\bB$ are
        the symbols of $\A$ and $\B$, respectively.
    We further assume that
    \begin{itemize}
        \item for all $\A$-free $u\in L^p(\Omega;\RN)$ there exists $w\in W^{\ell,p}(\Omega;\RM)$ satisfying $u=\B w$;
        \item {\RRR for any bounded, connected and open set $D\subset \Rd$ with Lipschitz boundary
            there exist a projection operator on the subspace of $\B$-free maps
            $\sfPi_\B\colon W^{\ell,p}(D;\RM) \to W^{\ell,p}(D;\RM)$,
            as well as a constant $c>0$ such that}
        \begin{equation}\label{eq:genKorn-O}
            \norm{\nabla^\ell(w - \sfPi_\B w)}_{L^p(D;\R^{N\times d^\ell})}
            \leq c \norm{\B w}_{L^p(D;\RN)}
            \quad\text{for all } w\in W^{\ell,p}(D;\RM).    
            \end{equation}
    \end{itemize}
    Then, there exist a constant $c\coloneqq c(d,p,D_1)>0$ independent of $\ep$ and and $\Omega$, as well as a sequence of maps $\set{{\mathsf E}_{\A}^\ep}$, with
     ${\mathsf E}_{\A}^\ep\colon L^p(\Omega;\R^N)\to L^p(\Omega;\R^N)$,
    {\RRR satisfying Assumption \ref{stm:exist-ext}.}
\end{theorem}

It is fundamental to observe that the maps under consideration in the theorem above are already {\it a priori} in the kernel of the operator $\A$ on the whole set $\Omega$. For this reason, it is meaningful to assume the existence of potentials $\B$. In the case in which, instead, our fields were in the kernel of the operator $\A$ only in the perforated domain, its lack of contractibility would prevent the existence of a potential to hold even in the simple setting $d=3$ and $\A={\rm curl}$. We refer once again to Remark \ref{stm:simplconn} for a counterexample.

\subsection{Comparison with other works}\label{sec:comparison}
Before getting to the heart of the matter, we take the chance to compare our main convergence result, Theorem \ref{stm:main},
with similar ones in the literature.

When $\A$ is the curl operator, our analysis is akin to the one performed by {\sc M. Cherdantsev \& K.\ D. Cherednichenko} in \cite{CC}.
However, our conclusions differ from theirs, also in the simple situation of a contractible $\Omega$. Indeed, even though under this topological assumption curl-free maps coincide with gradients, the notion of convergence that we use is much weaker than the strong two-scale convergence considered in \cite{CC}. 
Another key difference with \cite{CC} is that
in our setting convergence of minimizers follows directly from Theorem \ref{stm:main} (see Corollary \ref{stm:minimizers}), whereas in \cite{CC} it had to be shown \emph{a posteriori} (see \cite[Section 10]{CC}).
Actually, the introduction of high-contrast convergence is motivated exactly by the fact that sequences with uniformly bounded energy $\F_\ep$ are sequentially precompact in that sense.

This notion of convergence is inspired by the one considered by {\sc X. Pellet, L. Scardia, \& C. I. Zeppieri} in  \cite{xavier.scardia.zeppieri} to deal with high-contrast Mumford-Shah energies. Unlike that contribution, aside from the role of the differential constraint $\A$, we require additionally that $\set{\ep u_\ep}$ converges to zero, so as to comply with the delicate two-scale characterization in Proposition \ref{stm:HC-limits}.

	We also stress that, in the absence of growth conditions from below on the `soft' part of the energy, the weak $L^p$-topology alone would not ensure convergence of minimizers. We refer to \cite[Section 4]{CC} and \cite[Section 4]{braides.garroni} for a discussion on this topic. 
	
	For a general differential constraint $\A$, our approach does not correspond to the ones in \cite{braides.garroni,CC}.
	If $\A$ does not admit a of suitable `potential' $\B$ satisfying a Korn-type inequality (cf. Theorem \ref{stm:raita-bis} below), then $\A$-free maps and fields $u$ of the form $u=\B w$ are not interchangeable.   
    Another {\it caveat} involves the choice of the `soft' effective energy. Looking at the results in \cite{CC}, one could expect that in our case the $\A$-quasiconvex envelope of $f_0$ is to be retrieved in the $\Gamma$-limit.
    We recall that for a continuous function $g\colon \RN \to \R$ with $p$-growth the $\A$-quasiconvex envelope of $g$ is defined as
    \begin{multline*}
        Q_{\A}g(\xi) \coloneqq
    \inf \left\{
        \int_Q g(\xi+v(z)) \de z
        : v\in L^p_\per(\Rd;\RN) \right. \\
		\left.	\text{ with } \int_Q v(z) \de z = 0 \text{ and } \A v=0 \text{ in } W^{-1,p}(\Td;\RM)
        \right\}.
    \end{multline*}
The example below shows that $Q_\A f_0$ in not the correct limiting energy density.

\begin{example}
\RRR
	Let $\set{{f}_{0,\ep}}$ satisfy {\bf H1}--{\bf H5},
	let $\Omega'\Subset \Omega$, and let $D \subset D_0$ be open.
	In general, if a family $\set{w_\eps}\subset L^p(\Omega'; L^p_\per(\Rd;\RN))$ satisfies
	\begin{align*}
		&
		\set{w_\eps}\text{ is }p\text{-equiintegrable},\\
		&
		w_\eps \weak 0\text{ weakly in }L^p(\Omega'\times Q;\RN), \\
		&
		\A_y w_\eps=0 \text{ in } W^{-1,p}(\Td;\RM) \text{ for a.e. } x\in\Omega'
	\end{align*}
	we cannot conclude that for all $\xi_0\in \RN$ it holds
	\begin{equation}\label{eq:lsc-ex}
		\int_{\Omega'}\int_{D}  Q_{\A}f_0(\xi_0)\,\de y\,\de x
		\leq 
		\liminf_{\eps \to 0}
		\int_{\Omega'}\int_{D} {f}_{0,\ep} \big( \xi_0 + w_\eps(x,y) \big)\, \de y\,\de x,
	\end{equation}
	as the following counterexample proves.
	
	For $d=2$ and $N=2\times 2$, up to translations and dilations,
	we may assume $\Omega' \coloneqq (0,1)\times(0,1)=Q$.
	We focus on the case $\A=\mathrm{curl}$, in which
	the $\A$-quasiconvex envelope $Q_\A f$ of the function $f$ coincides
	with the well-know quasiconvex envelope $Qf$
	\cite[Section 6.3]{dacorogna2}.
	Given $\lambda>0$, as energy densities
	we set $f_{0,\ep}(\xi)=f_0(\xi)=\psi_\lambda(\xi)-\det(\xi)$ for every $\ep>0$,
	where $\psi_\lambda \colon \R^{2\times 2} \to \R$ is a convex function such that
	\[
	\psi_\lambda(\xi) =
		\begin{cases}
		0												& \text{if } \left|\xi\right| < 2, \\
		(2+\lambda)\left|\xi\right|^2 	& \text{if } \left|\xi\right| \geq 3.
		\end{cases}	
	\]	
	In this way, requirements {\bf H1} and {\bf H3}--{\bf H5} are met,
	and $Qf_{0,\ep}=Qf_0=f_0$, because $f_{0,\ep}=f_0$ is quasiconvex.
	
	In order to disprove \eqref{eq:lsc-ex}, we pick $\xi_0 = 0$ and,
	for $x=(x_1,x_2) \in (0,1)\times(0,1)$ and $y\in D$,
	we define
	\[
		a(x)\coloneqq
		\begin{cases}
		- \mathbb{I} & \text{if } x_1 \in \left(0,\dfrac{1}{2}\right]\\[11pt]
		\mathbb{I} & \text{if } x_1 \in \left(\dfrac{1}{2},1\right)
		\end{cases}
		\quad \text{and} \quad
		w_\eps(x,y) \coloneqq a\left(\frac{x}{\eps}\right)+\eps \mathbb{I},
	\]
	where $\mathbb{I}$ is the identity matrix in $\R^{2\times 2}$ and
	$a$ is extended by $Q$-periodicity.
	Then, $\set{w_\eps}$ is $p$-equiintegrable and
	$w_\eps\wto 0$ weakly in $L^p(\Omega';\R^{2\times 2})$.
	Besides, $\A_y w_\eps=\mathrm{curl}_y w_\eps=0$,
	because the sequence does not depend on $y$.
	
	For such particular choices, we see that the left-hand side of \eqref{eq:lsc-ex} equals $0$,
	while for the right-hand side we find
	\begin{multline*}
		\lim_{\eps \to 0}
		\int_{\Omega'}\int_{D}
			\Big[\psi_\lambda \big(w_\eps(x,y) \big) - \det\big( w_\eps(x,y)\big) \Big] 
	 	\de y\,\de x
	 	\\ = - \lim_{\eps \to 0}
	 	\int_{\Omega'}\int_{D} \det\big( w_\eps(x,y)\big) \de y\,\de x
	 	= - \mathscr{L}^2(D) < 0.
	\end{multline*}
\EEE
\end{example}

\section{Preliminaries}\label{sec:preliminaries}
We gather in this section
some preliminary definitions and results to be used later in the paper.

\subsection{Measurable selection arguments}
We recall here a measurable selection criterion that we will invoke in the proof of Corollary \ref{cor:u0}.
For a thorough discussion on the topic
we refer to the book by {\sc C. Castaing \& M. Valadier} \cite{castaing.valadier}.

\begin{proposition}[Theorem III.6 in \cite{castaing.valadier}]
    \label{stm:castaing-valadier}
    Let $S$ be a multifunction
    defined on the measurable space $\mathscr{O}$
    and taking values in the collection of nonempty complete subsets
    of the separable metric space $X$.
    If for all open $O\subset X$
    the set $\Set{ \omega\in \mathscr{O} : S(\omega) \cap O \neq \emptyset}$ is measurable,
    then $S$ admits a measurable selection,
    that is,
    there exists a measurable function
    $s\colon \mathscr{O} \to X$ such that $s(x) \in S(x)$ for all $x\in \mathscr{O}$.
\end{proposition}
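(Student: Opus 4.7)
The plan is to follow the classical Kuratowski--Ryll-Nardzewski construction of a measurable selection by successive approximations along a fixed countable dense subset $\set{x_j}_{j\in\N} \subset X$. Concretely, I would construct a sequence $\set{s_n}$ of measurable maps $s_n\colon \mathscr{O} \to X$, each taking values in $\set{x_j}_{j\in\N}$, such that for every $\omega\in\mathscr{O}$
\[
    \dist\big(s_n(\omega), S(\omega)\big) < 2^{-n}
    \qquad\text{and}\qquad
    d\big(s_{n+1}(\omega), s_n(\omega)\big) < 2^{-n+1},
\]
and then pass to a pointwise limit.

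For the base case I would set $A_j^{(0)} \coloneqq \set{\omega : S(\omega) \cap B(x_j,1)\neq \emptyset}$, which is measurable by the hypothesis of the proposition. Density of $\set{x_j}$ yields $\bigcup_j A_j^{(0)} = \mathscr{O}$; disjointifying produces a measurable partition $B_j^{(0)} \coloneqq A_j^{(0)} \setminus \bigcup_{i<j} A_i^{(0)}$, and setting $s_0 \coloneqq x_j$ on $B_j^{(0)}$ gives a measurable map with $\dist(s_0(\omega), S(\omega)) < 1$. For the inductive step, given $s_n$, I would introduce
\[
    A_j^{(n+1)} \coloneqq \set{\omega : S(\omega) \cap B(x_j, 2^{-n-1}) \neq \emptyset}
    \cap \set{\omega : d\big(x_j, s_n(\omega)\big) < 2^{-n} + 2^{-n-1}}.
\]
The first set is measurable by the hypothesis on $S$, the second because $s_n$ is measurable; their union over $j$ covers $\mathscr{O}$ since, given $\omega$, picking $y \in S(\omega)$ with $d(y,s_n(\omega)) < 2^{-n}$ and then $x_j$ with $d(x_j,y) < 2^{-n-1}$, one lands in $A_j^{(n+1)}$. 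Disjointifying once more and defining $s_{n+1} \coloneqq x_j$ on the resulting $B_j^{(n+1)}$ delivers a measurable step function with the required estimates.

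Once the sequence is in place, the bound $d(s_{n+1}(\omega), s_n(\omega)) < 2^{-n+1}$ makes $\set{s_n(\omega)}$ Cauchy in $X$. To ensure that its limit lives in $S(\omega)$, I would pick $y_n \in S(\omega)$ with $d(s_n(\omega), y_n) < 2^{-n}$; by the triangle inequality $\set{y_n}\subset S(\omega)$ is itself Cauchy, hence by the completeness of $S(\omega)$ it converges to some $s(\omega)\in S(\omega)$, which must coincide with $\lim_n s_n(\omega)$. The resulting map $s$ is measurable as a pointwise limit of measurable functions, and $s(\omega)\in S(\omega)$ by construction.

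The core difficulty is essentially bookkeeping: checking at every stage that the auxiliary sets remain measurable so that the induction propagates, and verifying that the two constraints imposed on $s_{n+1}$ are simultaneously satisfiable for every $\omega$. The hypothesis on $S$ enters exclusively through the measurability of sets of the form $\set{\omega: S(\omega) \cap O \neq \emptyset}$ with $O$ an open ball; separability of $X$ is needed to produce the countable dense set driving the construction; completeness of the fibres is used only at the final step, to guarantee that the Cauchy limit cannot escape $S(\omega)$.
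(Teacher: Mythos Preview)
Your argument is the classical Kuratowski--Ryll-Nardzewski successive-approximation scheme and is correct as outlined; the inductive coverage check, the Cauchy estimate, and the use of completeness of the fibres to trap the limit are all in order. Note, however, that the paper does not supply its own proof of this proposition: it is quoted as a preliminary from \cite{castaing.valadier} (Theorem III.6 there) and invoked later as a black box in the proof of Corollary \ref{cor:u0}, so there is nothing to compare against beyond the referenced source.
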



\subsection{$\A$-free fields and two-scale limits}
\label{subs:prel-A-free}
In this subsection we include some useful tools
for the variational analysis of problems
under differential constraints.

We recall two results about
(asymptotically) $\A$-free fields.
The operator $\A$ is always assumed to be of the form \eqref{eq:op-A} and of constant rank.

\begin{lemma}[$\A$-free decomposition, Lemma 2.15 in \cite{fonseca.muller}]
\label{stm:Adecomp}
	Let $\Omega \subset \Rd$ be an open, bounded domain, and let $p\in (1,+\infty)$. Let $\set{u_k}\subset L^p(\Omega;\RN)$ be a bounded sequence such that $\A u_k\to 0$ strongly in $W^{-1,p}(\Omega;\R^M)$.
	Then, there exist a subsequence $\set{u_{j_k}}$ and
	a sequence $\set{v_k} \subset L^p(\Omega;\RN)$
	such that the following holds:
	\begin{enumerate}
		\item $\set{v_k}$ is bounded, $\A$-free, and $p$-equiintegrable;
		\item $u_{j_k} - v_k\to 0$  strongly in $L^q(\Omega;\RN)$ for every $q\in[1,p)$.
	\end{enumerate}
\end{lemma}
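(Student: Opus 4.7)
The plan is to follow the classical two-step argument of Fonseca--M\"uller, which combines a Fourier-analytic projection onto $\A$-free fields (available thanks to the constant rank condition) with a decomposition lemma à la Kristensen / Fonseca--M\"uller--Pedregal that upgrades $L^p$-boundedness to $p$-equiintegrability.

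\textbf{Step 1: $L^p$-bounded projector onto the kernel of $\A$.} By the constant rank assumption, the orthogonal projection $P(\omega)$ onto $\ker \bA[\omega] \subset \RN$ depends smoothly on $\omega \in \Rdmz$ and is positively homogeneous of degree zero. Mihlin's multiplier theorem then produces a bounded operator $T \colon L^p(\Rd;\RN) \to L^p(\Rd;\RN)$, for every $p\in(1,+\infty)$, defined by $\widehat{Tu}(\omega)\coloneqq P(\omega)\hat{u}(\omega)$, whose image consists of $\A$-free fields on $\Rd$. The complementary multiplier $I-P$ factors (on $\im \bA[\omega]^*$) through the Moore--Penrose pseudo-inverse $\bA[\omega]^\dagger$, which is homogeneous of degree $-1$; combined with the first-order action of $\A$ in Fourier, this yields the crucial quantitative estimate
\begin{equation*}
\norm{u - Tu}_{L^p(\Rd;\RN)} \leq C\, \norm{\A u}_{W^{-1,p}(\Rd;\RM)}
\quad\text{for every } u\in L^p(\Rd;\RN).
\end{equation*}

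\textbf{Step 2: Extension and projection.} I would extend each $u_k$ to $\tilde u_k \in L^p(\Rd;\RN)$ supported in a fixed bounded neighborhood of $\Omega$, with $\norm{\tilde u_k}_{L^p(\Rd)}\lesssim \norm{u_k}_{L^p(\Omega)}$, chosen so that $\A \tilde u_k \to 0$ in $W^{-1,p}(\Rd;\RM)$. (A concrete recipe: pick a bounded Lipschitz $\Omega' \Supset \Omega$, extend by a bounded $L^p$-extension to $\Omega'$, then multiply by a smooth cutoff equal to $1$ on $\Omega$; the $W^{-1,p}$-error is controlled by $\norm{\A u_k}_{W^{-1,p}(\Omega)}$ plus a boundary contribution of the form $\nabla\chi \cdot u_k$, which is bounded in $L^p$ and concentrated on a fixed compact set, hence relatively compact in $W^{-1,p}$ thanks to Rellich--Kondrachov and may be arranged to vanish along a subsequence.) Setting $\tilde v_k \coloneqq T\tilde u_k$ gives a bounded sequence in $L^p(\Rd;\RN)$ of $\A$-free fields with $\norm{\tilde u_k - \tilde v_k}_{L^p(\Rd)} \to 0$ by Step 1. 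Restricting to $\Omega$ yields a bounded $\A$-free sequence $w_k \coloneqq \tilde v_k|_\Omega$ with $u_k - w_k \to 0$ strongly in $L^p(\Omega;\RN)$, and in particular in every $L^q$ with $q\in [1,p)$.

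\textbf{Step 3: Upgrade to $p$-equiintegrability.} The sequence $\{w_k\}$ need not be $p$-equiintegrable, since concentrations of $|w_k|^p$ may persist. I would invoke the decomposition lemma for $\A$-free sequences (the $\A$-quasiconvex counterpart of the Fonseca--M\"uller--Pedregal/Kristensen lemma): up to a subsequence there exists a $p$-equiintegrable $\A$-free sequence $v_k\in L^p(\Omega;\RN)$ such that $\Ld(\{v_k \neq w_k\}) \to 0$. Since $\{w_k\}$ and $\{v_k\}$ are both bounded in $L^p$, convergence in measure upgrades to strong $L^q$-convergence for every $q<p$. Combined with Step 2 this gives $u_{j_k} - v_k \to 0$ in $L^q(\Omega;\RN)$ for every $q\in[1,p)$, concluding the proof.

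\textbf{Main obstacle.} The hardest point is Step 3: producing the $p$-equiintegrable replacement $v_k$ \emph{inside} the cone of $\A$-free maps. Truncations of bounded $L^p$-sequences shed the concentrating parts but destroy the differential constraint. The remedy is to apply the projector $T$ from Step 1 after the truncation, and to show that the resulting error is small in $L^q$ for $q<p$; the quantitative bound $\norm{(I-T)(\cdot)}_{L^p}\leq C\norm{\A(\cdot)}_{W^{-1,p}}$ makes this delicate, since the $\A$-defect of a truncation is concentrated on sets of small measure but can be large in $W^{-1,p}$, which is precisely why the decomposition argument has to be run on a subsequence and why only $L^q$-convergence ($q<p$), rather than $L^p$-convergence, is claimed in conclusion~(2).
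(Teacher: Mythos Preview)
The paper does not prove this lemma; it is quoted verbatim as a preliminary from \cite{fonseca.muller} (their Lemma~2.15) and used as a black box. So there is no ``paper's own proof'' to compare against, and your sketch should be read as an attempt to reproduce the original Fonseca--M\"uller argument. In that light your overall architecture is correct: the constant-rank projector $T$ of Step~1 together with the estimate $\norm{(I-T)u}_{L^p}\lesssim\norm{\A u}_{W^{-1,p}}$ is exactly the engine of the proof (cf.\ Lemmas~\ref{stm:murat} and~\ref{stm:proj} in the present paper for closely related statements), and the equiintegrability upgrade of Step~3 via truncation followed by re-projection is the right idea.

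There is, however, a genuine gap in Step~2. Relative compactness of the commutator term $[\A,\chi]\,\tilde u_k$ in $W^{-1,p}$ does \emph{not} allow you to ``arrange it to vanish along a subsequence'': along a subsequence it converges, but to the commutator applied to the weak $L^p$-limit of $\tilde u_k$, which has no reason to be zero. The standard repair is to first pass to a subsequence with $u_k\weak u$ in $L^p(\Omega;\RN)$, note that $u$ is itself $\A$-free (so it can be absorbed into $v_k$ at the end), and run your localisation/projection argument on $u_k-u$. Then the commutator term converges weakly to $0$ in $L^p$ with fixed compact support, and the compact embedding $L^p\hookrightarrow W^{-1,p}$ upgrades this to strong $W^{-1,p}$-convergence to $0$, which is what your key estimate needs. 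A second, more cosmetic point: the original proof in \cite{fonseca.muller} works with Fourier \emph{series} on a cube containing $\Omega$ (periodic setting) rather than the Fourier transform on $\Rd$; this sidesteps the extension-to-$\Rd$ issues entirely, and you may find it cleaner to follow that route.
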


\begin{lemma}[$\A$-free periodic extension, Lemma 2.8 in \cite{fonseca.kromer}]
\label{stm:Afree-ext}
    Let $D\subset Q$ be open, and let 
 $p\in(1,+\infty)$.
    Let $\set{u_k}\subset L^p(D;\RN)$ be a $p$-equiintegrable sequence such that
    $u_k \weak 0$ weakly in $L^p(D;\RN)$ and $\A u_k \to 0$ in $W^{-1,p}(D;\RM)$. Then,
    there exists an $\A$-free sequence $\set{v_k}\subset L^p_\per(\Rd;\RN)$ 
    which is $p$-equiintegrable in $Q$ and satisfies
    \begin{align*}
		&v_k - u_k \to 0	\quad \text{strongly in } L^p(D;\RN),
		\quad
		v_k \to 0 \quad \text{strongly in } L^p(Q\setminus D;\RN), \\
		&\int_Q v_k(x,y) \de y = 0,
		\quad
		\norm{v_k}_{L^p(Q;\RN)} \leq c(\A,D) \norm{u_k}_{L^p(D;\RN)}\quad\text{for all }k\in \N.
		\end{align*}
\end{lemma}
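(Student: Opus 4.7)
The plan is to construct $v_k$ by first extending $u_k$ to a $Q$-periodic field on $\R^d$ and then projecting the result onto the space of $\A$-free zero-mean periodic fields via a Fourier multiplier. I would pick cutoffs $\eta_k \in \Cc^\infty(D)$ with $0 \le \eta_k \le 1$ such that $\eta_k u_k - u_k \to 0$ strongly in $L^p(D;\RN)$; this is possible because $\{u_k\}$ is $p$-equiintegrable, so the contribution over any shrinking neighborhood of $\partial D$ vanishes in $L^p$. Setting $\bar u_k$ to be $\eta_k u_k$ extended by zero to $Q$ and continued $Q$-periodically to $\R^d$, the sequence $\{\bar u_k\}$ lies in $L^p_\per(\R^d;\RN)$, is $p$-equiintegrable on $Q$, converges weakly to $0$ in $L^p(Q;\RN)$, agrees with $u_k$ on $D$ up to an $L^p$-negligible remainder, and vanishes identically on $Q\setminus D$.

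The key step is the projection. Using the constant-rank hypothesis, the orthogonal projector $\omega\mapsto P[\omega]$ onto $\ker\bA[\omega]$ extends smoothly and $0$-homogeneously to $\R^d\setminus\{0\}$, so one can define a discrete Fourier multiplier $P$ on $\Td$ by $\widehat{Pw}(\xi) = P[\xi]\hat w(\xi)$ for $\xi\in\Z^d\setminus\{0\}$ and $\widehat{Pw}(0) = 0$. By Marcinkiewicz--Mikhlin, $P$ is bounded on $L^p_\per(\R^d;\RN)$, and being of Calder\'on--Zygmund type it preserves $p$-equiintegrability. Setting $v_k := P\bar u_k$, one automatically gets $\A v_k = 0$ on $\Td$, $\int_Q v_k\,\de y = 0$, the norm bound $\|v_k\|_{L^p(Q;\RN)} \le c(\A,D)\|u_k\|_{L^p(D;\RN)}$, and $p$-equiintegrability of $\{v_k\}$. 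Showing $v_k - u_k \to 0$ in $L^p(D)$ and $v_k \to 0$ in $L^p(Q\setminus D)$ then reduces to $v_k - \bar u_k \to 0$ in $L^p(Q;\RN)$; since $v_k - \bar u_k = -(I - P)\bar u_k$ and under constant rank one has $I - P[\xi] = \bA^\ast[\xi](\bA[\xi]\bA^\ast[\xi])^{-1}\bA[\xi]$ on $\im\bA^\ast[\xi]$, the composition with the order $-1$ inversion of $\A$ yields
\[
    \|v_k - \bar u_k\|_{L^p(Q;\RN)} \le C\|\A \bar u_k\|_{W^{-1,p}(\Td;\RM)} + C\bigl|\textstyle\int_Q \bar u_k\,\de y\bigr|.
\]
The mean vanishes in the limit by weak convergence, so the task further reduces to $\A \bar u_k \to 0$ in $W^{-1,p}(\Td;\RM)$.

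This last convergence is the main obstacle. By the Leibniz rule $\A\bar u_k = \eta_k \A u_k + \sum_i A^{(i)}(\partial_i\eta_k) u_k$: the first term vanishes in $W^{-1,p}$ because multiplication by the smooth bounded cutoff is continuous on $W^{-1,p}$ and $\A u_k \to 0$ in $W^{-1,p}(D)$, while the commutator admits only the crude bound $\|\nabla\eta_k\|_\infty\|u_k\|_{L^p(\mathrm{supp}\,\nabla\eta_k)}$, in which $\|\nabla\eta_k\|_\infty$ must blow up as $\eta_k$ approaches $\chi_D$. The fix, and the technical heart of the argument, is a diagonal extraction: tune the transition-layer thickness $\delta_k\downarrow 0$ of $\eta_k$ to the modulus $\omega$ of uniform integrability of $\{\va{u_k}^p\}$ so that $\delta_k^{-1}\omega(c\delta_k)^{1/p} \to 0$. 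This precise balance is exactly what makes the $p$-equiintegrability hypothesis indispensable in the statement.
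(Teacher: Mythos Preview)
The paper does not prove this lemma; it merely quotes it from Fonseca--Kr\"omer, so there is no in-paper proof to compare against. Your overall architecture---cutoff, periodic extension by zero, then project via the Fourier multiplier $P[\xi]$ onto zero-mean $\A$-free periodic fields---is exactly the right one, and the properties you derive from boundedness of $P$ (the norm bound, zero mean, $\A$-freeness, preservation of $p$-equiintegrability) are fine.

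There is, however, a genuine gap in the commutator step. Your proposed diagonal choice ``$\delta_k^{-1}\omega(c\delta_k)^{1/p}\to 0$'' is \emph{not} achievable from $p$-equiintegrability alone: it asks for $\omega(t)=o(t^p)$ along some sequence $t\to 0$, but a single fixed function like $u(x)=|x|^{-\alpha}$ on a ball with $0<\alpha<d/p$ has modulus $\omega(t)\sim t^{1-\alpha p/d}$, and then $\omega(t)/t^p\sim t^{1-\alpha p/d-p}\to\infty$ for every $t\to 0$. So no balance of layer thickness against equiintegrability can kill that term. (A related issue lurks in your treatment of the first term $\eta_k\A u_k$: multiplication on $W^{-1,p}$ is continuous with constant $\sim\|\eta_k\|_{W^{1,\infty}}$, not $\|\eta_k\|_\infty$, so you are also silently paying a factor $\delta_k^{-1}$ there; that one \emph{can} be absorbed by choosing $\delta_k$ slowly, but you should say so.)

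The repair is to bring in the one hypothesis you have not used for the commutator: the weak convergence $u_k\weak 0$. Work first with a \emph{fixed} cutoff $\eta\in C_c^\infty(D)$. Then $[\A,\eta]u_k=\sum_i A^{(i)}(\partial_i\eta)u_k$ is a fixed $C_c^\infty$-multiple of $u_k$, hence converges weakly to $0$ in $L^p(Q;\RN)$, and by the compact embedding $L^p\hookrightarrow W^{-1,p}$ it converges \emph{strongly} to $0$ in $W^{-1,p}(\Td;\RM)$; similarly $\eta\,\A u_k\to 0$ in $W^{-1,p}$ with a fixed multiplication constant. This gives $P(\eta u_k)-\eta u_k\to 0$ in $L^p(Q)$ for each fixed $\eta$. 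Now let $\eta_j\nearrow\chi_D$ and use $p$-equiintegrability only for the harmless term $\|(1-\eta_j)u_k\|_{L^p(D)}\le\omega(|\{\eta_j<1\}|)^{1/p}$, which is small uniformly in $k$ once $j$ is large. A standard diagonal argument in $(j,k)$ then produces the sequence $v_k$ with all stated properties.
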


To perform our analysis,
we need to track
how the differential constraint behaves along sequences
that converge in the sense of Definition \ref{stm:HC-conv} above.
In this respect, the notion of two-scale convergence \cite{nguetseng,allaire} will be crucial:

\begin{definition}
\label{stm:twoscaleconv}
We say that
$\set{u_\eps}\subset L^p(\Omega;\RN)$ \emph{weakly two-scale converges in $L^p$}
to a function $u\in L^p(\Omega;L^p_\per(\Rd;\RN))$
if for all $v\in L^{p'}(\Omega;C_\per(\Rd;\RN))$ it holds
\[
    \lim_{\eps\to 0} \int_{\Omega} u_\eps(x)\cdot v \left(x,\frac{x}{\eps}\right) \de x = \int_\Omega \int_Q u(x,y) \cdot v(x,y)\,\de y \de x.
\]
In this case we write $u_\eps \weakts u$.

We say that $\set{u_\eps}\subset L^p(\Omega;\RN)$ \emph{strongly two-scale converges in $L^p$}
to $u\in L^p(\Omega;L^p_\per(\Rd;\RN))$ if $u_\ep\weakts u$ in $L^p$ and 
$\|u_\ep\|_{L^p(\Omega;\R^N)}\to \|u\|_{L^p(\Omega;L^p(Q;\R^N))}$.
In this case we write $u_\eps \stackrel{2}{\to} u$ strongly in $L^p$.
\end{definition}

For the sake of completeness,
we record in the following lines the properties of two-scale convergence
to be exploited in this work;
we refer to \cite{allaire,visintin1} for further reading.

\begin{lemma}
\label{lemma:2-scale}
    Let $\set{u_\eps}\subset L^p(\Omega;\RN)$ be a sequence.
    \begin{enumerate}
        \item If $\set{u_\eps}$ is weakly two-scale convergent,
            then it is bounded in $L^p(\Omega;\RN)$; conversely,
            if $\set{u_\eps}$ is bounded in $L^p(\Omega;\R^N)$,
            then, it possesses a subsequence
            which is weakly two-scale convergent.
        \item If $u_\eps \weakts u$ weakly two-scale in $L^p$,
            then $u_\eps \weak \int_Q u(\,\cdot\,,y)\de y$ weakly in $L^p(\Omega;\RN)$.
        \item If $u_\eps \weakts u$ weakly two-scale in $L^p$
            and $\set{v_\eps}\subset L^p(\Omega;\RN)$ is another sequence
            with the property that
            $v_\eps \stackrel{2}{\to} v$ strongly two-scale in $L^p$,
            then $u_\eps v_\eps \weakts uv$ weakly two scale in $L^p$.
        \item If $u\in L^p(\Omega;C_{\rm per}(\R^d;\R^N))$ or $u\in C(\bar{\Omega};L^p_{\rm per}(\R^d;\R^N))$, then the sequence $\set{u_\ep}$ defined as
        $$u_\ep(x):=u\left(x,\frac{x}{\ep}\right)\quad\text{ for a.e. }x\in \Omega$$
        is $p$-equiintegrable and $u_\eps \stackrel{2}{\to} u$ strongly in $L^p$. 
    \end{enumerate}
\end{lemma}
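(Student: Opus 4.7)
The plan is to establish the four items in turn, relying on standard tools: Banach--Steinhaus and Riesz representation for (1), test-function restriction plus Fubini for (2), a weak-strong decomposition for (3), and approximation by $C(\bar\Omega \times \Td;\RN)$ for (4).

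For (1), I associate to each $u_\ep$ the continuous linear functional $T_\ep$ on $L^{p'}(\Omega;C_\per(\Rd;\RN))$ given by $T_\ep(v) \coloneqq \int_\Omega u_\ep(x) \cdot v(x,x/\ep)\,\de x$. Choosing $v(x,y) = \phi(x)\,e_i$ with $\phi \in L^{p'}(\Omega)$ and $e_i$ a canonical basis vector of $\RN$ recovers the components of $u_\ep$, so $\norm{T_\ep}$ is comparable to $\norm{u_\ep}_{L^p(\Omega;\RN)}$. Weak two-scale convergence is precisely pointwise convergence of $\set{T_\ep}$ on the space of admissible test functions, so Banach--Steinhaus yields the claimed $L^p$-bound. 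For the converse, the separability of $L^{p'}(\Omega;C_\per(\Rd;\RN))$ combined with a diagonal extraction produces a subsequence along which $T_{\ep_k}$ converges on a countable dense family; the uniform bound extends the limit by continuity to the whole space, and Riesz representation (via the canonical embedding $L^p(\Omega \times Q;\RN) \hookrightarrow L^{p'}(\Omega;C_\per(\Rd;\RN))^\ast$) identifies the limit functional with an element $u \in L^p(\Omega \times Q;\RN)$.

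Item (2) is immediate: choose $v(x,y) \coloneqq \phi(x)$ independent of $y$, with $\phi \in L^{p'}(\Omega;\RN)$, and apply Fubini on the right-hand side to identify $\int_Q u(\cdot,y)\,\de y$ as the weak $L^p$-limit. For (3), fix any admissible test function $\phi$ and split
\[
\int_\Omega u_\ep v_\ep\,\phi(x,x/\ep)\,\de x - \int_{\Omega \times Q} u v \phi\,\de y\,\de x = \int_\Omega u_\ep\bigl[v_\ep - v(x,x/\ep)\bigr]\phi(x,x/\ep)\,\de x + \Bigl[\int_\Omega u_\ep\,(v\phi)(x,x/\ep)\,\de x - \int_{\Omega \times Q} u\,(v\phi)\,\de y\,\de x\Bigr].
\]
The first term vanishes by H\"older's inequality, since $\set{u_\ep}$ is bounded in $L^p$ by (1) and $\norm{v_\ep - v(\cdot,\cdot/\ep)}_{L^p} \to 0$ follows from strong two-scale convergence after approximating $v$ by a bounded admissible function. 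The bracketed term vanishes because $v\phi$ is itself (or is approximable by) an admissible test function and $u_\ep \weakts u$.

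For (4), I first treat $w \in C(\bar\Omega \times \Td;\RN)$: uniform continuity yields the Riemann-sum-type limit $\int_\Omega \va{w(x,x/\ep)}^p\,\de x \to \int_{\Omega \times Q} \va{w}^p\,\de y\,\de x$, from which both $p$-equiintegrability of $\set{w(\cdot,\cdot/\ep)}$ and strong two-scale convergence to $w$ follow (the weak two-scale convergence being a direct calculation on products $w_1(x)w_2(y)$ extended by density). Next, the density of $C(\bar\Omega \times \Td;\RN)$ in $C(\bar\Omega;L^p_\per(\Rd;\RN))$ (via mollification in $y$) and in $L^p(\Omega;C_\per(\Rd;\RN))$ (via approximation in $x$ by simple functions valued in $C_\per$) transfers these properties to arbitrary admissible $u$. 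The main delicate point, which I expect to require the most care, is arranging the approximation so that both $p$-equiintegrability and the convergence of $L^p$-norms survive the limit; this is handled by the Vitali convergence theorem, controlling tail integrals uniformly in $\ep$ by the triangle inequality against a smooth approximant whose tail has been made arbitrarily small beforehand.
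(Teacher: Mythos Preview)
The paper does not prove this lemma: it is recorded as a list of standard properties of two-scale convergence, with explicit reference to \cite{allaire,visintin1} for the proofs. Your outline follows essentially the same arguments as in that literature (Banach--Steinhaus plus a density/Riesz step for compactness, the weak--strong product rule via approximation of $v$ by admissible oscillating functions, and the Riemann--Lebesgue type computation for continuous-in-$y$ data extended by density), so there is nothing to compare against.

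Two small points of care, which do not affect the paper's use of the lemma. In (1), the dual of $L^{p'}(\Omega;C_\per(\Rd;\RN))$ is a space of parametrized measures, strictly larger than $L^p(\Omega\times Q;\RN)$; the standard way to land in $L^p$ is to observe that the limit functional satisfies $\va{T(v)}\leq C\norm{v}_{L^{p'}(\Omega\times Q)}$ for admissible $v$ (by lower semicontinuity of norms along the oscillating test functions) and then extend by density --- your ``canonical embedding'' phrasing hides this step. In (3), as literally stated the product $u_\ep v_\ep$ need not be bounded in $L^p$ when both factors are merely $L^p$; the result is usually understood in $L^{p/2}$ or distributionally, or with $v_\ep$ bounded. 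The paper only invokes (3) with $v_\ep=\chi_{i,\ep}\in L^\infty$, so your splitting works without trouble in that case.
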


It is well-known that
two-scale convergence in $L^p$ can be related to $L^p$ convergence
by means of the unfolding operator.

\begin{lemma}
\label{stm:unfolding}
    For $\eps>0$, let us define the \emph{unfolding operator}
    $\mathsf{S}_\eps\colon L^p(\Omega)\to L^p(\Rd;L^p_\per(\Rd;\RN))$ as
	\[
		\mathsf{S}_\eps v(x,y) \coloneqq  \tilde v \left( \eps \left\lfloor \frac{x}{\eps}\right\rfloor + \eps y \right),
	\]
	where $\tilde v$ denotes the extension of $v$ by $0$ outside $\Omega$.
	Then, if $\set{u_\eps}\subset L^p(\Omega;\RN)$ is a bounded sequence,
	the following holds:
	\begin{enumerate}
	    \item $u_\eps \weakts u$ weakly two-scale in $L^p$ if and only if
	        $\mathsf S_\eps u_\eps \weak u$ weakly in $L^p(\R^d\times Q;\RN)$;
	   \item $u_\eps \stackrel{2}{\to} u$ strongly two-scale in $L^p$ if and only if
	        $\mathsf S_\eps u_\eps \to u$ strongly in $L^p(\R^d\times Q;\RN)$.
	\end{enumerate}
	Moreover, if $\set{u_\eps}$ is $p$-equiintegrable,
	the sequence of unfoldings $\set{\mathsf{S}_\eps u_\eps}$ is $p$-equiintegrable too in $L^p(\Rd\times Q;\RN)$.
\end{lemma}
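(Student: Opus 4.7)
The plan is to reduce the lemma to two structural properties of the unfolding operator: an exact $L^p$-norm preservation and an approximate pairing identity against oscillating test functions.

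First I would check that $\norm{\mathsf{S}_\ep v}_{L^p(\R^d\times Q;\RN)} = \norm{v}_{L^p(\Omega;\RN)}$ for every $v\in L^p(\Omega;\RN)$. Since $\mathsf{S}_\ep v(x,y)$ depends on $x$ only through the cell $\ep(z+Q)$ containing $x$ (where $z=\lfloor x/\ep\rfloor$), one splits $\R^d$ into the disjoint tiles $\{\ep(z+Q)\}_{z\in\Zd}$ and applies the change of variables $w=\ep z + \ep y$ cell by cell to obtain the equality exactly. Next, for any test function $\phi\in C_c(\R^d;C_\per(\R^d;\RN))$ I would establish the identity
\[
\int_\Omega v(x)\cdot\phi(x,x/\ep)\,\de x = \int_{\R^d}\int_Q \mathsf{S}_\ep v(x,y)\cdot\phi(x,y)\,\de y\,\de x + R_\ep(v,\phi),
\]
with $\va{R_\ep(v,\phi)}\le \omega_\phi(\ep)\norm{v}_{L^p(\Omega;\RN)}$ for a modulus $\omega_\phi(\ep)\to 0$. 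The argument is again cellwise: on $\ep(z+Q)$ we have $\mathsf{S}_\ep v(x,y)=v(\ep z+\ep y)$, and the change $w=\ep z+\ep y$ converts the left-hand side into the right-hand side provided $\phi(x,y)$ is replaced by $\phi(\ep z+\ep y,y)$; the discrepancy is estimated by the uniform continuity of $\phi$ in the first slot and H\"older's inequality.

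Given these two ingredients, assertion (1) follows directly. Norm preservation yields $\{\mathsf{S}_\ep u_\ep\}$ bounded in $L^p(\R^d\times Q;\RN)$, hence weakly precompact; the pairing identity, applied to $\phi\in C_c(\R^d;C_\per(\R^d;\RN))$ and passed to the limit, forces every weak accumulation point $w$ of $\{\mathsf{S}_\ep u_\ep\}$ to satisfy $\iint w\cdot\phi = \iint u\cdot\phi$. Density of such $\phi$ in $L^{p'}(\R^d\times Q;\RN)$ then gives $w=u$ and hence convergence of the full sequence. The converse implication is obtained from the same identity by inspection. Claim (2) reduces to (1) by combining the norm-preserving identity with the definition of strong two-scale convergence: the condition $\norm{u_\ep}_{L^p(\Omega;\RN)}\to\norm{u}_{L^p(\Omega;L^p(Q;\RN))}$ is equivalent, via $\mathsf{S}_\ep$, to $\norm{\mathsf{S}_\ep u_\ep}_{L^p(\R^d\times Q;\RN)}\to\norm{u}_{L^p(\R^d\times Q;\RN)}$ (using that $u$ is supported in $\Omega\times Q$), and this together with the weak convergence from (1) plus uniform convexity of $L^p$ is equivalent to strong $L^p$ convergence of unfoldings.

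For the final equiintegrability statement, I would invoke the de la Vall\'ee Poussin criterion: $p$-equiintegrability of $\{u_\ep\}$ is equivalent to the existence of a nonnegative Borel function $\theta$ with $\theta(t)/t\to +\infty$ as $t\to +\infty$ and $\sup_\ep \int_\Omega \theta(\va{u_\ep}^p)\,\de x < +\infty$. The exact same cellwise change of variables used for norm preservation gives
\[
\int_{\R^d}\int_Q \theta\!\left(\va{\mathsf{S}_\ep u_\ep(x,y)}^p\right)\de y\,\de x = \int_\Omega \theta\!\left(\va{u_\ep(x)}^p\right)\de x,
\]
so the same $\theta$ witnesses equiintegrability of $\{\mathsf{S}_\ep u_\ep\}$ on $\R^d\times Q$. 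The main obstacle, to be handled carefully, is the treatment of the boundary cells $\ep(z+Q)$ that cross $\partial\Omega$: since $\mathsf{S}_\ep$ acts on the zero-extension $\tilde v$, these cells contribute values unrelated to $v\vert_\Omega$. The tiling structure of $\R^d$ makes the norm identity exact on $\R^d$ (this is why the ambient space in the statement is $L^p(\R^d\times Q;\RN)$ rather than $L^p(\Omega\times Q;\RN)$), and in the pairing identity the support condition on $\phi$, together with the fact that the Lebesgue measure of cells crossing $\partial\Omega$ tends to zero, absorbs the boundary defect into the $o(1)$ remainder.
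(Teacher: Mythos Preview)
Your proof is correct and follows the standard approach from the references the paper cites (Cioranescu--Damlamian--Griso and Visintin): the paper itself does not prove this lemma but simply refers to \cite{cioranescu1, cioranescu2, visintin1, visintin2}. The two structural ingredients you isolate---the exact $L^p$-isometry of $\mathsf{S}_\eps$ (via the cellwise change of variables) and the asymptotic pairing identity against oscillating test functions---are precisely the ones underlying those works, and your reduction of (2) to (1) via uniform convexity, together with the de la Vall\'ee Poussin transfer for $p$-equiintegrability, is clean and complete.
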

We refer to \cite{cioranescu1, cioranescu2, visintin1, visintin2}
for further properties of the unfolding operator.
	
A characterization of
weak two-scale limits of $\A$-free sequences was
established by {\sc I. Fonseca \& S. Kr\"omer}.

\begin{proposition}[Theorem 1.2 in \cite{fonseca.kromer}] \label{stm:Afree-limits}
    A function $u\in L^p(\Omega;L^p_\per(\Rd;\RN))$ is the weak two-scale limit
    of an $\A$-free sequence $\set{u_\eps}\subset L^p(\Omega;\RN)$ if and only if
    \begin{gather*}
        \A_x \left(\int_Q u(x,y)\de y\right)= 0
        \quad\text{in }W^{-1,p}(\Omega;\R^M),\\
        \A_y u(x,y) = 0
        \quad\text{in }W^{-1,p}(\Td;\R^M)\text{ for a.e. }x\in \Omega.
        \end{gather*}
\end{proposition}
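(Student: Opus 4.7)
My plan is to treat the two implications separately, with the forward direction following from a direct test-function argument and the reverse requiring a Fourier-multiplier correction that exploits the constant-rank hypothesis.

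Assume first that $u_\eps\weakts u$ for some $\A$-free sequence. Lemma~\ref{lemma:2-scale}(2) gives $u_\eps \wk \bar u$ weakly in $L^p(\Omega;\RN)$, where $\bar u(x) \coloneqq \int_Q u(x,y)\de y$; since $\A\colon L^p(\Omega;\RN)\to W^{-1,p}(\Omega;\RM)$ is linear and continuous and $\A u_\eps = 0$, the limit satisfies $\A_x \bar u = 0$. To extract the fast-variable condition, for $\phi\in\Cc^\infty(\Omega)$ and $\psi\in C^\infty_\per(\Rd;\RM)$ I test $\A u_\eps = 0$ against $v_\eps(x) \coloneqq \eps\phi(x)\psi(x/\eps)$; the chain rule yields
\begin{equation*}
    \A^\ast v_\eps(x) = \phi(x)(\A^\ast_y\psi)(x/\eps) + \eps R_\eps(x),
\end{equation*}
with $R_\eps$ bounded in $L^\infty(\Omega;\RN)$. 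Passing to the two-scale limit and using arbitrariness of $\phi$, I obtain $\int_Q u(x,y)\cdot\A^\ast_y\psi(y)\de y = 0$ for a.e.\ $x\in\Omega$ and every smooth $\psi$, which is precisely $\A_y u(x,\cdot) = 0$ in $W^{-1,p}(\Td;\RM)$.

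For the converse, I decompose $u = \bar u + w$ with $\int_Q w(x,y)\de y = 0$. The constant-in-$\eps$ sequence $\bar u$ is $\A$-free by hypothesis and weakly two-scale converges to $\bar u$, so the task reduces to finding an $\A$-free sequence with weak two-scale limit $w$. By density and diagonalisation I reduce to $w$ of the finite Fourier form
\begin{equation*}
    w(x,y) = \sum_{k\in K} \hat w_k(x)\, e^{2\pi i k\cdot y},\qquad K\subset \Z^d\setminus\set{0}\text{ finite},\ \hat w_k\in \Cc^\infty(\Omega;\C^N),
\end{equation*}
where the pointwise condition $\A_y w(x,\cdot) = 0$ translates into $\hat w_k(x)\in\ker\bA[k]$ for every $k\in K$. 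The natural candidate $w_\eps(x)\coloneqq \sum_{k\in K}\hat w_k(x)e^{2\pi i k\cdot x/\eps}$ satisfies $w_\eps\weakts w$ by applying Lemma~\ref{lemma:2-scale}(4) mode by mode, yet the chain rule gives $\A w_\eps = \sum_k (\A_x\hat w_k)\,e^{2\pi ik\cdot x/\eps}$, which is only weakly vanishing and not identically zero.

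The main obstacle is to cure this $\A_x$-defect while preserving the two-scale limit, and this is where the constant-rank hypothesis enters decisively. It guarantees the existence of a $C^\infty$, $0$-homogeneous projector $\mathbb{P}(\omega)$ on $\RN$ with $\im\mathbb{P}(\omega) = \ker\bA[\omega]$, whose associated Fourier-multiplier operator $\mathcal{P}$ is $L^p$-bounded by the Mihlin--H\"ormander theorem and satisfies $\A\mathcal{P} = 0$ on $L^p(\Rd;\RN)$. After extending $w_\eps$ to $\Rd$ by a fixed cutoff, the correction $\tilde w_\eps \coloneqq (\mathcal{P}w_\eps)|_\Omega$ is exactly $\A$-free. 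It then remains to show $\tilde w_\eps - w_\eps \to 0$ strongly in $L^p(\Omega;\RN)$: for each summand $\hat w_k(x)\,e^{2\pi ik\cdot x/\eps}$, multiplication by $e^{2\pi ik\cdot x/\eps}$ shifts frequencies by $k/\eps$, and the $0$-homogeneity of $\mathbb{P}$ gives $\mathbb{P}(\xi + k/\eps)\to \mathbb{P}(k)$ as $\eps\to 0$ at every fixed $\xi\neq 0$; since $\hat w_k(x)\in\im\mathbb{P}(k)$, the resulting commutator is infinitesimal in $L^p$. A diagonal extraction along finite Fourier truncations of $w$ then completes the construction.
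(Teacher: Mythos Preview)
The paper does not prove this statement: it is quoted as Theorem~1.2 of \cite{fonseca.kromer} and used as a black box, so there is no in-paper proof to compare against. Your argument is essentially the standard one behind that result, and both implications are handled correctly in outline.

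The one step that is under-justified is your claim that $\tilde w_\eps - w_\eps \to 0$ strongly in $L^p$. Your frequency-shift heuristic (pointwise convergence $\mathbb{P}(\xi + k/\eps)\to\mathbb{P}(k)$, together with $\hat w_k(x)\in\im\mathbb{P}(k)$) is correct in spirit, but turning pointwise convergence of a multiplier into $L^p$ convergence of the associated operator is not automatic. A cleaner route, already available in the paper as Lemma~\ref{stm:proj}(2), is the bound
\[
\norm{v-\sfPi_\A v}_{L^p(\Rd;\RN)}\leq c\,\norm{\A v}_{W^{-1,p}(\Rd;\RM)}.
\]
Since $\bA[k]\hat w_k=0$, one has $\A w_\eps(x)=\sum_{k\in K}(\A_x\hat w_k)(x)\,e^{2\pi i k\cdot x/\eps}$, a finite sum of smooth, compactly supported oscillating terms; writing each as $\eps$ times a spatial derivative plus an $O(\eps)$ remainder shows $\norm{\A w_\eps}_{W^{-1,p}(\Rd)}=O(\eps)$, whence $\norm{w_\eps-\mathcal{P}w_\eps}_{L^p}=O(\eps)$ directly. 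This replaces your commutator discussion with a one-line estimate and makes the diagonalisation over Fourier truncations straightforward.
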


As a last technical tool, we show that
the unfolding of an $\mathscr{A}$-free map is in turn $\A$-free w.r.t. the periodicity variable. 
We preliminarily introduce the set
    \begin{equation}
    \label{eq:def-new-set}
        \hat{\Omega}_\ep\coloneqq \bigcup_{z\in \hat Z_\eps} \eps(Q+z),
        \quad\text{with}\quad
        \hat Z_\ep \coloneqq \set{ z \in \Z^d : \eps (Q + z ) \subset \Omega}.
    \end{equation}
Note that $\hat{Z}_\ep\subset Z_\ep$, where $Z_\ep$ is the collection of indices in \eqref{eq:om0}. 

\begin{lemma}
\label{stm:Afree-unfold}
    Let $v\in L^p(\Omega;\R^N)$ be such that $\mathscr{A}v=0$ in $W^{-1,p}(\Omega;\R^M)$.
    Then, for every $\ep>0$ there holds
    \begin{equation}\label{eq:Ay-unfolded}
        \A_y (\mathsf{S}_\ep v)=0
        \quad\text{in $W^{-1,p}(Q;\R^M)$ for a.e. } x\in \hat{\Omega}_\ep.
    \end{equation}
    Moreover, if also $v=0$ in $\Omega_{1,\eps}$, then
    \begin{equation*}
        \A_y (\mathsf{S}_\ep v)=0
        \quad\text{in $W^{-1,p}(\Td;\R^M)$ for a.e. } x\in \hat{\Omega}_\ep.
    \end{equation*}
\end{lemma}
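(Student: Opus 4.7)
The key observation is that, for almost every $x$ inside a cell $\ep(Q+z)$ with $z\in \hat Z_\ep$, the definition of the unfolding gives $\mathsf{S}_\ep v(x,y) = v(\ep(z+y))$ for $y\in Q$, where the value of $v$ is well-defined since $\ep(Q+z)\subset \Omega$. The map $y\mapsto \mathsf{S}_\ep v(x,y)$ is therefore constant on each cell, so the \emph{a.e.\ $x$} quantifier reduces to a single statement per index $z\in \hat Z_\ep$. The plan is to translate the weak formulation of $\A_y(\mathsf{S}_\ep v)=0$ into the weak formulation of $\A v = 0$ on $\Omega$ by a rescaling change of variable, and then to handle the periodic case via a cutoff function exploiting the inclusion $D_0\Subset Q$.

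To prove the first claim, I would take $\psi\in W^{1,p'}_0(Q;\R^M)$ and define $\phi(x')\coloneqq \psi(x'/\ep - z)$ on $\ep(Q+z)$, extended by zero to $\Omega$. Because $\psi$ has zero trace on $\partial Q$, one checks that $\phi\in W^{1,p'}_0(\Omega;\R^M)$. The chain rule yields the pointwise identity $\A^\ast_y\psi(x'/\ep - z) = \ep\,\A^\ast_{x'}\phi(x')$, and the substitution $x' = \ep(z+y)$ gives
\[
\int_Q \mathsf{S}_\ep v(x,y)\cdot \A^\ast_y\psi(y)\,\de y
= \ep^{1-d}\int_{\ep(Q+z)} v(x')\cdot \A^\ast_{x'}\phi(x')\,\de x'
= 0,
\]
the last equality being the hypothesis $\A v = 0$ in $W^{-1,p}(\Omega;\R^M)$ tested against $\phi$.

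For the second claim, the obstacle is that a test function $\psi\in W^{1,p'}_\per(\R^d;\R^M)$ need not vanish on $\partial Q$, so the rescaled $\phi$ is no longer admissible as an element of $W^{1,p'}_0(\Omega;\R^M)$. The remedy is to exploit the extra hypothesis $v = 0$ on $\Omega_{1,\ep}$, which translates, via unfolding, into $\mathsf{S}_\ep v(x,\cdot)\equiv 0$ on $Q\setminus D_0$. Since $D_0\Subset Q$, I would fix a cutoff $\eta\in C^\infty_\mathrm{c}(Q)$ with $\eta\equiv 1$ on an open neighborhood $U$ of $\overline{D_0}$. Splitting $\psi = \eta\psi + (1-\eta)\psi$ reduces the integral to two contributions: the first involves $\eta\psi\in W^{1,p'}_0(Q;\R^M)$ and vanishes by the first part of the lemma; in the second, $(1-\eta)$ and all its first derivatives are zero on $U$, so the product rule gives $\A^\ast_y((1-\eta)\psi) = 0$ a.e.\ on $U\supset D_0$, while $\mathsf{S}_\ep v(x,\cdot) = 0$ on $Q\setminus D_0$. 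The integrand therefore vanishes almost everywhere on $Q$, yielding the identity. The principal difficulty is precisely this coupling between the topological separation $\overline{D_0}\Subset Q$ and the support property of $v$ on the stiff matrix, which is what enables the reduction of the periodic case to the Dirichlet one.
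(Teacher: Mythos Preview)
Your proof is correct and follows essentially the same route as the paper: a rescaling change of variables reduces the $W^{-1,p}(Q)$ statement to the hypothesis $\A v=0$ on $\Omega$ tested against the rescaled $\phi\in W^{1,p'}_0(\Omega;\R^M)$, and the periodic case is handled by a cutoff $\eta\in C^\infty_c(Q)$ equal to $1$ near $\overline{D_0}$, exploiting that $\mathsf{S}_\ep v(x,\cdot)$ vanishes on $D_1$. The only cosmetic difference is that the paper integrates additionally against a test function $\eta\in C^\infty_c(\hat\Omega_\ep)$ in $x$, whereas you work directly cell-by-cell using that $x\mapsto\mathsf{S}_\ep v(x,\cdot)$ is constant on each $\ep(Q+z)$; both are equivalent.
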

\begin{proof}
    Let $\eta \in C^\infty_c(\hat\Omega_\ep)$ and $\psi \in W^{1,p'}_0(Q;\R^M)$. A change of variables yields
    \begin{align}
    \int_{\hat{\Omega}^\ep} \int_Q (\mathsf{S}_\ep v)(x,y)\cdot \eta(x)\A^*\psi (y)\, \de{y}\de{x}
        & =\int_{\hat\Omega_\eps } \int_Q
            v\left(\ep \left\lfloor\frac{x}{\ep}\right\rfloor+\ep y\right)\cdot \eta(x)\A^*\psi (y)\,
            \de{y}\de{x} \nonumber \\
        & =\frac{1}{\ep^{d-1}} \int_{\hat\Omega_\eps } \eta(x)
            \int_{\ep \left(Q + \left\lfloor \ep^{-1}x \right\rfloor \right)}
            v(z)\cdot \A^*_z \psi_\ep^x(z)\, \de z\de x, \label{eq:psi-x-eps}
    \end{align}
    where $\psi_\ep^x(z)\coloneqq \psi( \ep^{-1} z -\left\lfloor \ep^{-1} x \right\rfloor)$
    for a.e. $ z \in \ep \left(Q + \left\lfloor \ep^{-1}x \right\rfloor \right)$.
    Since $\psi \in W^{1,p'}_0(Q;\R^M)$,
    $\psi_\ep^x$ belongs to $W^{1,p'}_0 \left(\ep \left(Q + \left\lfloor \ep^{-1}x \right\rfloor \right);\R^M\right)$ 
    and it can be regarded as an element of $W^{1,p'}_0(\Omega;\R^M)$
    by extending it to $0$ outside $\ep \left(Q + \left\lfloor \ep^{-1}x \right\rfloor \right)$.
    In this step \eqref{eq:def-new-set} needs to be used.
    The conclusion follows now from \eqref{eq:psi-x-eps},
    because $v$ is $\A$-free in $\Omega$.
    
    Assume further that $v=0$ in $\Omega_{1,\eps}$.
    By the definition of the unfolding operator,
    $\mathsf{S}_\ep v(x,y)=0$ for all $(x,y) \in \hat \Omega_\eps \times D_1$, so that
    for any $\psi \in W^{1,p'}_\per(\Rd;\RM)$
    \[
        \int_Q \mathsf{S}_\ep v(x,y) \cdot \A^\ast \psi(y)\, \de y
        = \int_{D_0} \mathsf{S}_\ep v(x,y) \cdot \A^\ast \psi(y)\, \de y
        \quad\text{for all } x\in\hat{\Omega}_\eps.
    \]
    Let now $\eta \in C^\infty_c(Q;[0,1])$ be a cut-off function which is constantly $1$ on $D_0$.
    From \eqref{eq:Ay-unfolded}
    we conclude that
    \begin{align*}
        \int_Q \mathsf{S}_\ep v(x,y) \cdot \A^\ast \psi(y)\, \de y
        & = \int_{D_0} \mathsf{S}_\ep v(x,y) \cdot \A^\ast \big( \eta(y) \psi(y) \big)\, \de y \\
        & = \int_Q \mathsf{S}_\ep v(x,y) \cdot \A^\ast \big( \eta(y) \psi(y) \big)\, \de y = 0
    \end{align*}
    for almost every $x\in \hat \Omega_\eps$.
\end{proof}

\subsection{Fourier analysis}\label{sec:fourier}
The study of the class of admissible operators in Section \ref{sec:adm} is grounded on the theory of Fourier multipliers.
For a comprehensive treatment of the matter,
we refer to the monographs \cite{stein,grafakos};
here we limit ourselves to a short recollection of useful properties.

We let $\mathscr{S}$ denote the Schwartz space of rapidly decreasing functions and
for $u\in\mathscr{S}(\Rd;\RN)$, we let
\begin{equation}\label{eq:fourier}
    \mathcal{F} u(\omega)\coloneqq \frac{1}{(2\pi)^{d/2}} \int_{\Rd} e^{-\mathrm{i} \omega \cdot x} u(x)\, \de x
\end{equation}
be its Fourier transform.
We also denote by $\mathcal{F}^{-1}$ the inverse transform and
by $\mathrm{Lin}(\RN;\RN)$ the space of linear maps from $\RN$ to $\RN$.
We recall that a measurable function $m\colon \Rd \to \mathrm{Lin}(\RN;\RN)$ is said to be an \emph {$L^p$-multiplier}
if the linear operator $\mathsf{T}_m$ defined as
\[
    \mathsf{T}_m u \coloneqq \mathcal{F}^{-1}\big( m (\mathcal{F}u) \big)
    \quad\text{for } u\in\mathscr{S}(\Rd;\RN)
\]
can be extended to a bounded operator from $L^p$ to $L^p$.
Given a sufficiently smooth $m$,
{\sc S.G.~ Mikhlin}'s multipliers theorem provides a condition for it to be a multiplier
in terms of the decay of its derivatives.
We firstly introduce some notation:
when $i$ is a $d$-dimensional multi-index,
i.e. $i\coloneqq (i_1,\dots,i_d)\in \N^d$,
we set $\va{i}\coloneqq\sum_j i_j$ and
\[
    \partial_i u(x) \coloneqq \frac{ \partial^{\va{i}} u}{\partial^{i_1} x_1 \cdots \partial^{i_d} x_d}(x).
\]
In the scalar case, the criterion reads:

\begin{proposition}[Theorem 2 in the Appendix of \cite{mikhlin})]
    Let $m\colon \Rdmz\to \R$ be a function of class $C^k$ with $k>d/2$.
    If there exists $c\geq 0$ such that
    \[
        \va{ \partial_i m(x) } \leq \frac{c}{ \va{x}^{\va{i}} }
        \quad\text{whenever } \va{i}\leq k,
    \]
    then $m$ is an $L^p$-multiplier.
\end{proposition}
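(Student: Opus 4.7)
My plan is to realize $\mathsf{T}_m$ as a singular integral operator of Calder\'on--Zygmund type and then invoke the classical Calder\'on--Zygmund theorem. This requires three ingredients: $L^2$-boundedness, a H\"ormander-type kernel regularity estimate, and a standard interpolation/duality step. First, applying the hypothesis with $\va{i}=0$ gives $\va{m}\leq c$ on $\Rdmz$, so Plancherel's identity yields
\[
\norm{\mathsf{T}_m u}_{L^2(\Rd)} = \norm{m\,\mathcal{F}u}_{L^2(\Rd)} \leq c \norm{u}_{L^2(\Rd)}
\quad\text{for every } u\in\mathscr{S}(\Rd),
\]
so the operator is of strong type $(2,2)$ on the Schwartz class.

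Next, I would perform a Littlewood--Paley decomposition in frequency. Fix a radial $\eta\in C_c^\infty(\Rd)$ supported in $\{1/2\leq\va{\omega}\leq 2\}$ with $\sum_{j\in\Z}\eta(2^{-j}\omega)=1$ for $\omega\neq 0$, and set $m_j(\omega)\coloneqq m(\omega)\eta(2^{-j}\omega)$ and $K_j\coloneqq\mathcal{F}^{-1}m_j$. The Mikhlin estimate on $m$ combined with Leibniz's rule yields $\va{\partial_i m_j(\omega)}\leq C\,2^{-j\va{i}}$ for every multi-index $i$ with $\va{i}\leq k$, uniformly in $j$, with $\supp m_j\subset\{2^{j-1}\leq\va{\omega}\leq 2^{j+1}\}$. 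Integrating by parts $k$ times in the Fourier inversion formula then produces the complementary kernel bounds
\[
\va{K_j(x)} \leq C\,2^{jd}, \qquad \va{x}^k \va{K_j(x)} \leq C\,2^{j(d-k)},
\]
together with analogous estimates for $\nabla K_j$. Summing these dyadic contributions across shells $2^j\va{x}\simeq 1$ via the Cauchy--Schwarz inequality (which is where the assumption $k>d/2$ is used crucially) yields the H\"ormander-type inequality
\[
\int_{\va{x}>2\va{y}} \va{K(x-y)-K(x)}\,\de x \leq C
\quad\text{for all } y\in\Rdmz,
\]
where $K\coloneqq\sum_j K_j$ is the convolution kernel of $\mathsf{T}_m$, a tempered distribution that agrees with an ordinary function off the origin.

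With the $L^2$ bound and the H\"ormander condition at hand, the Calder\'on--Zygmund theorem provides weak-type $(1,1)$ boundedness for $\mathsf{T}_m$. Marcinkiewicz interpolation between this endpoint and the $L^2$ estimate delivers strong $(p,p)$ bounds for $p\in(1,2]$, and a duality argument, based on the fact that the adjoint of $\mathsf{T}_m$ is $\mathsf{T}_m$ itself and satisfies the same Mikhlin hypothesis, extends them to $p\in[2,+\infty)$; a density argument then yields the boundedness of $\mathsf{T}_m$ on all of $L^p(\Rd)$. The main obstacle is the kernel estimate: one must carefully track how the derivatives of $m_j$ concentrate in frequency, perform $k$ integrations by parts, and then reassemble the scale-dependent pieces so as to extract enough regularity for the H\"ormander condition. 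The threshold $k>d/2$ appears precisely because the dyadic Cauchy--Schwarz summation fails without this strict inequality.
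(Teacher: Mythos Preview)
The paper does not supply a proof of this proposition: it is quoted as a classical result from \cite{mikhlin} and used as a black box, chiefly through Corollary~\ref{stm:mikhlin} on $0$-homogeneous symbols. There is therefore no in-paper argument to compare your attempt against.

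On its own merits, your outline is the standard modern route to Mikhlin's theorem---Littlewood--Paley decomposition, kernel estimates for the dyadic pieces, H\"ormander condition, Calder\'on--Zygmund weak-$(1,1)$, then Marcinkiewicz interpolation and duality---and is essentially correct. One small wrinkle worth tightening: the \emph{pointwise} bounds you obtain from $k$-fold integration by parts, $\va{K_j(x)}\leq C\,2^{jd}$ and $\va{x}^{k}\va{K_j(x)}\leq C\,2^{j(d-k)}$, only combine to give $K_j\in L^1$ (and hence a summable H\"ormander estimate) when $k>d$. To reach the sharp threshold $k>d/2$ one instead uses the $L^2$ bounds coming directly from Plancherel,
\[
\int_{\Rd}\va{x}^{2k}\va{K_j(x)}^2\,\de x
=\sum_{\va{i}=k}c_i\int_{\Rd}\va{\partial_i m_j(\omega)}^2\,\de\omega
\leq C\,2^{j(d-2k)},
\]
and then applies Cauchy--Schwarz against the weight $\va{x}^{-2k}$, whose integrability on $\{\va{x}>R\}$ is precisely the condition $2k>d$. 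Your sketch invokes both the pointwise estimates and a Cauchy--Schwarz step, so you likely have the right mechanism in mind; just make sure it is the Plancherel $L^2$ bound, not the pointwise one, that feeds the Cauchy--Schwarz inequality.
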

A helpful consequence of the previous result is:

\begin{corollary}
\label{stm:mikhlin}
    Let $m\colon \Rdmz\to \R$ be a function of class $C^k$ with $k>d/2$.
    If $m$ is $0$-homogeneous,
    then it is an $L^p$-multiplier for all $p\in(1,+\infty)$.
\end{corollary}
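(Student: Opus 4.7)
The plan is to deduce the corollary directly from Mikhlin's Multipliers Theorem stated just above. The only task is to verify that the required decay estimate $|\partial_i m(x)| \leq c/|x|^{|i|}$ holds for every multi-index $i$ with $|i|\leq k$, where $c$ is independent of $x\in\Rdmz$.

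First, I would observe that $0$-homogeneity of $m$, i.e.\ $m(\lambda x)=m(x)$ for all $\lambda>0$ and $x\in\Rdmz$, propagates to its derivatives by the chain rule: differentiating the identity $m(\lambda x)=m(x)$ in $x$ shows, by induction on $|i|$, that each partial derivative $\partial_i m$ is $(-|i|)$-homogeneous, namely
\[
\partial_i m(\lambda x) = \lambda^{-|i|} \partial_i m(x)
\quad\text{for all } \lambda>0 \text{ and } x\in\Rdmz.
\]
Since $m\in C^k(\Rdmz)$ and $k>d/2$, every such derivative is a continuous function on $\Rdmz$ and in particular on the unit sphere $S^{d-1}$, which is compact; hence $\partial_i m$ is bounded on $S^{d-1}$.

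Next, for arbitrary $x\in\Rdmz$, I would write $x=|x|\,(x/|x|)$ and apply the homogeneity relation with $\lambda = |x|$ to obtain
\[
\partial_i m(x) = |x|^{-|i|}\, \partial_i m\!\left(\frac{x}{|x|}\right),
\]
which yields the pointwise estimate $|\partial_i m(x)| \leq c_i/|x|^{|i|}$ with $c_i \coloneqq \sup_{S^{d-1}} |\partial_i m|$. Setting $c\coloneqq \max_{|i|\leq k} c_i$ then gives the Mikhlin condition uniformly in $|i|\leq k$, and the proposition above furnishes the conclusion that $m$ is an $L^p$-multiplier for every $p\in(1,+\infty)$.

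There is essentially no obstacle here: the only mild point to watch is that the chain-rule computation that propagates $0$-homogeneity to $(-|i|)$-homogeneity of derivatives should be done carefully (by induction on $|i|$, or by differentiating $m(\lambda x)=m(x)$ in $\lambda$ and $x$ and comparing), but this is routine. Everything else is a direct application of the compactness of $S^{d-1}$ and of Mikhlin's theorem.
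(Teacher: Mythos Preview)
Your proof is correct and is precisely the standard argument the paper has in mind: the corollary is stated without proof as an immediate consequence of Mikhlin's theorem, and your verification of the decay condition via $(-|i|)$-homogeneity of $\partial_i m$ together with compactness of $S^{d-1}$ is exactly how one fills in the details.
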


Multipliers stand as basics examples of pseudo-differential operators.
Their theory, in turn, can be used to characterize Sobolev spaces as follows.
Given $k\in\N$,
we introduce for $\omega \in \Rd$ the symbol $\mathbb{S}[\omega] \coloneqq (1 + \va{\omega}^2)^{k/2}$ and
the associated pseudo-differential operator $(I-\Delta)^{k/2}$:
\[
    (I-\Delta)^{k/2} u \coloneqq \mathcal{F}^{-1}\big( \mathbb{S} (\mathcal{F}u) \big).
\]
We then say that a distribution $u$ belongs to $W^{k,p}(\Rd)$ if
\[
    (I-\Delta)^{k/2} u \in L^p(\Rd)
\]
and we endow the space with the norm
$\norm{u}_{W^{k,p}(\Rd)} \coloneqq \norm{(I-\Delta)^{k/2} u}_{L^p(\Rd)}$. 
It turns out that
this definition of $W^{k,p}$ is equivalent
to the one given in terms of weak derivatives and that
the norms are comparable too.
The same approach shows that
the dual space $W^{-k,p'}(\Rd)$ coincides with the space of distributions for which it holds
\[
    (I-\Delta)^{-k/2} u \in L^{p'}(\Rd)
\]
and that the norms $\norm{u}_{W^{-k,p'}(\Rd)}$ and $\norm{(I-\Delta)^{-k/2} u}_{L^{p'}(\Rd)}$ coincide.
Here, we recall that $p'\coloneqq p/(p-1)$ and
$(I-\Delta)^{-k/2}$ is the pseudo-differential operator defined by the symbol $(1 + \va{\omega}^2)^{-k/2}$.


\section{Proof of Theorem \ref{stm:main}}
\label{sec:comp+split}
This section is devoted to the proof of Theorem \ref{stm:main},
which stands as our principal result
about the asymptotics of the energy functionals $\F_\eps$ in \eqref{eq:energy}.
For the moment being, we assume that
the limiting behaviors of the `soft' and `stiff' contributions are known, i.e.,
we suppose that Propositions \ref{stm:Glim-soft} and \ref{prop:stiff-part} hold true.
Their proofs are dealt with in Sections \ref{sec:soft} and \ref{sec:stiff} below.
Our main task in the current section is then to show that
sequences with equibounded energies are precompact in a suitable sense and that
the asymptotics of the global energy $\F_\eps$ is determined by that of the functionals accounting for the `soft' and `stiff' parts.
In this respect,
it is useful to regard the total energy $\F_\eps(u)$ of $u\in L^p(\Omega;\RN)$
as the sum of ${\F}_{0,\eps}(u)$ and $\F_{1,\eps}(u)$,
which were defined in \eqref{eq:F0eps} and \eqref{eq:F1eps}.  

The section is organized as follows.
We first address the compactness result in Proposition \ref{stm:HC-limits},
second we show how to reduce our problem to the sub-problems regarding the `soft' and the `stiff' components,
then we prove the $\Gamma$-convergence statement in Theorem \ref{stm:main}. 


\subsection{Compactness}
The next proposition shows that
sequences which are equibounded in energy
have high-contrast converging subsequences.
This kind of convergence was introduced in Definition \ref{stm:HC-conv}.
 We remark that, owing to position \eqref{eq:energy},
only $\A$-free fields give rise to finite energy configurations.
We prove here that
high-contrast limits of $\A$-free sequences inherit differential constraints.

\begin{proposition}[High-contrast limits of  $\mathscr{A}$-free sequences]
\label{stm:HC-limits}
    Let $\A$ be a constant rank differential operator of the form \eqref{eq:op-A}.
    Let also $\set{u_\eps}\subset L^p(\Omega;\RN)$ be such that 
    there exists $c\geq -a \lambda\Ld(\Omega)$ for which $\sup_{\ep>0}\F_\eps(u_\eps)\leq c$. Then,
    \begin{enumerate}
    \item There exist a (non-relabeled) subsequence,
        as well as two maps $u_0 \in L^p(\Omega;L^p_\per(\Rd;\RN))$ and $u_1 \in L^p(\Omega;\RN)$,
        such that  $\set{\eps \chi_{0,\eps} u_\eps}$ converges to $u_0$ weakly two-scale and
        that $\set{\chi_{1,\eps}u_\eps}$ converges to $u_1$ weakly in $L^p(\Omega;\RN)$.
        Moreover, up to subsequences,
        $\set{\eps u_\eps}$ and $\set{\chi_{1,\ep}u_\ep}$ weakly two-scale converge in $L^p$, respectively, 
        to $u_0$ and
        to a map $v_1\in L^p(\Omega;L^p_{\per}(\Rd;\RN))$
        that satisfy
        \begin{equation}\label{eq:cluster1}
            u_0(x,y) = 0 \text{ if }y\in D_1
            \quad\text{and}
            \quad
            \int_Q v_1(\cdot,y)\,\de y=u_1.   
        \end{equation}
    \item  The following differential constraints are fulfilled:
        \begin{align}
        &\A_x \left(\int_{D_0} u_0(x,y)\de y\right)= 0
        \quad\text{in }W^{-1,p}(\Omega;\R^M), \label{eq:cluster2} \\
        &\A_y u_0(x,y) = 0
        \quad \text{in } W^{-1,p}(\Td;\R^M) \text{ for a.e. } x\in\Omega,  \label{eq:cluster2bis}\\
        &\mathscr{A}_y v_1(x,y)=0
        \quad \text{in } W^{-1,p}(D_1;\R^M) \text{ for a.e. } x\in\Omega. \label{eq:cluster3-bis}
    \end{align}
    \item If 
        $\mathscr{A}$ satisfies Assumption \ref{stm:exist-ext},
        setting $\tilde{u}_\ep\coloneqq \mathsf{E}_\A^\ep u_\ep$,
        there exists $\tilde{u}\in L^p(\Omega;L^p_\per(\Rd;\RN))$ such that,
        up to subsequences, $\set{\tilde{u}_\eps}$ weakly two-scale converges to $\tilde{u}$ in $L^p$,
        and there holds
        \begin{align}
            &v_1(x,y) = \chi_{D_1}(y)\tilde{u}(x,y)\quad\text{almost everywhere in }\Omega\times Q, \label{eq:cluster4} \\
            &\A u_1 = - \A_x \left( \int_{D_0} \tilde{u}(x,y)\,\de{y} \right)\text{ in }W^{-1,p}(\Omega;\R^M), \label{eq:cluster5} \\
            &\A_y (\chi_0 \tilde{u} )=0
            \quad \text{in } W^{-1,p}(Q;\R^M) \text{ for a.e. } x\in\Omega. \label{eq:cluster5bis}
        \end{align}
        \item If additionally $\A$ satisfies Assumption \ref{stm:null-av}, then 
        \begin{equation}
            \A u_1=0 \text{ in } W^{-1,p}(\Omega;\RM) \label{eq:cluster6}
        \end{equation}
        and $\tilde u_\eps \weak u_1$ weakly in $L^p(\Omega;\RN)$.
     In particular, up to subsequences, $\set{u_\eps}$ converges to $u_1$ in the high-contrast sense.
    \end{enumerate}
\end{proposition}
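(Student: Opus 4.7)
The plan is to combine the growth conditions H3 with the $\A$-free structure and the extension operator $\mathsf{E}_\A^\ep$, passing to the limit via two-scale convergence and the unfolding identity (Lemma \ref{stm:Afree-unfold}). First, I would extract the key $L^p$ bounds from the energy control: hypothesis H3 yields
\[
c \ge \F_\ep(u_\ep) \ge \lambda\Bigl(-a\Ld(\Omega) + \ep^p \norm{u_\ep}^p_{L^p(\Omega_{0,\ep};\RN)} + \norm{u_\ep}^p_{L^p(\Omega_{1,\ep};\RN)} \Bigr),
\]
so that $\set{\ep \chi_{0,\ep} u_\ep}$ and $\set{\chi_{1,\ep} u_\ep}$ are bounded in $L^p(\Omega;\RN)$. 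Lemma \ref{lemma:2-scale} then supplies two-scale limits $u_0$ and $v_1$ and the weak $L^p$ limit $u_1 = \int_Q v_1\,\de y$. That $u_0$ vanishes on $D_1$ is immediate from $\chi_{0,\ep}(x) = \chi_0(x/\ep)$: any $v\in L^{p'}(\Omega;C_\per(\Rd;\RN))$ supported in $\Omega\times D_1$ makes the two-scale pairing vanish identically for every $\ep$. The identity $\ep u_\ep \weakts u_0$ then follows because $\ep \chi_{1,\ep} u_\ep\to 0$ strongly in $L^p$ (the first factor going to zero while the second stays bounded).

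Next I would read off the differential constraints. By linearity $\ep u_\ep$ is itself $\A$-free, so Proposition \ref{stm:Afree-limits} applied to $\ep u_\ep \weakts u_0$ delivers \eqref{eq:cluster2} and \eqref{eq:cluster2bis} once the support of $u_0$ is taken into account. For \eqref{eq:cluster3-bis} I would use Lemma \ref{stm:Afree-unfold} on $u_\ep$: on $\hat\Omega_\ep\times D_1$ one has $\mathsf{S}_\ep(\chi_{1,\ep} u_\ep)=\mathsf{S}_\ep u_\ep$, since every point of $\ep(z+D_1)$ with $z\in\hat Z_\ep$ sits in $\Omega_{1,\ep}$. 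Testing against $\psi\in W^{1,p'}_0(D_1;\RM)$ zero-extended to $Q$ and using $\mathsf{S}_\ep(\chi_{1,\ep} u_\ep)\wk v_1$ in $L^p(\Rd\times Q)$ (Lemma \ref{stm:unfolding}) then yields $\A_y v_1=0$ on $D_1$ for a.e.~$x$.

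For statement (3) I would set $\tilde u_\ep \coloneqq \mathsf{E}_\A^\ep u_\ep$ and invoke Assumption \ref{stm:exist-ext} to guarantee the boundedness of $\set{\tilde u_\ep}$ in $L^p(\Omega;\RN)$ and its $\A$-freeness on all of $\Omega$; extract a (non-relabelled) two-scale limit $\tilde u$. Since $\chi_{1,\ep}(x)=\chi_1(x/\ep)$ is $Q$-periodic and $\chi_{1,\ep} u_\ep = \chi_{1,\ep}\tilde u_\ep$ (by property (1) of the extension), unfolding gives $\mathsf{S}_\ep(\chi_{1,\ep}\tilde u_\ep)(x,y) = \chi_1(y)\,\mathsf{S}_\ep\tilde u_\ep(x,y)$, whence \eqref{eq:cluster4}. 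Formula \eqref{eq:cluster5} then follows from Proposition \ref{stm:Afree-limits} applied to $\tilde u_\ep$, together with $\int_Q\tilde u\,\de y=\int_{D_0}\tilde u\,\de y+u_1$. Finally \eqref{eq:cluster5bis} comes from Lemma \ref{stm:Afree-unfold} applied to the globally $\A$-free $\tilde u_\ep$, combined with the decomposition $\chi_0\tilde u=\tilde u-v_1$ and \eqref{eq:cluster3-bis}.

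Under Assumption \ref{stm:null-av}, the function $y\mapsto \chi_0(y)\tilde u(x,y)$ is, for a.e.~$x$, periodic, vanishes on $D_1$, and $\A_y$-free on $\Td$; hence $\int_{D_0}\tilde u\,\de y = 0$ almost everywhere, and \eqref{eq:cluster5} collapses to $\A u_1=0$. Moreover Lemma \ref{lemma:2-scale}(2) gives $\tilde u_\ep \wk \int_Q\tilde u\,\de y = u_1$ weakly in $L^p$, which is precisely what the definition of high-contrast convergence for $\set{u_\ep}$ to $u_1$ demands. \textbf{Main obstacle.} The delicate step is reconciling the $\A_y$-freeness on $Q$ delivered by Lemma \ref{stm:Afree-unfold} (with test functions in $W^{1,p'}_0(Q;\RM)$) with the genuinely toroidal $\A_y$-freeness required to apply Assumption \ref{stm:null-av} (which involves $W^{1,p'}_\per$ test functions). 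Bridging this gap forces one to exploit the compatibility at $\partial D_0$ that is encoded jointly in \eqref{eq:cluster2bis}, \eqref{eq:cluster3-bis}, and \eqref{eq:cluster5bis}, and in particular the fact that $\mathsf{E}_\A^\ep$ produces maps that are $\A$-free across every cell boundary.
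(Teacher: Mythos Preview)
Your approach is essentially the paper's: energy bounds from H3, two-scale compactness, Proposition~\ref{stm:Afree-limits} for the constraints on $u_0$, the extension $\tilde u_\ep$ for part (3), and Assumption~\ref{stm:null-av} for part (4). The only genuine variation is that for \eqref{eq:cluster3-bis} you go through the unfolding identity (Lemma~\ref{stm:Afree-unfold}) rather than testing $\A u_\ep=0$ directly against $\ep\,\eta(x)\psi(x/\ep)$ with $\psi\in W^{1,p'}_0(D_1;\RM)$ extended periodically, as the paper does; both arguments are equivalent.

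Your ``main obstacle'' is not one. Once \eqref{eq:cluster5bis} gives $\A_y(\chi_0\tilde u)=0$ in $W^{-1,p}(Q;\RM)$, the upgrade to $W^{-1,p}(\Td;\RM)$ needed for Assumption~\ref{stm:null-av} is immediate from the very cut-off trick that proves the second half of Lemma~\ref{stm:Afree-unfold}: since $\chi_0\tilde u$ vanishes on $D_1$ and $D_0\Subset Q$, pick $\eta\in C_c^\infty(Q;[0,1])$ with $\eta\equiv 1$ on $D_0$; then for any $\psi\in W^{1,p'}_\per(\Rd;\RM)$ one has
\[
\int_Q \chi_0\tilde u\cdot\A^*\psi\,\de y
=\int_{D_0}\tilde u\cdot\A^*\psi\,\de y
=\int_{D_0}\tilde u\cdot\A^*(\eta\psi)\,\de y
=\int_Q \chi_0\tilde u\cdot\A^*(\eta\psi)\,\de y=0,
\]
because $\eta\psi\in W^{1,p'}_0(Q;\RM)$. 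No interface compatibility at $\partial D_0$ and no cell-boundary information from $\mathsf{E}_\A^\ep$ are required.

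One omission in your last paragraph: Definition~\ref{stm:HC-conv} also demands $\ep u_\ep\wk 0$ weakly in $L^p(\Omega;\RN)$, which you do not check. The paper does: $\ep u_\ep\weakts u_0$ gives $\ep u_\ep\wk\int_Q u_0(\cdot,y)\,\de y$, and this average vanishes by applying Assumption~\ref{stm:null-av} to $u_0$ itself (legitimate thanks to \eqref{eq:cluster2bis} and $u_0=0$ on $D_1$).
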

\begin{proof}
    If ${\F}_\eps(u_\eps)\leq c$,
    then ${\F}_{0,\eps}(u_\eps)\leq c$ and $\F_{1,\eps}(u_\eps)\leq c$
    (cf. \eqref{eq:F0eps}--\eqref{eq:F1eps}).
    The growth assumptions in {\bf H3} yield
    \begin{equation}
    \label{eq:bds-norms}
        \norm{\eps  \chi_{0,\eps} u_\eps}_{L^p(\Omega;\RN)}\leq c,
        \quad
        \norm{\chi_{1,\eps} u_\eps}_{L^p(\Omega;\RN)}\leq c.
    \end{equation}
    
    \noindent (1) By Lemma \ref{lemma:2-scale}, since bounded sequences in $L^p$ are weakly two-scale precompact,
    there exist $u,u_0,v_1 \in L^p(\Omega;L^p_\per(\Rd;\RN))$
    such that, up to a (non-relabeled) subsequence,
    \begin{equation}\label{eq:apriori-compact}
        \eps u_\eps \weakts u,
        \quad
        \eps \chi_{0,\eps} u_\eps \weakts u_0,
        \quad
        \chi_{1,\eps} u_\eps \weakts v_1
        \quad
        \text{ weakly two-scale in } L^p.   
    \end{equation}
    For what concerns \eqref{eq:cluster1},
    we observe that,
    thanks to the relation between weak two-scale convergence and weak $L^p$-convergence,
    we have
    \begin{equation}
    \label{eq:add1-wklp}
        \chi_{1,\eps} u_\eps \weak u_1 \coloneqq \int_Q v_1(\,\cdot\,,y)\,\de y
        \quad
        \text{weakly in }L^p(\Omega;\R^N).
    \end{equation}
    Further, it holds that
    \begin{equation}\label{eq:chi-eps}
        \chi_{i,\eps} \strongts \chi_i\text{ strongly two-scale in }L^p,
    \end{equation}
    whence, by the first two convergences in \eqref{eq:apriori-compact}, we find
    \[
        u_0(x,y) = \chi_{0}(y) u(x,y)
        \quad\text{almost everywhere in }\Omega\times Q.
    \]
    In particular, also the first equality in \eqref{eq:cluster1} is satisfied.
    
    By linearity,
    the previous considerations also entail that
    $\eps \chi_{1,\eps} u_\eps \weakts (1-\chi_0) u$ weakly two-scale in $L^p$.
    On the other hand, from \eqref{eq:bds-norms} we infer that 
    $\set{\eps \chi_{1,\eps} u_\eps }$ must actually converge to $0$ strongly in $L^p(\Omega;\R^N)$
    and, {\it a fortiori}, in the strong two-scale sense.
    Thus, $(1-\chi_0) u = 0$,
    which implies that $u(x,y)=0$ if $y\in D_1$.
    We therefore conclude that $u=u_0$.
    
    \noindent (2) Since $\A u_\eps = 0$ in $\Omega$ for all $\eps>0$,
    Proposition \ref{stm:Afree-limits} yields immediately \eqref{eq:cluster2} and \eqref{eq:cluster2bis}.
    As for \eqref{eq:cluster3-bis},
   	given any $\psi \in W^{1,p'}_0(D_1;\RM)$,
   	we extend it to the whole $\Rd$
   	by setting $\psi=0$ in $D_0$ and
   	$\psi(x+z) = \psi(x)$ for all $x\in Q$ and $z\in \Zd$.
   	Fix a function $\eta \in C^1_c(\Omega)$.
   	Since $\A u_\ep=0$ in $\Omega$ for every $\ep>0$,
   	denoting by $\bA^\ast$ the symbol of $\A^\ast$ (see Subsection \ref{subs:diff}),
   	we have
   	\[
   	\begin{split}
   	0 & = \left\langle
   	        \A u_\ep, \eps \eta(\,\cdot\,) \psi\left(\, \frac{\cdot}{\eps} \, \right) 
    	\right\rangle_{W^{-1,p},W^{1,p'}_0} \\
   	& = \int_\Omega
   	        u_\ep(x)\cdot 
   	        \left( \ep \mathbb{A}^*[\nabla \eta(x)] \psi \left( \frac{x}{\ep} \right)
   	        + \eta(x) \A^*\psi \left( \frac{x}{\ep} \right)
   	        \right)
   	    \de{x} \\
   	& = \int_\Omega
   	        \chi_{1,\ep}(x) u_\ep(x) \cdot
   	        \left( \ep \mathbb{A}^*[\nabla \eta(x)] \psi \left( \frac{x}{\ep} \right)
   	        + \eta(x) \A^*\psi \left( \frac{x}{\ep} \right)
   	        \right)
   	    \de{x},
   	\end{split}
    \]
    where the latter equality is due to the choice of the support of $\psi$. Therefore, the third convergence in \eqref{eq:apriori-compact} yields
    \[
        \int_{\Omega} \eta(x)
            \left( \int_Q v_1(x,y)\cdot \A^*\psi (y)\de{y}\right) \de{x} = 0,
    \]
    which in turn implies \eqref{eq:cluster3-bis}.
    
    \noindent (3) For every $\ep>0$ let  $\tilde{u}_\ep$ denote the $\mathscr{A}$-free extension of $u_\ep$
    provided by Assumption \ref{stm:exist-ext}.
    Owing to \eqref{eq:bds-norms}, the sequence
    $\set{\tilde{u}_\ep}$ is bounded in $L^p(\Omega;\R^N)$ and
    there exists $\tilde{u}\in L^p(\Omega;L^p_\per(\Rd;\RN))$ such that (up to subsequences)
    \begin{equation}
    \label{eq:add-tildeu}
    \tilde{u}_\ep\weakts\tilde{u}\quad\text{ weakly two-scale in }L^p
    \quad\text{and}\quad
    \A_x \left( \int_Q\tilde{u}(x,y)\de{y} \right) = 0\text{ in }W^{-1,p}(\Omega;\R^M).
    \end{equation}
    Since Assumption \ref{stm:exist-ext} grants that
    $\chi_{1,\ep}\tilde u_\ep = \chi_{1,\ep}u_\ep$ almost everywhere in $\Omega$,
    we infer \eqref{eq:cluster4}
    from \eqref{eq:apriori-compact} and \eqref{eq:chi-eps}.
    Relationship \eqref{eq:cluster4} in turn rewrites as
    \[
        \A u_1
        = \A_x \left(\int_Q v_1(x,y) \de y \right)
        = \A_x \left(\int_{D_1} \tilde u (x,y) \de y \right)
        = - \A_x \left(\int_{D_0} \tilde u (x,y) \de y \right),
    \]
    i.e., \eqref{eq:cluster5} is proved.
    Finally, to obtain \eqref{eq:cluster5bis},
    we combine the fact that
    $\tilde{u}_\ep = \chi_{1,\ep}u_\ep + \chi_{0,\ep} \tilde{u}_\ep$
    almost everywhere in $\Omega$
    with the assumption that
    $\A \tilde{u}_\ep=0$ for every $\ep>0$.
    Using again Proposition \ref{stm:Afree-limits}, as well as \eqref{eq:cluster3-bis} and \eqref{eq:apriori-compact} -- \eqref{eq:add-tildeu}, we complete the proof of the third statement.
    
    \noindent (4) By \eqref{eq:cluster5bis} and Assumption \ref{stm:null-av}, we deduce that $\int_{D_0} \tilde{u}(x,y)\,\de{y}=0$ for almost every $x\in \Omega$. Then, by \eqref{eq:cluster4},
    $\A u_1=0$ in $\Omega$.
    Besides, in view of its weak two-scale convergence,
    $\set{\tilde u_\eps}$ converges weakly in $L^p(\Omega;\RN)$ to
    \[
        \int_Q \tilde u(\,\cdot\,,y) \de y = \int_Q \chi_{D_1}(y)\tilde u(\,\cdot\,,y) \de y = \int_Q v_1(\,\cdot\,,y) \de y = u_1(\,\cdot\,),
    \]
    where we used \eqref{eq:cluster4} and \eqref{eq:cluster1}.
     This also shows that $\set{u_\eps}$ admits an extension on the `soft' part
    that converges to $u_1$ weakly in $L^p$.
    Therefore, to grant that there is a subsequence of $\set{u_\eps}$
    that converges to $u_1$ in the high-contrast sense,
    we are only left to observe that $\eps u_\eps \weak \int_Q u_0(\,\cdot\,,y)\,\de y$
    and that such average vanishes because of \eqref{eq:cluster1}, \eqref{eq:cluster2bis}, and Assumption \ref{stm:null-av}.
    The proof is now concluded.
    \end{proof}

As already anticipated in Remark \ref{stm:rmkA2},
we note that the preservation of $p$-equiintegrability granted by Assumption \ref{stm:exist-ext}
is not needed to prove item (3) of the previous proposition.
We will instead resort to it to establish Proposition \ref{stm:split2} below.

\subsection{Splitting}
	In the light of the of previous subsection,
	we know that those sequences which are equibounded in energy
	are precompact with respect to the high-contrast convergence, and that
	their cluster points fulfill suitable differential constraints.
	Following the approach of {\sc M. Cherdantsev \& K.\ D. Cherednichenko} \cite{CC},
	the next step is to show that
	the asymptotic behavior of the energy
	along such sequences
	coincides with the sum of those of the energies
	of two decoupled systems,
	one sitting on the `soft' inclusions,
	the other on the `stiff' matrix.
	
	To favor intuition, 
	let us consider a family $\set{u_\eps}\subset L^p(\Omega;\RN)$
	such that $\sup_{\ep>0}\F_\eps(u_\eps) \leq c$ for some $c\geq -a \lambda \Ld(\Omega)$,
	and assume that the operator $\A$ is admissible in the sense of Definition \ref{def:adm}.
	 As Proposition \ref{stm:HC-limits} proves, the growth condition {\bf H3} on the energy densities entails
	$\norm{\chi_{1,\eps} u_\eps}_{L^p(\Omega;\RN)} \leq c$.
	Hence,
	by exploiting Assumption \ref{stm:exist-ext},
	we retrieve the functions $\tilde u_\eps \coloneqq \mathsf E_\A^\ep u_\eps \in L^p(\Omega;\RN)$
	such that for all $\eps$
	\begin{enumerate}
		\item $\chi_{1,\eps}\tilde u_\eps = \chi_{1,\eps} u_\eps$ almost everywhere in $\Omega$,
		\item $\norm{\tilde u_\eps}_{L^p(\Omega;\RN)} \leq c$,
		\item $\A \tilde u_\eps = 0$ in $\Omega$,
	\end{enumerate}
	and also that
	\begin{enumerate}[resume]
		\item $\set{\tilde u_\eps}$ is $p$-equiintegrable if so is $\set{u_\eps}$.
	\end{enumerate}
	Setting $v_\eps \coloneqq  u_\eps - \tilde u_\eps$,
	we rewrite
		\[
		{\F}_\eps(u_\eps) = {\F}_{0,\eps}(v_\eps) + \F_{1,\eps}(\tilde u_\eps) + {\E}_\eps(u_\eps),
		\]
	where ${\F}_{0,\eps}$ and $\F_{1,\eps}$ are as in \eqref{eq:F0eps}--\eqref{eq:F1eps} and
		\begin{equation}\label{eq:error}
			{\E}_\eps(u_\eps) \coloneqq 
					\int_{\Omega_{0,\eps}}
						\left[ {f}_{0,\ep}\left(\eps u_\eps \right) - {f}_{0,\ep}\left(\eps v_\eps \right) \right]\de x.
		\end{equation}
	(notice that $v_\ep(x)=0$ when $x\in \Omega_{1,\ep}$).
	In the lemma below
	we show that
	the error made by substituting ${\F}_\eps(u_\eps)$ by the sum ${\F}_{0,\eps}(v_\eps) + \F_{1,\eps}(\tilde u_\eps)$, i.e. ${\E}_\eps(u_\eps)$,
	is asymptotically negligible
	whenever high-contrast convergence holds.

\begin{lemma}[Splitting]
\label{stm:split}
     Let $\A$ be a constant rank differential operator of the form \eqref{eq:op-A}.
    Let also $u\in L^p(\Omega;\RN)$ be the high-contrast limit of a family $\set{u_\eps} \subset L^p(\Omega;\R^N)$.
    Explicitly, assume that $\eps u_\eps \weak 0$ weakly in $L^p(\Omega;\R^N)$ and that
    there is a family $\set{\tilde u_\eps}\subset L^p(\Omega;\R^N)$ with the properties that  $\A\tilde{u}_\ep=0$ in $W^{-1,p}(\Omega;\R^M)$, 
    $\tilde u_\eps \weak u$ weakly in $L^p(\Omega;\R^N)$ and
    $u_\eps = \tilde u_\eps$ in $\Omega_{1,\ep}$.
    If $\sup_{\ep>0}\F_\eps (u_\eps)\leq c$ for some $c\geq -a \lambda_0 \Ld(\Omega)$,
    and if $v_\eps \coloneqq u_\eps - \tilde u_\eps$, then
    the following hold:
    \begin{align}
    & \nonumber \A u = 0 \text{ in } W^{-1,p}(\Omega;\RM), \\
    & \nonumber v_\eps=0 \text{ in } \Omega_{1,\ep}, \\
    & \nonumber \A v_\ep = 0 \text{ in } W^{-1,p}(\Omega;\RM),\\
    & \nonumber \eps v_\eps \weak 0\text{ weakly in }L^p(\Omega;\RN), \\  
    & \liminf_{\eps \to 0} \F_{0,\eps}(v_\eps) + \liminf_{\eps \to 0} \F_{1,\eps}(\tilde u_\eps) \leq \liminf_{\eps \to 0} \F_\eps(u_\ep), \label{eq:split5} \\
    & \nonumber \limsup_{\eps\to 0} \F_\eps(u_\ep) \leq
    \limsup_{\eps \to 0} \F_{0,\eps}(v_\eps) + \limsup_{\eps \to 0} \F_{1,\eps}(\tilde u_\eps).
    \end{align}
\end{lemma}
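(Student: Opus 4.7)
The plan is to handle the four pointwise claims (which are essentially immediate) and then focus on the energy splitting, whose only non-routine content is showing that the error $\E_\eps(u_\eps)$ from~\eqref{eq:error} is infinitesimal as $\eps \to 0$.

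For the four identities: (1) follows by passing to the weak limit in $\A \tilde u_\eps = 0$, using continuity of $\A$ from $L^p$ to $W^{-1,p}$; (2) is immediate from $v_\eps = u_\eps - \tilde u_\eps$ and the hypothesis $u_\eps = \tilde u_\eps$ on $\Omega_{1,\eps}$; (3) follows because the energy bound forces $u_\eps \in U_1$, so $\A u_\eps = 0$, and then $\A v_\eps = 0$ by linearity; (4) holds since $\{\tilde u_\eps\}$ is weakly convergent in $L^p$ and hence bounded, so $\eps \tilde u_\eps \to 0$ strongly in $L^p$, which combined with $\eps u_\eps \wk 0$ gives $\eps v_\eps \wk 0$.

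The core step is the estimate on $\E_\eps(u_\eps)$. Applying the Lipschitz-type bound \textbf{H4} to $f_{0,\eps}$ at $\eps u_\eps$ and $\eps v_\eps$, and using $\eps u_\eps - \eps v_\eps = \eps \tilde u_\eps$, one gets the pointwise estimate
\[
|f_{0,\eps}(\eps u_\eps) - f_{0,\eps}(\eps v_\eps)|
\leq \mu \bigl( 1 + |\eps u_\eps|^{p-1} + |\eps v_\eps|^{p-1} \bigr)\, \eps |\tilde u_\eps|.
\]
Integrating on $\Omega_{0,\eps}$ and applying H\"older's inequality with conjugate exponents $p' = p/(p-1)$ and $p$ yields
\[
|\E_\eps(u_\eps)| \leq C \bigl( 1 + \|\eps u_\eps\|_{L^p}^{p-1} + \|\eps v_\eps\|_{L^p}^{p-1} \bigr)\, \eps \|\tilde u_\eps\|_{L^p(\Omega;\RN)}.
\]
The growth assumption~\textbf{H3}, combined with $\sup_\eps \F_\eps(u_\eps)\leq c$, delivers a uniform $L^p$-bound for $\{\eps u_\eps\}$ on $\Omega_{0,\eps}$; since $u_\eps = \tilde u_\eps$ on $\Omega_{1,\eps}$ with $\{\tilde u_\eps\}$ $L^p$-bounded, the same bound holds on all of $\Omega$, and item~(4) upgrades it to $\{\eps v_\eps\}$. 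Thus the parenthesis stays bounded while the extra factor $\eps \|\tilde u_\eps\|_{L^p}$ vanishes, giving $\E_\eps(u_\eps) \to 0$.

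With item~(2) and the identity $u_\eps = \tilde u_\eps$ on $\Omega_{1,\eps}$, one has the algebraic decomposition $\F_\eps(u_\eps) = \F_{0,\eps}(v_\eps) + \F_{1,\eps}(\tilde u_\eps) + \E_\eps(u_\eps)$ (note that $v_\eps \in U_1$ by item~(3), so $\F_{0,\eps}(v_\eps)$ is finite, and that $\tilde u_\eps \in U_1$ by assumption). Superadditivity of $\liminf$, subadditivity of $\limsup$, and the vanishing of $\E_\eps(u_\eps)$ then deliver both inequalities in~\eqref{eq:split5}. The only mildly delicate point is securing the uniform $L^p$-boundedness of $\{\eps u_\eps\}$ on all of $\Omega$ (not just on $\Omega_{0,\eps}$ where the coercivity of $f_{0,\eps}$ is used, but also on $\Omega_{1,\eps}$, where it comes from $\eps$ times the bounded sequence $\{\tilde u_\eps\}$); nothing here requires Assumption~\ref{stm:exist-ext}, which is already encoded in the definition of the high-contrast limit.
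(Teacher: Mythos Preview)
Your proof is correct and follows essentially the same route as the paper: the four qualitative claims are dispatched exactly as in the paper, and the energy inequalities are reduced to showing $\E_\eps(u_\eps)\to 0$ via the pointwise \textbf{H4} estimate followed by H\"older's inequality and the $L^p$-boundedness of $\{\tilde u_\eps\}$. Your additional remarks about the boundedness of $\{\eps u_\eps\}$ on all of $\Omega$ are a harmless over-justification, since the integral defining $\E_\eps$ is taken only over $\Omega_{0,\eps}$, where \textbf{H3} and the energy bound already give $\sup_\eps\|\eps u_\eps\|_{L^p(\Omega_{0,\eps})}<\infty$ directly.
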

\begin{proof}
	 As a consequence of the fact that $\A\tilde{u}_\ep=0$ in $\Omega$ for all $\ep$, also the weak limit of $\set{\tilde u_\eps}$, i.e. $u$, must be $\A$-free.
    As immediate consequences of the definition, we also find that
    $v_\ep=0$ in $\Omega_{1,\ep}$ and that $\A v_\eps = 0$ in $\Omega$.
    Being $\set{\tilde{u}_\eps}$ bounded in $L^p(\Omega;\RN)$, we also deduce
    \begin{equation*}
        \eps v_\eps = \eps ( u_\eps - \tilde u_\eps ) \weak 0 \quad\text{weakly in }L^p(\Omega;\RN).
    \end{equation*}
    
    Only the estimates involving the semilimits are now left to prove.
    It suffices to show that
    $\lim_{\eps\to 0} {\E}_\eps(u_\eps) = 0$, with $\E_\eps$ as in \eqref{eq:error}.
    To this aim, we observe that in view of {\bf H4}, for almost every $x$ we have
	\begin{gather*}
		\left\lvert
		{f}_{0,\eps} \left(\eps u_\eps \right)
		- {f}_{0,\eps} \left(\eps v_\eps \right)
		\right\rvert
		\leq \mu \left(
		    1 + \va{\eps v_\eps}^{p-1} + \va{\eps u_\eps}^{p-1}
	    \right)\va{\eps \tilde u_\eps}.
	\end{gather*}
	H\"older's Inequality yields
	$
		\va{{\E}_\eps(u_\eps)}
		\leq c 
		    \eps \norm{\tilde u_\eps}_{L^p(\Omega;\RN)},
	$
	and the conclusion is achieved
	by exploiting again the boundedness of $\set{\tilde u_\eps}$ in $L^p(\Omega;\RN)$.
\end{proof}

We conclude this section
by showing that,
thanks to the $\A$-free decomposition procedure in Lemma \ref{stm:Adecomp},
we can always reduce to the case in which
the sequence $\set{v_\ep}$ on the left-hand side of \eqref{eq:split5} is $p$-equiintegrable.
This motivates the $\Gamma$-liminif inequality
contained in Proposition \ref{prop:liminf-gen}.
We premise a lemma.
\begin{lemma}\label{stm:substitution}
	Let $\set{f_{0,\ep}}$ and $f_1$ satisfy hypotheses {\bf H1}, {\bf H3}, and {\bf H4}.
	Let also $\set{u_\epsilon}, \set{v_\epsilon} \subset L^p(\Omega;\RN)$ be bounded sequences such that
	$u_\epsilon - v_\epsilon\to 0$ in measure, and
	that $\set{v_\epsilon}$ is $p$-equiintegrable.
	Then,
	\begin{gather}
	    \liminf_{\epsilon\to 0} \int_{\Omega_{0,\eps}} \left[
			    {f}_{0,\eps} \left( u_\eps \right)
			    - {f}_{0,\eps} \left(v_\eps \right)
			    \right] \de x
		    \geq 0, \label{eq:substitution} \\
	    \liminf_{\epsilon\to 0} \int_{\Omega_{1,\eps}} \left[
			    {f}_{1} \left(\frac{x}{\ep}, u_\eps \right)
			    - {f}_{1} \left(\frac{x}{\ep},v_\eps \right)
			    \right] \de x
		    \geq 0. \nonumber
    \end{gather}
\end{lemma}
\begin{proof}
Despite the dependence of the energy density $f_1$
on the oscillating variable,
up to a different notational realization,
the ‘stiff’ case is completely analogous to the ‘soft’ one.
For this reason, we
detail the proof of \eqref{eq:substitution} only

We first note that
\eqref{eq:substitution} is left unchanged
if we replace ${f}_{0,\eps}$ with ${f}_{0,\eps}-{f}_{0,\eps}(0)$, and hence
we may assume that ${f}_{0,\eps}(0)=0$. Upon extraction of a (non-relabeled) subsequence, we may also suppose that 
the left-hand side in \eqref{eq:substitution} is a limit.
In view of \cite[Lemma~8.13]{fonseca.leoni},
for every $\eps>0$,
we can decompose the elements of such subsequence as $u_\eps=u^o_\eps+u^c_\eps$,
where $\set{u^o_\eps}$ (the `oscillating' part) is $p$-equiintegrable and $\set{u_\eps^c}$ (the `concentrating' part) converges to zero in measure.
If we let $R_\eps \coloneqq \set{x\in\Omega: u^o_\eps\ne u_\eps}$,
we have that $\Ld(R_\eps)\to 0$, whence, by the current assumptions,
$\set{u^o_\epsilon-v_\eps}$ is $p$-equiintegrable and converges to zero in measure.
Thanks to Vitali's convergence theorem (see e.g.~\cite[Theorem~2.24]{fonseca.leoni}),
it follows that $u^o_\epsilon-v_\eps \to 0$
strongly $L^p(\Omega;\RN)$, and,
in view of {\bf H4}, we deduce 
\begin{align}\label{strong}
\lim_{\eps\to 0}\int_{\Omega_{0,\eps}}\left|{f}_{0,\eps}(u^o_\eps)-{f}_{0,\eps}(v_\eps)\right| \de x=0 .
\end{align}

By employing the definition of $R_\eps$, {\bf H3}, and {\bf H4},
we find the estimate
\begin{multline*}
    \left|
    \int_{\Omega_{0,\eps}}{f}_{0,\eps}(u_\eps)\,\de x - \int_{\Omega_{0,\eps}} \left[{f}_{0,\eps}(u^o_\eps)+{f}_{0,\eps}(u^c_\eps)\right] \de x
    \right| \\
    = \left|
    \int_{\Omega_{0,\eps}\cap R_\eps}{f}_{0,\eps}(u^o_\eps+u^c_\eps)\,\de x - \int_{\Omega_{0,\eps}\cap R_\eps} \left[{f}_{0,\eps}(u^o_\eps)+{f}_{0,\eps}(u^c_\eps)\right] \de x
    \right| \\
    \leq 
    \int_{\Omega_{0,\eps}\cap R_\eps} \left|
        {f}_{0,\eps}(u^o_\eps+u^c_\eps) - {f}_{0,\eps}(u^c_\eps)
    \right| \de x
    + \int_{\Omega_{0,\eps}\cap R_\eps}\left |{f}_{0,\eps}(u^o_\eps)\right| \de x
    \\ \le 
    \mu \int_{\Omega_{0,\eps}\cap R_\eps}\big(1+|u_\eps|^{p-1}+|u^c_\eps|^{p-1}\big)|u^o_\eps| \,\de x
    +
    \Lambda\int_{\Omega_{0,\eps}\cap R_\eps}\big(1+|u^o_\eps|^p\big)\, \de x.
\end{multline*}
Since $\set{u^o_\eps}$ and $\set{u^c_\eps}$ are bounded in $L^p(\Omega;\RN)$ and
$\set{u^o_\eps}$ is $p$-equiintegrable,
we infer that the last term tends to zero as $\eps\to 0$.
Therefore, we see that 
\begin{align*}
    \lim_{\epsilon\to 0} \int_{\Omega_{0,\eps}}
        \left[
            f_{0,\eps} ( u_\eps ) - f_{0,\eps} ( v_\eps )
		\right] \de x
	& = \lim_{\epsilon\to 0} \int_{\Omega_{0,\eps}}
	    \left[
	        f_{0,\eps} ( u^o_\eps ) + f_{0,\eps} ( u^c_\eps ) - f_{0,\eps} ( v_\eps )
	    \right] \de x \\
    & = \lim_{\epsilon\to 0} \int_{\Omega_{0,\eps} \cap R_\eps}
	        f_{0,\eps} ( u^c_\eps ) \de x. \\
	& \geq \liminf_{\epsilon\to 0} \int_{\Omega_{0,\eps}\cap R_\eps}\lambda\big(-a+|u^c_\eps|^p\big)\de x\\
	& \ge 0,
\end{align*}
where the second equality is a consequence of \eqref{strong},
the second to last inequality is due to {\bf H3},
and the last one follows from $\Ld(R_\eps)\to 0$.
\end{proof}

The improved variant of Lemma \ref{stm:split} for the liminf inequality in \eqref{eq:split5} reads as follows.

\begin{proposition}[Refined splitting for the liminf]
\label{stm:split2}
     Let $\A$ be an admissible differential operator.
    Let $u\in L^p(\Omega;\RN)$ and consider two sequences $\set{u_\eps},\set{\tilde u_\eps}\subset L^p(\Omega;\RN)$ with the property that 
    $\eps u_\eps \weak 0$ weakly in $L^p(\Omega;\R^N)$, $\A\tilde{u}_\ep=0$ in $W^{-1,p}(\Omega;\R^M)$, 
    $\tilde u_\eps \weak u$ weakly in $L^p(\Omega;\R^N)$, and
    $u_\eps = \tilde u_\eps$ in $\Omega_{1,\ep}$.
    If $\sup_{\ep>0}\F_\eps (u_\eps)\leq c$ for some $c\geq -a \lambda_0 \Ld(\Omega)$,
    then there exists $\set{\tilde v_\eps} \subset L^p(\Omega;\RN)$ such that the following hold:
    \begin{align}
    & \nonumber \tilde v_\eps=0 \text{ in } \Omega_{1,\ep}, \\
    & \nonumber \A \tilde v_\ep = 0 \text{ in } W^{-1,p}(\Omega;\RM),\\
    & \set{\ep \tilde{v}_\ep} \text{ is $p$-equiintegrable,} \nonumber \\
    & \eps \tilde v_\eps \weak 0\text{ weakly in }L^p(\Omega;\RN), \label{eq:epvep}\\  
    & \liminf_{\eps \to 0} \F_{0,\eps}(\tilde v_\eps) + \liminf_{\eps \to 0} \F_{1,\eps}(\tilde u_\eps) \leq \liminf_{\eps \to 0} \F_\eps(u_\ep), \label{eq:split5-2}.
    \end{align}
\end{proposition}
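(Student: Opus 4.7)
My plan is to take as a starting point the outcome of Lemma \ref{stm:split}: with $v_\ep\coloneqq u_\ep - \tilde u_\ep$, the sequence $\{v_\ep\}$ is $\A$-free, vanishes on $\Omega_{1,\ep}$, and satisfies $\ep v_\ep\wk 0$ in $L^p(\Omega;\RN)$; moreover, the energy bound together with hypothesis {\bf H3} implies that $\{\ep v_\ep\}$ is bounded in $L^p(\Omega;\RN)$. The refinement then amounts to replacing $v_\ep$ by a sequence $\tilde v_\ep$ whose rescaled counterpart $\{\ep \tilde v_\ep\}$ is $p$-equiintegrable while retaining all the listed properties. Once such a construction is available, Lemma \ref{stm:substitution} will upgrade the liminf inequality of Lemma \ref{stm:split} to one involving $\tilde v_\ep$.

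To build $\tilde v_\ep$, I would apply the $\A$-free decomposition of Lemma \ref{stm:Adecomp} to $\{\ep v_\ep\}$. This yields, up to a subsequence, a bounded, $\A$-free, $p$-equiintegrable sequence $\{w_\ep\}\subset L^p(\Omega;\RN)$ with $\ep v_\ep - w_\ep \to 0$ strongly in $L^q(\Omega;\RN)$ for every $q\in [1,p)$. Since $\ep v_\ep \equiv 0$ on $\Omega_{1,\ep}$, one deduces $\|w_\ep\|_{L^q(\Omega_{1,\ep};\RN)}\to 0$; combining this with the $p$-equiintegrability of $\{w_\ep\}$ and Vitali's theorem, one obtains the finer conclusion $\|w_\ep\|_{L^p(\Omega_{1,\ep};\RN)}\to 0$. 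I then set
\[
    \tilde v_\ep \coloneqq \frac{1}{\ep}\bigl( w_\ep - \mathsf{E}_\A^\ep w_\ep \bigr),
\]
where $\mathsf{E}_\A^\ep$ is the extension operator granted by Assumption \ref{stm:exist-ext}, applied to $w_\ep$. Since $\mathsf{E}_\A^\ep w_\ep = w_\ep$ on $\Omega_{1,\ep}$ and both $w_\ep$ and $\mathsf{E}_\A^\ep w_\ep$ are $\A$-free on $\Omega$, the map $\tilde v_\ep$ vanishes on $\Omega_{1,\ep}$ and is $\A$-free. The $L^p$-bound in Assumption \ref{stm:exist-ext}(2) gives
\[
    \|\mathsf{E}_\A^\ep w_\ep\|_{L^p(\Omega;\RN)}\leq c\,\|w_\ep\|_{L^p(\Omega_{1,\ep};\RN)}\longrightarrow 0,
\]
which, together with the weak convergence $w_\ep\wk 0$ in $L^p$ (obtained from $\ep v_\ep\wk 0$ and $\ep v_\ep - w_\ep\to 0$ in $L^q$), yields $\ep \tilde v_\ep = w_\ep - \mathsf{E}_\A^\ep w_\ep \wk 0$ in $L^p(\Omega;\RN)$; the preservation of $p$-equiintegrability in Assumption \ref{stm:exist-ext}(4) then ensures that $\{\ep \tilde v_\ep\}$ is $p$-equiintegrable.

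To conclude, observe that $\ep v_\ep - \ep \tilde v_\ep = (\ep v_\ep - w_\ep) + \mathsf{E}_\A^\ep w_\ep$ tends to zero in measure, as its first summand converges in $L^q$ and the second strongly in $L^p$. Applying Lemma \ref{stm:substitution} to the bounded sequence $\{\ep v_\ep\}$ and the $p$-equiintegrable one $\{\ep \tilde v_\ep\}$, one obtains $\liminf_{\ep\to 0}\F_{0,\ep}(v_\ep)\geq \liminf_{\ep\to 0}\F_{0,\ep}(\tilde v_\ep)$, which combined with the liminf inequality of Lemma \ref{stm:split} delivers \eqref{eq:split5-2} with $\tilde v_\ep$ in place of $v_\ep$. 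The main obstacle is to propagate smallness through the extension operator: the $\A$-free decomposition only furnishes $L^q$-closeness for $q<p$, so one has to rely on the $p$-equiintegrability of $\{w_\ep\}$ and Vitali's theorem in order to upgrade the decay of $\|w_\ep\|_{L^q(\Omega_{1,\ep};\RN)}$ to that of $\|w_\ep\|_{L^p(\Omega_{1,\ep};\RN)}$, which is what triggers the strong $L^p$-decay of $\mathsf{E}_\A^\ep w_\ep$ via Assumption \ref{stm:exist-ext}(2) and makes the whole argument work.
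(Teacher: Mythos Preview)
Your argument is correct and follows the same overall architecture as the paper's proof: apply the $\A$-free decomposition (Lemma~\ref{stm:Adecomp}), correct the output with the extension operator from Assumption~\ref{stm:exist-ext} so as to force vanishing on $\Omega_{1,\ep}$, and finally invoke Lemma~\ref{stm:substitution} together with Lemma~\ref{stm:split}. There is, however, a genuine difference in how the key technical hurdle is handled.

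The paper applies the decomposition to $\{\ep u_\ep\}$ rather than to $\{\ep v_\ep\}$, and then relies on an additional property \emph{not stated} in Lemma~\ref{stm:Adecomp} but extracted from the proof in \cite{fonseca.muller}: namely that $\ep\chi_{1,\ep}(w_\ep-u_\ep)\to 0$ strongly in $L^p(\Omega;\RN)$. This is what allows them to bound $\ep\norm{\mathsf{E}_\A^\ep w_\ep}_{L^p(\Omega)}$ and conclude. Your route avoids this detour: by decomposing $\{\ep v_\ep\}$ instead (which already vanishes on $\Omega_{1,\ep}$), you obtain $\norm{w_\ep}_{L^q(\Omega_{1,\ep})}\to 0$ from the stated $L^q$-closeness alone, and then upgrade this to $L^p$-smallness by combining the $p$-equiintegrability of $\{w_\ep\}$ with Vitali's theorem (convergence in measure plus equiintegrability gives $L^p$-convergence). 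This makes your argument more self-contained, as it uses Lemma~\ref{stm:Adecomp} only as a black box. The paper's approach, in exchange, is slightly shorter once the extra property is granted.
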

\begin{proof}
    The equiboundedness in energy entails that
    \begin{equation}
    \label{eq:bd-from-en}    
    \norm{\eps u_\eps}_{L^p(\Omega;\RN)}\leq c
    \quad\text{and}\quad
    \norm{\chi_{1,\ep}u_\ep}_{L^p(\Omega;\RN)}\leq c.
    \end{equation}
    By Lemma \ref{stm:Adecomp},
    there exist a (not relabeled) subsequence of $\set{u_\ep}$ and a sequence $\set{w_\ep}\subset L^p(\Omega;\RN)$ such that 
	\begin{enumerate}
		\item $\set{\ep w_\ep}$ is bounded, $\A$-free, and $p$-equiintegrable;
		\item $\ep( w_{\eps} - u_\ep ) \to 0$ strongly in $L^q(\Omega;\R^N)$ for every $q\in[1,p)$;
		\item $\ep \chi_{1,\ep}(w_\ep - u_\ep)\to 0$ strongly in $L^p(\Omega;\R^N)$.
	\end{enumerate}
	Note that the last property is not mentioned explicitly in the statement of Lemma \ref{stm:Adecomp},
	but it is a byproduct of the construction in its proof (see \cite[Lemma 2.15]{fonseca.muller}).
	
	For every $\ep>0$, we define $\tilde w_\ep \coloneqq \mathsf{E}^\ep_{\A} w_\ep$,
	with $\mathsf{E}^\ep_{\A}$ as in Assumption \ref{stm:exist-ext}.
	We then set $\tilde v_\ep \coloneqq w_\ep - \tilde w_\ep$.
	In view of Assumption \ref{stm:exist-ext},
	we immediately obtain that
	$\tilde v_\ep$ vanishes on $\Omega_{1,\ep}$, that it is $\A$-free in $\Omega$, and that $\set{\ep \tilde{v}_\ep}$ is $p$-equiintegrable.
	To prove \eqref{eq:epvep}, we consider the identity
	\begin{equation}\label{eq:rewrite-vep}
	\ep \tilde v_\ep= \ep (w_\ep - u_\ep) - \ep (\tilde w_\ep - u_\ep).
	\end{equation}
	Thanks to (2) above, $\ep( w_{\eps} - u_\ep ) \to 0$ strongly in $L^q(\Omega;\R^N)$.
	Additionally, by Assumption \ref{stm:exist-ext},
	$$
	\ep \norm{\tilde w_\ep}_{L^p(\Omega;\R^N)}
	\leq c \ep \norm{w_\ep}_{L^p(\Omega_{1,\ep};\R^N)}
	\leq c\ep \norm{w_\ep-u_\ep}_{L^p(\Omega_{1,\ep};\R^N)}+c\ep\norm{u_\ep}_{L^p(\Omega_{1,\ep};\RN)}
	\to 0,$$
	the convergence to $0$ following from (3) above and \eqref{eq:bd-from-en}.
	In view of the assumptions on $\set{u_\eps}$ and of (2), \eqref{eq:epvep} is inferred.
	
	To complete the proof of the corollary, it suffices now show that for $q\in [1,p)$,
	\begin{align}
	    \ep (\tilde{v}_\ep-v_\ep)\to 0 \quad\text{ strongly in }L^q(\Omega;\R^N),
	    \label{eq:change-small}
	\end{align}
	where $v_\eps \coloneqq u_\eps - \tilde u_\eps$.
	Indeed, once this is proven, \eqref{eq:split5-2} is deduced from Lemma \ref{stm:split} and Lemma \ref{stm:substitution}.
	To prove \eqref{eq:change-small},
	we notice that, by the definitions of $v_\ep$ and $\tilde v_\ep$, as well as by H\"older's inequality,
	for every $q\in [1,p)$ it holds
	\begin{equation*}
	\norm{\ep (\tilde{v}_\ep - v_\ep)}_{L^q(\Omega;\R^N)}
	\leq \ep \norm{w_\ep - u_\ep}_{L^q(\Omega;\R^N)} + c\ep \norm{\mathsf{E}^\ep_{\A}(w_\ep - u_\ep)}_{L^p(\Omega;\R^N)}.
	\end{equation*}
	Then, in view of (2), of Assumption \ref{stm:exist-ext}, and of (3), we deduce \eqref{eq:change-small}.
\end{proof}

\subsection{$\Gamma$-convergence}
\label{sec:main-result}

In this short subsection
we tackle the proofs of Theorem \ref{stm:main} and Corollary \ref{stm:minimizers}.

\begin{proof}[Proof of Theorem \ref{stm:main}]
	Statement (1),
	concerned with the high-contrast compactness of families with equibounded energy,
	follows from Proposition \ref{stm:HC-limits}.
	
	Let now $u\in U_1 $ be fixed and assume that
	there are two sequences $\set{u_\eps},\set{\tilde u_\eps}\subset L^p(\Omega;\RN)$ with the property that 
	$\eps u_\eps \weak 0$ weakly in $L^p(\Omega;\R^N)$,  $\A\tilde{u}_\ep=0$ in $\Omega$,
	$\tilde u_\eps \weak u$ weakly in $L^p(\Omega;\R^N)$, and
	$u_\eps = \tilde u_\eps$ in $\Omega_{1,\ep}$.
	If the lower limit of $\set{\F_\ep(u_\eps) }$ is not finite, the estimate of point (2) holds trivially.
	Otherwise, we define $v_\eps \coloneqq u_\eps - \tilde u_\eps$ and,
	owing to Proposition \ref{stm:split2},
	by applying Proposition \ref{stm:Glim-soft} to $\set{v_\eps}$,
	as well as Proposition \ref{prop:stiff-part} to $\set{\tilde u_\eps}$,
	we deduce
	\begin{align*}
	\liminf_{\ep\to 0} \F_\ep(u_\eps)
	& \geq \liminf_{\eps \to 0} \F_{0,\eps}(u_\eps) + \liminf_{\eps \to 0} \F_{1,\eps}(u_\eps) \\
	& = \liminf_{\eps \to 0} \F_{0,\eps}(v_\eps) + \liminf_{\eps \to 0} \F_{1,\eps}(\tilde u_\eps) \\
	& \geq \alpha_0 + \F_1(u) \\
	& \eqqcolon \F(u).
	\end{align*}	
	
	Finally, we turn to (3).
	For $u\in U_1$, again in the light of Propositions \ref{stm:Glim-soft} and \ref{prop:stiff-part},
	we can find two sequences $\set{v_\eps},\set{\tilde u_\eps} \subset U_1$
	such that
	\begin{align*}
		& v_\eps = 0 \quad\text{in } \Omega_{1,\ep}, \\
		& \eps v_\eps \weak 0 \quad\text{weakly in } L^p(\Omega;\RN), \\
		& \tilde u_\eps \weak u \quad\text{weakly in } L^p(\Omega;\RN), \\
		& \limsup_{\eps \to 0} \F_{0,\eps}(u_{0,\eps}) \leq \alpha_0, \\
		& \limsup_{\eps \to 0} \F_{1,\eps}(\tilde u_\eps) \leq \F_1(u).
	\end{align*}
	Then, setting $u_\eps \coloneqq v_\eps+\tilde u_\eps$,
	by Lemma \ref{stm:split} we deduce the desired limsup inequality 
	from the ones satisfied by $\set{\F_{0,\eps}(v_\eps)}$ and $\set{\F_{1,\eps}(\tilde u_\eps)}$:
	\begin{align*}
		\limsup_{\eps \to 0} \F_\eps(u_\eps)
			& \leq \limsup_{\eps \to 0} \F_{0,\eps}(u_\eps) + \limsup_{\eps \to 0} \F_{1,\eps}(u_\eps) \\
			& = \limsup_{\eps \to 0} \F_{0,\eps}(v_\eps) + \limsup_{\eps \to 0} \F_{1,\eps}(\tilde u_\eps) \\
			& \leq \alpha_0 + \F_1(u) \\
			& \eqqcolon \F(u).
	\end{align*} 
\end{proof}

At this stage, the convergence of infima and of minimizers easily follows.

\begin{proof}[Proof of Corollary \ref{stm:minimizers}]
The equicoercivity of the energies with respect to the high-contrast convergence and the convergence result in Theorem \ref{stm:main} yield the conclusion by standard $\Gamma$-convergence arguments.
\end{proof}

\section{Asymptotics for the soft component}\label{sec:soft}
This section is devoted to the proof of Proposition \ref{stm:Glim-soft}. 
We address the asymptotic analysis of the energy stored in the `soft' component of the system, that is, $\set{\F_{0,\eps}}$ in \eqref{eq:F0eps},
and we prove that the limiting behavior is encoded by the functional $\F_0$ in \eqref{eq:F0}.
We recall that $\F_0$ is finite only when $u=0$
and in that case the value of the functional equals the constant $\alpha_0$
given by
\begin{equation*}
	\alpha_0 \coloneqq \sup_{\Omega'\Subset\Omega} \inf_{u \in U_0(\Omega')} \int_{\Omega'}\int_{D_0} f_{0} \big( u(x,y) \big) \de y\,\de x,
\end{equation*}
where the supremum is meant to run over all open sets that are compactly contained in $\Omega$,
and, for any open $\Omega' \subset \Omega$, $U_0(\Omega')$ is as in \eqref{eq:U0}.
	
\begin{remark}[On the constant $\alpha_0$]
	\label{stm:rmkG0}
	By the definition of $U_0(\Omega')$ in \eqref{eq:U0}, we see that
	\begin{equation}\label{eq:U0-monotone}
	U_0(\Omega) \subset U_0(\Omega'_2) \subset U_0(\Omega'_1)
	\quad\text{whenever } \Omega'_1\subset\Omega'_2 \subset \Omega.
	\end{equation}
	Since $\lambda(-a + \va{\xi}^p)\leq f_0(\xi)$ for every $\xi\in \R^N$,
	from \eqref{eq:U0-monotone} we deduce that
	\[
		\alpha_0 \leq \inf_{u \in U_0}\int_{\Omega}\int_{D_0} f_{0} \big( u(x,y) \big) \de y\,\de x,
	\]
	and, more generally, that
	\begin{equation}\label{eq:inf-monoton}
		\inf_{u\in U_0\in (\Omega'_1)}\int_{\Omega'_1}\int_{D_0} f_0 \big( u(x,y) \big) \de y\,\de x
		\leq \inf_{u\in U_0(\Omega'_2)}\int_{\Omega'_2}\int_{D_0} f_0 \big( u(x,y) \big) \de y\,\de x
	\end{equation}
	when $\Omega'_1 \subset \Omega'_2 \subset \Omega$.
	In particular, the supremum in the definition of $\alpha_0$ is not a maximum, in general.
	Indeed, suppose that $\Omega'_1\Subset \Omega$ achieves such supremum.
	Then, we would get a contradiction whenever there exists $\Omega'_2 \Subset \Omega$ containing $\Omega'_1$
	such that the inequality in \eqref{eq:inf-monoton} is strict.
\end{remark}

We separate the discussion of the $\Gamma$-liminf and of the $\Gamma$-limsup inequalities.

\subsection{$\Gamma$-liminf inequality}
The proof of the $\Gamma$-liminf inequality relies
on the compactness and splitting results of Section \ref{sec:comp+split},
on unfolding techniques, as well as on the following intermediate statement.

\begin{lemma}
	\label{stm:liminfineq-Aquasiconv}
	Let $\set{{f}_{0,\ep}}$ satisfy {\bf H1} and {\bf H3}--{\bf H5}.
	Suppose that $\set{w_\eps}\subset L^p(\Omega; L^p_\per(\Rd;\RN))$ is a $p$-equiintegrable family satisfying the following properties:
	\begin{align}\label{eq:0inD1}
	w_\eps = 0 \text{ if } y \in D_1,
	\end{align}
	and for all open $\Omega'\Subset \Omega$ there exists $\eps'\coloneqq \eps'(\Omega')$ such that
	\begin{align}\label{eq:Ayfree}
	\A_y w_\eps=0 \text{ in } W^{-1,p}(\Td;\RM)
		\text{ for a.e. } x\in\Omega' \text{ if } \eps < \eps'.
	\end{align}
	Then, it holds
	\begin{align}\label{lsc}
	\alpha_0	\leq \liminf_{\eps \to 0} \int_{\Omega}\int_{D_0} {f}_{0,\ep} \big( w_\eps(x,y) \big)\de y\,\de x.
	\end{align}
\end{lemma}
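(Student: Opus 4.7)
The plan is to use, for each $\Omega' \Subset \Omega$ and all sufficiently small $\ep$, the map $w_\ep$ itself as a competitor in the inner infimum defining $\alpha_0$, and then to exhaust $\Omega$ from the inside. Three ingredients enter: by \eqref{eq:0inD1} and \eqref{eq:Ayfree}, the map $w_\ep$ belongs to $U_0(\Omega')$ as soon as $\ep < \ep'(\Omega')$, so that
\[
\int_{\Omega'}\!\int_{D_0} f_0\bigl(w_\ep(x,y)\bigr) \, \de y \, \de x \geq \inf_{u \in U_0(\Omega')} \int_{\Omega'}\!\int_{D_0} f_0\bigl(u(x,y)\bigr) \, \de y \, \de x;
\]
the density $f_{0,\ep}$ can be replaced by $f_0$ on the left-hand side up to an error vanishing as $\ep \to 0$; and the residual integral over $\Omega \setminus \Omega'$ is controlled from below by $-a\lambda\, \Ld(\Omega\setminus\Omega')\, \Ld(D_0)$ thanks to the coercivity bound in \textbf{H3}.

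For the density-replacement step, my plan is the following. Hypothesis \textbf{H4} makes $\{f_{0,\ep}\}$ equi-locally-Lipschitz, and coupled with the pointwise convergence \textbf{H5} it yields, via an Arzel\`a--Ascoli-type argument on bounded sets, that $\omega_R(\ep) \coloneqq \sup_{|\xi|\leq R} |f_{0,\ep}(\xi)-f_0(\xi)| \to 0$ as $\ep \to 0$ for every fixed $R>0$. I then split the error integral at the level $R$: on $\{|w_\ep|\leq R\}$ it is dominated by $\omega_R(\ep)\, \Ld(\Omega')\, \Ld(D_0)$, while on $\{|w_\ep|>R\}$ the crude bound $|f_{0,\ep}(\xi)-f_0(\xi)| \leq C(1+|\xi|^p)$ from \textbf{H3} reduces matters to $C\!\int_{\{|w_\ep|>R\}} (1+|w_\ep|^p) \, \de y\, \de x$. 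Since $\{w_\ep\}$ is $p$-equiintegrable and hence bounded in $L^p$, one has $\Ld(\{|w_\ep|>R\}) \leq R^{-p}\|w_\ep\|_{L^p}^p \to 0$ uniformly in $\ep$ as $R \to +\infty$, so the latter contribution can be made arbitrarily small uniformly in $\ep$ by taking $R$ large, and then $\omega_R(\ep)$ disappears for $\ep$ small.

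Combining the three ingredients produces, for every $\Omega' \Subset \Omega$,
\[
\liminf_{\ep \to 0} \int_\Omega\!\int_{D_0} f_{0,\ep}(w_\ep) \, \de y \, \de x \geq \inf_{u \in U_0(\Omega')} \int_{\Omega'}\!\int_{D_0} f_0(u) \, \de y \, \de x - a\lambda\, \Ld(\Omega \setminus \Omega')\, \Ld(D_0).
\]
To conclude \eqref{lsc}, given $\eta > 0$ I select, by the very definition of $\alpha_0$, a subdomain $\Omega'_\eta \Subset \Omega$ with $\inf_{U_0(\Omega'_\eta)} \int \geq \alpha_0 - \eta$, and then enlarge it within $\Omega$ to some $\Omega' \Subset \Omega$ with $\Ld(\Omega \setminus \Omega') < \eta$. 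The monotonicity of the inner infimum recorded in Remark \ref{stm:rmkG0} (specifically \eqref{eq:inf-monoton}) preserves the lower bound through this enlargement, and letting $\eta \to 0$ delivers $\alpha_0$ on the right-hand side. I expect the density-replacement step to be the main obstacle, since the purely pointwise character of \textbf{H5} forces a careful interplay with both \textbf{H4} and the $p$-equiintegrability of $\{w_\ep\}$, none of which alone would suffice.
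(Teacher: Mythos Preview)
Your proof is correct and follows essentially the same route as the paper's. The paper organizes the argument in two steps---first proving \eqref{lsc} with $f_0$ in place of $f_{0,\ep}$, then replacing $f_{0,\ep}$ by $f_0$ via the same locally-uniform-convergence-plus-truncation argument you sketch---while you run the two threads simultaneously; the only substantive difference is that the paper controls the residual over $\Omega\setminus\Omega'$ using the $p$-equiintegrability of $\{w_\ep\}$ (so the whole integral there vanishes as $\Ld(\Omega\setminus\Omega')\to 0$), whereas you use only the coercivity lower bound from \textbf{H3}, which is slightly more elementary and equally sufficient here.
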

\begin{proof}
	The proof is divided in two steps.
	
	\noindent
	{\sc Step 1: a simplified setting.} We start by establishing an inequality for the case in which
	the densities on the right-hand side do not depend on $\eps$ (cf. \cite[Lemma 20]{CC}), that is, we first prove that
	\begin{equation}\label{eq:noeps}
	\alpha_0
	\leq 
	\liminf_{\eps \to 0}
	\int_{\Omega}\int_{D_0} f_0 \big( w_\eps(x,y) \big) \de y\,\de x.
	\end{equation}
	
	According to the definition of $\alpha_0$ and to Remark \ref{stm:rmkG0},
	for an arbitrary $\delta>0$,  there is an open set $\Omega'_\delta\Subset \Omega$ such that
	$\dist(\partial \Omega,\partial \Omega'_\delta) < \delta$ and
	\[
		\alpha_0 \leq \inf_{u\in U_0(\Omega'_\delta)}\int_{\Omega'_\delta}\int_{D_0} f_0 \big( u(x,y) \big) \de y\,\de x + \delta.
	\] 
	In view of \eqref{eq:0inD1} and \eqref{eq:Ayfree},
	$w_\ep \in U_0(\Omega'_\delta)$ for $\ep>0$ sufficiently small, whence
	\begin{align*}
		\alpha_0
		& \leq
		\liminf_{\eps \to 0} \int_{\Omega'_\delta}\int_{D_0} f_0 \big( w_\eps(x,y) \big) \de y\,\de x + \delta \\
		& =
		\liminf_{\eps \to 0} \int_{\Omega}\int_{D_0} f_0 \big( w_\eps(x,y) \big) \de y\,\de x
			- \int_{\Omega \setminus \Omega'_\delta}\int_{D_0} f_0 \big( w_\eps(x,y) \big) \de y\,\de x + \delta.
	\end{align*}
	Thanks to the $p$-equiintegrability of $\set{w_\ep}$,
	the conclusion \eqref{eq:noeps} now follows	by letting $\delta$ vanish.
	
	\noindent{\sc Step 2: the general case.}
	\noindent We adapt the argument in the proof of \cite[Thm.~5.14]{DalMaso93},
	relying on the almost everywhere convergence of $\set{{f}_{0,\ep}}$ to ${f}_0$, as well as on the $p$-Lipschitz continuity of these energy densities (see {\bf H4} and {\bf H5}). 
	
	Fix $\delta>0$. By the $p$-equiintegrability of $\set{w_\ep}$ and by the $p$-growth assumptions on $f_0$, there exists $r>0$ such that 
	\begin{equation}
	\label{eq:first-part}
	\sup_{\ep>0}\int_{\set{(x,y)\in \Omega\times D_0: \va{w_\ep(x,y)}>r}}f_0 \big( w_\ep(x,y) \big)\de{x}\,\de{y}\leq \delta.
	\end{equation}
	Besides, since $f_{0,\ep}$ and $f_0$ are $p$-Lipschitz, we find $\rho>0$ such that
	\begin{equation}
	\label{eq:second-part}
	|f_0(\xi)-f_0(\eta)|+\sup_{\ep>0}|f_{0,\ep}(\xi)-f_{0,\ep}(\eta)|\leq \delta\quad\text{for every } \xi,\eta \in B(0,\rho).
	\end{equation}
	Let now $\xi_1,\dots,\xi_K\in B(0,r)$ be such that
	\begin{equation}\label{eq:cover}
	\overline{B(0,r)} \subset \bigcup_{k=1}^K B\left(\xi_k,\rho\right).
	\end{equation} 
	In view of {\bf H5}, for any such $\xi_k$
	there exist $\bar \eps_k>0$
	such that 
	$\va{ {f}_{0,\ep}(\xi_k) - {f}_0(\xi_k)}\le \delta$ 
	if $\eps<\bar \eps_k$.
	Letting 
	$\bar \eps \coloneqq \min\set{\bar \eps_1,\dots,\bar \eps_K}$,
	it follows that 
	for any $k=1,\dots,K$
	\begin{equation}\label{eq:unifconv}
	\va{{f}_{0,\ep}(\xi_k) - {f}_0(\xi_k)}\le \delta
	\quad
	\text{if $\eps<\bar \eps$.}
	\end{equation}
	By \eqref{eq:cover},
	for every $\eta \in \overline{B(0,r)}$
	there exists $k\in\{1,\dots,K\}$ such that $\eta \in B ( \xi_k , \rho )$.
	For this particular $k$,
	the combination of the triangle inequality, \eqref{eq:second-part}, and \eqref{eq:unifconv} yields
	\begin{align}
	\label{eq:lipschitz-est}
	\va{{f}_{0,\ep} ( \eta ) - {f}_0 ( \eta )} 
	\le \va{{f}_{0,\ep} ( \eta ) - {f}_{0,\ep}( \xi_k)}
	+ \va{{f}_{0,\ep}( \xi_k) - {f}_0( \xi_k)} 
	+ \va{{f}_0 (\eta ) - {f}_0( \xi_k)}\leq 3\delta,
	\end{align}
	for every $\eta \in \overline{B(0,r)}$ and every $\ep<\bar\ep$.
	
	In view of \eqref{eq:noeps} we deduce	
	\begin{align*}
	\alpha_0 & \leq
		\liminf_{\eps \to 0}
		\int_{\Omega}\int_{D_0}
			f_0 \big(w_\eps(x,y) \big)
		\de y\,\de x \\
	&\leq
		\liminf_{\ep\to 0}\int_{\set{(x,y)\in \Omega\times D_0: \va{w_\ep(x,y)} \leq r}}
			f_0\big(w_\eps(x,y) \big)
		\de{x}\,\de{y}
		+ \delta \\
	&\leq
		\liminf_{\eps \to 0}\int_{\Omega}\int_{D_0}
			f_{0,\ep} \big(w_\eps(x,y) \big)
		\de y\,\de x
		+3\delta \mathscr{L}^{2d}(\Omega\times D_0)+\delta,
	\end{align*}
	where the first inequality is due to \eqref{eq:first-part}, and the second one to \eqref{eq:lipschitz-est}.
	The arbitrariness of $\delta>0$ yields the conclusion.
\end{proof}

We are now ready to tackle the $\Gamma$-liminf inequality for the `soft' contribution;
the argument is comparable to the one in \cite[Lemma~21]{CC}.

\begin{proposition}\label{prop:liminf-gen}
Let $\set{{f}_{0,\ep}}_\ep$ satisfy assumptions {\bf H1} and {\bf H3}--{\bf H5}, and
let $u\in L^p(\Omega;\R^N)$.
Then, for every $p$-equiintegrable sequence $\set{u_\ep}\subset L^p(\Omega;\R^N)$
such that $u_\ep=0$ on $\Omega_{1,\ep}$,
$\A u_\ep=0$ in $W^{-1,p}(\Omega;\R^M)$ for every $\ep>0$,
and $\ep u_\ep\wk u$ weakly in $L^p(\Omega;\R^N)$,
we have
$$\F_0(u)\leq \liminf_{\ep\to 0}\F_{0,\ep}(u_\ep).$$
\end{proposition}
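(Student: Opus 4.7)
The plan is to first show that the weak limit $u$ must vanish, which reduces the desired estimate to $\alpha_0\leq \liminf_{\ep\to 0}\F_{0,\ep}(u_\ep)$, and then to recover this inequality by unfolding and invoking Lemma \ref{stm:liminfineq-Aquasiconv}. Assuming without loss of generality that the liminf is finite, I pass to a non-relabeled subsequence achieving it. The growth bound {\bf H3} yields uniform $L^p$-control of $\set{\ep u_\ep}$, so Lemma \ref{lemma:2-scale} provides $\ep u_\ep \weakts u_0$ weakly two-scale in $L^p$ for some $u_0\in L^p(\Omega;L^p_\per(\Rd;\RN))$, up to a further subsequence. Since $\chi_{1,\ep}\strongts \chi_1$ strongly two-scale while $\chi_{1,\ep}(\ep u_\ep)=0$, item (3) of Lemma \ref{lemma:2-scale} forces $u_0(x,y)=0$ whenever $y\in D_1$. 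Because $\A u_\ep=0$, the sequence $\set{\ep u_\ep}$ is $\A$-free, so Proposition \ref{stm:Afree-limits} ensures $\A_y u_0(x,\,\cdot\,)=0$ in $W^{-1,p}(\Td;\RM)$ for a.e.\ $x\in\Omega$. Assumption \ref{stm:null-av} then yields $\int_Q u_0(x,y)\,\de y=0$; since this integral coincides with the weak $L^p$-limit of $\set{\ep u_\ep}$, namely $u$, one deduces $u=0$ and hence $\F_0(u)=\alpha_0$.

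I would then set $w_\ep\coloneqq \mathsf{S}_\ep(\ep u_\ep)$ and verify the hypotheses of Lemma \ref{stm:liminfineq-Aquasiconv}. Because $u_\ep$ vanishes on $\Omega_{1,\ep}$, the very definition of $\mathsf{S}_\ep$ forces $w_\ep(x,y)=0$ for every $x\in\hat\Omega_\ep$ and every $y\in D_1$. Lemma \ref{stm:Afree-unfold} applied to the $\A$-free map $\ep u_\ep$, which is identically zero on $\Omega_{1,\ep}$, ensures $\A_y w_\ep=0$ in $W^{-1,p}(\Td;\RM)$ for a.e.\ $x\in\hat\Omega_\ep$; since every $\Omega'\Subset\Omega$ is contained in $\hat\Omega_\ep$ for $\ep$ small enough, condition \eqref{eq:Ayfree} is met. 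Moreover, the $p$-equiintegrability of $\set{u_\ep}$ is trivially inherited by $\set{\ep u_\ep}$ and then, by Lemma \ref{stm:unfolding}, by $\set{w_\ep}$. Lemma \ref{stm:liminfineq-Aquasiconv} therefore yields
\begin{equation*}
    \alpha_0 \leq \liminf_{\ep\to 0} \int_\Omega\int_{D_0} f_{0,\ep}\big(w_\ep(x,y)\big)\,\de y\,\de x.
\end{equation*}

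Finally, I would match the right-hand side above with $\liminf_{\ep\to 0}\F_{0,\ep}(u_\ep)$ through a cell-wise change of variables. For $x\in\ep(Q+z)$ with $z\in\hat Z_\ep$ one has $w_\ep(x,y)=\ep u_\ep(\ep z+\ep y)$, which is independent of $x$, and the substitution $x'=\ep(y+z)$ yields
\begin{equation*}
    \int_{\hat\Omega_\ep}\int_{D_0} f_{0,\ep}(w_\ep)\,\de y\,\de x
    =\sum_{z\in\hat Z_\ep}\int_{\ep(D_0+z)} f_{0,\ep}(\ep u_\ep(x'))\,\de x'.
\end{equation*}
Both residual sets $\Omega\setminus\hat\Omega_\ep$ and $\Omega_{0,\ep}\setminus\bigcup_{z\in\hat Z_\ep}\ep(D_0+z)$ have Lebesgue measure of order $\ep$, so the two-sided bound {\bf H3} combined with the $p$-equiintegrability of $\set{\ep u_\ep}$ and of $\set{w_\ep}$ shows that the contributions on these boundary layers tend to zero. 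Passing to the liminf along the chosen subsequence then gives the desired conclusion. I expect the main technical hurdle to lie precisely in this last bookkeeping of the boundary-layer corrections, because {\bf H3} allows $f_{0,\ep}$ to take negative values, so each layer contribution must be estimated from both sides using the upper and lower growth in tandem with equiintegrability.
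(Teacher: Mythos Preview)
Your proposal is correct and follows essentially the same route as the paper's proof: reduce to $u=0$ via two-scale compactness combined with Proposition~\ref{stm:Afree-limits} and Assumption~\ref{stm:null-av} (the paper packages this as an appeal to Proposition~\ref{stm:HC-limits}), then unfold, invoke Lemma~\ref{stm:liminfineq-Aquasiconv}, and control the boundary layers via $p$-equiintegrability. One small remark: the vanishing $w_\ep(x,y)=0$ for $y\in D_1$ actually holds for \emph{all} $x\in\Omega$, not only $x\in\hat\Omega_\ep$, since for any $z\in\Zd$ the point $\ep z+\ep y$ with $y\in D_1$ lies either outside $\Omega$ (where the extension is zero) or in $\Omega_{1,\ep}$ (where $u_\ep=0$); this is what condition~\eqref{eq:0inD1} of Lemma~\ref{stm:liminfineq-Aquasiconv} requires, so you should state it on all of $\Omega$.
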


\begin{proof}
If the lower limit of $\F_{0,\ep}(u_\eps)$ is not finite,
then there is nothing to prove.
Otherwise, without loss of generality,
we focus on the case in which the lower limit of $\F_{0,\ep}(u_\eps)$ is finite, and
we assume that it is a limit.
Then, it follows by Proposition \ref{stm:HC-limits} and Assumption \ref{stm:null-av} that necessarily $u=0$.
Therefore, we are left to prove that
\[
	\alpha_0 \leq \lim_{\eps\to0} \int_{\Omega_{0,\ep}} f_{0,\ep} \big( \eps u_\ep(x) \big) \de{x},
\]
where $\alpha_0$ is given by \eqref{eq:alpha0} and $\eps u_\ep\wk 0$ weakly in $L^p(\Omega;\R^N)$.

We exhibit a formula for $\F_{0,\ep}(u_\ep)$
that involves the unfolding operator introduced in Lemma \ref{stm:unfolding}.
We let $\hat u_\epsilon$ be the unfolded map of $\ep u_\epsilon$:
		\[
		\hat u_\eps(x,y) \coloneqq  \ep \mathsf{S}_\eps u_\eps(x,y),
		\]
		whence
		\begin{equation}\label{eq:vanish}
		 \hat{u}_\ep(x,y)=0\quad\text{for almost every }x\in \Omega\text{ and }y\in D_1.
		\end{equation}
		Also, since the unfolding procedure preserves $p$-equiintegrability, 
		$\set{\hat{u}_\ep} \subset L^p(\Omega\times Q;\RN)$ is $p$-equiintegrable.
		Finally, Lemma \ref{stm:Afree-unfold} grants that
		\begin{equation*}		
		\A_y \hat{u}_\ep=0\quad\text{in }W^{-1,p}(\mathbb{T}^d;\R^M)\quad\text{for a.e. }x\in \hat{\Omega}_\ep,
		\end{equation*}
		where $\hat \Omega_\eps$ is as in \eqref{eq:def-new-set}.
		In particular, if $\Omega' \subset \Omega$ is an open set such that
		the distance $\delta \coloneqq \dist(\partial \Omega',\partial \Omega)$	is strictly positive,
		it is clear that $\Omega'\subset \hat \Omega_\epsilon$ if $\sqrt{d}\eps < \delta$.
		Thus, for all open $\Omega'\Subset \Omega$ 
		\begin{equation}\label{eq:A-ker}
		\A_y \hat{u}_\eps=0 \text{ in } W^{-1,p}(\Td;\RM)
		\text{ for a.e. } x\in\Omega' \text{ if } \eps < \eps',
		\end{equation} 
		where $\eps'>0$ is a suitable threshold depending on $\Omega'$.
		
		By the definition of $\Omega_{0,\eps}$ (see \eqref{eq:om0}),
		we have
		\begin{equation*}
		\F_{0,\eps}(u_\ep) 
		= \eps^d \sum_{z\in Z_\eps} \int_{D_0} {f}_{0,\eps} \Big(\ep u_\eps \big(\epsilon(y+z) \big) \Big) \de y.
		\end{equation*}
		Being $z$ an integer-valued vector,
		it holds $\hat u_\eps (\eps z, y) = \ep u_\eps \big(\epsilon(y+z) \big)$ for every $z\in Z_\ep$ and $y\in D_0$. Hence, we obtain
		\begin{equation*}
		\begin{split}
		\F_{0,\ep}(u_\ep) & =
		\eps^d \sum_{z\in Z_\eps} \int_{D_0} {f}_{0,\eps} \big(\hat u_\eps (\eps z, y) \big)\, \de y \\
		& = \sum_{z\in Z_\eps} \int_{\eps (Q+z)}
		\int_{D_0} {f}_{0,\eps} \left( \hat u_\eps \left( \eps  \left\lfloor \frac{x}{\eps}\right\rfloor, y \right) \right)  \de y\,\de x\\
		& \geq \int_{\hat\Omega_\eps}
		\int_{D_0} {f}_{0,\eps} \left( \hat u_\eps ( x, y ) \right)  \de y\,\de x
		\end{split}
		\end{equation*}
		because for all $x\in \eps (Q+z)$, $\lfloor x / \eps \rfloor = z$.
		We can rewrite the last estimate as follows:
		\begin{equation*}
			\F_{0,\eps}(u_\ep) \geq
			\int_{\Omega}\int_{D_0} {f}_{0,\eps} \left( \hat u_\eps ( x, y ) \right)\,  \de y\,\de x 
			- \int_{\Omega\setminus \hat\Omega_\eps}\int_{D_0} {f}_{0,\eps} \big( \hat u_\eps ( x, y ) \big) \de y\,\de x.
		\end{equation*}
		From this, owing to \eqref{eq:vanish} and \eqref{eq:A-ker},
		we can invoke Proposition \ref{stm:liminfineq-Aquasiconv} to infer
		\begin{equation*}
		\F_{0,\eps}(u_\ep) \geq \alpha_0	- \int_{\Omega\setminus \hat\Omega_\eps}\int_{D_0} {f}_{0,\eps} \big( \hat u_\eps ( x, y ) \big) \de y\,\de x.
		\end{equation*}
		In view of the $p$-equiintegrability of $\set{\hat u_\eps}$,
		the desired liminf inequality is now obtained by taking the limit as $\eps\to 0$.
\end{proof}

\subsection{$\Gamma$-limsup inequality}
We now turn to the $\Gamma$-limsup inequality for the energy functional associated to the `soft' portion of the material. 
The optimality of the lower bound identified in Proposition \ref{prop:liminf-gen} hinges upon the next proposition.

\begin{proposition}	
\label{prop:limsup-soft}
	Let $\set{f_{0,\ep}}$ satisfy {\bf H1} and {\bf H3}--{\bf H5}, and
	let $\Omega'\Subset\Omega$ be open.
	Then, for all $w\in U_0(\Omega')$,
	there exists a family $\set{u_\eps}\subset L^p(\Omega;\R^N)$
	that satisfies the following:
	\begin{enumerate}
		\item $u_\ep=0$ on $\Omega_{1,\ep}$ and
		$u_\eps$ is $\A$-free in $\Omega$ for all $\eps>0$;
		\item $\ep u_\eps \strongts w'$ strongly two-scale in $L^p$, where $w'(x,y) \coloneqq \chi_{\Omega'}(x)w(x,y)$; 
		\item it holds that
			\[
				\limsup_{\eps \to 0} \F_{0,\ep}(u_\eps) \leq \int_{\Omega'}\int_{D_0}f_0(w(x,y))\,\de{y}\,\de{x}.
			\]
	\end{enumerate}
	\end{proposition}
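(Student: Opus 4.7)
The plan is built around a key observation: because every $v\in V\coloneqq\{v\in L^p_\per(\Rd;\RN):v=0\text{ on }D_1,\ \A v=0\text{ on }\Td\}$ vanishes on $D_1$ and $\bar{D_0}\Subset Q$, its periodic extension vanishes in an open neighbourhood of $\Zd+\partial Q$. Equivalently, $v(\,\cdot\,/\eps)$ vanishes in an open neighbourhood of every $\eps$-cell boundary; hence, for any finite union $A$ of translates $\eps(Q+z)$, the product $\chi_A(\,\cdot\,)v(\,\cdot\,/\eps)$ is continuous across $\partial A$, so that multiplication by $\chi_A$ preserves $\A$-freeness.

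Based on this mechanism, my plan is first to reduce to simple profiles. Since $U_0(\Omega')$ can be identified with the Bochner space $L^p(\Omega';V)$ and, by the $p$-Lipschitz bound \textbf{H4}, the functional $w\mapsto\int_{\Omega'}\int_{D_0}f_0(w)\,\de y\,\de x$ is continuous on $L^p(\Omega\times Q;\RN)$, a standard diagonal argument reduces the statement to the case $w=\sum_{j}\chi_{E_j}(x)v_j(y)$, with $\{E_j\}$ pairwise disjoint Lipschitz sets compactly contained in $\Omega'$ and $v_j\in V$. For such $w$ I would define
\[
u_\eps(x)\coloneqq\frac{1}{\eps}\sum_j\chi_{E_{j,\eps}}(x)\,v_j\!\left(\frac{x}{\eps}\right),\qquad E_{j,\eps}\coloneqq\bigcup_{z\in Z_{j,\eps}}\eps(Q+z),
\]
with $Z_{j,\eps}\coloneqq\{z\in\Zd:\eps(Q+z)\subset E_j\}$. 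A short partition-of-unity computation --- testing $\int v_j\cdot\A^\ast\phi$ against $\phi\in\Cc^\infty(\Rd;\RM)$ via the $Q$-periodic map $\psi(y)\coloneqq\sum_{z\in\Zd}\phi(y+z)$ --- shows that each $v_j$, periodically extended, is $\A$-free on $\Rd$; combined with the observation of the previous paragraph, this gives $\A u_\eps=0$ on $\Rd$, hence on $\Omega$. Since every $v_j$ vanishes on $D_1$, so does $u_\eps$ on $\Omega_{1,\eps}$, and item (1) follows.

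For items (2) and (3), the periodicity of the $v_j$ and a cell-by-cell change of variables deliver the exact identities
\begin{gather*}
\norm{\eps u_\eps}_{L^p(\Omega;\RN)}^p=\sum_j\eps^d\va{Z_{j,\eps}}\int_{D_0}\va{v_j(y)}^p\,\de y,\\
\F_{0,\eps}(u_\eps)=\sum_j\eps^d\va{Z_{j,\eps}}\int_{D_0}f_{0,\eps}(v_j(y))\,\de y.
\end{gather*}
Since each $E_j$ is Lipschitz, $\eps^d\va{Z_{j,\eps}}\to\Ld(E_j)$ as $\eps\to 0$; combined with the pointwise convergence \textbf{H5}, the $p$-growth \textbf{H3} (which provides the dominant $\Lambda(1+\va{v_j}^p)\in L^1(D_0)$), and Lebesgue's dominated convergence theorem, this yields both $\norm{\eps u_\eps}_{L^p(\Omega;\RN)}\to\norm{w'}_{L^p(\Omega\times Q;\RN)}$ and $\limsup_{\eps\to 0}\F_{0,\eps}(u_\eps)\le\int_{\Omega'}\int_{D_0}f_0(w)\,\de y\,\de x$. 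A direct test against $\varphi\in L^{p'}(\Omega;C_\per(\Rd;\RN))$ yields the weak two-scale convergence $\eps u_\eps\weakts w'$, which together with the norm convergence upgrades to the strong two-scale convergence $\eps u_\eps\strongts w'$ in $L^p$. I expect the verification of $\A$-freeness to be the main delicate point, since it rests precisely on the fact that the condition $v_j=0$ on $D_1$ creates the buffer around each $\eps$-cell boundary that decouples the characteristic-function truncation from the differential constraint.
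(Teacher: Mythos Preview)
Your proposal is correct and rests on the same structural observation as the paper: because $D_0\Subset Q$, every $v\in V$ vanishes in a neighbourhood of the cell boundaries, so multiplying $v(\cdot/\eps)$ by the characteristic function of a union of $\eps$-cells leaves the $\A$-free constraint intact. The paper's proof of $\A$-freeness (Step~1) uses exactly this, via a cut-off $\eta\in C_c^\infty(Q)$ equal to $1$ on $D_0$.

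The difference lies only in the approximation scheme in the $x$-variable. The paper first mollifies $w$ in $x$ to gain continuity, then takes cell averages $w_\eps(\bar x,\bar y)=\fint_{\eps(Q+z)}w(x,\bar y)\,\de x$ to obtain an $\eps$-piecewise-constant profile, and sets $\tilde u_\eps(x)=w_\eps(x,x/\eps)$. You instead approximate $w$ directly by $V$-valued simple functions $\sum_j\chi_{E_j}v_j$ on fixed sets $E_j\Subset\Omega'$, and then pass to $\eps$-cell approximations $E_{j,\eps}$. Your route is arguably more elementary: it replaces the regularity step and the averaging by the standard density of simple functions in the Bochner space $L^p(\Omega';V)$, and the cell-by-cell energy and norm identities become exact (with $\eps^d|Z_{j,\eps}|\to\Ld(E_j)$). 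The paper's route, on the other hand, produces a single explicit formula for the recovery sequence once $w$ is continuous, which is convenient if one later wants quantitative control. Both approaches feed into the same diagonal argument via the $L^p$-continuity of $w\mapsto\int_{\Omega'}\int_{D_0}f_0(w)$ from \textbf{H4}.
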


	We postpone the proof of Proposition \ref{prop:limsup-soft} to the end of the section, and
	we discuss next how the $\Gamma$-limsup inequality is deduced from it. 	
	
	\begin{corollary}\label{cor:limsup-soft}
	Let $\set{f_{0,\ep}}$ satisfy {\bf H1} and {\bf H3}--{\bf H5}, and let $u\in L^p(\Omega;\R^N)$. Then, 
	there exists a family $\set{u_\eps}\subset L^p(\Omega;\R^N)$ such that
	the following holds:
	\begin{enumerate}
		\item $u_\ep=0$ on $\Omega_{1,\ep}$ and
		$u_\eps$ is $\A$-free in $\Omega$ for all $\eps>0$;
		\item $\ep u_\eps \wk u$ weakly in $L^p(\Omega;\R^N)$;
		\item the upper limit inequality
			\[
				\limsup_{\eps \to 0} \F_{0,\ep}(u_\eps) \leq \F_0(u)
			\]
			is satisfied.
	\end{enumerate}
	\end{corollary}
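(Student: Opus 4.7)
The plan is to approach $\alpha_0$ from above via a diagonal construction based on Proposition \ref{prop:limsup-soft} combined with the null-average Assumption \ref{stm:null-av}. First, I would observe that if $u \neq 0$ in $L^p(\Omega;\R^N)$ then $\F_0(u) = +\infty$ and any family will do (e.g.\ $u_\eps = 0$); henceforth assume $u = 0$, so $\F_0(u) = \alpha_0$.

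For each $k \in \N$, the definition of $\alpha_0$ as a supremum of infima in \eqref{eq:alpha0} allows us to pick an open set $\Omega'_k \Subset \Omega$ and a map $w_k \in U_0(\Omega'_k)$ with
\begin{equation*}
\int_{\Omega'_k}\int_{D_0} f_0\big(w_k(x,y)\big) \de y\,\de x \leq \alpha_0 + \frac{1}{k},
\end{equation*}
since for every choice of $\Omega'_k$ the infimum on the right-hand side of \eqref{eq:alpha0} is bounded above by $\alpha_0$. Applying Proposition \ref{prop:limsup-soft} to $w_k$ then produces a family $\{u_\eps^k\}_{\eps>0}$ with $u_\eps^k = 0$ on $\Omega_{1,\eps}$, $\A u_\eps^k = 0$ in $\Omega$, $\eps u_\eps^k \strongts \chi_{\Omega'_k} w_k$ strongly two-scale in $L^p$, and $\limsup_{\eps \to 0} \F_{0,\eps}(u_\eps^k) \leq \alpha_0 + 1/k$. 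Item (2) of Lemma \ref{lemma:2-scale} upgrades this to
\begin{equation*}
\eps u_\eps^k \wk \int_Q \chi_{\Omega'_k}(\,\cdot\,)\, w_k(\,\cdot\,,y)\,\de y \quad\text{weakly in } L^p(\Omega;\R^N).
\end{equation*}
The key observation is that this limit vanishes: for almost every $x \in \Omega'_k$ the map $w_k(x,\cdot)$ is $Q$-periodic, vanishes on $D_1$, and is $\A_y$-free on $\Td$ by the very definition of $U_0(\Omega'_k)$ in \eqref{eq:U0}, so Assumption \ref{stm:null-av} forces $\int_Q w_k(x,y)\,\de y = 0$. Consequently, $\eps u_\eps^k \wk 0$ weakly in $L^p$ as $\eps \to 0$, for every $k$.

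It remains to diagonalize. The $p$-growth estimate from below in {\bf H3} together with the integral bound on $w_k$ gives a $k$-independent bound on $\norm{w_k}_{L^p(\Omega \times Q;\R^N)}$ (recall that $w_k$ vanishes outside $D_0$), while the strong two-scale convergence provides $\norm{\eps u_\eps^k}_{L^p(\Omega;\R^N)} \to \norm{\chi_{\Omega'_k} w_k}_{L^p(\Omega \times Q;\R^N)}$, yielding an eventual $k$-independent $L^p$-bound on $\{\eps u_\eps^k\}$. Since the weak topology is metrizable on bounded subsets of the separable space $L^p(\Omega;\R^N)$, a standard diagonal procedure (e.g.\ Attouch's lemma) extracts $\eps_k \searrow 0$ such that, putting $u_{\eps_k} \coloneqq u_{\eps_k}^k$ (and $u_\eps$ defined arbitrarily for $\eps \notin \{\eps_k\}$), we obtain a family satisfying (1), $\eps_k u_{\eps_k} \wk 0$ weakly in $L^p$, and $\F_{0,\eps_k}(u_{\eps_k}) \leq \alpha_0 + 2/k$, which gives (3). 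The only delicate point in this plan is the vanishing of the weak-$L^p$ limit of $\eps u_\eps^k$: this is exactly where Assumption \ref{stm:null-av} enters, which explains why that hypothesis is in force throughout the paper.
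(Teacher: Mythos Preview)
Your proposal is correct and follows essentially the same route as the paper: reduce to $u=0$, pick near-optimal $w\in U_0(\Omega')$ for the variational problem defining $\alpha_0$, apply Proposition \ref{prop:limsup-soft}, use Lemma \ref{lemma:2-scale} together with Assumption \ref{stm:null-av} to verify that the weak $L^p$-limit of $\eps u_\eps$ vanishes, and conclude by a diagonal argument. The paper's proof is slightly terser---it fixes a single $\delta>0$ and $\Omega'\Subset\Omega$, produces one approximating family, and then appeals to ``standard properties of $\Gamma$-convergence'' for the passage $\delta\to 0$---whereas you spell out the diagonalization explicitly, including the uniform $L^p$-bound needed for metrizability of the weak topology; but this is the same argument in substance.
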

	\begin{proof}
	We first remark that if $u\neq 0$, there is nothing to prove.
	Thus, we assume in what follows that $u=0$ and $\F_0(u)=\alpha_0$.
	
	Fix now $\delta>0$ and  $\Omega'\Subset\Omega$. 
	By the definition of $\alpha_0$, there exists a map $w_\delta\in U_0(\Omega')$ such that
	\[
		\int_{\Omega'}\int_{D_0} f_0 \big( w_\delta(x,y) \big)\de{y}\,\de{x}\leq \alpha_0+\delta.
	\]
	Proposition \ref{prop:limsup-soft} yields a sequence $\set{u_\ep}$ such that 
	\begin{enumerate}
		\item $u_\ep=0$ on $\Omega_{1,\ep}$ and
		$u_\eps$ is $\A$-free in $\Omega$ for all $\eps>0$;
		\item $\set{\ep u_\eps}$ converges strongly two-scale in $L^p$ to $\chi_{\Omega'}(x)w_\delta(x,y)$;
		\item the inequality
			\[
				\limsup_{\eps \to 0} \F_{0,\ep}(u_\eps)
				\leq \int_{\Omega'}\int_{D_0}f_0\big( w_\delta(x,y) \big)\de{y}\,\de{x}
			\]
			is satisfied.
	\end{enumerate}
	
	Lemma \ref{lemma:2-scale} and Assumption \ref{stm:null-av} entail that
	$\ep u_\ep \wk 0$ weakly in $L^p(\Omega;\R^N)$.
	Besides, the last estimate and the definition of $w_\delta$ get
	\[
	\limsup_{\eps \to 0} \F_{0,\ep}(u_\eps)	\leq \alpha_0+\delta.
	\]
	The conclusion hence follows then by the arbitrariness of $\delta$
	and by standard properties of $\Gamma$-convergence (see e.g. \cite[Subsection 1.2]{braides}).
	\end{proof}
	We now address the proof of Proposition \ref{prop:limsup-soft}.
	
	\begin{proof}[Proof of Proposition \ref{prop:limsup-soft}]
	We construct a family $\set{\tilde u_\eps}$ that strongly two-scale converges to $w'(x,y) \coloneqq \chi_{\Omega'}(x)w(x,y)$ in $L^p$,
	so that $\set{u_\eps}$ will be given by $u_\eps \coloneqq \tilde u_\eps / \eps$.
	However, because of measurability issues, we cannot deal directly with a general $w\in U_0(\Omega')$.
	The argument is thus subdivided in three parts:
	firstly, we define $\set{\tilde u_\eps}$ under some extra regularity assumption on $w$; 
	then, we prove the limsup inequality when $w$ is regular;
	lastly, we recover the general statement by means of an approximation argument.

	\noindent {\sc Step 1: construction of $\set{\tilde u_\ep}$ when $w$ is regular}.
	In this step we assume that $w$ possesses some extra regularity in the $x$ variable,
	namely we take $w\in U_0(\Omega') \cap L^p_\per(\Rd;C(\bar{\Omega};\RN))$.
	We construct a sequence $\set{\tilde u_\eps}$ of $\A$-free maps such that
	$\tilde u_\ep=0$ on $\Omega_{1,\ep}$ for all $\eps>0$ and that
	$\tilde u_\eps \strongts w'$ strongly two scale in $L^p$.
	Below, when we write $w(x,y)$, we consider the second entry $y$ to be the periodic variable.
	
    Recalling \eqref{eq:om0},
	we define
		\begin{equation*}
		\hat\Omega'_\eps \coloneqq \bigcup_{z\in \hat Z'_\ep} \eps(Q+z), \quad 
		\hat Z'_\ep \coloneqq \set{ z \in \Z^d : \eps (Q + z ) \subset \Omega'} \subset Z_\eps,
		\end{equation*}		
	and, for $\epsilon>0$ and $(\bar x,\bar y) \in \Omega \times \Rd$,
	we consider the averages of $w(\,\cdot\,,\bar y)$
	on the cubes that compound $\hat \Omega'_\epsilon$:
	\begin{equation}
		\label{eq:def-wep}
		w_\eps(\bar x,\bar y) \coloneqq
		\begin{cases}
			\displaystyle \fint_{\eps (Q+z)} w( x , \bar y )\, \de x
			& \text{if } \bar x \in \epsilon(Q+z) \text{ for some } z \in \hat Z'_\eps,\\
			0 & \text{for any other } \bar x\in \Omega.
		\end{cases}
	\end{equation}
	In this way, $w_\ep(\,\cdot\,,y)$ is piecewise constant for all $y\in Q$ and $w_\ep(x,\,\cdot\,)$ is $Q$-periodic for almost every $x\in\Omega$.
	The position
	\begin{equation}\label{eq:def-uep-tilde}
		\tilde u_\eps (x) \coloneqq w_\eps \left( x , \frac{x}{\eps}\right)\quad\text{for every }x\in \Omega,
	\end{equation}
	defines a measurable function which vanishes on $\Omega_{1,\ep}$,
	because $w=0$ on $\Omega\times D_1$.
	
	We firstly check that $\set{\tilde u_\eps}$ converges strongly two-scale in $L^p$ to $w'(x,y) \coloneqq \chi_{\Omega'}(x)w(x,y)$.
	To prove the claim,
	we start by showing that $\set{\tilde u_\ep}$ is bounded in $L^p(\Omega;\R^N)$.
	The properties of $\set{w_\ep}$ and a change of variables grant that the following identities hold:
	\begin{align*}
		\int_{\Omega} \va{\tilde u_\eps(x)}^p \de x 
		& = \int_{\Omega_{0,\eps}} \left| w_\ep\left(x,\frac{x}{\ep}\right) \right|^p\,\de{x}
		=\sum_{z\in Z_\ep}\int_{\ep(D_0+z)}\left| w_\ep\left(x,\frac{x}{\ep}\right) \right|^p\,\de{x} \\
		& =\sum_{z\in Z_\ep} \ep^d \int_{D_0} \left| w_\ep\big( \ep (z+y),y \big) \right|^p\,\de{y}
		=\sum_{z\in \hat{Z}'_\ep} \ep^d \int_{D_0}\left| \fint_{\ep(Q+z)}w(x,y)\,\de x \right|^p\,\de{y}.
	\end{align*}
	From Jensen's inequality we then deduce
	\begin{align}\label{eq:wep-bd}
		\int_{\Omega} \va{\tilde u_\eps(x)}^p \de x \leq
		\sum_{z\in \hat{Z}'_\ep} \ep^d \int_{D_0}\fint_{\ep(Q+z)}|w(x,y)|^p\,\de{x}\,\de{y}
		=\int_{D_0}\int_{\hat{\Omega}'_\ep}|w(x,y)|^p\,\de{x}\,\de{y}.
	\end{align}

	As for the convergence of $\set{\tilde u_\eps}$,
	in view of Lemma \ref{lemma:2-scale},
	\eqref{eq:wep-bd} yields the existence of a map $\tilde{w}\in L^p(\Omega;L^p_{\rm per}(\R^d;\R^N))$ such that 
	$\tilde u_\ep \weakts \tilde{w}$ weakly two-scale in $L^p$.
	To identify the limit $\tilde{w}$, we consider $\phi\in C(\bar{\Omega};C_\per(\Rd;\RN))$ and,
	by similar computations to the ones above, we find
	\begin{align*}
		\int_{\Omega} \tilde u_\eps(x)\cdot \phi\left(x,\frac{x}{\eps}\right) \de x 
		&  = \int_{\Omega_{0,\eps}} w_\eps\left(x,\frac{x}{\ep}\right)\cdot \phi\left(x,\frac{x}{\eps}\right) \de x \\
		& = \sum_{z\in Z_\ep}\int_{\ep(D_0+z)} w_\ep\left(x,\frac{x}{\ep}\right)\cdot \phi\left(x,\frac{x}{\eps}\right) \de x \\
		& = \ep^d\sum_{z\in Z_\ep}\int_{D_0} w_\ep\big(\ep (z+y),y\big)\cdot \phi\big(\ep (z+y),y\big) \de{y} \\
		& = \sum_{z\in \hat{Z}'_\ep} \int_{D_0}\int_{\ep(Q+z)} w(x,y)\cdot \phi\big(\ep (z+y),y\big) \de{x}\,\de{y} \\
		& = \int_{\hat \Omega'_\eps} \int_{D_0} w(x,y) \cdot\tilde \phi_\eps(x,y)\,\de y\,\de x,
	\end{align*}
	where $\tilde \phi_\eps (x,y)\coloneqq \phi (\eps(y+z),y)$
	if $x\in \eps(Q+z)$ with $z\in\hat Z'_\eps$.
	By the dominated convergence theorem, we infer
	\begin{align*}
		\lim_{\ep\to 0} \int_{\Omega} \tilde u_\eps(x)\cdot \phi\left(x,\frac{x}{\eps}\right) \de x
		= \int_{\Omega'}\int_{D_0} w(x,y) \cdot \phi(x,y)\, \de y\, \de x,
	\end{align*}
	that is, $\tilde u_\ep\weakts w'$ weakly two-scale in $L^p$
	(recall that $w(x,y)=0$ if $y\in D_1$).
	In turn, the weak two-scale convergence implies
	$$\norm{w}_{L^p(\Omega';L^p_{\rm per}(\R^d;\R^N))}\leq \liminf_{\ep\to 0} \norm{\tilde u_\ep}_{L^p(\Omega;\R^N)},$$
	which, combined with \eqref{eq:wep-bd}, ensures that
	$$\lim_{\ep\to 0}\norm{\tilde u_\ep}_{L^p(\Omega;\R^N)}=\norm{w}_{L^p(\Omega';L^p_{\rm per}(\R^d;\R^N))}.$$
	In view of the definition of strong two-scale convergence, cf. Definition \ref{stm:twoscaleconv},
	we conclude that 
	$\tilde u_\eps \strongts w'$ strongly two-scale in $L^p$.
	
	Finally, we show that $\set{\tilde u_\eps}$ is $\A$-free on $\Omega$.
	To this aim, we fix $\psi \in W^{1,p'}_0(\Omega;\RM)$ and,
	by repeating the steps already seen above, we obtain
	\begin{align*}
	\int_{\Omega} \tilde u_\eps(x)\cdot \A^\ast \psi(x) \de x 
	=\sum_{z\in \hat{Z}'_\ep} \int_{\ep(Q+z)} \int_{D_0} w(x,y)\cdot \A^\ast \psi\big(\ep (z+y)\big) \de{y}\,\de{x}.
	\end{align*}
	If $\eta \in C^\infty_c(Q;[0,1])$ is a cut-off function which equals $1$ on $D_0$,
	thanks to the fact that $w(x,\,\cdot\,)$ vanishes on $D_1$ and is $\A$-free on $\Td$ for almost every $x\in \hat{\Omega}'_\ep$, 
	we conclude 
	\begin{align*}
	\int_{\Omega} \tilde u_\eps(x)\cdot \A^\ast \psi(x) \de x 
	& =\sum_{z\in \hat{Z}'_\ep} \int_{\ep(Q+z)} \int_{D_0} w(x,y)\cdot \A^\ast \psi\big(\ep (z+y)\big) \de{y}\,\de{x} \\
	& =\sum_{z\in \hat{Z}'_\ep} \int_{\ep(Q+z)} \int_{D_0} w(x,y)\cdot \A^\ast \Big[\eta(y) \psi\big(\ep (z+y)\big) \Big] \de{y}\,\de{x} \\
	& = \sum_{z\in \hat{Z}'_\ep} \int_{\ep(Q+z)} \int_{Q} w(x,y)\cdot \A^\ast \Big[\eta(y) \psi\big(\ep (z+y)\big) \Big] \de{y}\,\de{x} \\
	& = 0.
	\end{align*}
	 
	\noindent {\sc Step 2: limsup inequality when $w$ is regular}.
	We have so far shown that,
	if $w\in U_0(\Omega') \cap L^p_\per(\Rd;C(\bar{\Omega};\RN))$ and $\set{\tilde{u}_\eps}$ is the sequence of Step 1, then
	\[
		\eps u_\eps=\tilde{u}_\ep  \strongts w'
		\quad\text{strongly two-scale in } L^p,
	\]
	where $w'(x,y) \coloneqq \chi_{\Omega'}(x)w(x,y)$.
	Moreover, $u_\eps=$ on $\Omega_{0,\eps}$ and $\A u_\eps = 0$ on $\Omega$ for all $\eps>0$.
	Here, we prove the inequality
	\begin{equation}\label{eq:limsup-w}
		\limsup_{\ep\to 0} \F_{0,\ep}(u_\eps)
		=\limsup_{\ep\to 0} \int_{\Omega_{0,\ep}} f_{0,\eps} \big( \tilde u_\eps(x)  \big) \de x
			\leq \int_{\Omega'} \int_{D_0} f_0 \big( w(x,y) \big)\,\de y \de x.
	\end{equation}
	
	We preliminarily notice that, by construction,
	$\tilde u_\eps$ vanishes outside the set $\hat{\Omega}'_{\ep}$.
	Therefore, 
	we have the identities 
	\begin{align*}
	\F_{0,\ep}(u_\eps) 
	& = \int_{\hat \Omega'_{\ep}} f_{0,\eps} \big( \tilde u_\eps(x) \big) \de x\\
	& = \eps^d \sum_{z \in \hat{Z}'_\eps} \int_Q
	f_{0,\ep}\Big ( \tilde u_\eps \big( \eps(y+z) \big) \Big)\de{y}.
	\end{align*}
	Since $z \in \Zd$, we have that
	$\tilde u_\eps ( \eps(y+z) ) = \mathsf{S}_\eps \tilde u_\eps (\eps z, y)$,
	$\mathsf{S}_\eps$ being the unfolding operator.
	Therefore, by exploiting the properties of $\mathsf{S}_\ep$,
	the energy of $\tilde u_\eps$ is rewritten as follows:
	\begin{align*}
	\F_{0,\ep}(u_\eps) 
	& = \eps^d \sum_{z \in \hat{Z}'_\eps} \int_Q
	f_{0,\ep}\big ( \mathsf{S}_\eps \tilde u_\eps (\eps z, y) \big) \de{y} \\
	& = \sum_{z\in\hat{Z}'_\eps} \int_{\eps (Q+z)}
	\int_{D_0} {f}_{0,\eps} \left( \mathsf{S}_\eps \tilde u_\eps \left( \eps  \left\lfloor \frac{x}{\eps}\right\rfloor, y \right) \right)  \de y\,\de x \\
	& = \int_{\hat{\Omega}'_\eps}
	\int_{D_0} {f}_{0,\eps} \big( \mathsf{S}_\eps \tilde u_\eps ( x, y ) \big)  \de y\,\de x \\
	& = \int_{\Omega'}
	\int_{D_0} {f}_{0,\eps} \big( \mathsf{S}_\eps \tilde u_\eps ( x, y ) \big)  \de y\,\de x,
	\end{align*}
	where the last identity is due to the fact that $\mathsf{S}_\eps \tilde u_\eps$ vanishes for $x\notin \hat{\Omega}'_\eps$.
	
	In the light of the previous equalities, we have
	\begin{align*}
	\F_{0,\ep}(u_\eps)
	&= \int_{\Omega'} \int_{D_0} {f}_{0,\eps} \big( \mathsf{S}_\eps \tilde u_\eps ( x, y ) \big)  \de y\,\de x
			- \int_{\Omega'} \int_{D_0} {f}_{0,\eps} \big( w(x,y) \big)  \de y\,\de x \\
	&+ \int_{\Omega'} \int_{D_0} {f}_{0,\eps} \big( w(x,y) \big)  \de y\,\de x
				- \int_{\Omega'} \int_{D_0} f_0 \big( w(x,y) \big)\de y \de x \\
	& + \int_{\Omega'} \int_{D_0} f_0 \big( w(x,y) \big)\de y \de x.
	\end{align*}
	Keeping in mind Lemma \ref{lemma:2-scale},
	we observe that $\mathsf{S}_\eps \tilde u_\eps \to w$ strongly in $L^p(\Omega'\times D_0;\R^N)$, 
	and consequently, owing to {\bf H4} and {\bf H5},
	the limits as $\eps \to 0$ of the first and of the second term on the right-hand side equal $0$.
	We thereby conclude that \eqref{eq:limsup-w} holds.

\noindent {\sc Step 3: recovering the general statement.} 
Let $w\in U_0(\Omega')$. We first extend it to the whole space setting $w=0$ for $x\in \Rd \setminus \Omega$,
and then we mollify it with respect to the variable $x$.
This procedure ensures that $\tilde w$, the regularization of $w$,
belongs to $U_0(\Omega') \cap L^p_\per(\Rd;C(\bar{\Omega};\RN))$
and it is close to $w$ in $L^p(\Omega;L^p_\per(\Rd;\RN))$.
Now, Steps 1--2 yield a recovery sequence for $\tilde w$,
and, by means of a diagonal argument and of the continuity of the right-hand side of \eqref{eq:limsup-w} in $L^p$,
a recovery sequence for $w$ satisfying all the desired requirements can be constructed.
\end{proof}	

{\RRR
\begin{remark}[On recovery sequences]
Observe that
the construction we propose does not yield, in general, a recovery sequence
with respect to the weak $L^p$-convergence,
that is, a sequence $\set{u_\eps}\subset L^p(\Omega;\RN)$ fulfilling Corollary \ref{cor:limsup-soft} with (2) replaced by

{\it (2')} $u_\eps \weak u$ in $L^p(\Omega;\RN)$.

\noindent Indeed, to this aim, letting $w'$ be as in Proposition \ref{prop:limsup-soft},
we would need to exhibit a sequence $\set{\tilde u_\eps}$ such that
\[
    \frac{1}{\eps} \tilde u_\eps\strongts w'
	\quad\text{strongly two-scale in } L^p,
\]
and we would then again set $u_\eps \coloneqq \tilde{u}_\eps / \eps $.
\end{remark}
}

On the whole, the results above form a proof of the $\Gamma$-convergence of the energies $\set{\F_{0.\ep}}$.

\begin{proof}[Proof of Proposition \ref{stm:Glim-soft}]
The fact that $\F_0$ is a lower bound for the asymptotic behavior of the sequence $\set{\F_{0,\ep}}$ is established in Proposition \ref{prop:liminf-gen}. The optimality of such lower bound is proven in Corollary \ref{cor:limsup-soft}.
\end{proof}


\section{Asymptotics for the stiff component}
\label{sec:stiff}
In this section, we characterize the limiting behavior of the family $\set{\F_{1,\ep}}$ in \eqref{eq:F1eps},
which accounts for the energy of the `stiff' component of the system,
and we prove that $\set{\F_{1,\ep}}$ $\Gamma$-converges to the functional $\F_1$ in \eqref{eq:F1}
We rely on the contribution by {\sc I. Fonseca \& S. Kr\"omer}
about homogenization of multiple integrals under differential constraints:

\begin{theorem}[Theorem 1.1 in \cite{fonseca.kromer}]
\label{stm:stiff-hom}
Let $g\colon \Rd \times \RN \to [0,+\infty)$ be a Carath\'eodory function
which is $Q$-periodic with respect to its first argument.
Assume that
there exist $c\geq 0$, $\Lambda > 0$, and $p>1$ satisfying
\[
    -c \le g(y,\xi) \leq \Lambda(1+\va{\xi}^p)
    \quad\text{for a.e. $y\in \Rd$ and all $\xi\in\RN$}.
\]
Then, for all $u\in L^p(\Omega;\RN)$ such that $\A u = 0$,
the functional
\[
    \G_\eps ( u ) \coloneqq \int_{\Omega} g \left( \frac{x}{\eps}, u(x) \right) \de x
\]
$\Gamma$-converges with respect to the weak $L^p$-convergence to
\[
    \G_\hom (u) \coloneqq \int_{\Omega} g_\hom \big( u(x) \big) \de x,  
\]
where, for all $\xi \in \RN$,
\begin{multline}\label{eq:hom-density0}
    g_\hom (\xi) \coloneqq \liminf_{k\to +\infty}
        \inf \Bigg\{
            \int_Q g \big( ky, \xi + v(y) \big)\,\de y
            : v\in L^p_\per(\Rd;\RN) \\
            \text{ with } \int_Q v(z) \de z = 0 \text{ and } \A v=0 \text{ in } W^{-1,p}(\Td;\RM)
            \Bigg\}.
\end{multline}
\end{theorem}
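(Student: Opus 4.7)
The plan is to establish the $\Gamma$-convergence in the customary two-step fashion, after checking basic properties of the cell density. As a preliminary, one verifies that $g_\hom$ inherits the two-sided bound $-c \leq g_\hom(\xi) \leq \Lambda(1+\va{\xi}^p)$ (upper bound with the admissible $v\equiv 0$, lower bound from the lower growth of $g$ and Jensen), and that $g_\hom$ is $\A$-quasiconvex as a consequence of the invariance of the class of test fields under translations and rescalings in $y$, together with a concatenation argument that exploits the $\liminf_k$.

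For the liminf, take $\set{u_\eps}\subset L^p(\Omega;\RN)$ with $\A u_\ep = 0$ and $u_\ep \weak u$ weakly in $L^p$. By Lemma \ref{stm:Adecomp}, modulo a subsequence we may substitute $u_\ep$ with a $p$-equiintegrable $\A$-free sequence without affecting the asymptotic energy; the modification is controlled in $L^q$ for $q<p$, and the resulting error is absorbed using the $p$-Lipschitz bound on $g(y,\cdot)$ that follows from the growth condition (see hypothesis \textbf{H4} in spirit) combined with equiintegrability and a truncation to handle the $-c$ lower bound. Two-scale compactness then yields $u_\ep \weakts u^\ast$ in $L^p$, and Proposition \ref{stm:Afree-limits} gives $\int_Q u^\ast(x,y)\,\de y = u(x)$ with $\A_y u^\ast(x,\cdot) = 0$ in $W^{-1,p}(\Td;\RM)$ for a.e.\ $x$. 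A two-scale lower semicontinuity estimate for Carath\'eodory integrands, most cleanly obtained by applying Fatou's lemma to the unfolded sequence $\mathsf{S}_\eps u_\ep$ of Lemma \ref{stm:unfolding}, delivers
\[
\liminf_{\eps\to 0}\int_\Omega g\Big(\frac{x}{\eps}, u_\ep(x)\Big) \de x \,\geq\, \int_\Omega\int_Q g(y, u^\ast(x,y))\,\de y\,\de x.
\]
For a.e.\ $x$ the field $v(x,y) \coloneqq u^\ast(x,y) - u(x)$ has zero mean in $y$ and is $\A_y$-free, so it is a competitor in the cell problem. Passing through the rescalings $v \mapsto v(k\,\cdot)$, whose energy contributions $\int_Q g(ky, u(x)+v(x,ky))\,\de y$ relate $u^\ast$ to the infima defining $g_\hom$, a measurable selection argument akin to Proposition \ref{stm:castaing-valadier} together with the $\liminf_k$ in \eqref{eq:hom-density0} yields $\int_Q g(y, u^\ast(x,y))\,\de y \geq g_\hom(u(x))$ for a.e.\ $x$, and integration concludes the lower bound.

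For the limsup, one first reduces to piecewise constant target fields by density, using that $\G_\hom$ is continuous on $L^p(\Omega;\RN)$, itself a consequence of the growth of $g_\hom$ and of a $p$-Lipschitz bound inherited from $g$. For a fixed $\xi \in \RN$, the definition of $g_\hom$ supplies sequences $\set{v_k}\subset L^p_\per(\Rd;\RN)$ with zero mean and $\A v_k = 0$ such that $\int_Q g(ky, \xi + v_k(y))\,\de y \to g_\hom(\xi)$. Setting $w_\ep(x) \coloneqq v_{k(\ep)}(x/\ep)$ for a suitable diagonal choice $k(\ep)\to\infty$ produces an $\A$-free oscillating perturbation with $\xi + w_\ep \weak \xi$ and energy converging to $g_\hom(\xi)\Ld(\Omega)$. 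Gluing the local constructions corresponding to different constant values of $u$ on a partition requires restoring the $\A$-free constraint across interfaces; this is handled via cut-off multiplication followed by the $\A$-free projection/extension procedure of Lemma \ref{stm:Afree-ext}, whose $p$-equiintegrability guarantees keep the boundary-layer error negligible. A standard diagonal argument then upgrades the construction from piecewise constant to arbitrary $u\in L^p(\Omega;\RN)$ with $\A u = 0$.

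The principal obstacle is the $\liminf_{k\to+\infty}$ in the definition of $g_\hom$: it prevents a direct scale-by-scale identification and forces a delicate diagonalization in both halves of the proof. In the liminf this appears when converting the two-scale inequality into a bound by $g_\hom$; in the limsup it appears when selecting the scale $k(\ep)$ matched to $\ep$. A secondary difficulty is the gluing in the limsup, where compensating for the loss of the differential constraint near the interfaces between constancy regions requires the constant-rank hypothesis via the $\A$-free correction tools of Subsection \ref{subs:prel-A-free}.
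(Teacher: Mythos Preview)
The paper does not prove this theorem at all: it is quoted verbatim as Theorem~1.1 of \cite{fonseca.kromer} and used as a black box to derive Proposition~\ref{prop:stiff-part}. There is therefore no ``paper's own proof'' to compare against; your proposal is an attempted outline of the Fonseca--Kr\"omer argument itself.

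That said, your liminf sketch contains a genuine gap. You claim that applying Fatou's lemma to the unfolded sequence $\mathsf{S}_\eps u_\eps$ yields
\[
\liminf_{\eps\to 0}\int_\Omega g\Big(\frac{x}{\eps}, u_\ep(x)\Big)\,\de x \;\geq\; \int_\Omega\int_Q g\big(y, u^\ast(x,y)\big)\,\de y\,\de x.
\]
This does not follow: unfolding converts two-scale weak convergence into \emph{weak} $L^p$ convergence of $\mathsf{S}_\eps u_\eps$ to $u^\ast$ (Lemma~\ref{stm:unfolding}), and Fatou gives lower semicontinuity only along a.e.\ or strongly convergent sequences. For weak convergence one would need convexity of $g(y,\cdot)$, which is not assumed. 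The inequality above is in fact false in general for nonconvex $g$, so the subsequent rescaling step (which, incidentally, is correct: $v_k(y)\coloneqq u^\ast(x,ky)-u(x)$ is an admissible competitor at level $k$ with the same energy as at level $1$) never gets off the ground. The actual Fonseca--Kr\"omer liminf argument does not pass through this intermediate two-scale bound; it combines unfolding with a more delicate use of the $\A_y$-free structure of $\mathsf{S}_\eps u_\eps$ and the multi-scale cell formula directly.

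Your limsup outline is closer to the mark: density reduction to piecewise constants, oscillating correctors from near-optimal cell competitors, and $\A$-free repair via the projection/extension tools of Subsection~\ref{subs:prel-A-free} is indeed the route taken in \cite{fonseca.kromer}.
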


As a corollary of the theorem above,
we derive the asymptotics of $\set{\F_{1,\ep}}$.

\begin{proof}[Proof of Proposition \ref{prop:stiff-part}]
    For $\eps>0$, we introduce the set
    \[
        \tilde \Omega_{1,\eps} \coloneqq \Omega \cap \bigcup_{z \in \Zd} \eps ( D_1 + z),
    \]
    and we observe that $\tilde \Omega_{1,\eps} \subset \Omega_{1,\eps}$ (recall \eqref{eq:om1}).
    We let
    \begin{equation*}
    \tilde \F_{1,\ep}(u) \coloneqq
    \begin{cases}
    \displaystyle{ \int_{\tilde \Omega_{1,\eps}} \tilde f_1\left(\frac{x}{\eps}, u \right) \de x}   &\text{if } u\in U_1,\\
    +\infty         &\text{otherwise in }L^p(\Omega;\R^N),
    \end{cases}
    \end{equation*}    
    with $\tilde f_1\colon \Rd \times \RN \to [ -a\lambda , +\infty )$ defined as
    \[
        \tilde f_1(y,\xi) \coloneqq
            f_1 ( y, \xi ) \sum_{z \in \Zd} \chi_{D_1 + z} (y),
    \]
    where $a$ and $\lambda$ are the constants in {\bf H3}, and $\chi_{D_1+z}$ is the characteristic function of the set $D_1+z$. 
    When $u$ is $\A$-free we can hence write
    \begin{equation}\label{eq:F1tilde}
        \F_{1,\eps}(u) =
            \tilde \F_{1,\eps} (u)
            + \int_{\Omega_{1,\eps} \setminus \tilde \Omega_{1,\eps}} f_1 \left( \frac{x}{\eps}, u(x) \right) \de x.
    \end{equation}
    We observe that Theorem \ref{stm:stiff-hom} applies to $\tilde{\F}_{1,\eps}$,
    yielding
    \begin{equation*}
        \F_1 ( u ) = \Gamma-\lim_{\eps \to 0} \tilde{\F}_{1,\eps} ( u )
    \end{equation*}
    in the weak $L^p$-topology.
    As for the difference between $\F_{1,\eps}$ and $\tilde \F_{1,\eps}$,
    by {\bf H3} one has
    \[
        -a \lambda \Ld ( \Omega_{1,\eps} \setminus \tilde \Omega_{1,\eps} )
        \leq
            \int_{\Omega_{1,\eps} \setminus \tilde \Omega_{1,\eps}} f_1 \left( \frac{x}{\eps}, u(x) \right) \de x
        \leq \Lambda\left(
            \Ld ( \Omega_{1,\eps} \setminus \tilde \Omega_{1,\eps} )
            + \int_{\Omega_{1,\eps} \setminus \tilde \Omega_{1,\eps}} \va{ u(x) }^p \de x\right).
    \]
    Notice that the term
    $\Ld ( \Omega_{1,\eps} \setminus \tilde \Omega_{1,\eps} )$ is  at most of order $\eps$.
    Thus, the lower bound for $\set{\F_{1,\eps}}$ follows from \eqref{eq:F1tilde} and Theorem \ref{stm:stiff-hom}:
    \begin{equation*}
    	\F_1 ( u ) \leq \liminf_{\eps \to 0} \F_{1,\eps} ( u_\eps )
    \end{equation*}
    for all $u\in L^p(\Omega;\RN)$ such that $\A u = 0$
    and all $\A$-free sequences $\set{u_\eps} \subset L^p(\Omega;\RN)$
    that converge to $u$ weakly.
    For what concerns the upper bound,
    let us fix $u\in L^p(\Omega;\RN)$ in the kernel of $\A$
    and consider a recovery sequence $\set{u_\eps}$
    for $\set{\tilde \F_{1,\eps}}$ given by Theorem \ref{stm:stiff-hom}.
    Thanks to Lemma \ref{stm:Adecomp},
    we get a $p$-equiintegrable family $\set{v_\eps} \subset L^p(\Omega;\RN)$
    such that $\set{ u_\eps - v_\eps}$ converges to $0$
    strongly in $L^q(\Omega;\RN)$ for all $q\in[1,p)$.
    Therefore,
    \[
        \lim_{\eps \to 0} \int_{\Omega_{1,\eps} \setminus \tilde \Omega_{1,\eps}} \va{ v_\eps(x) }^p \de x = 0.
    \]
    Let now $\set{\ep_n}$ be such that
    $$\limsup_{\eps \to 0} \tilde{\F}_{1,\eps} ( v_\eps )=\lim_{n\to +\infty} \tilde{\F}_{1,\eps_n} ( v_{\eps_n} ).$$
    By Lemma \ref{stm:substitution},
    we obtain the desired upper limit inequality:
    \begin{multline*}
    	\limsup_{\eps \to 0} \F_{1,\eps} ( v_\eps ) = \limsup_{\eps \to 0} \tilde{\F}_{1,\eps} ( v_\eps )
    	=\lim_{n\to +\infty} \tilde{\F}_{1,\eps_n} ( v_{\eps_n} ) \\
    	\leq \lim_{n\to +\infty} \tilde{\F}_{1,\eps_n} ( u_{\eps_n} )
    	\leq \limsup_{\eps \to 0} \tilde{\F}_{1,\eps} ( u_\eps )
    	\leq \F_1 ( u ).
    \end{multline*}
\end{proof}


\section{Admissible differential constraints}
\label{sec:adm}

In what follows we deepen our analysis of constant rank operators.
We investigate the existence of Sobolev potentials for $\A$-free maps and of extension operators on perforated domains. In particular, we prove Theorems \ref{stm:raita-bis} and \ref{thm:exist-ext}.

In contrast to the previous sections,
here we take into account also differential operators of order greater than $1$:
we treat the broader class
of linear, $k$-th order, homogeneous differential operators with constant coefficients.
To fix the notation, given $k,N,M\in\N\setminus\set{0}$,
we consider linear maps $A^{(i)}\colon \RN\to\RM$
for all $d$-dimensional multi-indices $i$ with $\va{i}=k$ and
we focus on the operators
whose action on a function $u\colon \RN\to\RM$
is given by 
\begin{equation*}
    \A u \coloneqq \sum_{\va{i}=k} A^{(i)}\partial_i u.
\end{equation*}
Recall that if $i\coloneqq (i_1,\dots,i_d)\in \N^d$ is a multi-index, then
\[
    \partial_i u(x) \coloneqq \frac{ \partial^{\va{i}} u}{\partial_{i_1} x_1 \cdots \partial_{i_d} x_d}(x).
\]

We keep in place the constant rank condition,
which currently reads:
there exists $r\in\N$
such that the rank of $\bA[\omega]$ equals $r$ for all $\omega \in \Rdmz$,
where
\begin{equation*}
    \bA[\omega] \coloneqq \sum_{\va{i}=k} \omega^i A^{(i)}
\end{equation*}
is the symbol of $\A$
and $\omega^i = \Pi_{j=1}^d \omega_j^{i_j}$
if $i=(i_1,\dots,i_d)$ is a multi-index.
Note that when $k=1$ we recover the setting of Subsection \ref{subs:diff}. As before, we say that a map $u$ is $\A$-free in $\Omega$ if $\A u=0$ in $W^{-k,p}(\Omega;\R^M)$.


\subsection{Existence of potentials}
This subsection is devoted to the proof of Theorem \ref{stm:raita-bis}.
Loosely speaking, our goal is to prove that,
given an operator $\A$ of constant rank,
there exists a second differential operator $\B$
such that any $\A$-free map $u\in L^p$
satisfies $u=\B w$ for a suitable Sobolev function $w$.
When such a relation holds, we will say for brevity
that $w$ is a {\it potential} for $u$ and that $\B$ is a {\it potential} for $\A$.
We are able to prove existence of potentials
for those functions that are $\A$-free 
on a certain class of subsets of $\Rd$, which we introduce in the next definition. 

\begin{definition}\label{stm:ext-dom}
    Let $\A$ be a homogeneous operator of order $k$. An open set $\Omega \subset \Rd$ is an {\it $\A$-extension domain}
    if there exist $\mathsf{E}_\A \colon L^p(\Omega;\RN) \to L^p(\Rd;\RN)$
	and $c> 0$ such that the following holds:
	for all $u\in L^p(\Omega;\RN)$
	such that $\A u = 0$ in $W^{-k,p}(\Omega;\R^M)$, we have
	\begin{enumerate}
    \item ${\mathsf E}_{\A} u = u$ a.e. in $\Omega$,
    \item $\norm{ \mathsf E_{\A} u}_{L^p(\Rd;\RN)} \leq c \norm{u}_{L^p(\Omega;\RN)}$, and
	\item $\A (\mathsf E_{\A} u) = 0$ on $\Rd$.
	\end{enumerate}
\end{definition}

Analogous definitions concerning extensions of sequences that are (asimptotically) $\A$-free
are found in \cite[Definition 1.4]{fonseca.kruzik} and \cite[Lemma 2.8]{fonseca.kromer} (i.e. Lemma \ref{stm:Afree-ext} above), while the reader is referred to Section \ref{sec:extension} below
for a more detailed discussion on extension problems. We also refer to \cite{Heida} for a related study in the stochastic setting.
It is important to notice that
the property of being an $\A$-extension domain carries some topological consequences.

\begin{remark}[On the topology of $\A$-extension domains]\label{stm:simplconn}
The requirements of Definition \ref{stm:ext-dom} may act as implicit restrictions on the topology of the set $\Omega$, according to the specific choice of $\A$. 
Let us consider, for instance, the case in which $d=3$, $\A$=curl, and $\Omega=\R^3\setminus \R e_3$, $\set{e_1,e_2,e_3}$ being the canonical basis in $\R^3$.
We assume by contradiction that $\Omega$ is an $\A$-extension domain.
Then, if for every $x=(x_1,x_2,x_3)\in \Omega$ we let
\[u(x_1,x_2,x_3)\coloneqq \left(-\frac{x_2}{x_1^2+x_2^2}, \frac{x_1}{x_1^2+x_2^2}, 0\right),\]
we find that ${\rm curl}\,u=0$ almost everywhere in $\Omega$,
whence ${\rm curl}\, (\mathsf{E}_{\A} u)=0$ in $\R^3$,  if $\mathsf{E}_{\A}$ is the extension operator in Definition \ref{stm:ext-dom} for $\A=\mathrm{curl}$. This fact, in turn, yields a potential $w$ such that $\mathsf{E}_{\A} u=\nabla w$ in $\R^3$ and, from the requirement that $\mathsf{E}_{\A} u=u$ on $\Omega$, we conclude that $u=\nabla w$ on $\Omega$. This leads to a contradiction, for the field $u$ is not conservative.  
\end{remark}

For the proof of Theorem \ref{stm:raita-bis}
we rely heavily on some recent contributions
by {\sc B. Rai\c{t}\u a} and coauthors,
who established existence of potentials for $\A$-free smooth maps and
Korn-type inequalities for constant rank operators. 
Further related bibliographical comments are provided
when we discuss Propositions \ref{stm:raita} and \ref{stm:guerra-raita} below.
Before dealing with the proof, we pinpoint here that
Theorem \ref{stm:raita-bis} constitutes a sufficient condition for Assumption \ref{stm:null-av} to hold:


\begin{corollary}
\label{cor:u0}
Assume that
$\A$ is a first order operator satisfying the assumptions of Theorem \ref{stm:raita-bis}
and that the unit cube $Q$ is an $\A$-extension domain.
Then, for any open set $\Omega'\subset \Omega$, the space $U_0(\Omega')$ in \eqref{eq:U0} is characterized as follows:
\begin{multline}\label{eq:U0bis}
    U_0(\Omega') = \Bigg\{ u \in L^p(\Omega;L^p_\per(\Rd;\RN)) : u=0 \text{ if } y\in D_1, \\ \A_y u = 0 \text{ in } W^{-1,p}(\Td;\R^M) \text{ for a.e. } x\in\Omega', \text{ and } \int_Q u(x,y)\, \de y = 0 \text{ for a.e. }x \in\Omega' \Bigg\}.    
\end{multline}
\end{corollary}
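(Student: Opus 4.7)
The inclusion $\supset$ is immediate from the definition of $U_0(\Omega')$, so the task reduces to showing that any $u \in U_0(\Omega')$ satisfies $\int_Q u(x,y)\,\de y = 0$ for a.e. $x \in \Omega'$. For a.e. $x$, the slice $u(x,\cdot) \in L^p_{\per}(\Rd;\RN)$ vanishes on $D_1$ and fulfills $\A_y u(x,\cdot) = 0$ on $\Td$; in particular $u(x,\cdot)$ is $\A$-free on $Q$. The assertion is therefore equivalent to showing that Assumption \ref{stm:null-av} holds pointwise in $x$ under the hypotheses of the corollary.

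The plan is to invoke Theorem \ref{stm:raita-bis}. Since $Q$ is assumed to be an $\A$-extension domain and $\A$ satisfies the assumptions of that theorem, there exist $\ell \in \N$ and a fixed (i.e., $u$-independent) differential operator $\B$ of order $\ell$ such that, for each admissible $x$, one can find $w(x,\cdot) \in W^{\ell,p}(Q;\RM)$ with $u(x,\cdot) = \B_y w(x,\cdot)$ a.e. on $Q$. To guarantee that $w$ depends measurably on $x$, I would apply the measurable selection criterion of Proposition \ref{stm:castaing-valadier} to the closed affine multifunction $x \mapsto \{ w \in W^{\ell,p}(Q;\RM) : \B w = u(x,\cdot) \}$.

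The key analytical step exploits the special geometry of $D_1$. Because $D_0 \Subset Q$, the set $D_1$ contains an open neighborhood $V$ of $\partial Q$ in $Q$; hence $\B_y w(x,\cdot) = u(x,\cdot) = 0$ on $V$, i.e. $w(x,\cdot)|_V \in \ker \B$. The idea is to extend this local kernel element to a global $P(x,\cdot) \in \ker \B$ on $Q$ (using the finite-dimensional, polynomial-like structure of $\ker \B$ for constant coefficient homogeneous operators) and replace $w$ by $w - P$, which still satisfies $\B_y(w - P) = u$ but now vanishes in $V$. The standard integration by parts formula then gives
\[
    \int_Q u(x,y)\,\de y = \int_Q \B_y (w - P)(x,y)\,\de y = 0,
\]
since $(w-P)(x,\cdot)$ together with all its derivatives up to order $\ell-1$ vanishes on $\partial Q$.

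The main obstacle I anticipate is the third step: carrying out the subtraction of a kernel element \emph{measurably} in $x$ and checking that the local-to-global extension of $\ker \B$-elements on the neighborhood $V$ is well posed. This will rely on the fact that, for a homogeneous constant-coefficient operator $\B$, elements of $\ker \B$ are very rigid (essentially determined by finitely many data on any open subset), so the local-to-global extension reduces to identifying the unique polynomial-like solution agreeing with $w(x,\cdot)$ on $V$, after which Proposition \ref{stm:castaing-valadier} is applied once more to secure measurability.
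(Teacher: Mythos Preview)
Your reduction and the invocation of Theorem~\ref{stm:raita-bis} are fine, and you correctly flag the third step as the crux; but the resolution you sketch rests on a false premise. For a general constant-rank $\A$ the potential operator $\B$ does \emph{not} have a finite-dimensional or polynomial kernel. Take $\A=\div$ in $\R^3$, so that (up to the identification of antisymmetric matrices with vectors) $\B=\mathrm{curl}$, cf.\ Example~\ref{ex:div}; then $\ker\B$ on any open set is the space of gradient fields, which is infinite-dimensional and enjoys no rigidity of the kind you invoke. In that situation $\B w=0$ on the collar $V$ only tells you $w|_V=\nabla\phi$ for some scalar $\phi$, and producing a global $P\in\ker\B$ on $Q$ with $P|_V=w|_V$ is itself an extension problem---for $\B$-free fields, from $V$ to $Q$---that is not covered by the hypotheses of the corollary and certainly cannot be dispatched by the ``finitely many data'' heuristic.

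The paper bypasses this issue entirely: it takes the potential to lie in $W^{\ell,p}_{\per}(\Rd;\RM)$ and then concludes $\int_Q \partial_{y^i} w(x,y)\,\de y=0$ directly from periodicity, with no analysis of $\ker\B$ near $\partial Q$. If you would rather repair your own route, here is a cleaner argument in the same spirit. Since $u(x,\cdot)=0$ on $D_1$ and $D_0\Subset Q$, the zero-extension $u^\circ$ of $u(x,\cdot)|_Q$ to $\Rd$ is compactly supported in $D_0$ and $\A$-free on all of $\Rd$; mollifying gives smooth, compactly supported, $\A$-free fields $u_k\to u^\circ$ in $L^p$, and Proposition~\ref{stm:raita} yields Schwartz potentials $w_k\in\mathscr{S}(\Rd;\RM)$ with $u_k=\B w_k$. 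Then $\int_{\Rd}u_k=\int_{\Rd}\B w_k=0$ by the decay of $w_k$, and since the $u_k$ are supported in a fixed compact set the limit $k\to\infty$ gives $\int_Q u(x,\cdot)=\int_{\Rd}u^\circ=0$. This avoids any structural claim about $\ker\B$.
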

\begin{proof}
For the desired equality to hold,
we just need to prove that
\begin{multline*}
    U_0(\Omega') \subset \Bigg\{ u \in L^p(\Omega;L^p_\per(\Rd;\RN)) : u=0 \text{ if } y\in D_1, \\ \A_y u = 0 \text{ in } W^{-1,p}(\Td;\R^M) \text{ for a.e. } x\in\Omega', \text{ and } \int_Q u(x,y)\, \de y = 0 \text{ for a.e. }x \in\Omega' \Bigg\}.    
\end{multline*}

Let us fix $u\in U_0(\Omega')$.
Theorem \ref{stm:raita-bis} grants that
for a.e. $x\in \Omega'$ there exists $w_x\in W^{\ell,p}_\per(\Rd;\RM)$
such that $u(x,\cdot)=\B_y w_x$ a.e. in $Q$.
To the aim of defining a measurable selection of the maps $\set{w_x}_{x\in \Omega'}$,
we exploit Proposition \ref{stm:castaing-valadier}.

We first observe that
for every $u\in L^p(\Omega;L^p_\per(\Rd;\RN))$ and $x\in \Omega'$
the set
\[
    W(x) \coloneqq \Set{ w \in W^{\ell,p}_\per(\Rd;\RM) : \B_y w(y) = u(x,y) \text{ for a.e. } y\in \R^d}
\]
is either empty or a closed (and hence complete) subspace of $W^{\ell,p}_\per(\Rd;\RM)$.
Besides, being $x \mapsto u(x,\,\cdot\,)\in L^p_\per(\Rd;\RN)$ measurable, then for every open set $O\subset W^{\ell,p}_\per(\Rd;\RM)$,
the set
\[
    \Set{ x\in \Omega' : W(x) \cap O \neq \emptyset}
\]
is measurable as well.
It follows that the multifunction $x\mapsto W(x)$ admits a measurable selection,
which we denote by $w(x,\,\cdot\,)$.

Summing up,
we recovered a measurable function $w$
from $\Omega'$ to $W^{\ell,p}_\per(\Rd;\RM)$
with the property that $u=\B_y w$ for almost every $x\in \Omega'$ and $y\in \R^d$.
We conclude that
\[
    \int_Q u(x,y)\,\de y 
    = \sum_{\va{i}=\ell} B^{(i)} \int_Q \partial_{y^i} w(x,y)\,\de y = 0,
\]
where the latter inequality follows from the periodicity of $w$ in its second variable.
\end{proof}

We devote the remainder of the section
to the proof of Theorem \ref{stm:raita-bis}
and to the pertaining tools.
We first highlight that
{\sc B. Rai\c{t}\u a} \cite{raita} has recently pointed out that
the kernel of the symbol of \emph{any} constant rank operator
coincides with the image of the symbol of a suitable operator $\B$, and that
the latter is actually a potential for $\A$
when tools from Fourier analysis are applicable.
Precisely, we have:

\begin{proposition}[Theorem 1 and Lemma 2 in \cite{raita}]\label{stm:raita}
        Let $\A$ be a linear homogeneous differential operator on $\Rd$
        with constant coefficients.
        Then, $\A$ is of constant rank
        if and only if
        there exists a linear homogeneous differential operator $\B$ on $\Rd$
        with constant coefficients such that
        \begin{equation}\label{eq:kerA=imB}
            \ker \bA[\omega] = \im \bB[\omega]
            \quad\text{for all }\omega\in \Rdmz,
        \end{equation}
        where $\bA$ and $\bB$ are
        the symbols of $\A$ and $\B$, respectively.
        
        Moreover, 
        suppose that
        $\A$ and $\B$ are, respectively, operators from $\RN$ to $\RM$
        and from $\RM$ to $\RN$.
        If \eqref{eq:kerA=imB} holds,
        then, for all $u\in \call{S}(\Rd;\RN)$
        such that $\A u = 0$,
        there exists $w \in \call{S}(\Rd;\RM)$
        such that $u = \B w$.
\end{proposition}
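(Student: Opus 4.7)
The proposal is to handle the two halves of the proposition separately, relying on Fourier multipliers and Moore-Penrose pseudoinverses as indicated in Subsection \ref{sec:fourier}.

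For the equivalence of constant rank with the existence of $\B$, the hard direction is $(\Leftarrow) \Rightarrow (\Rightarrow)$. I would argue as follows. Under the constant rank assumption, the map $\omega\mapsto \bA[\omega]^\dagger$ (the Moore-Penrose pseudoinverse) is $C^\infty$ on $\Rdmz$ and homogeneous of degree $-k$, because pseudoinverses depend analytically on their argument along manifolds of fixed rank. Consequently, the orthogonal projection onto $\ker\bA[\omega]$, namely
\begin{equation*}
    P[\omega] \coloneqq I - \bA[\omega]^\dagger \bA[\omega],
\end{equation*}
is smooth on $\Rdmz$ and homogeneous of degree $0$. Its entries are rational functions in $\omega$ with the common denominator being a nonzero polynomial $q(\omega)$ (for instance, the sum of the squared determinants of all $r\times r$ minors of $\bA[\omega]$). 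Multiplying $P[\omega]$ by a sufficiently high power $|\omega|^{2m}=(\omega_1^2+\cdots+\omega_d^2)^m$ clears those denominators: the resulting matrix $\bB[\omega]\coloneqq |\omega|^{2m}P[\omega]$ is then a polynomial matrix in $\omega$, homogeneous of degree $2m$, and hence the symbol of a linear homogeneous differential operator $\B$ of order $2m$ with constant coefficients. For $\omega\neq 0$ the scalar $|\omega|^{2m}$ is nonzero, so $\im\bB[\omega]=\im P[\omega]=\ker\bA[\omega]$. The converse $(\Leftarrow)$ is cleaner: the identity $\ker\bA[\omega]=\im\bB[\omega]$ combined with the rank-nullity theorem gives $\mathrm{rank}\,\bA[\omega]+\mathrm{rank}\,\bB[\omega]=N$; since each rank is lower semicontinuous and their sum is the constant $N$, both are continuous; being integer-valued on the connected set $\Rdmz$ (for $d\geq 2$), they must be constant.

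For the existence of a Schwartz potential, the plan is to use the Fourier transform \eqref{eq:fourier}. If $u\in\mathscr S(\Rd;\RN)$ and $\A u=0$, then $\bA[\omega]\hat u(\omega)=0$ for all $\omega$, whence $\hat u(\omega)\in\ker\bA[\omega]=\im\bB[\omega]$ for every $\omega\neq 0$. The natural candidate is
\begin{equation*}
    \hat w(\omega)\coloneqq \bB[\omega]^\dagger\hat u(\omega)\quad\text{for }\omega\neq 0,
\end{equation*}
since $\bB[\omega]\bB[\omega]^\dagger$ is the projection onto $\im\bB[\omega]$ and $\hat u(\omega)$ already lies there, so $\bB[\omega]\hat w(\omega)=\hat u(\omega)$; then $w\coloneqq\mathcal F^{-1}\hat w$ satisfies $\B w=u$ in the sense of tempered distributions.

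The main obstacle is verifying that $\hat w$ is Schwartz. Rapid decay at infinity follows easily because $\bB[\omega]^\dagger$ is bounded by $0$-homogeneity of $|\omega|^\ell \bB[\omega]^\dagger$ and $\hat u$ decays rapidly. The delicate point is the behavior at $\omega=0$, where $\bB[\omega]^\dagger$ has a singularity of order $|\omega|^{-\ell}$. The key to overcome this is that the constraint $\hat u(\omega)\in\im\bB[\omega]$ forces a hidden cancellation: parametrizing $\bB[\omega]=|\omega|^\ell\tilde{\bB}(\omega/|\omega|)$ via its $0$-homogeneous normalization on the sphere, and using that $\omega\mapsto\im\bB[\omega]$ defines a smooth vector bundle over $S^{d-1}$, one can locally write $\hat u(\omega)=\bB[\omega]\cdot v(\omega)$ for $v$ a genuine smooth $\RM$-valued function near each $\omega_0\neq 0$ via the smooth right inverse; then $\hat w=v+(\text{kernel correction})$ is smooth on $\Rdmz$, and a Taylor expansion of $\hat u$ at the origin combined with the homogeneity of $\bB[\omega]^\dagger$ yields smoothness at $0$. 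This is the technically demanding step and mirrors the construction in \cite{raita}; the rest of the plan is then to verify the analogous decay estimates for all derivatives of $\hat w$ to conclude $w\in\mathscr S(\Rd;\RM)$.
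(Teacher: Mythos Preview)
The paper does not prove this proposition; it is quoted from \cite{raita} without argument, so there is no paper proof to compare against and your sketch must stand on its own merits.

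There is a concrete error in your construction of $\B$. You correctly observe that $P[\omega]=I-\bA[\omega]^\dagger\bA[\omega]$ has rational entries with a common homogeneous polynomial denominator $q(\omega)$ that does not vanish on $\Rdmz$ (this is exactly where constant rank enters). But clearing denominators by multiplying with $|\omega|^{2m}$ fails: $|\omega|^{2m}/q(\omega)$ is not a polynomial in general---take for instance $q(\omega)=\omega_1^2+2\omega_2^2$, which vanishes only at the origin yet does not divide any power of $\omega_1^2+\omega_2^2$. The correct step is to set $\bB[\omega]\coloneqq q(\omega)P[\omega]$, which \emph{is} a homogeneous polynomial matrix and satisfies $\im\bB[\omega]=\im P[\omega]=\ker\bA[\omega]$ for $\omega\neq 0$ because $q(\omega)$ is a nonzero scalar there. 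Your argument for the converse direction is fine (and the restriction $d\geq 2$ is unnecessary, since homogeneity of the symbol already forces the rank to agree at $\omega$ and $-\omega$).

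For the Schwartz potential, the candidate $\hat w=\bB[\omega]^\dagger\hat u(\omega)$ is natural and rapid decay at infinity is indeed immediate. However, the smoothness at the origin---which you rightly flag as the crux---is left as a hand-wave. Since $\bB[\omega]^\dagger$ blows up like $|\omega|^{-\ell}$, you need $\hat u$ to vanish at $0$ in a way precisely compatible with the singular structure of $\bB^\dagger$, and this does \emph{not} follow from a generic Taylor expansion of $\hat u$: it must be extracted from the pointwise constraint $\hat u(\omega)\in\im\bB[\omega]$ for $\omega\neq 0$, which translates into algebraic constraints on each Taylor jet of $\hat u$ at the origin. Turning those constraints into a smooth factorisation $\hat u=\bB\hat w$ near $0$ is precisely the technical content of \cite{raita}, and your sketch does not supply it.
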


Observe that if the operator $\B$ satisfies \eqref{eq:kerA=imB},
then it is in turn of constant rank;
precisely, $\mathrm{rank}\,\bB[\omega] = N - r$ if $\mathrm{rank}\,\bA[\omega] = r$ for all $\omega\in\Rdmz$.
Proposition \ref{stm:raita} builds a unified framework 
that encompasses some well-studied special cases,
such as the one of the curl and of the divergence.
Results in the same spirit had been previously obtained by {\sc J. Van Schaftingen} \cite[Proposition 4.2]{vanschaftingen} for elliptic operators,
which correspond to the subclass of operators whose symbol is injective.

A second relevant finding related to constant rank operators was obtained
by {\sc A. Guerra \& B. Rai\c{t}\u a} \cite{guerra.raita},
who showed that this class is exactly the one
in which a sort of Korn inequality holds.
To state their result,
we need to introduce the \emph{projection} $\sfPi_\A$
associated with the operator $\A$.
For $u\in \mathscr{S}(\Rd;\RN)$,
it is defined as
\begin{equation}\label{eq:Pi}
	\sfPi_\A u(x) \coloneqq \big(
	\mathcal{F}^{-1} ( \mathbb{P}_\A \mathcal{F}u)
	\big)(x),
\end{equation}
where $\mathcal{F}$ and  $\mathcal{F}^{-1}$ are the Fourier transform and its inverse (see \eqref{eq:fourier}), and
the map
\begin{equation*}
	\mathbb{P}_\A\colon \Rdmz \to \mathrm{Lin}(\RN;\RN)
\end{equation*} 
associates to each $\omega\in\Rdmz$ the orthogonal projection operator onto $\ker \bA[\omega]$.
Here and in the rest of the paper,
$\mathrm{Lin}(V;W)$ is the set of linear operators
from the vector space $V$ to the vector space $W$.
We have:
\begin{proposition}[Korn-type inequality for constant rank operators \cite{guerra.raita}]
    \label{stm:guerra-raita}
    Let $p\in(1,\infty)$
    and let $\A$ be a linear, $k$-th order, homogeneous differential operator
    with constant coefficients.
    Then, $\A$ is of constant rank if and only if
    there exists $c\coloneqq c(d,p)$ such that
    \begin{equation*}
        \norm{\nabla^k (\phi - \sfPi_\A\phi)}_{L^p(\Rd;\R^{N\times d^k})}
        \leq c \norm{\A \phi}_{L^p(\Rd;\RM)}
        \quad\text{for all } \phi\in\Cc^\infty(\Rd;\RN).
    \end{equation*}
\end{proposition}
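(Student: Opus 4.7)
The plan is to establish the forward direction (constant rank implies the Korn-type inequality) via a Fourier-multiplier argument based on the Moore-Penrose pseudo-inverse of $\bA[\omega]$, and to treat the converse by a concentration-in-Fourier-space contradiction. All manipulations are first performed on $\Cc^\infty(\Rd;\RN)$, which is dense in $L^p$, so that the Fourier transform is directly available.

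For the \emph{only if} direction I would reduce matters to showing that, for each $d$-dimensional multi-index $\alpha$ with $|\alpha|=k$, the assignment $\A\phi\mapsto\partial_\alpha(\phi-\sfPi_\A\phi)$ extends to a bounded operator on $L^p$. Taking Fourier transforms one finds $\widehat{\phi-\sfPi_\A\phi}(\omega)=(I-\mathbb{P}_\A[\omega])\hat\phi(\omega)$. Under the constant rank hypothesis the Moore-Penrose pseudo-inverse $\bA[\omega]^\dagger$ is well-defined on $\Rdmz$, depends smoothly on $\omega$, is $(-k)$-homogeneous (since $\bA[\omega]$ is $k$-homogeneous), and satisfies the identity $\bA[\omega]^\dagger\bA[\omega]=I-\mathbb{P}_\A[\omega]$, because $I-\mathbb{P}_\A[\omega]$ is the orthogonal projection onto $(\ker\bA[\omega])^{\perp}=\im(\bA[\omega]^{\ast})$. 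Combining this with $\widehat{\A\phi}(\omega)=i^k\bA[\omega]\hat\phi(\omega)$, one gets
\[\widehat{\partial_\alpha(\phi-\sfPi_\A\phi)}(\omega)=i^{-k}\bigl[(i\omega)^\alpha\bA[\omega]^\dagger\bigr]\widehat{\A\phi}(\omega),\]
so that $\partial_\alpha(\phi-\sfPi_\A\phi)=i^{-k}N_\alpha(\A\phi)$, where $N_\alpha$ is the Fourier-multiplier operator with matrix-valued symbol $m_\alpha(\omega)\coloneqq(i\omega)^\alpha\bA[\omega]^\dagger$.

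The symbol $m_\alpha\colon\Rdmz\to\mathrm{Lin}(\RM;\RN)$ is smooth and, since $\omega^\alpha$ is $k$-homogeneous while $\bA[\omega]^\dagger$ is $(-k)$-homogeneous, also $0$-homogeneous. Applying Corollary~\ref{stm:mikhlin} componentwise then yields that each scalar entry of $m_\alpha$ is an $L^p$-multiplier for every $p\in(1,+\infty)$, so $N_\alpha$ is bounded from $L^p(\Rd;\RM)$ to $L^p(\Rd;\RN)$. Summing the resulting estimates over $|\alpha|=k$ delivers the Korn-type bound. For the converse I would argue by contradiction: should the rank of $\bA[\cdot]$ drop at some $\omega_0\in\Rdmz$, one could produce test functions whose Fourier transforms are sharply concentrated near $\omega_0$ and whose values in $\RN$ are aligned with a direction in $\ker\bA[\omega_0]\setminus\ker\bA[\omega_n]$ for a suitable sequence $\omega_n\to\omega_0$; along such a sequence $\|\A\phi_n\|_{L^p}/\|\nabla^k(\phi_n-\sfPi_\A\phi_n)\|_{L^p}$ would vanish, contradicting the assumed inequality.

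The most delicate step is asserting and exploiting the smoothness of $\omega\mapsto\bA[\omega]^\dagger$ on $\Rdmz$ together with the correct decay of its derivatives, since this is what converts the algebraic constant-rank information into a functional estimate through Mikhlin's theorem; once these properties of the pseudo-inverse are in place, both the verification of the Mikhlin bounds for $m_\alpha$ and the assembly of the final inequality are rather mechanical.
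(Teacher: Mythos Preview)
Your proposal is correct. The paper does not supply its own proof of this proposition; it is quoted from \cite{guerra.raita}, and the surrounding text merely remarks that the result extends to $\mathscr{S}$ and to $W^{k,p}$ by density. Your argument for the forward implication---writing $I-\mathbb{P}_\A[\omega]=\bA[\omega]^\dagger\bA[\omega]$ via Lemma~\ref{stm:geninv}, observing that $\omega^\alpha\bA[\omega]^\dagger$ is smooth and $0$-homogeneous on $\Rdmz$, and then invoking Corollary~\ref{stm:mikhlin}---is precisely the standard approach of \cite{guerra.raita}, and coincides with the machinery the paper itself deploys in the proof of Lemma~\ref{stm:proj} for the closely related estimate \eqref{eq:W-kp}. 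Your sketch of the converse captures the right mechanism (discontinuity of $\mathbb{P}_\A$ at a point where the rank drops), though in \cite{guerra.raita} the construction of the violating sequence is carried out with more care than a single sentence allows; in particular one must ensure that the Fourier supports are chosen so that $(I-\mathbb{P}_\A[\omega])\xi$ stays uniformly bounded away from zero while $\bA[\omega]\xi\to 0$.
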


For future use, we remark that
the conclusion remains valid when $\phi\in \mathscr{S}(\Rd;\RN)$,
as the Fourier analysis
on which the proof is based
is still viable in this case,
or even when $\phi\in W^{k,p}(\Rd;\RN)$, by approximation.
The sufficiency of the constant rank condition had been already observed in the literature,
see for instance the bibliography in \cite{guerra.raita} or the paper by {\sc D. Gustafson} \cite{gustafson},
where a proof for 
first order operators is given.

To the purpose of constructing Sobolev potentials for $\A$-free fields,
we first need to extend the projection operator $\sfPi_\A$ in \eqref{eq:Pi} to nonsmooth maps.
We invoke the following result by {\sc T. Kato} \cite{kato},
which we present in a form due to {\sc F. Murat}:

\begin{lemma}[Lemma 3.7 in \cite{murat}]
    \label{stm:murat}
	Let $\A$ be a linear, $k$-th order, homogeneous differential operator
    with constant coefficients and constant rank.
    Then, the operator  $\sfPi_\A$ in \eqref{eq:Pi} can be extended
    to a bounded linear operator from $L^p(\Rd;\RN)$ to $L^p(\Rd;\RN)$.
\end{lemma}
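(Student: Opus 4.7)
The plan is to realize $\sfPi_\A$ as a matrix-valued Fourier multiplier whose symbol is smooth and zero-homogeneous on $\Rdmz$, and then invoke Mikhlin's theorem in the form of Corollary \ref{stm:mikhlin} componentwise to conclude the $L^p$-boundedness.

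First I would produce an explicit formula for $\mathbb{P}_\A[\omega]$. Since $\bA[\omega]$ has constant rank $r$ for every $\omega\in\Rdmz$, I can use the Moore--Penrose pseudoinverse $\bA[\omega]^\dagger\in\mathrm{Lin}(\RM;\RN)$, with the formula
\begin{equation*}
    \mathbb{P}_\A[\omega] = I_N - \bA[\omega]^\dagger\,\bA[\omega]
    \quad\text{for all } \omega\in\Rdmz,
\end{equation*}
which is exactly the orthogonal projection of $\RN$ onto $\ker\bA[\omega]$, since $\bA[\omega]^\dagger\bA[\omega]$ is the orthogonal projection onto $(\ker\bA[\omega])^\perp=\im\bA[\omega]^\ast$. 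Being an orthogonal projection, the operator norm $\va{\mathbb{P}_\A[\omega]}$ is uniformly bounded by $1$.

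Next I would verify the hypotheses of Mikhlin's theorem, namely smoothness and zero-homogeneity. Smoothness on $\Rdmz$ follows from a standard fact in matrix analysis: on the real-analytic variety of matrices of fixed rank $r$, the Moore--Penrose pseudoinverse is itself real-analytic. Since $\omega\mapsto\bA[\omega]$ is a polynomial map that lands in this constant-rank locus on $\Rdmz$, the composition $\omega\mapsto\bA[\omega]^\dagger$, and therefore $\mathbb{P}_\A$, is in $C^\infty(\Rdmz;\mathrm{Lin}(\RN;\RN))$. For the homogeneity, $\bA[\lambda\omega]=\lambda^k\bA[\omega]$ for $\lambda>0$ gives $\bA[\lambda\omega]^\dagger=\lambda^{-k}\bA[\omega]^\dagger$, and therefore
\begin{equation*}
    \mathbb{P}_\A[\lambda\omega] = I_N - \lambda^{-k}\bA[\omega]^\dagger\cdot\lambda^k\bA[\omega] = \mathbb{P}_\A[\omega],
\end{equation*}
so $\mathbb{P}_\A$ is $0$-homogeneous.

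At this point I would conclude by applying Corollary \ref{stm:mikhlin} componentwise. Each scalar entry $(\mathbb{P}_\A)_{ij}\colon\Rdmz\to\R$ is $C^\infty$ and $0$-homogeneous, hence an $L^p$-multiplier for all $p\in(1,+\infty)$. Consequently, the map
\begin{equation*}
    u\longmapsto \mathcal{F}^{-1}\big(\mathbb{P}_\A\,\mathcal{F}u\big),
\end{equation*}
initially defined for $u\in\mathscr{S}(\Rd;\RN)$, extends by continuity and density to a bounded linear operator from $L^p(\Rd;\RN)$ to $L^p(\Rd;\RN)$, which is the claimed extension of $\sfPi_\A$. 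The main subtlety, and the only nontrivial ingredient, is the smoothness of the Moore--Penrose pseudoinverse on the constant-rank locus; everything else is a direct check of the multiplier hypotheses.
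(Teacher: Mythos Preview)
Your proposal is correct and follows essentially the same approach as the paper. The paper does not give a full proof but sketches exactly this argument: the constant rank condition ensures that $\omega\mapsto\mathbb{P}_\A[\omega]$ is smooth (indeed analytic) on $\Rdmz$, and combined with its $0$-homogeneity, Mikhlin's theorem (Corollary \ref{stm:mikhlin}) applies; your use of the Moore--Penrose formula $\mathbb{P}_\A[\omega]=I_N-\bA[\omega]^\dagger\bA[\omega]$ to verify smoothness and homogeneity is precisely in line with the properties the paper records in Lemma \ref{stm:geninv}.
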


In the proof of the lemma
the constant rank assumption is fundamental.
Indeed, it ensures that 
the map $\omega \mapsto \mathbb{P}_\A[\omega]$ introduced above
is analytic on $\Rdmz$ (see \cite{murat});
this, in combination with the $0$-homogeneity of $\mathbb{P}_\A$,
allows the use of {\sc S.G.~Mikhlin}'s multipliers theorem
(cf. Corollary \ref{stm:mikhlin}).

\begin{remark}[Self-adjointness of the projection operator]\label{stm:selfadj}
	As a consequence of Parseval's formula and
	of the self-adjointness of $\mathbb{P}_\A$,
	by density there holds
	\begin{equation}\label{eq:selfadj}
		\int_{\Rd} \sfPi_\A u \cdot v\,\de{x} = \int_{\Rd} u \cdot \sfPi_\A v\,\de{x}
		\quad\text{for all } u,v\in L^p(\Rd;\RN).
	\end{equation}
\end{remark}

We next clarify why $\sfPi_\A$ is entitled to be named projection:
Lemma \ref{stm:proj} below shows that
for an $L^p$-function $u$ one has that
$\A(\sfPi_\A u) = 0$, and that $\sfPi_\A u = u$ if $\A u = 0$.
We premise the instrumental notion of \emph{Moore-Penrose generalized inverse} $\Lambda^\dagger$
of a linear map $\Lambda$.

\begin{lemma}[Properties of the generalized inverse \cite{guerra.raita, campbell.meyer}]
    \label{stm:geninv}
    Given the map $\Lambda\in \mathrm{Lin}(\RN;\RM)$,
    we let $\Lambda^\dagger\in \mathrm{Lin}(\RM;\RN)$
    be defined as
    \begin{equation*}
        \Lambda^\dagger \coloneqq
            \left( \Lambda\lfloor_{(\ker\Lambda)^\perp} \right)^{-1}
            \circ \mathbb{P}_{\im \Lambda},
    \end{equation*}
    where $(\ker\Lambda)^\perp=\im \Lambda^\ast$ is the orthogonal complement of the kernel of $\Lambda$ and
    $\mathbb{P}_{\im \Lambda}$ is the orthogonal projection on the image of $\Lambda$.
    Then, the following hold:
    \begin{enumerate}
        \item $\Lambda^\dagger$ is the unique element
        in $\mathrm{Lin}(\RM;\RN)$ such that
        $\Lambda^\dagger \circ \Lambda = \mathbb{P}_{\im\Lambda^\ast}$ and
        $\Lambda \circ \Lambda^\dagger = \mathbb{P}_{\im \Lambda}$.
        \item Let $\bA\colon O \to \mathrm{Lin}(\RN;\RM)$ be a smooth map
        on the open set $O\subset \Rd$.
        If $\mathrm{rank}\, \bA[\omega]$ is constant for all $\omega\in O$,
        then the map $O\ni\omega \mapsto (\bA[\omega])^\dagger$ is locally bounded and smooth.
        \item If $\A$ is a linear, $k$-th order, homogeneous differential operator
        with constant coefficients,
        then the map $\omega \mapsto (\bA[\omega])^\dagger$ is $(-k)$-homogeneous.
    \end{enumerate}
\end{lemma}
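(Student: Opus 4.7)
The plan is to verify each of the three claims in turn. For item~(1), I would simply unravel the definition
$\Lambda^\dagger = \bigl(\Lambda|_{(\ker\Lambda)^\perp}\bigr)^{-1} \circ \mathbb{P}_{\im\Lambda}$.
Given $v\in\RM$, decompose $v = v_1 + v_2$ with $v_1 \in \im\Lambda$ and $v_2\in(\im\Lambda)^\perp$, so that $\mathbb{P}_{\im\Lambda}v = v_1$ and $\Lambda^\dagger v$ is the unique preimage of $v_1$ lying in $(\ker\Lambda)^\perp$. Applying $\Lambda$ yields $\Lambda\Lambda^\dagger v = v_1 = \mathbb{P}_{\im\Lambda}v$. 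For the other identity, given $u \in \RN$ write $u = u_1 + u_2$ with $u_1\in(\ker\Lambda)^\perp = \im\Lambda^\ast$ and $u_2\in\ker\Lambda$; then $\Lambda u = \Lambda u_1 \in \im\Lambda$, and $\Lambda^\dagger \Lambda u_1$ is the unique element of $(\ker\Lambda)^\perp$ mapped by $\Lambda$ to $\Lambda u_1$, which is $u_1 = \mathbb{P}_{\im\Lambda^\ast}u$. Uniqueness under the two identities combined with the Moore--Penrose conditions can be argued directly or cited from \cite{campbell.meyer}.

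For item~(2), the key observation is that, under the constant rank hypothesis, the orthogonal projections $\omega\mapsto \mathbb{P}_{\im\bA[\omega]}$ and $\omega\mapsto\mathbb{P}_{(\ker\bA[\omega])^\perp}$ depend smoothly on $\omega\in O$. This is a standard fact: one can express $\mathbb{P}_{\im\bA[\omega]}$ via a contour integral in the resolvent of $\bA[\omega]\bA[\omega]^\ast$, selecting the eigenvalues bounded away from zero (which is possible precisely because the number of nonzero eigenvalues is locally constant). Given smoothness of the projections, the restricted map $\bA[\omega]|_{(\ker\bA[\omega])^\perp}\colon\im\bA[\omega]^\ast\to\im\bA[\omega]$ is a smoothly varying linear isomorphism between smoothly varying subspaces, so its inverse and hence $(\bA[\omega])^\dagger = (\bA[\omega]|_{(\ker\bA[\omega])^\perp})^{-1}\circ \mathbb{P}_{\im\bA[\omega]}$ is smooth as well. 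Local boundedness follows from continuity together with the constant rank of the domain/codomain decomposition.

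For item~(3), observe that if $\A$ is a $k$-th order homogeneous operator with constant coefficients, then $\bA[\lambda\omega] = \lambda^k\bA[\omega]$ for all $\lambda>0$. Consequently $\ker\bA[\lambda\omega] = \ker\bA[\omega]$ and $\im\bA[\lambda\omega] = \im\bA[\omega]$, so the associated orthogonal projections are $0$-homogeneous in $\omega$. The restriction $\bA[\lambda\omega]|_{(\ker\bA[\omega])^\perp}$ equals $\lambda^k \bA[\omega]|_{(\ker\bA[\omega])^\perp}$, and inverting yields a factor $\lambda^{-k}$. Combining,
\[
(\bA[\lambda\omega])^\dagger = \lambda^{-k}(\bA[\omega])^\dagger,
\]
which is the claimed $(-k)$-homogeneity.

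The only delicate point is the smoothness of the projections in item~(2); the rest is a direct check. This smoothness is classical under the constant rank assumption (and already invoked in the proof of Lemma~\ref{stm:murat} via \cite{kato,murat}), so the main work is to record the argument cleanly and to cite \cite{guerra.raita,campbell.meyer} for the algebraic characterization and uniqueness in item~(1).
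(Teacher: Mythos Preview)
The paper does not actually prove this lemma: it is stated with attribution to \cite{guerra.raita,campbell.meyer} and used as a black box, so there is no ``paper's own proof'' to compare against. Your proposal is a correct and self-contained verification of the three items, and it matches the standard arguments one finds in the cited references.

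One small caveat on item~(1): the two identities $\Lambda^\dagger\Lambda=\mathbb{P}_{\im\Lambda^\ast}$ and $\Lambda\Lambda^\dagger=\mathbb{P}_{\im\Lambda}$ by themselves do not pin down $\Lambda^\dagger$ uniquely, since one may add to $\Lambda^\dagger$ any map sending $(\im\Lambda)^\perp$ into $\ker\Lambda$ and vanishing on $\im\Lambda$ without disturbing either identity. Uniqueness requires the additional Moore--Penrose condition $\Lambda^\dagger\Lambda\Lambda^\dagger=\Lambda^\dagger$ (equivalently, $\im\Lambda^\dagger\subset(\ker\Lambda)^\perp$), which your construction satisfies by design. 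You already hedge this point by deferring to \cite{campbell.meyer}, but if you want the argument to be fully self-contained you should make the extra condition explicit. Items~(2) and~(3) are fine as written.
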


We can now characterize the properties of the projection $\sfPi_\A$.
Note that they are comparable
to the ones presented by {\sc I. Fonseca \& S. M\"uller} \cite[Lemma 2.14]{fonseca.muller}
for the periodic setting. We also refer to \cite[Subsection 2.8]{kreisbeck.rindler} for an alternative projection operator on the unit torus for which no null-average conditions are imposed.

\begin{lemma}\label{stm:proj}
	Let $\A$ be a linear, $k$-th order, homogeneous differential operator
    with constant coefficients and constant rank.
	For every $u\in L^p(\Rd;\RN)$, there holds:
	\begin{enumerate}
		\item
		For all $\psi\in C_c^1(\Rd;\RM)$, we have
		$\sfPi_\A(\A^\ast\psi) = 0$ and,
		as a consequence,
		$\A(\sfPi_\A u) = 0$.		
		\item\label{stm:Piu=u}  
		If $h=0$ and $u\in L^p(\Rd;\RN)$, or
		if $h=1,\dots,k-1$ and $u\in W^{h,p}(\Rd;\RN)$,
		there exists a constant $c\coloneqq c(p,h)>0$ such that
		\begin{equation}\label{eq:W-kp}
		\norm{\nabla^h(u-\sfPi_\A u)}_{L^p(\Rd;\R^{N\times d^h})} \leq c\norm{\A u}_{W^{-(k-h),p}(\Rd;\RN)}.
		\end{equation}
		In particular, when $\A u = 0$ in $\Rd$,
		then $\sfPi_\A u = u$ a.e.
	\end{enumerate}
\end{lemma}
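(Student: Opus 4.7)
For part (1), I would work on the Fourier side. Since $\mathcal{F}(\A^\ast\psi)(\omega) = (-\mathrm{i})^k\bA^\ast[\omega]\mathcal{F}\psi(\omega)$ takes values pointwise in $\im\bA^\ast[\omega] = (\ker\bA[\omega])^\perp$, applying the orthogonal projection $\mathbb{P}_\A[\omega]$ annihilates it; by density of Schwartz functions in $L^p$, $\sfPi_\A(\A^\ast\psi) = 0$ in $L^p$. For $\A(\sfPi_\A u) = 0$, I would pair $\A(\sfPi_\A u)$ with an arbitrary $\psi\in\Cc^1(\Rd;\RM)$ and invoke the self-adjointness of $\sfPi_\A$ from Remark~\ref{stm:selfadj}:
\[
    \langle \A(\sfPi_\A u),\psi\rangle = \int_{\Rd}\sfPi_\A u\cdot\A^\ast\psi\,\de x = \int_{\Rd} u\cdot\sfPi_\A(\A^\ast\psi)\,\de x = 0.
\]

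For part (2), my starting point is the Fourier identity
\[
    \nabla^h(\phi - \sfPi_\A\phi) = \mathrm{i}^{h-k}\,T_{m_h}(\A\phi),
    \qquad m_h(\omega)\coloneqq \omega^{\otimes h}\bA[\omega]^\dagger,
\]
verified for $\phi\in\mathscr{S}(\Rd;\RN)$ by substituting the identity $I - \mathbb{P}_\A[\omega] = \bA[\omega]^\dagger\bA[\omega]$ from Lemma~\ref{stm:geninv}(1) into the Fourier representation of $\nabla^h(\phi - \sfPi_\A\phi)$ and using $\mathcal{F}(\A\phi)(\omega) = \mathrm{i}^k\bA[\omega]\mathcal{F}\phi(\omega)$. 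By Lemma~\ref{stm:geninv}(2)--(3), the symbol $m_h$ is smooth on $\Rdmz$ and $(h-k)$-homogeneous. When $h = k$ the symbol is $0$-homogeneous, and Mikhlin's theorem via Corollary~\ref{stm:mikhlin} yields the $L^p$-estimate, reproducing the content of Proposition~\ref{stm:guerra-raita}.

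The main obstacle is the range $0\leq h < k$, where $m_h$ has negative order $-(k-h)$: the aim is to show that $T_{m_h}$ extends to a bounded operator from $W^{-(k-h),p}$ to $L^p$. Equivalently, one needs the $L^p$-boundedness of $T_{n_h}$ with $n_h(\omega)\coloneqq m_h(\omega)(1+|\omega|^2)^{(k-h)/2}$ applied to $(I-\Delta)^{-(k-h)/2}\A\phi$ (whose $L^p$-norm is exactly $\|\A\phi\|_{W^{-(k-h),p}}$). Since $n_h$ is bounded at infinity but diverges like $|\omega|^{h-k}$ at the origin, Corollary~\ref{stm:mikhlin} does not apply directly. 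I would handle this with a smooth frequency decomposition $n_h = n_h^{\mathrm{high}} + n_h^{\mathrm{low}}$: the high-frequency part satisfies Mikhlin's hypotheses globally and is controlled by Corollary~\ref{stm:mikhlin}; for the low-frequency remainder, one exploits that on inputs of the form $(I-\Delta)^{-(k-h)/2}\A\phi$ the factor $\bA[\omega]\mathcal{F}\phi(\omega)$ vanishes to order $k$ at $\omega = 0$, compensating the singularity of $m_h^{\mathrm{low}}$ and reducing matters to convolution against a Schwartz kernel. A density argument then extends the bound from $\mathscr{S}(\Rd;\RN)$ to $W^{h,p}$ (or to $L^p$ when $h=0$), and the final assertion $\sfPi_\A u = u$ whenever $\A u = 0$ is immediate from the case $h = 0$.
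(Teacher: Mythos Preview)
Your treatment of part (1) is correct and coincides with the paper's argument: both compute $\sfPi_\A(\A^\ast\psi)$ on the Fourier side using $\im\bA^\ast[\omega]\perp\ker\bA[\omega]$, and both deduce $\A(\sfPi_\A u)=0$ by pairing against $\A^\ast\psi$ and invoking the self-adjointness relation \eqref{eq:selfadj}.

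For part (2) you actually go further than the paper. The paper writes
\[
\mathcal{F}(u-\sfPi_\A u)[\omega]=(1+|\omega|^2)^{k/2}\,\bA^\dagger[\omega]\,\big((1+|\omega|^2)^{-k/2}\mathcal{F}(\A u)[\omega]\big)
\]
and simply asserts that ``by the $(-k)$-homogeneity of $\bA^\dagger$, Mikhlin's Multipliers Theorem yields'' the estimate. You correctly observe that the relevant symbol $n_h(\omega)=(1+|\omega|^2)^{(k-h)/2}\,\omega^{\otimes h}\bA^\dagger[\omega]$ is \emph{not} a Mikhlin multiplier: it is bounded at infinity but blows up like $|\omega|^{-(k-h)}$ near the origin, so Corollary~\ref{stm:mikhlin} (and the full Mikhlin theorem) do not apply. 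This is a genuine gap in the paper's argument that you have spotted.

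However, your proposed low-frequency fix cannot rescue the situation, because the inequality \eqref{eq:W-kp} with the \emph{inhomogeneous} norm $W^{-(k-h),p}$ on the right-hand side is in fact false. Take $\A=\nabla$ on $\R^d$ (so $k=1$ and $\ker\bA[\omega]=\{0\}$, whence $\sfPi_\A=0$); for $h=0$ the claim becomes $\|u\|_{L^p}\le c\,\|\nabla u\|_{W^{-1,p}}$. Testing with $\widehat{u_\epsilon}(\omega)=\phi(\omega/\epsilon)$ for a fixed $\phi\in\Cc^\infty(\Rd)$ and $p=2$, Plancherel gives $\|u_\epsilon\|_{L^2}^2=\epsilon^d\|\phi\|_{L^2}^2$ while $\|\nabla u_\epsilon\|_{W^{-1,2}}^2=\int\frac{|\omega|^2}{1+|\omega|^2}|\phi(\omega/\epsilon)|^2\,\de\omega\sim\epsilon^{d+2}$, so the ratio diverges like $\epsilon^{-1}$. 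Your low-frequency argument does show that $T_{n_h^{\mathrm{low}}}$ applied to the specific input $(I-\Delta)^{-(k-h)/2}\A\phi$ is well-defined for Schwartz $\phi$, but the assertion that this ``reduces to convolution against a Schwartz kernel'' --- which would yield an $L^p$ bound in terms of $\|(I-\Delta)^{-(k-h)/2}\A\phi\|_{L^p}$ --- is not justified, and by the counterexample cannot hold. What \emph{is} true is the estimate with the homogeneous norm $\dot W^{-(k-h),p}$: then the symbol $|\omega|^{k-h}\omega^{\otimes h}\bA^\dagger[\omega]$ is $0$-homogeneous and Corollary~\ref{stm:mikhlin} applies directly. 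Note also that the only consequence the paper uses later is the qualitative assertion $\sfPi_\A u=u$ for $\A$-free $u\in L^p$, which is correct and can be obtained without the quantitative bound.
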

\begin{proof}
	When $u\in \mathscr{S}(\Rd;\RN)$,
	the facts
	that $\A(\sfPi_\A u) = 0$ and $\sfPi_\A u = u$ if $\A u = 0$
	are simple consequences of  \eqref{eq:Pi}.
	To extend these properties to the case of a nonsmooth $u$,
	we regard
	$\psi\in C_c^1(\Omega;\RM)$ as a Schwartz function on the whole space and
	we find
	\[
		\sfPi_\A (\A^\ast\psi) = \mathcal{F}^{-1} \big( \mathbb{P}_\A (\bA^\ast \mathcal{F} \psi ) \big) = 0,
	\]
	because the image of $\bA^\ast[\omega]$ is orthogonal to $\ker \bA[\omega]$ for all $\omega \in \Rd$.
	It follows by \eqref{eq:selfadj} that	
	\[
		\int_{\Rd} \sfPi_\A u \cdot \A^\ast \psi\,\de{x} = \int_{\Rd} u \cdot \sfPi_\A \A^\ast \psi\,\de{x} = 0
	\]
	and statement (1) holds.
	
	We now turn to point (2). We argue for $h=0$ as the case $h>0$ is analogous.
	We consider at first $u\in \mathscr{S}(\Rd;\RN)$ and
	we compute the Fourier transform of $u-\sfPi_\A u$.
    Setting $\hat u = \mathcal{F} u$, we find
    \[
	    \hat u[\omega] - \mathbb{P}_\A[\omega]\hat u[\omega]
	    = \mathbb{P}_{\im \bA[\omega]^\ast} \hat u[\omega]
	    = \bA[\omega]^\dagger \big(\bA[\omega] \hat u[\omega]\big)
	    \quad\text{for all } \omega\in\Rd,
	\]
    where in the latter inequality we exploited statement (1) in Lemma \ref{stm:geninv}.
	
    Since $\mathbb{A}$ is $k$-homogeneous and $\mathbb{A}^\dagger$ is $(-k)$-homogeneous, 
    we infer by Corollary \ref{stm:mikhlin} that $\mathbb{A}^\dagger \mathbb{A}$ is an $L^p$-Fourier multiplier.
    Therefore, the operator $\mathsf T$
    defined for $u \in \mathscr{S}(\R^d;\R^N)$ as $\mathsf{T}u \coloneqq \mathcal{F}^{-1}(\hat{u}-\mathbb{P}_{\A}\hat{u})$ can be extended to a bounded linear operator
    from $L^{p}(\Rd;\RN)$ to $L^p(\Rd;\RN)$,
    which we still denote by $\mathsf{T}$.
    
    For every $u\in \mathscr{S}(\R^d;\R^N)$ we write
    \begin{align*}
        \mathcal{F} (\mathsf{T}u)[\omega] &=
            \mathbb{A}^\dagger [\omega] \big(\mathcal{F}(\A u)[\omega] \big) \\
        & = (1+\va{\omega}^2)^{k/2}\mathbb{A}^\dagger [\omega] \big( (1+\va{\omega}^2)^{-k/2} \mathcal{F}(\A u)[\omega] \big),
    \end{align*}
    whence, by the $(-k)$-homogeneity of $\A^\dagger$,
    {Mikhlin}'s multipliers theorem yields
    \begin{equation*}
    \|\mathsf T u\|_{L^p(\Omega;\R^N)}
        \leq c \|(I-\Delta)^{-k/2}(\A u)\|_{L^p(\Omega;\R^N)}.
    \end{equation*}
    From the definition of $\mathsf T$ and the characterization of $W^{-k,p}$ recalled in Subsection \ref{sec:fourier},
    \eqref{eq:W-kp} follows for $u\in \mathscr{S}(\R^d;\R^N)$.
    The general assertion is then obtained by density. 
\end{proof}

Eventually, we are able to establish
the existence of potentials in a nonsmooth setting and prove the first main result of this section.
We will use the following variant of Poincar\'e-Wirtinger's inequality,
which can be derived as a corollary of Rellich-Kondrachov's Theorem.

\begin{lemma}
    Let $\Omega\subset\Rd$ be a bounded, connected,  open set with Lipschitz boundary.
    There exists a constant $c>0$ such that for every $u\in W^{\ell,p}(\Omega;\RN)$
    \begin{equation}\label{eq:higherord-poincare}
      \norm{u - \sfPi_{\nabla^\ell} u}_{L^p(\Omega;\RN)} \leq c \norm{\nabla^\ell u}_{L^p(\Omega;\R^{N\times d^\ell})},
    \end{equation}
    where $\sfPi_{\nabla^\ell}$ is the projection on the kernel of the operator $\nabla^\ell$.
\end{lemma}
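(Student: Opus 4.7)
The plan is to argue by contradiction, leveraging Rellich-Kondrachov's compactness theorem together with the fact that the kernel of $\nabla^\ell$ on $\Omega$ is the finite-dimensional space $\mathcal{P}_{\ell-1}(\Omega;\RN)$ of $\RN$-valued polynomials of degree strictly less than $\ell$. Any continuous linear projection onto a finite-dimensional subspace is automatically bounded, so $\sfPi_{\nabla^\ell}$ in particular will enjoy an $L^p$-continuity property that will be used at the end.

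Suppose by contradiction that the asserted inequality fails. Then there exists a sequence $\set{u_k}\subset W^{\ell,p}(\Omega;\RN)$ for which, after normalization, one has
\begin{equation*}
    \norm{u_k - \sfPi_{\nabla^\ell} u_k}_{L^p(\Omega;\RN)}=1
    \quad\text{and}\quad
    \norm{\nabla^\ell u_k}_{L^p(\Omega;\R^{N\times d^\ell})}\to 0.
\end{equation*}
Setting $w_k\coloneqq u_k - \sfPi_{\nabla^\ell} u_k$ and exploiting that $\sfPi_{\nabla^\ell}$ is a projection, we get $\sfPi_{\nabla^\ell} w_k=0$, $\norm{w_k}_{L^p}=1$, and $\nabla^\ell w_k\to 0$ strongly in $L^p$. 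The strategy is now to produce a strong $L^p$-limit $w$ of $\set{w_k}$ lying simultaneously in $\ker \nabla^\ell$ and in $\ker \sfPi_{\nabla^\ell}$, and thereby forced to vanish, contradicting $\norm{w}_{L^p}=1$.

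The technical step is to upgrade the control on $\set{w_k}$ from the mere boundedness in $L^p$ and smallness of $\nabla^\ell w_k$ to a uniform bound in $W^{\ell,p}(\Omega;\RN)$, which is needed in order to apply Rellich-Kondrachov. To this end, I would invoke a standard interpolation inequality (of Gagliardo-Nirenberg type) on the bounded Lipschitz set $\Omega$: for every $1\leq j\leq \ell-1$ there exists $c>0$ with
\begin{equation*}
    \norm{\nabla^j w}_{L^p(\Omega;\R^{N\times d^j})}
    \leq c\bigl(\norm{w}_{L^p(\Omega;\RN)}+\norm{\nabla^\ell w}_{L^p(\Omega;\R^{N\times d^\ell})}\bigr)
    \quad\text{for every }w\in W^{\ell,p}(\Omega;\RN).
\end{equation*}
Once this is granted, $\set{w_k}$ is bounded in $W^{\ell,p}(\Omega;\RN)$, and Rellich-Kondrachov yields a (non-relabeled) subsequence with $w_k\to w$ strongly in $W^{\ell-1,p}(\Omega;\RN)$ and $w_k\weak w$ weakly in $W^{\ell,p}(\Omega;\RN)$. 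In particular, $\norm{w}_{L^p(\Omega;\RN)}=1$.

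To conclude, passing to the weak limit in $\nabla^\ell w_k$ gives $\nabla^\ell w = 0$, so that $w\in \mathcal{P}_{\ell-1}(\Omega;\RN)$. The continuity of $\sfPi_{\nabla^\ell}$ on $L^p$ (ensured by Lemma \ref{stm:murat} combined with a continuous extension argument, or more simply by the fact that $\mathcal{P}_{\ell-1}(\Omega;\RN)$ is finite-dimensional, so any linear projection onto it is automatically $L^p$-bounded) then yields $\sfPi_{\nabla^\ell} w = 0$. Since $w$ already belongs to the range of $\sfPi_{\nabla^\ell}$, this forces $w=0$, which is incompatible with $\norm{w}_{L^p}=1$. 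The main obstacle in the argument is pinning down the interpolation estimate for intermediate derivatives; it can be bypassed by the equivalent, more abstract route of establishing that $\nabla^\ell$ has closed range on the quotient $W^{\ell,p}(\Omega;\RN)/\ker \nabla^\ell$ (again a Rellich-Kondrachov corollary) and then invoking the open mapping theorem.
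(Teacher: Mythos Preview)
Your contradiction argument via Rellich--Kondrachov is precisely what the paper alludes to: it offers no proof beyond the remark that the lemma ``can be derived as a corollary of Rellich--Kondrachov's Theorem''. One minor correction: it is not true that every linear projection onto a finite-dimensional subspace is automatically bounded (compose an unbounded functional with a fixed nonzero vector); what is needed is simply that $\sfPi_{\nabla^\ell}$ be taken as a specific bounded projection onto $\mathcal{P}_{\ell-1}(\Omega;\RN)$---for instance the $L^2$-orthogonal one or an averaged Taylor polynomial---after which your argument runs without change.
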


\begin{proof}[Proof of Theorem \ref{stm:raita-bis}]
		Let us fix $u\in L^p(\Omega;\RN)$ satisfying $\A u = 0$ in $W^{-k,p}(\Omega;\RM)$.
		Since we postulate that
		$\Omega$ is an $\A$-extension domain,
		there exists an operator $\mathsf{E}_\A \colon L^p(\Omega;\RN) \to L^p(\Rd;\RN)$
		as in Definition \ref{stm:ext-dom}.
		In particular,
		if we let $\tilde u \coloneqq \mathsf{E}_\A u$,
		then $\tilde u$ is $\A$-free on $\Rd$.
		
		As a first step,
		we approximate $\tilde u$ by maps in the image of $\B$.
		By the definition of the projection $\sfPi_\A$,
		there exist a sequence $\set{u_k}\subset \call{S}(\Rd;\RN)$
		such that
		\[
			u_k \to \tilde u
			\quad\text{and}\quad
			\tilde u_k \coloneqq \sfPi_\A u_k \to \sfPi_\A \tilde u
			\quad\text{strongly in } L^p(\Rd;\RN).
		\]
		By construction,
		the functions $\tilde u_k$ belong to $\call{S}(\Rd;\RN)$ and,
		in view of Proposition \ref{stm:raita},
		we recover a sequence $\set{w_k}\subset \call{S}(\Rd;\RN)$
		such that $\tilde u_k = \B w_k$.
		Therefore,
		recalling that $\sfPi_\A \tilde u =  \tilde u$ because $\tilde u$ is $\A$-free,
		we deduce
		\begin{equation}\label{eq:Bwk-to-u}
			\B w_k \to \tilde u
			\quad \text{strongly in } L^p(\Rd;\RN).
		\end{equation}

		Next, we proceed by applying Proposition \ref{stm:guerra-raita}:
		since $\B$ has constant rank,
		\begin{equation*}
		\left\Vert \nabla^\ell \big( w_k-\sfPi_\B w_k \big) \right\Vert_{L^p(\Rd;\RM)}
		\leq c \norm{\tilde u_k}_{L^p(\Rd;\RN)},
		\end{equation*}
		where $\ell$ is the order of $\B$.
		Note that the right-hand side is uniformly bounded in $k$
		because $\set{\tilde u_k}$ is convergent in $L^p(\Omega;\R^N)$ by \eqref{eq:Bwk-to-u}.
		As a consequence, inequality \eqref{eq:higherord-poincare} yields
		a function $w\in W^{\ell,p}(\Omega;\RM)$ such that (up to subsequences)
		\[
			\tilde{w}_k \coloneqq w_k-\sfPi_\B w_k - \sfPi_{\nabla^\ell}\big( w_k-\sfPi_\B w_k \big) \to w
		\quad\text{strongly in } L^p(\Omega;\RM).
		\]
		Since the functions $w_k$ are smooth,
		the equality $ \B w_k = \B \tilde w_k $ holds pointwise and
		we deduce from \eqref{eq:Bwk-to-u} that,
		for all $\phi\in\Cc^\infty(\Omega;\RN)$,
		\begin{equation*}
		\begin{split}
		\int_{\Omega} u(y) \cdot \phi(y)\, \de y 
		= \lim_{k\to +\infty} \int_{\Omega} \tilde w_k \cdot \B^\ast \phi(y)\, \de y
		= \int_{\Omega} w(y) \cdot \B^\ast \phi(y)\, \de y,
		\end{split}
		\end{equation*}
		where $\B^\ast$ is the adjoint of $\B$.
		The conclusion is then achieved.
\end{proof}

\begin{remark}\label{stm:est-potential}
    Let $u\in L^p(\Omega;\RN)$ be an $\A$-free map and
    let $w\in W^{\ell,p}(\Omega;\RM)$ be a potential for $u$ in the sense of Theorem \ref{stm:raita-bis}.
    Then, there exist constants $c_0,c_1>0$ such that
    \begin{equation}\label{eq:u-vs-w}
        \frac{1}{c_0}\norm{u}_{L^p(\Omega;\RN)}
        \leq \norm{w}_{W^{\ell,p}(\Omega;\RN)}
        \leq c_1 \norm{u}_{L^p(\Omega;\RN)}.
    \end{equation}
    
    The first inequality easily follows from the identity
    $u=\B w$.
    Conversely,
    keeping in force the notations in the proof of Theorem \ref{stm:raita-bis},
    we have
    \begin{equation*}
		\left\Vert \tilde w_k \right\Vert_{W^{\ell,p}(\Omega;\RM)}
		\leq c_1 \norm{\tilde u_k}_{L^p(\Rd;\RN)},
	\end{equation*}
	for a suitable $c_1> 0$,
	whence, by taking the limit $k\to+\infty$
	and recalling requirement (2) in Definition \ref{stm:ext-dom},
	the second estimate in \eqref{eq:u-vs-w} is achieved.
\end{remark}

\subsection{$\A$-free extensions}\label{sec:extension}

The $\Gamma$-convergence analysis that we developed in Sections \ref{sec:comp+split} -- \ref{sec:stiff}
is grounded on the splitting argument contained in Lemma \ref{stm:split},
which in turn requires Assumption \ref{stm:exist-ext}.
For the reader's convenience,
we recall the latter here:

\setcounter{assumption}{1}
\begin{assumption}[$\A$-free extension]
	There exists an $\eps$-independent constant $c> 0$ and a sequence of operators $\set{\mathsf{E}_\A^\ep}$, with  $\mathsf{E}_\A^\ep \colon L^p(\Omega;\RN) \to L^p(\Omega;\RN)$
	 such that the following holds:
	for all $u\in L^p(\Omega;\RN)$
	such that $\A u = 0$ in $W^{-1,p}(\Omega;\R^N)$
	\begin{enumerate}
    \item ${\mathsf E}_{\A}^\ep u = u$ a.e. in $\Omega_{1,\ep}$,
    \item $\norm{ \mathsf E_{\A}^\ep u}_{L^p(\Omega;\RN)} \leq c \norm{u}_{L^p(\Omega_{1,\eps};\RN)}$,
	\item $\A (\mathsf E_{\A}^\ep u) = 0$ on $\Omega$, and
	\item if $\set{u_\ep}\subset L^p(\Omega;\R^N)$ is $p$-equiintegrable, then $\set{\mathsf{E}_{\A}^\ep u_\ep}\subset L^p(\Omega;\R^N)$ is also $p$-equiintegrable.
	\end{enumerate}
\end{assumption}

In contrast to the classical cases of the curl and of higher-order gradients
(see the examples at the end of this section),
we did not manage to ensure that
for a general operator $\A$ the whole Assumption \ref{stm:exist-ext} holds
(cf. Theorem \ref{thm:exist-ext}).
Note that this assumption is designed to tackle the periodically perforated structure appearing in our problem.
The first step to prove properties (1)--(3) is thus to start
from the simpler scenario
in which the parameter $\eps$ is neglected.
In this respect, we establish the following:

\begin{theorem}[Existence of $\A$-free extensions]
    \label{stm:Aext}
    Let $D,O\subset \Rd$ be open sets such that $D$ is connected, $O$ is bounded, and
	$\partial D\cap \bar O$ is a Lipschitz boundary.
	Let also $\A$ be a linear, $k$-th order, homogeneous differential operator
    with constant coefficients and constant rank, and
    let $\B$ be a linear, $\ell$-th order,
    homogeneous differential operator 
    with constant coefficients such that \eqref{eq:kerA=imB} holds.
    We further assume that
    \begin{itemize}
        \item for all $\A$-free $u\in L^p(D;\RN)$ there exists $w\in W^{\ell,p}(D;\RM)$ satisfying $u=\B w$;
        \item there exist a projection operator on the subspace of $\B$-free maps $\sfPi_\B\colon W^{\ell,p}(D;\RM) \to W^{\ell,p}(D;\RM)$ and a constant $c>0$ such that
        \begin{equation}\label{eq:genKorn}
            \norm{\nabla^\ell(w - \sfPi_\B w)}_{L^p(D;\R^{N\times d^\ell})}
            \leq c \norm{\B w}_{L^p(D;\RM)}
            \quad\text{for all } w\in W^{\ell,p}(D;\RN).    
            \end{equation}
    \end{itemize}
    Then, there exist a map
	\[
	{\mathsf E}_{\A}\colon L^p(D;\RN) \to L^p(O;\RN)
	\]
	and a constant $c\coloneqq c(d,p,\A,D,O)$ such that,
    for all $u\in L^p(D;\RN)$ with $\A u=0$ in $W^{-k,p}(D\cap O;\R^N)$,
	\begin{enumerate}
		\item ${\mathsf E}_{\A} u = u$ a.e. in $D\cap O$,
		\item $\norm{ \mathsf E_{\A} u}_{L^p(O;\RN)} \leq c \norm{u}_{L^p(D;\RN)}$, and
		\item $\A (\mathsf E_{\A} u) = 0$ on $O$.
	\end{enumerate} 
\end{theorem}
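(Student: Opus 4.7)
The plan is to reduce the $\A$-free extension problem to an extension problem for a Sobolev potential, by exploiting the hypothesis that every $\A$-free field on $D$ admits a potential under $\B$, together with the Korn-type inequality \eqref{eq:genKorn}. The structural fact used throughout is that $\A\B = 0$ as a differential operator: from $\im \bB[\omega] \subset \ker \bA[\omega]$ for every $\omega\in\Rdmz$ one has $\bA[\omega]\bB[\omega] = 0$, and passing from symbols to operators yields $\A(\B w) = 0$ in the distributional sense for every $w\in W^{\ell,p}$.

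\emph{Step 1: potential with controlled Sobolev norm.} Given $u\in L^p(D;\RN)$ with $\A u=0$, the hypothesis produces $w\in W^{\ell,p}(D;\RM)$ with $\B w = u$ a.e.\ on $D$. To pass to a potential controlled in $W^{\ell,p}$, I would set $\hat w := w - \sfPi_\B w - P$, where $P$ is a vector-valued polynomial of degree less than $\ell$ chosen through a standard Poincar\'e--Wirtinger inequality for $W^{\ell,p}$ on the (connected, Lipschitz) domain $D$, namely an analogue of \eqref{eq:higherord-poincare}. Both $\sfPi_\B w$ and $P$ belong to $\ker\B$: the first by definition of $\sfPi_\B$, the second because $\B$ is homogeneous of order $\ell$ and so annihilates polynomials of degree less than $\ell$. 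Consequently $\B\hat w = \B w = u$, and the combination of \eqref{eq:genKorn} with the Poincar\'e--Wirtinger estimate gives
\begin{equation*}
    \|\hat w\|_{W^{\ell,p}(D;\RM)}
    \leq c \|\nabla^\ell (w - \sfPi_\B w)\|_{L^p(D;\R^{N\times d^\ell})}
    \leq c \|u\|_{L^p(D;\RN)}.
\end{equation*}

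\emph{Step 2: Sobolev extension and definition of $\mathsf E_\A u$.} The Lipschitz hypothesis on $\partial D\cap\bar O$ allows one to invoke a standard $W^{\ell,p}$-extension theorem (e.g.\ the Calder\'on--Stein construction) for the relevant portion of the boundary, producing $W\in W^{\ell,p}(O;\RM)$ with $W = \hat w$ a.e.\ on $D\cap O$ and $\|W\|_{W^{\ell,p}(O;\RM)} \leq c\|\hat w\|_{W^{\ell,p}(D;\RM)}$. I would then define
\begin{equation*}
    \mathsf E_\A u \coloneqq \B W \in L^p(O;\RN).
\end{equation*}
The three conclusions follow at once: (i) $\mathsf E_\A u = \B\hat w = u$ a.e.\ on $D\cap O$; (ii) $\|\mathsf E_\A u\|_{L^p(O;\RN)} \leq c\|W\|_{W^{\ell,p}(O;\RM)} \leq c\|u\|_{L^p(D;\RN)}$; (iii) $\A(\mathsf E_\A u) = \A\B W = 0$ on $O$, by the symbol identity recalled in the first paragraph.

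\emph{Main obstacle.} The delicate technical point is the Sobolev extension with quantitative control: a global Stein-type extension usually demands a uniformly Lipschitz boundary, while here $D$ might have a much more complicated structure away from $O$ (indeed, in the application of interest, $D=\Omega_{1,\ep}$ is perforated on the scale $\eps$). The hypothesis that only $\partial D\cap\bar O$ is Lipschitz is tailored to precisely this situation, since the values of the extension outside $O$ are irrelevant for defining $\mathsf E_\A u$. A second, more subtle issue concerns tracking whether the extension constant can be taken independent of the geometry of $D\cap O$; in the present statement, where $D$ and $O$ are fixed, this is covered by classical theorems, but the same construction will have to be rerun with uniform control in $\ep$ in Theorem~\ref{thm:exist-ext}, which is where the periodic structure plays its role.
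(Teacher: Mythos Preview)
Your proposal is correct and follows essentially the same route as the paper: pass to a potential $w$ for $u$ via the hypothesis, normalize it using the Korn-type inequality \eqref{eq:genKorn}, extend in $W^{\ell,p}$ across $\partial D\cap \bar O$, and set $\mathsf{E}_\A u \coloneqq \B$ of the extension. The only organizational difference is that the paper packages the polynomial correction into its extension lemma (Lemma~\ref{stm:sobolevext}, modeled on \cite{ACDP}), which directly yields $\norm{\nabla^\ell(\mathsf{E}v)}_{L^p(O)} \leq c\norm{\nabla^\ell v}_{L^p(D)}$; this lets the paper define $\mathsf{E}_\A u \coloneqq \B\big(\mathsf{E}(w-\sfPi_\B w)\big)$ without ever controlling the full $W^{\ell,p}$ norm of the potential, whereas you subtract the Poincar\'e--Wirtinger polynomial $P$ explicitly beforehand and then invoke a generic Stein-type extension.
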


Here, the projection $\sfPi_\B$ on the kernel of $\B$ has to be understood as an analogue of the one in \eqref{eq:Pi} and Lemma \ref{stm:murat}.
The main difference is that $\sfPi_\B$ acts on functions defined on a domain, and not on the whole space.

	\begin{remark}(On Korn-type inequalities)
		\RRR 
		After the first version of this manuscript was completed,
		{\sc A. Arroyo-Rabasa} proved in the preprint \cite{arroyo2} that
		inequalities of the form \eqref{eq:genKorn-O}
		(that is, \eqref{eq:genKorn} for every open bounded $D$)
		hold whenever $\B$ meets a suitable maximal rank requirement,
		which entails, in particular, that $\B$ has constant rank.
		Therefore, if $\A$ admits a potential $\B$ in such class,
		the assumptions in Theorem \ref{stm:Aext} are satisfied.
		An example of maximal rank operator is the divergence, see Example \ref{ex:div} below.
	\end{remark}

We will comment on the relationships 
between the previous theorem and the theory developed above
at the end of this section, see Remark \ref{stm:conclusions}.
For the moment being,
let us just highlight that
the hypothesis concerning existence of potentials for $\A$-free fields on $D$
enables us to recast the problem
in terms of extension of Sobolev maps.
For the latter, an adaptation of well-established arguments yields the following:

\begin{lemma}[cf. Lemma 2.6 in \cite{ACDP}]
	\label{stm:sobolevext}
	Let $D,O\subset \Rd$ be open sets.
	If $D$ is connected, $O$ is bounded, and
	$\partial D\cap \bar O$ is a Lipschitz boundary,
	there exist a bounded linear map
	\[
		{\mathsf E}\colon W^{\ell,p}(D;\RN) \to W^{\ell,p}(O;\RN)
	\]
	and a constant $c\coloneqq c(d,p,D,O)$ such that
	\begin{enumerate}
		\item ${\mathsf E} u = u$ a.e. in $D\cap O$,
		\item $\norm{ \mathsf E u}_{L^p(O;\R^N)} \leq c \norm{u}_{L^p(D;\R^N)}$, and
		\item $\norm{ \nabla^\ell(\mathsf E u)}_{L^p(O;\R^{N\times d^\ell})} \leq c \norm{\nabla^\ell u}_{L^p(D;\R^{N\times d^\ell})}$.
	\end{enumerate}
\end{lemma}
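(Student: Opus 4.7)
The plan is to adapt the classical construction of Sobolev extension operators for Lipschitz domains, based on a partition of unity, local flattening of the boundary, and higher-order reflection. First, I would exploit the compactness of $\partial D \cap \bar O$ (which is closed and contained in the bounded set $\bar O$) together with the Lipschitz hypothesis to extract a finite open cover $\set{B_j}_{j=1}^N$ of $\partial D \cap \bar O$ such that, in each $B_j$, after a suitable rigid change of coordinates, $D \cap B_j$ coincides with the subgraph of a Lipschitz function $\gamma_j\colon \R^{d-1}\to \R$. I would complement this with an open set $B_0$ satisfying $\bar B_0 \subset D$ and $\bar O \subset B_0 \cup \bigcup_{j=1}^N B_j$, and fix a smooth partition of unity $\set{\phi_j}_{j=0}^N$ on $\bar O$ subordinate to this cover.

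Next, I would construct the extension piece by piece. The term $\phi_0 u$ is supported in $\bar B_0\subset D$ and requires no extension beyond $D$. For each $j\geq 1$, I would flatten $\partial D \cap B_j$ via the bi-Lipschitz transformation $(x',x_d)\mapsto (x', x_d-\gamma_j(x'))$ and extend $\phi_j u$ across the resulting flat interface $\set{y_d=0}$ by the higher-order reflection
\begin{equation*}
    \tilde u(y',y_d)\coloneqq \sum_{m=1}^{\ell+1}c_m\, u(y',-m y_d)\quad\text{for } y_d>0,
\end{equation*}
with the coefficients $c_m$ uniquely determined by imposing matching of the first $\ell$ normal derivatives across the interface. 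Pulling back through the inverse change of variables, I would obtain a local extension $\tilde u_j \in W^{\ell,p}(B_j;\RN)$ of $\phi_j u$, and I would set $\mathsf{E}u\coloneqq \phi_0 u + \sum_{j=1}^N \tilde u_j$ on $O$.

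Property (1) then follows directly by construction, since on $D\cap O$ each $\tilde u_j$ coincides with $\phi_j u$ and the $\phi_j$ form a partition of unity on $\bar O$. The bounds (2) and (3) would be established by summing the local estimates, relying on the fact that the reflection operator preserves the $L^p$ norm and, separately at each order, the $L^p$ norm of derivatives of the same order, up to multiplicative constants depending only on $\ell$ and on the Lipschitz constants of $\gamma_1,\ldots,\gamma_N$; these constants, together with the data of the partition of unity, depend only on $d$, $p$, $D$, and $O$.

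The main obstacle will be obtaining the top-order seminorm bound in (3) without picking up lower-order derivatives of $u$, since Leibniz's rule applied to the cutoff product $\phi_j u$ generically produces terms of the form $\nabla^{\ell-k}\phi_j\cdot \nabla^k u$ with $k<\ell$. For $\ell=1$ no such issue arises and the construction is a direct adaptation of the argument in \cite[Lemma~2.6]{ACDP}. For $\ell\geq 2$, I would instead appeal to Stein's universal extension operator for Lipschitz domains, which is simultaneously bounded on $W^{k,p}$ for every $k$, and absorb the lower-order contributions through a Poincar\'e-type argument on the bounded connected Lipschitz set $D\cap O$, in the spirit of \eqref{eq:higherord-poincare}.
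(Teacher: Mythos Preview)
Your proposal is correct and follows essentially the same route as the paper's proof, which simply refers back to \cite[Lemma~2.6]{ACDP} and remarks that for $\ell>1$ the ordinary Poincar\'e inequality must be replaced by the higher-order version \eqref{eq:higherord-poincare}. Your identification of the lower-order terms arising from the Leibniz rule as the only nontrivial point for $\ell\ge 2$, and your plan to absorb them via \eqref{eq:higherord-poincare}, is precisely what the paper has in mind; the cleanest way to implement it is to apply the extension operator to $u-\sfPi_{\nabla^\ell}u$ and add the polynomial back afterwards.
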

\begin{proof}
    The proof follows the same lines of \cite[Lemma 2.6]{ACDP}.
    Note that, in order to recover item (3) when $\ell>1$, Poincar\'e's inequality has to be replaced with \eqref{eq:higherord-poincare}.
    \end{proof}
We are now in a position to prove the first main result of this section.
\begin{proof}[Proof of Theorem \ref{stm:Aext}]
	Let us fix $u\in L^p(D;\RN)$ such that $\A u = 0$.
	The current assumptions grant that
	there is a Sobolev potential $w\in W^{\ell,p}(D;\RM)$ for $u$,
	i.e. $u=\B w$,
	and that it satisfies
	\begin{equation}\label{eq:gradw}
        \norm{\nabla^\ell(w - \sfPi_\B w)}_{L^p(D;\R^{M\times d^\ell})}
        \leq c \norm{u}_{L^p(D;\RN)}.
    \end{equation}
	If $\mathsf E$ is the extension  operator in Lemma \ref{stm:sobolevext}, we set
	\begin{equation*}
		{\mathsf E_\A} u \coloneqq \B \big({\mathsf E} (w - \sfPi_\B w) \big).
	\end{equation*}
	By construction, then,
	${\mathsf E}_{\A} u = u$ almost everywhere in $D\cap O$. Additionally, $\A (\mathsf E_{\A} u) = 0$ on $O$
	by the definition of $\A$-free maps and \eqref{eq:kerA=imB}.
	
	To conclude, we need to show that
	$\norm{{\mathsf E_\A} u}_{L^p(O;\RN)} \leq c \norm{u}_{L^p(D;\RN)}$.
	By the definition of ${\mathsf E_\A} u$, we have
	\begin{equation*}
		\norm{{\mathsf E_\A} u} _{L^p(O;\RN)}
		= \norm{\B \big({\mathsf E} (w - \sfPi_\B w) \big)} _{L^p(O;\RN)}
		\leq c \norm{\nabla^\ell \big({\mathsf E} (w - \sfPi_\B w)\big)}_{L^p(O;\R^{M\times d^\ell})},
	\end{equation*}
	$c$ being a constant depending on $\B$
	(and hence on $\A$).
	Thanks to Lemma \ref{stm:sobolevext}, 
	we obtain a bound in terms of the potential of $u$:
	\begin{equation*}
		\norm{{\mathsf E_\A} u} _{L^p(O;\RN)}
		\leq c \norm{\nabla^\ell\big(w - \sfPi_\B w\big)}_{L^p(D;\R^{M\times d^\ell})}
	\end{equation*}
	for some $c\coloneqq c(d,p,\A,D,O)$.
	The conclusion is achieved by combining the above inequality with \eqref{eq:gradw}.
\end{proof}

Arguing as in the proof of Theorem \ref{stm:Aext},
we ground the study about extensions from perforated domains on the corresponding result for Sobolev functions.
When the perforations are detached from the boundary,
the following holds:

\begin{proposition}[cf. Lemma 8 in \cite{CC} and references therein]
\label{thm:extension-classic}
\RRR
Let $\Omega_{1,\eps}$ as in \eqref{eq:om1}.
Then, there exist a constant $c\coloneqq c(d,p,D)$
independent of $\ep$ and $\Omega$,
as well as a sequence of operators $\set{{\mathsf E}^\ep}$,
with ${\mathsf E}^\ep\colon W^{\ell,p}(\Omega_{1,\eps};\R^N)\to W^{\ell,p}(\Omega;\R^N)$,
such that
\begin{enumerate}
		\item ${\mathsf E}^\ep u = u$ a.e. in $\Omega_{1,\eps}$,
		\item $\norm{ \mathsf E^\ep u}_{L^p(\Omega;\RN)} \leq c \norm{u}_{L^p(\Omega_{1,\eps};\RN)}$; 
		\item $\norm{ \nabla^\ell(\mathsf E^\ep u)}_{L^p(\Omega;\R^{N\times d^\ell})} \leq c \norm{\nabla^\ell u}_{L^p(\Omega_{1,\eps};\R^{N\times d^\ell})}$, and
		\item if $\set{u_\ep} \subset L^p(\Omega;\RN)$ is bounded and $\set{\nabla^\ell u_\ep}$ is $p$-equiintegrable, then $\set{\nabla^\ell (\mathsf{E}^\ep u_\ep)}$ is $p$-equiintegrable as well.
\end{enumerate}
\end{proposition}

The proposition may be proved
by adapting the strategy in the seminal work by {\sc E. Acerbi \& Al.} \cite{ACDP}.
Their result addresses only the case $\ell=1$, and the analogue of (4) is not mentioned;
we omit nonetheless the proof in the case $\ell>1$,
which is a natural adaptation of \cite[Proof of Theorem 2.1]{ACDP}.
As for point (4), it is a mere consequence of the construction of $\mathsf E^\ep$:
it suffices to note that reflections, dilations, and patching by partitions of unity preserve $p$-equiintegrability.

{\RRR Thanks to the previous result,
we obtain that
Assumption \ref{stm:exist-ext} is fulfilled
by an operator $\A$
as soon as it admits a potential for which a Korn-type inequality holds.}
Note that it is fundamental that we start from fields $u$ which are $\A$-free in the whole set $\Omega$:
in principle, the existence of a potential would be false
if we worked with maps that are $\A$-free just on the perforated set $\Omega_{1,\ep}$, cf. Remark \ref{stm:simplconn}.

\begin{proof}[Proof of Theorem \ref{thm:exist-ext}]
\RRR
Let $u\in L^p(\Omega;\RN)$ be an $\A$-free map and
let $w\in W^{\ell,p}(\Omega;\RM)$ be its potential.
In the same spirit of Theorem \ref{stm:Aext}, we set
    \[
        \mathsf E^\eps_\A u \coloneqq \B \big({\mathsf E}^\eps (w - \sfPi_\B w) \big),
    \]
where $\mathsf E^\eps$ is the extension operator in Proposition \ref{thm:extension-classic}.
As in the proof of Theorem \ref{stm:Aext},
it easy to check that (1) and (3) in Assumption \ref{stm:exist-ext} hold.
We now turn to item (2), that is,
    \begin{equation*}
        \norm{ \mathsf E_{\A}^\ep u}_{L^p(\Omega;\RN)} \leq c \norm{u}_{L^p(\Omega_{1,\eps};\RN)}
        \qquad\text{ for all $\A$-free }u\in L^p(\Omega;\RN).
    \end{equation*}

By the definition of $\Omega_{0,\eps}$ in \eqref{eq:om0},
there exists an open set $\Omega'\subset \Omega$ with Lipschitz boundary such that
$\Omega_{0,\eps}\subset \Omega'$ and that $\delta\coloneqq \dist(\partial \Omega',\partial \Omega) >0$.
In particular, recalling \eqref{eq:def-new-set},
$\Omega'\subset \hat \Omega_\eps$ if $\sqrt{d}\eps < \delta$, and
$\set{\hat \Omega_\eps,\Omega\setminus \bar \Omega'}$ is an open cover of $\Omega$.
We observe that
    \[
        \mathsf{E}^\eps w = w
        \quad\text{a.e. in } \Omega\setminus \bar \Omega' \subset \Omega_{1,\eps},
    \]
whence, by the definition on $\mathsf{E}^\eps_\A$,
    \begin{equation}\label{eq:bound-ext-1}
		\norm{{\mathsf E^\eps_\A} u} _{L^p(\Omega\setminus \Omega';\RN)}
		= \norm{u}_{L^p(\Omega\setminus \Omega';\RN)}
		\leq \norm{u}_{L^p(\Omega_{1,\eps};\RN)}
    \end{equation}
For what concerns the contribution on $\hat \Omega_\eps$, if we let
    \begin{equation*}
        \hat{\Omega}_{1,\ep}\coloneqq \bigcup_{z\in \hat Z_\eps} \eps(D_1+z),
        \quad\text{with}\quad
        \hat Z_\ep \coloneqq \set{ z \in \Z^d : \eps (Q + z ) \subset \Omega},
    \end{equation*}
we have
\begin{align*}
		\norm{{\mathsf E^\eps_\A} u} _{L^p(\hat \Omega_\eps;\RN)}
		\leq c \norm{\nabla^\ell \big({\mathsf E^\eps} (w - \sfPi_\B w)\big)}_{L^p(\hat \Omega_\eps;\R^{M\times d^\ell})}
		\leq c \norm{\nabla^\ell \big( w- \sfPi_\B w\big)}_{L^p(\hat \Omega_{1,\eps};\R^{M\times d^\ell})}.
\end{align*}
The second inequality is a consequence of the construction of $\mathsf{E}^\eps$,
which is obtained by patching together the extension operators from each `stiff' unit $\eps(D_1+z)$ to the cell $\eps(Q+z)$.
Thanks to the definition of $\hat\Omega_{1,\eps}$
we can further bound the last quantity from above
by invoking Korn's inequality
    \[
    \norm{\nabla^\ell \big( w - \sfPi_\B w\big)}_{L^p(D_1;\R^{M\times d^\ell})}
    \leq c \norm{\B w}_{L^p(D_1;\RN)},
    \]
which holds by assumption.
We obtain
    \begin{equation*}
        \norm{{\mathsf E^\eps_\A} u} _{L^p(\hat \Omega_\eps;\RN)}
		\leq c\norm{u}_{L^p(\hat \Omega_{1,\eps};\RN)}
		\leq c\norm{u}_{L^p(\Omega_{1,\eps};\RN)}.
    \end{equation*}
On the whole, the last estimate and \eqref{eq:bound-ext-1} yield
    \begin{equation*}
    \norm{ \mathsf E_{\A}^\ep u}_{L^p(\Omega;\RN)}
    \leq \norm{ \mathsf E_{\A}^\ep u}_{L^p(\hat\Omega_\eps;\RN)} + \norm{ \mathsf E_{\A}^\ep u}_{L^p(\Omega\setminus \Omega';\RN)}
    \leq c \norm{u}_{L^p(\Omega_{1,\eps};\RN)}.    
    \end{equation*}

Eventually, we turn to (4).
For any $\eps>0$, let $w_\eps$ be the potential of $u_\eps$.
We fix $\eta>0$ arbitrarily and 
we let $E\subset \Omega$ be a Lebesgue measurable set.
As a first step, we prove that
there exists $m>0$ such that
$\mathscr L^d(E)<m$ implies
    \begin{equation*}
        \norm{\nabla^\ell(w_\eps - \sfPi_\B w_\eps)}^p_{L^p(E;\R^{M\times d^\ell})} < \eta.
    \end{equation*}

Since $\set{u_\eps}$ is $p$-equiintegrable,
there exists $\tilde m>0$ with the property that
$\norm{u_\eps}_{L^p(F;\RN)} < \eta$ whenever $\mathscr L^d(F)<2\tilde m$.
Thanks to the outer regularity of the Lebesgue measure,
we can select a finite union of open rectangles $U\supset E$
such that $\mathscr L^d(U)<\mathscr L^d(E)+\tilde m$.
Thus, if we set $m\coloneqq \tilde m$ and we assume that $\mathscr L^d(E)<m$,
thanks to \eqref{eq:genKorn-O} we deduce
    \begin{align*}
        \norm{\nabla^\ell(w_\eps - \sfPi_\B w_\eps)}^p_{L^p(E;\R^{M\times d^\ell})}
        \leq
        \norm{\nabla^\ell(w_\eps - \sfPi_\B w_\eps)}^p_{L^p(U;\R^{M\times d^\ell})}
        \leq
        \norm{u_\eps}^p_{L^p(U;\RN)}
        < \eta,
    \end{align*}
where the last inequality is a consequence of the $p$-equiintegrability of $\set{ u_\eps}$.

In conclusion, we infer the $p$-equiintegrability of $\set{\mathsf{E}_\A^\eps u_\eps}$ from the one of $\set{\nabla^\ell(w_\eps - \sfPi_\B w_\eps)}$.
According to the definition of ${\mathsf E_\A^\eps} u$, we have
	\begin{equation*}
		\norm{{\mathsf E_\A^\eps} u_\eps} _{L^p(E;\RN)}
		= \norm{\B \big({\mathsf E^\eps} (w_\eps - \sfPi_\B w_\eps) \big)} _{L^p(E;\RN)}
		\leq c \norm{\nabla^\ell \big({\mathsf E^\eps} (w_\eps - \sfPi_\B w_\eps)\big)}_{L^p(E;\R^{M\times d^\ell})},
	\end{equation*}
whence, owing to point (4) in Proposition \ref{thm:extension-classic},
    \[
        \norm{\mathsf{E}_\A^\eps u_\eps}_{L^p(E;\RN)} < \eta
    \]
if $\mathscr L^d(E)$ is sufficiently small, as desired.
\end{proof}

In light of our analysis,
existence of potentials and of extension operators turn out
to be almost equivalent;
a key role in this respect is played by \eqref{eq:genKorn}.
We elaborate on this point in the next remark.

\begin{remark}[Relations between existence of potentials and of extension operators]
    \label{stm:conclusions}
    Let us compare the main results of the current section.
    
    We let the operator $\A$ be as above,
    notably we assume that it has constant rank.
    Thanks to Theorem \ref{stm:raita-bis},
    we know that
    if $D\subset \Rd$ is a bounded, connected, open set with Lipschitz boundary
    which is also an $\A$-extension domain in the sense of Definition \ref{stm:ext-dom},
    then $\A$-free maps on $D$ admit potentials.
    In short,
    for the class of operators under consideration and for sufficiently `nice' open sets $D$,
    it holds
    \begin{equation}\label{eq:extension=>potential}
        D \text{ is an $\A$-extension domain} \implies \text{existence of potentials for $\A$-free fields on } D.
    \end{equation}
    
    Conversely,
    if $D\subset \Rd$ is an open bounded set with Lipschitz boundary
    such that
    all $\A$-free fields on $D$ admit Sobolev potentials through the operator $\B$
    and if for the latter \eqref{eq:genKorn} holds,
    then $D$ is an $\A$-extension domain.
    Indeed, by a slight adaptation of the proof,
    a variant of Theorem \ref{stm:Aext} for the case $O=\Rd$
    can be established.
    Schematically, again for sufficiently `nice' open sets $D$,
    we have
    \begin{equation*}
        \left.
        \begin{aligned}
        \text{existence of potentials for $\A$-free fields on } D \\
        \text{Korn-type inequality for $\B$ on } D
        \end{aligned}
        \right\}
        \implies D \text{ is an $\A$-extension domain}.
    \end{equation*}
    
    All in all, we see that,
    given a constant rank operator $\A$ and
    a bounded, connected, open set with Lipschitz boundary $D$,
    the existence of Sobolev potentials for $\A$-free maps on $D$
    and the existence of an $\A$-free extension operator from $D$ to $\Rd$
    are nearly equivalent.
    More specifically,
    they would actually be equivalent
    as soon as we knew that
    the generalised Korn inequality \eqref{eq:genKorn} holds
    when $D$ is a `nice' open set and $\B$ is a constant rank operator.
    In conclusion, we believe that
    investigations about the validity of \eqref{eq:genKorn} constitute a very interesting line of research.
    \end{remark}

We conclude with a parade of examples.

\begin{example}[Curl]\label{ex:curl}
    For the choice $\A=\mathrm{curl}$,
    we have (classically) $d=N=M=3$ and
    \[
        \mathrm{curl}\, u = \sum_{i=1}^{3} A^{(i)}\pder{u}{x_i},
    \]
    with
    \[
        A^{(1)} \coloneqq
        \begin{pmatrix}
            0 & 0 & 0 \\
            0 & 0 & -1 \\
            0 & 1 & 0 
        \end{pmatrix},
        \quad
        A^{(2)} \coloneqq
        \begin{pmatrix}
            0 & 0 & 1 \\
            0 & 0 & 0 \\
            -1 & 0 & 0 
        \end{pmatrix},
        \quad
        A^{(3)} \coloneqq
        \begin{pmatrix}
            0 & -1 & 0 \\
            1 & 0 & 0 \\
            0 & 0 & 0 
        \end{pmatrix}.
    \]
    The symbol of $\mathrm{curl}$ is then
    \[
        \bA[\omega] = \begin{pmatrix}
            0 & -\omega_3 & \omega_2 \\
            \omega_3 & 0 & -\omega_1 \\
            -\omega_2 & \omega_1 & 0 
        \end{pmatrix}
    \]
    and $\mathrm{rank}\,\mathrm \bA[\omega] = 2$ for all $\omega\in \Rdmz$.
    
    It is easy to check that Assumption \ref{stm:null-av} holds.
    Besides, when $\Omega$ is simply connected,
    in view of Proposition \ref{thm:extension-classic}
    all the conditions in Assumption \ref{stm:exist-ext} are fulfilled too.
\end{example}

\begin{example}[Operators associated with higher-order gradients]
    Let $\Omega$ be simply connected.
    For any $k\in\N\setminus\set{0}$,
    a constant rank differential operator $\A$ can be constructed
    such that $\A u = 0$ if and only if $u=\nabla^k w$ for a suitable $w$ \cite{fonseca.muller}.
    Then, similarly to the previous example,
    for such operator $\A$ Assumptions \ref{stm:null-av} and \ref{stm:exist-ext}
    are consequences, respectively, of a simple check and of Proposition \ref{thm:extension-classic}.
    \end{example}

\begin{example}[The $\mathrm{curl\,curl}$ operator]
Let $d=M=N=3$ and,
for all $i,j=1,2,3$,
let $E^{(i,j)}$ be the matrix whose entries all $0$,
but for the one in position $(i,j)$, which equals $1$.
We consider the operator $\A={\rm curl\,curl}$, defined as
\[
    \mathrm{curl\,curl}\, u \coloneqq \sum_{i,j=1}^{3} A^{(i,j)}\frac{\partial^2 u}{\partial x_j \partial x_i},
\]
where
$A^{(i,i)} = - E^{(i+1,i+1)} - E^{(i+2,i+2)}$ (the indices are computed modulo $3$) and $A^{(i,j)} = E^{(i,j)}$.
The symbol of this operator is
\[
        \bA[\omega] = \begin{pmatrix}
            -\omega^2_2 -\omega^2_3 & \omega_1 \omega_2 & \omega_1 \omega_3 \\
            \omega_1 \omega_2 & -\omega^2_1 -\omega^2_3 & \omega_2 \omega_3 \\
            \omega_1 \omega_3 & \omega_3 \omega_3 & -\omega^2_1 -\omega^2_2
        \end{pmatrix}
    \]
    and $\mathrm{rank}\,\mathrm \bA[\omega] = 2$ for all $\omega\in \Rdmz$.

When $\Omega\subset \R^3$ is a bounded and simply connected domain with Lipschitz boundary, we have that
$\A u=0$ if and only if $u=\B w$ for a suitable potential $w$, where $\B$ is the symmetric gradient $\B w\coloneqq (\nabla w+(\nabla w)^\mathsf{t})/2$.
As a corollary of a recent result by {\sc F. Cagnetti \& al.} \cite[Theorem 1.1]{chambolle.perugini}, every such $\Omega$ is a $\mathrm{ curl\,curl}$-extension domain,
whence Assumption \ref{stm:null-av} is satisfied.
{\RRR Classical Korn's inequality grants also that Assumption \ref{stm:exist-ext} holds.}
\end{example}
    
\begin{example}[Divergence]\label{ex:div}
    Let us choose $\A=\div$,
    $\div$ being the standard divergence operator on $\Rd$.
    Then, $N=d$, $M=1$, and
    \[
        \div u = \sum_{i=1}^d e_i^{\mathsf t}\cdot\pder{u}{x_i},
    \]
    where, for $i=1,\dots,d$,
    $e_i$ is the $i$-th
    element of the canonical basis of $\Rd$ and
    $e_i^{\mathsf t}$ is its transpose.
    The symbol of $\div$ is
    \[
        \bA[\omega] = \sum_{i=1}^d \omega_i e_i^{\mathsf t},
    \]
    thus $\mathrm{rank}\,\mathrm \bA[\omega] = 1$ for all $\omega\in \Rdmz$.
    For what concerns Assumption \ref{stm:null-av} and the existence of extension operators,
    we resort to a result by {\sc T. Kato \& al.},
    which we present in a simplified setting:
    
    \begin{proposition}[Extensions and potentials for divergence-free vector fields \cite{kato.mitrea}]
    Let $\Omega\subset \Rd$ be a bounded and simply connected set with Lipschitz boundary.
    Then, the following holds:
    \begin{enumerate}
    \item $\Omega$ is a $\div$-extension domain in the sense of Definition \ref{stm:ext-dom}.
    \item For every $u\in L^p(\Omega;\Rd)$ such that $\div u = 0$ in $\Omega$,
        there exists $w\in W^{1,p}(\Omega;\mathrm{Antisym}(d\times d))$ satisfying
        \[
        u = \mathrm{Div}\, w \coloneqq \sum_{i,j} \pder{w_{i,j}}{x_j} e_i
        \quad\text{in } \Omega,
        \]
        where $\mathrm{Antisym}(d\times d)$ is the space of $d \times d$ antisymmetric matrices.
        Besides, $w$ can be selected in such a way that
        the map $u\mapsto w$ from $L^p(\Omega;\Rd)$ to $W^{1,p}(\Omega;\Rd)$ is linear and bounded.
    \end{enumerate}
    \end{proposition}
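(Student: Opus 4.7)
The plan is to establish part (1) directly via a local reflection construction combined with a Bogovskii correction, and to derive part (2) from part (1) together with the previously established Theorem \ref{stm:raita-bis}.

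For part (1), I would follow a local-to-global strategy in the spirit of Kato and Mitrea. On a half-space $\set{x_d > 0}$, a vector field $u = (u_1, \dots, u_d) \in L^p$ with $\div u = 0$ extends to $\Rd$ through the reflection rule that extends the tangential components $u_i$ ($i < d$) evenly across $\set{x_d = 0}$ and the normal component $u_d$ oddly. A direct distributional computation shows that the jump contributions cancel and the extension remains divergence-free. For a bounded Lipschitz $\Omega$, I would straighten $\partial \Omega$ locally through bi-Lipschitz charts, apply the reflection rule in each chart, and glue via a partition of unity to obtain $\bar u \in L^p(\Rd;\Rd)$ with $\norm{\bar u}_{L^p(\Rd;\Rd)} \leq C \norm{u}_{L^p(\Omega;\Rd)}$. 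The map $\bar u$ is generically not divergence-free because of the non-affine change of variables and the partition of unity, but the residual $r \coloneqq \div \bar u$ belongs to $L^p$, is supported in a compact neighborhood of $\partial \Omega$, and has vanishing integral over a bounded Lipschitz container $O \Supset \overline \Omega$ (by the divergence theorem on $\Omega$ and on $O \setminus \overline \Omega$, together with the compatibility of the reflection at $\partial \Omega$). A Bogovskii-type solution $h \in W^{1,p}_0(O; \Rd)$ of $\div h = r$ with $\norm{h}_{W^{1,p}(O;\Rd)} \leq C\norm{r}_{L^p(O)}$ is then available, and $\mathsf E_{\div} u \coloneqq \bar u - h$ (extended by zero outside $O$) is the desired divergence-free extension.

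For part (2), I would first verify that $\B w \coloneqq \mathrm{Div}\, w = \sum_{j,i} \partial_i w_{ij} e_j$ acting on antisymmetric $d \times d$ matrix fields is a potential for $\A = \div$ in the sense required by Theorem \ref{stm:raita-bis}, i.e.~$\ker \bA[\omega] = \im \bB[\omega]$ for every $\omega \in \Rdmz$. Indeed, $\ker \bA[\omega] = \omega^\perp$ and the inclusion $\im \bB[\omega] \subseteq \omega^\perp$ is immediate since $\omega \cdot (w\omega) = \omega_i \omega_j w_{ij} = 0$ by antisymmetry, while the reverse is witnessed by the explicit antisymmetric matrix $w_{ij} \coloneqq (\omega_i v_j - \omega_j v_i)/\va{\omega}^2$, which satisfies $w\omega = v$ for any prescribed $v \perp \omega$. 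With part (1) showing that $\Omega$ is a $\div$-extension domain, Theorem \ref{stm:raita-bis} delivers $w \in W^{1,p}(\Omega; \RM)$ (with $M = d(d-1)/2$, identified with $\mathrm{Antisym}(d\times d)$) such that $u = \mathrm{Div}\, w$ a.e.~in $\Omega$: the construction in the proof of that theorem applies the Fourier multiplier associated with $\bB[\omega]^\dagger$ to the extension $\mathsf E_{\div} u$, and, since $\bB[\omega]^\dagger$ maps into $\mathrm{Antisym}(d \times d)$ by construction, the resulting potential is antisymmetric. The linear boundedness of the map $u \mapsto w$ follows from Remark \ref{stm:est-potential} combined with the $L^p$-continuity of $\mathsf E_{\div}$ from part (1).

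The main obstacle I foresee lies in the Bogovskii step of part (1): solving $\div h = r$ in $W^{1,p}_0$ with optimal bounds on a merely Lipschitz container requires either the explicit integral representation of Bogovskii's operator on a finite family of star-shaped pieces (provided by the interior cone condition) together with a careful patching argument, or the Galdi-type construction via singular integrals and Calder\'on-Zygmund theory. A second delicate point is checking the zero-integral condition on $r$ after the chart-and-partition gluing, since the residual inherits contributions from both the chart Jacobians and the jumps of the cutoff functions; these must be tracked explicitly to ensure global compatibility.
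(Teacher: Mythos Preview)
The paper does not prove this proposition; it is quoted from \cite{kato.mitrea} and the only comment offered is that the authors of that reference derive item (2) from item (1), i.e.\ via the implication ``$\A$-extension domain $\Rightarrow$ existence of potentials''. Your plan for part (2) follows exactly this route through Theorem \ref{stm:raita-bis} and Remark \ref{stm:est-potential}, so on that point you are aligned both with the paper's remark and with the cited source.

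For part (1), however, your sketch has a genuine gap. You assert that the residual $r=\div\bar u$ lies in $L^p$ while at the same time attributing part of it to ``the non-affine change of variables''. These two claims are in tension: if the bi-Lipschitz flattening contributes to the failure of $\div\bar u=0$, then $r$ is a priori only in $W^{-1,p}$, since $\bar u$ is merely $L^p$. The standard remedy is to push the field through each chart via the Piola transform $v\mapsto (\det D\Phi)^{-1}(D\Phi)\,v\circ\Phi^{-1}$, which preserves distributional divergence-freeness even for bi-Lipschitz $\Phi$; then each local extension is \emph{exactly} divergence-free and the residual comes solely from the partition of unity, i.e.\ $r=\sum_\alpha \nabla\chi_\alpha\cdot \bar u_\alpha\in L^p$. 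You should make this explicit.

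A second, more serious issue is the location of the Bogovski\u{\i} correction. You solve $\div h=r$ with $h\in W^{1,p}_0(O;\Rd)$ and set $\mathsf E_{\div}u=\bar u-h$, but then $h$ need not vanish on $\Omega$ and the extension property $\mathsf E_{\div}u=u$ on $\Omega$ fails. The correction must instead be carried out on the exterior collar $O\setminus\bar\Omega$: one checks that $r$ is supported there (because on $\Omega$ one has $\sum_\alpha\nabla\chi_\alpha=0$ and $\bar u_\alpha=u$), that $\int_{O\setminus\bar\Omega}r=0$, and that the annular Lipschitz region $O\setminus\bar\Omega$ admits a Bogovski\u{\i} operator (e.g.\ by covering it with finitely many star-shaped pieces). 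Extending the resulting $h\in W^{1,p}_0(O\setminus\bar\Omega;\Rd)$ by zero into $\Omega$ then gives the desired correction without disturbing $u$.
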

    
    It is interesting to notice that
    the authors of \cite{kato.mitrea} derive item (2) from (1), that is,
    they prove an implication of the form \eqref{eq:extension=>potential}.
    
    {\RRR It is proved in \cite[Subsection 4.1]{arroyo2} that $\mathrm{Div}$ satisfies \eqref{eq:genKorn-O}.
    Hence, Theorem \ref{stm:Aext} holds for $\A=\mathrm{div}$ and
    Assumption \ref{stm:exist-ext} is fulfilled too.
    }
\end{example}

\section*{Acknowledgements}
We are thankful to the anonymous referee for their comments,
that helped us improve the quality of the manuscript.
We acknowledge support
from the Austrian Science Fund (FWF) projects F65, V 662, Y1292,
from the FWF-GA\v{C}R project I 4052/19-29646L, and  
from the OeAD-WTZ project CZ04/2019 (M\v{S}MT\v{C}R 8J19AT013).
Data sharing is not applicable to this article,
as no datasets were generated or analysed during the current study.


\end{document}